\newtheorem{theorem}{Theorem}[section]
\newtheorem{definition}[theorem]{Definition}
\newtheorem{lemma}[theorem]{Lemma}
\newtheorem{proposition}[theorem]{Proposition}
\newtheorem{corollary}[theorem]{Corollary}
\newtheorem{remark}[theorem]{Remark}
\newtheorem{example}[theorem]{Example}
\newtheorem{examples}[theorem]{Examples}
\newtheorem{assumption}[theorem]{Assumption}
\newcommand{\hh}{{\mathbb{H}}}
\newcommand{\cc}{{\mathbb{C}}}
\newcommand{\rr}{{\mathbb{R}}}
\newcommand{\zz}{{\mathbb{Z}}}
\newcommand{\nn}{{\mathbb{N}}}
\newcommand{\s}{{\mathbb{S}}}
\newcommand{\sr}{\mathcal{SR}}
\newcommand{\I}{\mathcal{I}}
\newcommand{\T}{\mathcal{T}}
\newcommand{\B}{\mathcal{B}}
\newcommand{\torus}{\mathbb{T}}
\newcommand{\mon}{\mathrm{Mon}}
\newcommand{\reg}{\operatorname{Reg}}
\newcommand{\stem}{\operatorname{Stem}}
\newcommand{\slice}{\mathcal{S}}
\renewcommand{\k}{{\bf k}}
\newcommand{\F}{\mathscr{F}}
\newcommand{\debar}{\overline{\partial}}
\newcommand{\Span}{\operatorname{Span}}
\title{{\bf A manifold Fueter-Sce phenomenon\\ in one hypercomplex variable
}}
\author{Riccardo Ghiloni\\
\small Dipartimento di Matematica, Universit\`a di Trento\\ 
\small Via Sommarive 14, I-38123 Povo Trento, Italy\\
\small riccardo.ghiloni@unitn.it\\
\and
Caterina Stoppato
\\ 
\small Dipartimento di Matematica e Informatica ``U. Dini'', Universit\`a di Firenze \\
\small Viale Morgagni 67/A, I-50134 Firenze, Italy\\
\small caterina.stoppato@unifi.it}
\date{  }
\begin{document}

\maketitle

%%%%%%%%%%%%%%%%%%%%%%%

\begin{abstract}
Fueter's theorem states, in modern terms, that the Laplacian maps slice-regular quaternionic functions into Fueter-regular functions with axial symmetry. This phenomenon is also present in the Clifford setting, where both slice-monogenic functions and generalized partial-slice monogenic functions are mapped by the Laplacian into monogenic functions with axial symmetry. These results are due, respectively, to Sce and Qian and to Xu and Sabadini.

The present work puts the Fueter-Sce phenomenon into context for the wider class of strongly $T$-regular functions. It shows that the phenomenon appears over general associative $*$-algebras. Moreover, the symmetry considered here is multi-axial in a sense introduced by Eelbode. Additionally, but more surprisingly, the phenomenon studied by Fueter, Sce, Xu and Sabadini turns out to be the last step in a multi-step process. A new phenomenon in one hypercomplex variable is therefore discovered.
\end{abstract}

%%%%%%%%%%%%%%%%%%%%%%%

%{\footnotesize
%\tableofcontents
%}
%%%%%%%%%%%%%%%%%%%%%%%

\section{Introduction}\label{sec:introduction}

This work uncovers a new Fueter-Sce phenomenon in one hypercomplex variable. Several novelties appear. Firstly, Fueter's theorem of~\cite{fueter1}, Sce's theorem of~\cite{sce}, and a more recent result of Xu and Sabadini~\cite{xsfuetersce} are unified into a single statement, valid over general associative $*$-algebras. Secondly, the main results obtained are multi-axial in the sense of Eelbode's work~\cite{eelbode2014}. Lastly, but most importantly, the phenomenon studied in~\cite{fueter1,sce,xsfuetersce} turns out to be the last step of a longer process involving several steps. To better explain all three novelties, some introduction is in order.

Hypercomplex function theory originated from the search for analogs, over higher-dimensional algebras, of the theory of holomorphic functions. Numerous theories in one hypercomplex variable were developed over the last century. Fueter introduced his theory of quaternionic regular functions in~\cite{fueter1,fueter2}, see also~\cite{sudbery}. Generalizing Fueter's ideas to Clifford algebras led to the well-established theory of monogenic functions over Clifford algebras, see~\cite{librosommen,librocnops,librogurlebeck2} and references therein. Following an idea of Cullen~\cite{cullen}, Gentili and Struppa introduced in~\cite{cras,advances} the completely distinct theory of quaternionic slice-regular functions, which was vastly developed over the last twenty years: see~\cite{librospringer2} and references therein. Over Clifford algebras, Colombo, Sabadini and Struppa introduced in~\cite{israel} the theory of slice-monogenic functions (or slice-hyperholomorphic functions, see~\cite{librodaniele2}). These endeavors are not limited to the associative setting: for instance, an octonionic function theory was introduced in~\cite{dentonisce} (see~\cite{librotraduzionesce} for a translation into English), while octonionic slice-regular functions were defined in~\cite{rocky}. The work~\cite{perotti} introduced the fundamental concept of slice function and set the grounds for the study of slice-regularity over general alternative $*$-algebras.

A turning point in function theory in one hypercomplex variable was announced in~\cite{unifiednotion} and developed in~\cite{unifiedtheory}. For a fixed alternative $*$-algebra $A$ and an appropriately chosen $(N+1)$-dimensional real subspace $V$ of $A$, the new concept of \emph{$T$-regular function} provided a whole spectrum of theories for functions $V\to A$, varying with the choice of a $\tau\in\nn$ and of a \emph{list of steps}
\[T=(t_0,t_1,\ldots,t_\tau)\in\nn^{\tau+1},\qquad0\leq t_0<t_1<\ldots<t_\tau=N\,.\]
The traditional approach, based on the Cauchy-Riemann-Fueter operator or on the Dirac operator, and the more recent approach, based on slice-wise holomorphy, sit at the opposite edges of this spectrum: they lead to the theory of $(N)$-regular functions and to the theory of $(0,N)$-regular functions, respectively. Already when $A=\hh=V$, in addition to $(3)$-regularity (Fueter's theory) and $(0,3)$-regularity (Gentili and Struppa's theory) a new function theory emerged: the theory of $(1,3)$-regular functions, studied in~\cite{unifiednotion}. The concepts of \emph{$T$-function} and of \emph{strongly $T$-regular function} were also introduced in~\cite{unifiednotion} and studied in~\cite{unifiedtheory}. Independently, the works~\cite{xsannouncement,xsgeneralizedpartialslice} developed the notion of \emph{generalized partial-slice monogenic function}, which coincides with the notion of $T$-regular function if $A$ is chosen to be the Clifford algebra $C\ell(0,N)$, $V$ to be the paravector subspace $\rr^{N+1}$ and $T$ to be of the form $(t_0,N)$ (whence $\tau=1$).

The present work studies the Fueter-Sce phenomenon for $T$-regular functions over general associative $*$-algebras. Over quaternions, Fueter discovered this phenomenon already in his original work~\cite{fueter1}. Namely, he showed how to obtain Fueter-regular functions $\hh\to\hh$ with axial symmetry as Laplacians of quaternionic functions of a special type, which would now be described as slice-regular quaternionic functions (or $(0,3)$-regular functions $\hh\to\hh$) preserving $\cc$. For $N=2n+1$, Sce similarly constructed in~\cite{sce} monogenic functions $\rr^{N+1}\to C\ell(0,N)$ with axial symmetry by applying the $n$th iterate of the Laplacian to specific Clifford functions, which would now be described as slice-monogenic functions (or $(0,N)$-regular functions $\rr^{N+1}\to C\ell(0,N)$) preserving $\Span(e_\emptyset,e_1)$. A translation of~\cite{sce} into English can be found in~\cite{librotraduzionesce}. Qian successfully addressed the same problem for $N$ even in~\cite{qian1997}. These classical results are overviewed in~\cite[\S11.2.3]{librogurlebeck2}, which also cites~\cite{sommen1988}. Some generalizations include~\cite{integralfueter,dongqian,eelbodesouckvanlancker2012,eelbodesouckvanlancker2014,feicerejeiraskaehler,kouqiansommen,penapenaqiansommen,penapenasommen2006,penapenasommen2010,penapenasommen2010bis,sommen2000}. We also wish to mention the works~\cite{inversefueter,colombosabadinisommen2013,dongkouqian} on the inversion of the Fueter-Sce theorem, along with the survey~\cite{qian2015}. In~\cite{xsfuetersce}, Xu and Sabadini uncovered the same phenomenon for generalized partial-slice monogenic functions: in our current terminology, for $f:\rr^{N+1}\to C\ell(0,N)$ that is strongly $(t_0,N)$-regular with $N-t_0=2n+1$, they proved that the $n$th Laplacian of $f$ is a monogenic function $\rr^{N+1}\to C\ell(0,N)$ with a new type of axial symmetry, which is the same as being a $(t_0,N)$-function. Eelbode's results of~\cite{eelbode2014} are also strictly related and can be reinterpreted in the following terms: for a very specific type of function $f:\rr^{N+1}\to C\ell(0,N)$ that is strongly $(0,t_1,N)$-regular with $t_1=2n_1+1$ and $N-t_1=2n_2+1$, the $(n_1+n_2)$-th Laplacian of $f$ is a monogenic function $\rr^{N+1}\to C\ell(0,N)$ with biaxial symmetry. This specific biaxial symmetry is subsumed in the concept of $(0,t_1,N)$-function. In general, being a $T$-function may be described as having $\tau$-axial symmetry. The aim of the present work is to put the Fueter-Sce phenomenon into context for the wider class of strongly $T$-regular functions over general associative $*$-algebras.

To describe the present work in further detail, after our previous choice of $T$, let us set 
\[T_1:=(t_1,\ldots,t_\tau),\ldots,T_\sigma:=(t_\sigma,\ldots,t_\tau),\ldots,T_\tau:=(t_\tau)=(N)\]
and fix $\sigma\in\{1,\ldots,\tau\}$. After preliminaries in Section~\ref{sec:preliminaries}, the operators $\debar_T,\partial_T,\Delta_T$ are defined in Section~\ref{sec:globaloperators}. Section~\ref{sec:naturalinclusion} proves that $T$-functions are naturally included among $T_1$-functions, whence among $T_\sigma$ functions. Section~\ref{sec:tildeTlaplacians} computes the effect on $T$-functions of $\debar_{T_1}$ and of $\Delta_{T_\sigma}$. In particular, it shows that both $\debar_{T_1}$ and $\Delta_{T_\sigma}$ map $T$-functions into $T$-functions. Section~\ref{sec:iteratesharmonic} computes, for any $f$ in the kernel of $\Delta_T$ and any $n\in\nn$, the value of $\Delta_{T_1}^nf$. In case $t_1-t_0=2n_1+1$, it turns out that $\Delta_{T_1}^nf\equiv0$ for all $n>n_1$. Section~\ref{sec:FueterSceTfunctions} proves three important results. The first one unifies the results of~\cite{fueter1,sce,xsfuetersce} and generalizes them to algebras other than $C\ell(0,N)$ and to vector spaces $V$ other than the paravector space $\rr^{N+1}$:

\begin{theorem}
Assume $t_1-t_0$ to be an odd natural number $2n_1+1$. For every strongly $T$-regular function $f$, the function $\Delta_{T_1}^{n_1}f$ is a strongly $T_1$-regular function and still a $T$-function.
\end{theorem}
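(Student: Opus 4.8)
The plan is to combine three facts already established in the earlier sections into a short argument. First, by Section~\ref{sec:naturalinclusion}, any $T$-function is in particular a $T_1$-function, and the ingredients for the Fueter-Sce construction are the decomposition $\Delta_T=\Delta_{T_1}\circ\Delta_{(t_0,t_1)}$ of the $T$-Laplacian into the ``first block'' Laplacian $\Delta_{(t_0,t_1)}$ (acting in the $t_0,\dots,t_1$ directions) and the ``tail'' Laplacian $\Delta_{T_1}$, together with the commutation/stability results of Section~\ref{sec:tildeTlaplacians}. So I would first record that a strongly $T$-regular $f$ lies in $\ker\Delta_T$ and is a $T$-function, hence a $T_1$-function; and that by Section~\ref{sec:tildeTlaplacians} the operator $\Delta_{T_1}$ maps $T$-functions to $T$-functions, so $\Delta_{T_1}^{n_1}f$ is automatically still a $T$-function. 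That disposes of the second assertion of the theorem for free, and it remains only to prove that $\Delta_{T_1}^{n_1}f$ is strongly $T_1$-regular.

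Next I would unwind what ``strongly $T_1$-regular'' means: it should be equivalent to being a $T_1$-function that additionally lies in $\ker\debar_{T_1}$ (this is the strong form, parallel to how strong $T$-regularity splits into being a $T$-function plus a differential condition). We already know $\Delta_{T_1}^{n_1}f$ is a $T$-function, hence a $T_1$-function, so the only thing to check is $\debar_{T_1}\bigl(\Delta_{T_1}^{n_1}f\bigr)=0$. Here the key input is Section~\ref{sec:iteratesharmonic}: for $f\in\ker\Delta_T$ it computes $\Delta_{T_1}^nf$ explicitly, and in the case $t_1-t_0=2n_1+1$ it yields $\Delta_{T_1}^nf\equiv0$ for all $n>n_1$. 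Combined with the factorization $\Delta_{T_1}=\partial_{T_1}\debar_{T_1}=\debar_{T_1}\partial_{T_1}$ (the $T_1$-analogue of $\Delta=\partial\debar$), one gets $\Delta_{T_1}^{n_1+1}f=\Delta_{T_1}\bigl(\Delta_{T_1}^{n_1}f\bigr)=\partial_{T_1}\debar_{T_1}\bigl(\Delta_{T_1}^{n_1}f\bigr)\equiv0$. To conclude $\debar_{T_1}\bigl(\Delta_{T_1}^{n_1}f\bigr)=0$ rather than merely $\partial_{T_1}\debar_{T_1}(\cdots)=0$, I would use that $\debar_{T_1}\bigl(\Delta_{T_1}^{n_1}f\bigr)$ is itself a $T_1$-function (because $\debar_{T_1}$ maps $T$-functions to $T$-functions by Section~\ref{sec:tildeTlaplacians}, or directly because $\debar_{T_1}$ preserves the relevant axial symmetry), and invoke an injectivity property of $\partial_{T_1}$ (equivalently of $\Delta_{T_1}$) on the space of $T_1$-functions vanishing at infinity or with suitable decay — or, more cleanly, the fact that a $T_1$-function annihilated by $\Delta_{T_1}$ and by $\debar_{T_1}$-image considerations forces the $\debar_{T_1}$-primitive data to vanish.

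The main obstacle, and the step I would spend the most care on, is precisely this last passage from $\Delta_{T_1}^{n_1+1}f\equiv 0$ to $\debar_{T_1}\bigl(\Delta_{T_1}^{n_1}f\bigr)\equiv 0$: naively $\Delta_{T_1}=\partial_{T_1}\debar_{T_1}$ only tells us $\debar_{T_1}\bigl(\Delta_{T_1}^{n_1}f\bigr)$ is $\partial_{T_1}$-harmonic, not zero. I expect the resolution to come from the structure theory of $T_1$-functions: a $T_1$-function is determined by a ``stem'' function on a lower-dimensional parameter domain, and on that stem level $\debar_{T_1}$ and $\partial_{T_1}$ act (up to the symmetry) like conjugate Cauchy-Riemann-type operators in the remaining $t_1,\dots,t_\tau=N$ variables, where one block has odd dimension $t_1-t_0$... actually the relevant parity is that of $N-t_1$ within the $T_1$ theory, but here the vanishing is driven purely by the $t_1-t_0=2n_1+1$ input through the Almansi-type expansion of Section~\ref{sec:iteratesharmonic}. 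I would therefore reduce to the stem, where $\Delta_{T_1}^{n_1}f$ has an explicit polynomial-in-the-first-block-radial-variable form coming from that expansion, and check by direct substitution that $\debar_{T_1}$ kills it — the oddness $2n_1+1$ being exactly what makes the top-degree term of the Almansi expansion $\debar_{T_1}$-closed. The remaining verifications (that $\Delta_{T_1}$ and $\debar_{T_1}$ genuinely map $T$-functions to $T$-functions, and the precise equivalence ``strongly $T_1$-regular'' $\Leftrightarrow$ ``$T_1$-function in $\ker\debar_{T_1}$'') are all quoted from Sections~\ref{sec:tildeTlaplacians}--\ref{sec:iteratesharmonic} and require no new work.
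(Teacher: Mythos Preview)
Your plan has two concrete problems.

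First, the ``decomposition $\Delta_T=\Delta_{T_1}\circ\Delta_{(t_0,t_1)}$'' is not how these operators relate. Theorem~\ref{thm:varietyoflaplacians} gives $\Delta_{T_1}f=\Delta_Tf+g$ for an explicit additive correction $g$, not a factorization; so that part of your setup rests on a misconception (though it is not actually needed for the proof).

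Second, and more seriously, the passage from $\Delta_{T_1}^{n_1+1}f\equiv0$ to $\debar_{T_1}\bigl(\Delta_{T_1}^{n_1}f\bigr)\equiv0$ via ``injectivity of $\partial_{T_1}$'' does not work: $\partial_{T_1}$ is not injective on $T_1$-functions (any anti-$T_1$-regular function is in its kernel), and there is no decay hypothesis here to rescue it. The paper does \emph{not} use the $(n_1{+}1)$-st power vanishing for the regularity claim at all. Instead it proceeds by direct computation of $\debar_{\widetilde{T}}f^{[n_1]}$: Theorem~\ref{thm:debartilde} gives $\debar_{\widetilde{T}}f^{[n_1]}=\debar_Tf^{[n_1]}-2n_1g$ with $g$ explicitly expressed in terms of the components $F^{[n_1]}_K$; then the explicit Bessel-coefficient formula~\eqref{eq:F[n]} for $F^{[n_1]}$ is substituted, the hypothesis $\debar_TF\equiv0$ is used to replace the $\alpha$- and $\beta_{h\geq2}$-derivatives by $\partial_{\beta_1}$-derivatives, and the resulting sums cancel thanks to the recursions in Lemma~\ref{lem:besselcoefficients}. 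Your fallback (``reduce to the stem and check by direct substitution'') is the right instinct, but you are missing the key tool---the $\debar_{\widetilde{T}}$ versus $\debar_T$ identity of Theorem~\ref{thm:debartilde}---and the actual cancellation argument, which is where the oddness $t_1-t_0=2n_1+1$ enters (as the factor $1+t_0-t_1=-2n_1$).
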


The second result we wish to highlight is the main theorem:

\begin{theorem}
Assume there exist $n_1,\ldots,n_\sigma\in\nn$ such that $t_h-t_{h-1}=2n_h+1$ for every $h\in\{1,\ldots,\sigma\}$. For every strongly $T$-regular function $f$, the function
\[\Delta_{T_\sigma}^{n_\sigma}\ldots\Delta_{T_2}^{n_2}\Delta_{T_1}^{n_1}f\]
is a strongly $T_\sigma$-regular function and still a $T$-function.
\end{theorem}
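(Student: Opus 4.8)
The plan is to obtain the statement by a straightforward induction on the number of odd steps being collapsed, using as the only engine the first theorem above, which handles the collapse of a single odd step: if the first step $t_1-t_0$ of an admissible list equals $2n_1+1$, then $\Delta_{T_1}^{n_1}$ turns a strongly $T$-regular function into one that is strongly $T_1$-regular and still a $T$-function. I would set $g_0:=f$ and, for $h\in\{1,\ldots,\sigma\}$, define recursively $g_h:=\Delta_{T_h}^{n_h}g_{h-1}$, so that $g_\sigma$ is exactly $\Delta_{T_\sigma}^{n_\sigma}\cdots\Delta_{T_1}^{n_1}f$. I would then prove by induction on $h$ the combined claim $P(h)$: \emph{$g_h$ is strongly $T_h$-regular and still a $T$-function}. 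The base case $P(0)$ is immediate, since $g_0=f$ is strongly $T_0=T$-regular by hypothesis and every strongly $T$-regular function is in particular a $T$-function.

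For the inductive step, assume $P(h-1)$ with $1\leq h\leq\sigma$. The point is that the truncated list $T_{h-1}=(t_{h-1},t_h,\ldots,t_\tau)$ is again an admissible list of steps, its first step is $t_h-t_{h-1}=2n_h+1$ (odd, by hypothesis, since $h\leq\sigma$), and $(T_{h-1})_1=T_h$. Hence the one-step theorem, applied with $T_{h-1}$ in place of $T$ to the strongly $T_{h-1}$-regular function $g_{h-1}$, yields at once that $g_h=\Delta_{T_h}^{n_h}g_{h-1}$ is strongly $T_h$-regular. It remains to check that $g_h$ is still a $T$-function. By the inductive hypothesis $g_{h-1}$ is a $T$-function, and Section~\ref{sec:tildeTlaplacians} shows that $\Delta_{T_h}$ maps $T$-functions into $T$-functions (the invariance statement there being valid for every index in $\{1,\ldots,\tau\}$, hence for $h$); iterating $n_h$ times, $g_h$ is a $T$-function. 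This establishes $P(h)$, and $P(\sigma)$ is the assertion of the theorem.

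I do not expect a serious obstacle: once the first theorem and the preparatory material of Sections~\ref{sec:naturalinclusion} and~\ref{sec:tildeTlaplacians} are in place, the argument is essentially formal. The only delicate bookkeeping is to keep two distinct levels of structure separate along the chain. First, the \emph{strong regularity}, whose reference list shrinks at each stage from $T_{h-1}$ to $T_h$; this is what makes the hypothesis of the next application of the one-step theorem genuinely available, and it works precisely because that theorem is designed to return strong $T_1$-regularity, not merely strong $T$-regularity. Second, the property of \emph{being a $T$-function}, whose reference list is always the original $T$; this is preserved because the relevant invariance in Section~\ref{sec:tildeTlaplacians} holds for each of the operators $\Delta_{T_1},\ldots,\Delta_{T_\sigma}$, not only for $\Delta_{T_1}$. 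Everything else is contained in the results quoted above; in particular, the heavy computations sit in the one-step theorem and in Section~\ref{sec:iteratesharmonic}, not in this argument.
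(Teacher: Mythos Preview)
Your proposal is correct and follows essentially the same inductive argument as the paper's proof of Theorem~\ref{cor:fuetersce}: induct on the number of collapsed steps, apply Theorem~\ref{thm:fuetersce} at each stage to upgrade strong $T_{h-1}$-regularity to strong $T_h$-regularity, and apply Theorem~\ref{thm:varietyoflaplacians} iteratively to preserve the $T$-function property. One small precision worth adding: Theorem~\ref{thm:varietyoflaplacians} is stated for $\slice_T^p$ with $p\in\{\infty,\omega\}$, so your inductive invariant should track $g_h\in\slice_T^\omega(\Omega_D,A)$ rather than merely $g_h\in\slice_T(\Omega_D,A)$; this is automatic since strong $T_{h-1}$-regularity gives real analyticity and Proposition~\ref{prop:analyticTfunction} then places $g_{h-1}$ in $\slice_T^\omega$, but the paper makes this explicit.
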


\begin{corollary}
If $t_h-t_{h-1}$ is an odd natural number $2n_h+1$ for every $h\in\{1,\ldots,\tau\}$, then $\Delta_{T_\tau}^{n_\tau}\ldots\Delta_{T_2}^{n_2}\Delta_{T_1}^{n_1}f$ is a strongly $(N)$-regular function that is still a $T$-function. In case the codomain of $f$ is the Clifford algebra $C\ell(0,N)$ and the domain is the paravector space $\rr^{N+1}$, then $\Delta_{T_\tau}^{n_\tau}\ldots\Delta_{T_2}^{n_2}\Delta_{T_1}^{n_1}f$ is a monogenic function $\rr^{N+1}\to C\ell(0,N)$ with $\tau$-axial symmetry.
\end{corollary}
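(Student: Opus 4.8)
The plan is to derive the Corollary from the Main Theorem (Theorem~1.2) by iterating it all the way up the list of steps. Since the hypothesis assumes $t_h-t_{h-1}=2n_h+1$ is odd for \emph{every} $h\in\{1,\ldots,\tau\}$, we may apply the Main Theorem with $\sigma=\tau$: for every strongly $T$-regular $f$, the iterated Laplacian $g:=\Delta_{T_\tau}^{n_\tau}\cdots\Delta_{T_2}^{n_2}\Delta_{T_1}^{n_1}f$ is a strongly $T_\tau$-regular function and still a $T$-function. Here $T_\tau=(t_\tau)=(N)$, so ``strongly $T_\tau$-regular'' is precisely ``strongly $(N)$-regular'', which is the first claim. (One should check that the degenerate case $\sigma=\tau$ is genuinely covered, i.e.\ that the composition in the Main Theorem reduces correctly and that no step is vacuous; this is immediate from the definitions since each $n_h\in\nn$ is a well-defined nonnegative integer.)

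For the second half, I would specialize the algebra and the subspace: take $A=C\ell(0,N)$, take $V=\rr^{N+1}$ to be the paravector subspace, and let $T$ be as given. I would then invoke two facts established earlier in the paper. First, by the preliminaries of Section~\ref{sec:preliminaries} together with the discussion in the introduction, in this Clifford/paravector setting the notion of $(N)$-regularity coincides with classical monogenicity (the Dirac operator $\debar_{(N)}$ is the usual Dirac operator on $\rr^{N+1}$), so $g$ is a monogenic function $\rr^{N+1}\to C\ell(0,N)$. Second, being a $T$-function is, by definition, equivalent to having the multi-axial symmetry described in the introduction --- ``$\tau$-axial symmetry'' in the sense subsuming Eelbode's biaxial symmetry and the axial symmetry of Fueter--Sce. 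Combining these, $g$ is a monogenic function with $\tau$-axial symmetry, which is the second claim.

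The only real content beyond bookkeeping is making sure the two specialization statements are exactly the ones proved in the earlier sections: namely that $\debar_{(N)}$ on $\rr^{N+1}\subset C\ell(0,N)$ is the Dirac operator whose kernel is the monogenic functions, and that ``$T$-function'' unwinds to precisely the stated symmetry. Both should be recorded (or be immediate consequences of definitions) in Sections~\ref{sec:preliminaries}--\ref{sec:globaloperators}. I expect the main --- indeed only --- obstacle to be purely expository: confirming that the terminology ``$\tau$-axial symmetry'' has been pinned down rigorously enough that the corollary's phrasing is justified, rather than any mathematical difficulty, since all the analytic work is already contained in Theorem~1.2.
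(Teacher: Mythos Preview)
Your proposal is correct and matches the paper's treatment exactly: the paper states the corollary as an ``immediate consequence'' of Theorem~\ref{cor:fuetersce} (the Main Theorem) without giving a separate proof, and your unpacking---apply the Main Theorem with $\sigma=\tau$, then invoke the identification of $(N)$-regularity with monogenicity in the paravector case and of $T$-function with $\tau$-axial symmetry---is precisely the intended reading.
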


All of the aforementioned results stated are tested  on explicit polynomial examples. The concluding remarks in Section~\ref{sec:conclusions} conjecture that it is also possible, following Qian's approach of~\cite{qian1997}, to drop the oddness hypotheses in the aforementioned theorems and corollary.

%%%%%%%%%%%%%%%%%%%%%%%

\section{Preliminaries}\label{sec:preliminaries}

Recall that a real $*$-algebra of finite dimension is a finite-dimensional $\rr$-vector space, equipped with an $\rr$-bilinear multiplication and with a $*$-involution, i.e., an involutive $\rr$-linear antiautomorphism $x\mapsto x^c$.

\begin{assumption}
We fix an associative real $*$-algebra $(A,+,\cdot,^c)$ of finite dimension $d$. Moreover, we endow $A$ and all its real vector subspaces with the natural topology and differential structure as a real vector space.
\end{assumption}

Set $\nn^*:=\nn\setminus\{0\}$. For any $m\in\nn^*$, let $\mathscr{P}(m)$ denote the power set of $\{1,\ldots,m\}$. Furthermore: for all $K\in\mathscr{P}(m)$, let $|K|$ denote the cardinality of $K$.

\begin{examples}[Clifford algebras]
An associative $*$-algebra $C\ell(p,q)$, called a Clifford algebra, is constructed on the real vector space $\rr^{2^{m}}$ with $m=p+q$ by adding a multiplication and a $*$-involution, called Clifford conjugation, along the following lines:
\begin{itemize}
\item $(e_K)_{K\in\mathscr{P}(m)}$ denotes the standard basis of $\rr^{2^{m}}$; if $K=\{k_1,\ldots,k_s\}$ with $1\leq k_1<\ldots<k_s\leq m$, then the element $e_K$ is also denoted as $e_{k_1\ldots k_s}$;
\item $e_\emptyset$ is defined to be the neutral element and also denoted as $1$;
\item $e_k^2:=1$ for all $k\in\{1,\ldots,p\}$ and $e_k^2:=-1$ for all $k\in\{p+1,\ldots,m\}$;
\item if $1\leq k_1<\ldots<k_s\leq m$, then the product $e_{k_1}\cdots e_{k_s}$ is defined to be $e_{k_1\ldots k_s}$;
\item $e_he_k = -e_ke_h$ for all distinct $h,k\in\{1,\ldots,m\}$;
\item $e_K^c:=e_K$ if $|K|\equiv0,3 \mod 4$ and $e_K^c:=-e_K$ if $|K|\equiv1,2 \mod 4$.
\end{itemize}
In particular, $C\ell(0,1)$ is the field of complex numbers $\cc$ endowed with its standard conjugation $z\mapsto\bar{z}$, if we denote $e_1$ by $i$. $C\ell(0,2)$ is the skew field of quaternions $\hh$ endowed with its standard conjugation $q\mapsto\bar{q}$, if we denote $e_1,e_2,e_{12}$ by $i,j,k$, respectively. 
\end{examples}

For more details on these examples and their history, we refer the reader to~\cite{ebbinghaus,librogurlebeck2}. On $A$, we use the notations $t(x):=x+x^c$ and $n(x):=xx^c$ for all $x\in A$. The elements of
\[\s_A:=\{x\in A: t(x)=0,n(x)=1\}\]
are called the \emph{imaginary units} of $A$.

\begin{assumption}
We take the assumption $\s_A\neq\emptyset$.
\end{assumption}

The \emph{quadratic cone} of $A$,
\[Q_A:=\rr\cup\{x\in A\setminus\rr:t(x)\in\rr,n(x)\in\rr,4n(x)>t(x)^2\}\]
was defined in~\cite{perotti}, which also proved that
\[Q_A=\bigcup_{J\in\s_A}\cc_J\,,\]
where $\cc_J:=\rr+J\rr$ for all $J\in\s_A$. Now, $\cc_J$ is $*$-isomorphic to $\cc$. Thus, for any $x=\alpha+\beta J\in Q_A$ (with $\alpha,\beta\in\rr,J\in\s_A$): the conjugate $x^c=\alpha-\beta J$ belongs to $\cc_J\subset Q_A$; $t(x)=2\alpha\in\rr$; $n(x)=n(x^c)=\alpha^2+\beta^2$ is a positive real number; provided $x\neq0$, the element $x$ has a multiplicative inverse, namely $x^{-1}=n(x)^{-1}x^c=x^cn(x)^{-1}$, which still belongs to $Q_A$. In particular, $x\in Q_A\setminus\{0\}$ is neither a left nor a right zero divisor. Our previous assumption $\s_A\neq\emptyset$ guarantees that $\rr\subsetneq Q_A$. The following definition was given in~\cite[\S3]{perotticr} (see also~\cite[Lemma 1.4]{volumeintegral}).

\begin{definition}
Let $M$ be a real vector subspace of $A$. A \emph{hypercomplex basis} of $M$ is an ordered real vector basis $(v_0,v_1,\ldots,v_m)$ of $M$ such that: $m\geq1$; $v_0=1$; $v_s\in\s_A$ and $v_sv_t=-v_tv_s$ for all distinct $s,t\in\{1,\ldots,m\}$. If $\rr\subsetneq M\subseteq Q_A$, then $M$ is called a \emph{hypercomplex subspace} of $A$. If $M$ is a hypercomplex subspace of $A$, a \emph{domain} $G$ in $M$ is a nonempty connected open subset $G$ of $M$.
\end{definition}

For every ordered real vector basis $\B'=(v_0,v_1,\ldots,v_d)$ of $A$ with $v_0=1$, our $*$-algebra $A$ can be endowed with the standard Euclidean scalar product $\langle\cdot,\cdot\rangle=\langle\cdot,\cdot\rangle_{\B'}$ and norm $\Vert\cdot\Vert=\Vert\cdot\Vert_{\B'}$ associated to $\B'$, i.e., with the Hilbert space structure that makes
\[L_{\B'}:\rr^{d+1}\to A\,,\quad L_{\B'}(x_0,\ldots,x_d)=\sum_{s=0}^dx_s\,v_s=\sum_{s=0}^dv_s\,x_s\]
a Hilbert space isomorphism. We recall some properties and some examples from~\cite[\S3]{perotticr}, from~\cite{unifiednotion} and from~\cite{unifiedtheory}.

\begin{theorem}\label{thm:hypercomplexbasis}
Fix a real vector subspace $M$ of $A$. It is a hypercomplex subspace of $A$ if, and only if, it admits a hypercomplex basis $\B=(v_0,v_1,\ldots,v_m)$. If this is the case, if we complete $\B$ to a real vector basis $\B'=(v_0,v_1,\ldots,v_m,v_{m+1},\ldots,v_d)$ of $A$ and if we endow $A$ with $\langle\cdot,\cdot\rangle=\langle\cdot,\cdot\rangle_{\B'}$ and $\Vert\cdot\Vert=\Vert\cdot\Vert_{\B'}$, then
\begin{align}
&t(xy^c)=t(yx^c)=2\langle x,y\rangle\,,\label{eq:cliffordscalarproduct}\\
&n(x)=n(x^c)=\Vert x\Vert^2\,,\label{eq:cliffordnorm}
\end{align}
for all $x,y\in M$. As a consequence, the intersection $\s_A\cap M$ is a compact set: namely, the unit $(m-1)$-sphere centered at the origin in $\Span(v_1,\ldots,v_m)$, with respect to the norm $\Vert\cdot\Vert$.
\end{theorem}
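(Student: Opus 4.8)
The plan is to establish the equivalence first, then extract the two displayed identities as a by-product of the existence of a hypercomplex basis, and finally read off the assertion about $\s_A\cap M$. For the implication ``hypercomplex basis $\Rightarrow$ hypercomplex subspace'', I would take an arbitrary $x=\sum_{s=0}^m x_s v_s\in M$ and compute directly: since $v_0=1$, while $v_s^c=-v_s$ and $v_s^2=-1$ for $s\ge1$ (both consequences of $v_s\in\s_A$), and $v_sv_t=-v_tv_s$ for distinct $s,t\ge1$, one gets $x^c=x_0-\sum_{s\ge1}x_sv_s$, hence $t(x)=2x_0\in\rr$ and $n(x)=xx^c=\sum_{s=0}^m x_s^2\in\rr$, with $4n(x)-t(x)^2=4\sum_{s\ge1}x_s^2$; the latter is $>0$ exactly when $x\notin\rr$, so $x\in Q_A$ and therefore $M\subseteq Q_A$, while $1=v_0\in M$ and $\dim M=m+1\ge2$ give $\rr\subsetneq M$. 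Once $\B$ is completed to an orthonormal basis $\B'$, the same computation reads $n(x)=\sum_{s=0}^m x_s^2=\Vert x\Vert^2$ for $x\in M$, and since $x^c\in M$ has the same coordinates up to sign, $n(x^c)=\Vert x^c\Vert^2=\Vert x\Vert^2$; the remaining identity follows by polarization, since $(xy^c)^c=yx^c$ yields $n(x+y)=n(x)+n(y)+t(xy^c)$, which compared with $\Vert x+y\Vert^2=\Vert x\Vert^2+\Vert y\Vert^2+2\langle x,y\rangle$ gives $t(xy^c)=2\langle x,y\rangle$, while $t(yx^c)=t((xy^c)^c)=t(xy^c)$.

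The substantial direction is ``hypercomplex subspace $\Rightarrow$ hypercomplex basis'', and this is where I expect the only real work. I would put $v_0:=1\in\rr\subseteq M$ and let $M_0:=\{u\in M:t(u)=0\}$ be the kernel of the $\rr$-linear functional $t|_M$, which is surjective because $t(1)=2$; then $M=\rr v_0\oplus M_0$ and $m:=\dim M_0=\dim M-1\ge1$. A nonzero $u\in M_0$ lies in $Q_A\setminus\rr$ with $t(u)=0$, so $4n(u)>0$, i.e.\ $n(u)>0$; moreover each of $u$, $w$, $u+w$ lies in some $\cc_J$ and, being $t$-null, squares to a non-positive real, so $uw+wu=(u+w)^2-u^2-w^2$ is real for all $u,w\in M_0$. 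Hence $B(u,w):=-\tfrac12\,t(uw)=-\tfrac12(uw+wu)$ is a well-defined symmetric $\rr$-bilinear form on $M_0$ with $B(u,u)=-u^2=n(u)>0$ for $u\ne0$, that is, $B$ is positive definite. Choosing a $B$-orthonormal basis $v_1,\dots,v_m$ of $M_0$ by Gram--Schmidt, one gets $n(v_s)=B(v_s,v_s)=1$ and $t(v_s)=0$, so $v_s\in\s_A$, and $B(v_s,v_t)=0$ for $s\ne t$, so $t(v_sv_t)=0$, i.e.\ $v_sv_t=-v_tv_s$; thus $(v_0,v_1,\dots,v_m)$ is a hypercomplex basis of $M$.

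It then remains to identify $\s_A\cap M$: for $x\in M$ one has $x\in\s_A$ iff $t(x)=0$ and $n(x)=1$, which (using $M_0=\Span(v_1,\dots,v_m)$ and $n(x)=\Vert x\Vert^2$) is equivalent to $x\in\Span(v_1,\dots,v_m)$ and $\Vert x\Vert=1$; this set is exactly the unit $(m-1)$-sphere centered at the origin of the $m$-dimensional subspace $\Span(v_1,\dots,v_m)$, and it is compact, being closed and bounded in a finite-dimensional space. The main obstacle is pinpointing the correct inner product on $M_0$ — namely $B(u,w)=-\tfrac12\,t(uw)$ — and proving its positive definiteness from the defining inequality of the quadratic cone $Q_A$; everything else is polarization and routine linear algebra.
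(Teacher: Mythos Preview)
The paper does not actually prove this theorem: it is stated as a recalled result, attributed to \cite[\S3]{perotticr}, \cite{unifiednotion}, and \cite{unifiedtheory}, with no argument given in the present paper. So there is no in-paper proof to compare against.

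Your argument is correct and is essentially the standard one. Both directions go through cleanly: for ``basis $\Rightarrow$ subspace'' the direct expansion of $xx^c$ using $v_s^2=-1$ and the anticommutation relations gives $n(x)=\sum_s x_s^2$, from which both the inclusion $M\subseteq Q_A$ and the identities~\eqref{eq:cliffordscalarproduct},~\eqref{eq:cliffordnorm} follow; for ``subspace $\Rightarrow$ basis'' the key observation is exactly the one you isolate, namely that for $u\in M_0=\ker(t|_M)$ one has $u^c=-u$ and hence $u^2=-n(u)\in\rr_{\le 0}$, so that $B(u,w)=-\tfrac12(uw+wu)$ is a genuine positive-definite inner product on $M_0$ and a $B$-orthonormal basis is automatically a hypercomplex basis. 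The identification of $\s_A\cap M$ with the unit sphere in $M_0$ is then immediate from $t(x)=2x_0$ and $n(x)=\Vert x\Vert^2$. One small stylistic point: you can bypass the appeal to $Q_A=\bigcup_J\cc_J$ when showing $u^2\in\rr$ for $u\in M_0$, since $u^c=-u$ already gives $u^2=-uu^c=-n(u)$ directly from the definition of the quadratic cone.
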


\begin{example}[Paravectors]\label{ex:paravectors}
The space of paravectors $\rr^{m+1}$ is a hypercomplex subspace of $C\ell(0,m)$, with hypercomplex basis $\B=(e_\emptyset,e_1,\ldots,e_m)$. We complete $\B$ to the standard basis $\B'=(e_K)_{K\in\mathscr{P}(m)}$ of $C\ell(0,m)$. Equalities~\eqref{eq:cliffordscalarproduct},~\eqref{eq:cliffordnorm} and
\begin{equation}\label{eq:cliffordmultiplicativenorm}
\Vert ax\Vert=\Vert a\Vert\,\Vert x\Vert=\Vert x\Vert\,\Vert a\Vert=\Vert xa\Vert\,,
\end{equation}
hold true for all $a\in C\ell(0,m),x,y\in\rr^{m+1}$.

On the other hand, for $m\geq3$, the norm $\Vert\cdot\Vert$ is not multiplicative over general elements of $C\ell(0,m)$. For instance: the elements $a=1+e_{123}$ and $b=1-e_{123}$ have $ab=0$, whence $\Vert ab\Vert=0\neq2=\Vert a\Vert\Vert b\Vert$.
\end{example}

\begin{examples}\label{ex:svectors}
For any $h\in\{1,\ldots,m\}$ with $h\equiv1\,\mathrm{mod}\,4$,
\[V_h:=\left\{x_0+\sum_{1\leq k_1<\ldots<k_h\leq m}x_{k_1\ldots k_h}e_{k_1\ldots k_h} : x_0, x_{k_1\ldots k_h}\in\rr\right\}\]
is a hypercomplex subspace of $C\ell(0,m)$. It has hypercomplex basis $\B=(e_{k_1\ldots k_h})_{1\leq k_1<\ldots<k_h\leq m}$, whence $\dim V_h=\binom{m}{h}+1\geq\left(\frac{m}{h}\right)^h+1$. If we set $h(m):=4\lfloor \frac{m+2}8\rfloor+1$ (whence $\frac{m}2-2<h(m)\leq\frac{m}2+2$), then $\dim V_{h(m)}$ grows exponentially with $m$. Again, we can complete $\B$ to $\B'=(e_K)_{K\in\mathscr{P}(m)}$ of $C\ell(0,m)$. Equalities~\eqref{eq:cliffordscalarproduct},~\eqref{eq:cliffordnorm} and~\eqref{eq:cliffordmultiplicativenorm} hold true for all $a\in C\ell(0,m),x,y\in V_h$. In the special case $h=1=m$, we find that $\cc=C\ell(0,1)$ is a hypercomplex subspace of itself with $\B=\B'=\{1,i\}$. We also recover the well-known fact that Euclidean norm on $\cc$ is multiplicative.

\end{examples}

\begin{example}\label{ex:2mod4}
For every $h\leq m$ with $h\equiv2\,\mathrm{mod}\,4$, the set $W_h=\Span(e_\emptyset,e_1,e_2,\ldots,e_h,e_{12\ldots h})$ is a hypercomplex subspace of $C\ell(0,m)$ that properly includes the space of paravectors. Once more, we can complete $\B=(e_\emptyset,e_1,e_2,\ldots,e_h,e_{12\ldots h})$ to $\B'=(e_K)_{K\in\mathscr{P}(m)}$ of $C\ell(0,m)$. Equalities~\eqref{eq:cliffordscalarproduct},~\eqref{eq:cliffordnorm} and~\eqref{eq:cliffordmultiplicativenorm} hold true for all $a\in C\ell(0,m),x,y\in W_h$. In the special case $h=2=m$, we find that $\hh=C\ell(0,2)$ is a hypercomplex subspace of itself with $\B=\B'=\{1,i,j,k\}$. We also recover the well-known fact that Euclidean norm on $\hh$ is multiplicative.
\end{example}

We henceforth make the following assumption.

\begin{assumption}\label{ass:associative}
We assume $V$ to be a hypercomplex subspace of the associative real $*$-algebra $A$, with a hypercomplex basis $\B=(v_0,v_1,\ldots,v_N)$ for some $N\in\nn^*$. We complete $\B$ to a real vector basis $\B'=(v_0,v_1,\ldots,v_N,v_{N+1},\ldots,v_d)$ of $A$ and endow $A$ with the standard Euclidean scalar product $\langle\cdot,\cdot\rangle=\langle\cdot,\cdot\rangle_{\B'}$ and norm $\Vert\cdot\Vert=\Vert\cdot\Vert_{\B'}$ associated to $\B'$.
\end{assumption}

In~\cite{unifiednotion} we gave the following definitions, within the hypercomplex subspace $V$.

\begin{definition}
For $0\leq\ell<m\leq N$, we set $\rr_{\ell,m}:=\Span(v_\ell,\ldots,v_m)$. The unit $(m-\ell)$-sphere within the $(m-\ell+1)$-dimensional subspace $\rr_{\ell,m}$ is denoted by $\s_{\ell,m}$. For any \emph{number of steps} $\tau\in\{0,\ldots,N\}$ and any \emph{list of steps} $T=(t_0,\ldots,t_\tau)\in\nn^{\tau+1}$, with $0\leq t_0<t_1<\ldots<t_\tau=N$, the \emph{$T$-fan} is defined as
\[\rr_{0,t_0}\subsetneq\rr_{0,t_1}\subsetneq\ldots\subsetneq\rr_{0,t_\tau}=V\,.\]
The first subspace, $\rr_{0,t_0}$, is called the \emph{mirror}. The \emph{$T$-torus} is defined as
\[\torus:=\s_{t_0+1,t_1}\times\ldots\times\s_{t_{\tau-1}+1,t_\tau}\]
when $\tau\geq1$ and as $\torus:=\emptyset$ when $\tau=0$.
\end{definition}

We point out that $N\geq t_0+\tau$ in the previous definition. All elements of the $T$-fan are hypercomplex subspaces of $A$ with the possible exception of the mirror $\rr_{0,t_0}$, which reduces to the real axis $\rr$ if $t_0=0$. If $\tau\geq1$ then, for every $h\in\{1,\ldots,\tau\}$, the sphere $\s_{t_{h-1}+1,t_h}$ is a $(t_h-t_{h-1}-1)$-dimensional subset of the compact set $\s_A\cap V=\s_{1,N}$ and the $T$-torus $\torus$ is a $(N-t_0-\tau)$-dimensional compact set contained in $(\s_A)^\tau$.

\begin{example}[Paravectors]
If $V$ is the space $\rr^{n+1}$ of paravectors in $C\ell(0,n)$ (see Example~\ref{ex:paravectors}), then the $T$-fan is
\[\rr^{t_0+1}\subsetneq\rr^{t_1+1}\subsetneq\ldots\subsetneq\rr^{t_\tau+1}=\rr^{n+1}\,.\]
\end{example}

\begin{example}[Quaternions]
If $V=\hh$ within $\hh$: the $3$-fan is $\hh$; the $(2,3)$-fan is $\rr+i\rr+j\rr\subsetneq\hh$; the $(1,3)$-fan is $\cc\subsetneq\hh$; the $(0,3)$-fan is $\rr\subsetneq\hh$; the $(1,2,3)$-fan is $\cc\subsetneq\rr+i\rr+j\rr\subsetneq\hh$; the $(0,2,3)$-fan is $\rr\subsetneq\rr+i\rr+j\rr\subsetneq\hh$; the $(0,1,3)$-fan is $\rr\subsetneq\cc\subsetneq\hh$; and the $(0,1,2,3)$-fan is $\rr\subsetneq\cc\subsetneq\rr+i\rr+j\rr\subsetneq\hh$.
\end{example}

We recall some further material from~\cite{unifiednotion}.

\begin{remark}\label{rmk:decomposedvariable}
Any $x=\sum_{\ell=0}^Nv_\ell x_\ell\in V$ decomposes as $x=x^0+x^1+\ldots+x^\tau$, where $x^h:=\sum_{\ell=t_{h-1}+1}^{t_h}x_\ell v_\ell\in\rr_{t_{h-1}+1,t_h}$ (with $t_{-1}:=-1$). The decomposition is orthogonal, whence unique. If $\tau\geq1$, then there exist $\beta=(\beta_1,\ldots,\beta_\tau)\in\rr^\tau$ and $J=(J_1,\ldots,J_\tau)\in\torus$ such that
\begin{equation}\label{eq:decomposedvariable}
x=x^0+\beta_1J_1+\ldots+\beta_\tau J_\tau\,.
\end{equation}
Equality~\eqref{eq:decomposedvariable} holds true exactly when, for every $h\in\{1,\ldots,\tau\}$: either $x^h\neq0,\beta_h=\pm\Vert x^h\Vert$ and $J_h=\frac{x^h}{\beta_h}$; or $x^h=0,\beta_h=0$ and $J_h$ is any element of $\s_{t_{h-1}+1,t_h}$.
\end{remark}

\begin{lemma}
If $\tau\geq1$, fix $J=(J_1,\ldots,J_\tau)\in\torus$ and set
\[\rr^{t_0+\tau+1}_J:=\Span(\B_J)\,,\quad\B_J:=(v_0,v_1,\ldots,v_{t_0},J_1,\ldots,J_\tau)\,.\]
If $\tau=0$ (whence $t_0=N\geq1$), set $J:=\emptyset,\B_\emptyset:=(v_0,v_1,\ldots,v_{t_0})=\B,\rr^{t_0+1}_\emptyset:=\Span(\B_\emptyset)=V$. 
In either case, $\B_J$ is a hypercomplex basis of $\rr^{t_0+\tau+1}_J$, which is therefore a hypercomplex subspace of $A$ contained in $V$. Moreover, if $J'\in\torus$, then the equality $\rr^{t_0+\tau+1}_J=\rr^{t_0+\tau+1}_{J'}$ is equivalent to $J'\in\{\pm J_1\}\times\ldots\times\{\pm J_\tau\}$.
\end{lemma}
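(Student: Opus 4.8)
The plan is to verify directly that $\B_J$ satisfies the three defining conditions of a hypercomplex basis, then invoke Theorem~\ref{thm:hypercomplexbasis} for the claim that $\rr^{t_0+\tau+1}_J$ is a hypercomplex subspace, and finally settle the uniqueness clause by an elementary intersection argument inside $V$. The case $\tau=0$ is trivial: there $\B_\emptyset=\B$ is a hypercomplex basis of $V$ by Assumption~\ref{ass:associative}, and the last assertion is vacuous since $\torus=\emptyset$. So I assume $\tau\geq1$ from here on.

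For the first part, I would begin by checking that $\B_J=(v_0,\ldots,v_{t_0},J_1,\ldots,J_\tau)$ is a real vector basis of $\Span(\B_J)$. Each $J_h$ lies in $\rr_{t_{h-1}+1,t_h}$, and the subspaces $\rr_{0,t_0},\rr_{t_0+1,t_1},\ldots,\rr_{t_{\tau-1}+1,t_\tau}$ are spanned by pairwise disjoint subsets of $\B$, hence are in direct sum; since $v_0,\ldots,v_{t_0}$ are linearly independent and each $J_h$ is a unit vector, $\B_J$ is linearly independent, of length $m+1$ with $m=t_0+\tau\geq1$. Next, $v_0=1$ by hypothesis, $v_1,\ldots,v_{t_0}\in\s_A$ because $\B$ is a hypercomplex basis, and $J_h\in\s_{t_{h-1}+1,t_h}\subseteq\s_{1,N}=\s_A\cap V$ for each $h$. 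For the anticommutation relations, I would write $J_h=\sum_{\ell=t_{h-1}+1}^{t_h}c_\ell v_\ell$; the index set used by $J_h$ is disjoint from $\{1,\ldots,t_0\}$ and, when $h\neq h'$, from the index set used by $J_{h'}$, so $v_sv_t=-v_tv_s$, $v_sJ_h=-J_hv_s$ (for $s\leq t_0$), and $J_hJ_{h'}=-J_{h'}J_h$ all follow by $\rr$-bilinearity from $v_av_b=-v_bv_a$ for $a\neq b$. Hence $\B_J$ is a hypercomplex basis, so $\rr^{t_0+\tau+1}_J$ is a hypercomplex subspace of $A$ by Theorem~\ref{thm:hypercomplexbasis}, and it is contained in $V$ because all of its generators are.

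For the uniqueness clause, the implication $(\Leftarrow)$ is immediate, since flipping the sign of any $J_h$ does not change $\Span(\B_J)$. For $(\Rightarrow)$, put $M:=\rr^{t_0+\tau+1}_J=\rr^{t_0+\tau+1}_{J'}$ and fix $h\in\{1,\ldots,\tau\}$. The crux is the identity $\rr_{t_{h-1}+1,t_h}\cap M=\rr J_h$: writing a generic element of $M$ as $\sum_{s=0}^{t_0}a_sv_s+\sum_{k=1}^\tau b_kJ_k$ and projecting onto the summands of the orthogonal decomposition of $V$ from Remark~\ref{rmk:decomposedvariable}, one finds that such an element lies in $\rr_{t_{h-1}+1,t_h}$ exactly when $a_0=\cdots=a_{t_0}=0$ and $b_k=0$ for all $k\neq h$, i.e. when it equals $b_hJ_h$. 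Since $J'_h\in\rr_{t_{h-1}+1,t_h}\cap M=\rr J_h$ and both $J_h$ and $J'_h$ have norm $1$, we conclude $J'_h=\pm J_h$; as $h$ was arbitrary, $J'\in\{\pm J_1\}\times\cdots\times\{\pm J_\tau\}$.

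The argument is essentially bookkeeping with the block structure of the $T$-fan, and I expect no serious obstacle. The one point that demands a little care is the intersection identity $\rr_{t_{h-1}+1,t_h}\cap M=\rr J_h$, where the orthogonality (equivalently, directness) of the decomposition $V=\rr_{0,t_0}\oplus\rr_{t_0+1,t_1}\oplus\cdots\oplus\rr_{t_{\tau-1}+1,t_\tau}$ from Remark~\ref{rmk:decomposedvariable} is genuinely used in order to compare the original coordinates $v_\ell$ block by block with the rotated coordinates $J_k$.
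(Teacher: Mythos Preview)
Your argument is correct. The paper itself does not supply a proof of this lemma: it is stated in the preliminaries as material recalled from~\cite{unifiednotion}, so there is no in-paper proof to compare against. Your direct verification of the hypercomplex-basis axioms via the block decomposition of $V$, followed by the intersection identity $\rr_{t_{h-1}+1,t_h}\cap M=\rr J_h$ for the uniqueness clause, is exactly the natural way to fill this in, and each step checks out.
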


As remarked in~\cite{unifiedtheory}, the hypercomplex basis $\B_J:=(v_0,v_1,\ldots,v_{t_0},J_1,\ldots,J_\tau)$ of $\rr^{t_0+\tau+1}_J$ can always be completed to a basis $(\B_J)'$ of $A$ that is orthonormal with respect to $\langle\cdot,\cdot\rangle_{\B'}$, so that $\langle\cdot,\cdot\rangle_{(\B_J)'}=\langle\cdot,\cdot\rangle_{\B'}$ and $\Vert\cdot\Vert_{(\B_J)'}=\Vert\cdot\Vert_{\B'}$. In addition to the previously defined $L_{\B'}:\rr^{d+1}\to A$, we will use
\[L_{\B_J}:\rr^{t_0+\tau+1}\to \rr^{t_0+\tau+1}_J\,,\quad L_{\B_J}(x_0,\ldots,x_{t_0+\tau})=\sum_{s=0}^{t_0}x_s\,v_s+\sum_{u=1}^\tau x_{t_0+u}\,J_u,.\]
For every domain $G$ in $\rr^{t_0+\tau+1}_J$ and every $p\in\nn$, the symbol $\mathscr{C}^p(G,A)$ denotes the bilateral $A$-module of $\mathscr{C}^p$ functions $G\to A$. The following definition was given in~\cite{unifiednotion}, following~\cite{perotticr}.

\begin{definition}\label{def:Jmonogenic}
If $\tau\geq1$, fix $J=(J_1,\ldots,J_\tau)\in\torus$. If $\tau=0$, set $J:=\emptyset$. Fix a domain $G$ in $\rr^{t_0+\tau+1}_J$, set $\widehat{G}:=L_{\B_J}^{-1}(G)$ and let $\phi\in\mathscr{C}^1(G,A)$. For $s\in\{0,\ldots,t_0+\tau\}$, we define
\[\partial_s\phi:=L_{\B'}\circ\left(\frac{\partial}{\partial x_s}\left(L_{\B'}^{-1}\circ \phi\circ (L_{\B_J})_{|_{\widehat{G}}}\right)\right)\circ (L_{\B_J}^{-1})_{|_G}\in\mathscr{C}^0(G,A)\,.\]
The \emph{$J$-Cauchy-Riemann operator} $\debar_J:\mathscr{C}^{1}(G,A)\to\mathscr{C}^0(G,A)$ and the operators $\partial_J:\mathscr{C}^{1}(G,A)\to\mathscr{C}^0(G,A)$ and $\Delta_J:\mathscr{C}^{2}(G,A)\to\mathscr{C}^0(G,A)$ are defined as follows:
\begin{align*}
&\debar_J\phi:=\sum_{s=0}^{t_0}v_s\,\partial_s\phi+\sum_{u=1}^\tau J_u\,\partial_{t_0+u}\phi\,,\\
&\partial_J\phi:=\partial_0\phi-\sum_{s=1}^{t_0}v_s\,\partial_s\phi-\sum_{u=1}^\tau J_u\,\partial_{t_0+u}\phi\,,\\
&\Delta_J\phi:=\sum_{s=0}^{t_0+\tau}\partial_s^2\phi\,.
\end{align*}
The right $A$-submodule of those $\phi\in\mathscr{C}^1(G,A)$ such that $\debar_J \phi\equiv0$ is denoted by $\mon_J(G,A)$ and its elements are called \emph{$J$-monogenic} functions. The elements of the kernel of $\Delta_J$ are called \emph{$J$-harmonic} functions.
\end{definition}

According to~\cite{unifiedtheory}, for any $x\in G,s\in\{0,\ldots,t_0\},u\in\{1,\ldots,\tau\}$,
\begin{align*}
&\partial_s\phi(x)=\lim_{\rr\ni \varepsilon\to0}\varepsilon^{-1}\left(\phi(x+\varepsilon v_s)-\phi(x)\right)\,,\\
&\partial_{t_0+u}\phi(x)=\lim_{\rr\ni \varepsilon\to0}\varepsilon^{-1}\left(\phi(x+\varepsilon J_u)-\phi(x)\right)\,.
\end{align*}
As a consequence, the operators $\debar_J,\partial_J,\Delta_J$ do not depend on the whole basis $\B'$ of $A$ chosen, but only on the choice of $J$ in $\torus$. Informally, referring to the decomposition~\eqref{eq:decomposedvariable} of the variable $x$, we have
\begin{align*}
\debar_J&=\partial_{x_0}+v_1\partial_{x_1}+\ldots+v_{t_0}\partial_{x_{t_0}}+J_1\partial_{\beta_1}+\ldots+J_\tau\partial_{\beta_\tau}\,,\\
\partial_J&=\partial_{x_0}-v_1\partial_{x_1}-\ldots-v_{t_0}\partial_{x_{t_0}}-J_1\partial_{\beta_1}-\ldots-J_\tau\partial_{\beta_\tau}\,,\\
\Delta_J&=\partial_{x_0}^2+\partial_{x_1}^2+\ldots+\partial_{x_{t_0}}^2+\partial_{\beta_1}^2+\ldots+\partial_{\beta_\tau}^2\,.
\end{align*}
Using the formal definition of $\debar_J$ is necessary to guarantee that, when $J,J'\in\torus$ are such that $\rr^{t_0+\tau+1}_J=\rr^{t_0+\tau+1}_{J'}$, then $\debar_J=\debar_{J'}$. Similar considerations apply to $\partial_J,\Delta_J$. The equalities $\debar_J\partial_J=\partial_J\debar_J=\Delta_J$ hold true on $\mathscr{C}^{2}(G,A)$. Moreover,~\cite{unifiedtheory}proved that $J$-monogenic functions are $J$-harmonic, whence real analytic. In the special case when $\tau=0$, whence $t_0=N$, our last definition can be rephrased informally as $\debar_\emptyset:=\debar_\B=\partial_{x_0}+v_1\partial_{x_1}+\ldots+v_N\partial_{x_N}$, as well as $\partial_\emptyset:=\partial_\B=\partial_{x_0}-v_1\partial_{x_1}-\ldots-v_N\partial_{x_N}$ and $\Delta_\emptyset:=\Delta_\B=\partial_{x_0}^2+\partial_{x_1}^2+\ldots+\partial_{x_N}^2$.

We now recall the concept of $T$-regular function from~\cite{unifiednotion}. In the special case with $A=C\ell(0,n),V=\rr^{n+1}$ and $\tau=1$, it was independently constructed in~\cite{xsgeneralizedpartialslice} (see also~\cite{xsannouncement}) under the name of \emph{generalized partial-slice monogenic function}.

\begin{definition}
For any $Y\subseteq V,f:Y\to A$ and $J\in\torus$ (or $J=\emptyset$, in case $\tau=0$), the intersection $Y_J:=Y\cap\rr^{t_0+\tau+1}_J$ is called the \emph{$J$-slice} of $Y$ and we set $f_J:=f_{|_{Y_J}}$. Now fix a domain $\Omega$ in $V$. A function $f:\Omega\to A$ is termed \emph{$T$-regular} if the restriction $f_J:\Omega_J\to A$ is $J$-monogenic for every $J\in\torus$, if $\tau\geq1$ (for $J=\emptyset$, if $\tau=0$). It is termed \emph{$T$-harmonic} if $f_J$ is $J$-harmonic for every $J\in\torus$, if $\tau\geq1$ (for $J=\emptyset$, if $\tau=0$). The class of $T$-regular functions $\Omega\to A$ is denoted by $\reg_T(\Omega,A)$.
\end{definition}

The class $\reg_T(\Omega,A)$ is a right $A$-module. Furthermore, if $f\in\reg_T(\Omega,A)$ and $p\in\rr_{0,t_0}$, then setting $g(x):=f(x+p)$ defines a $g\in\reg_T(\Omega-p,A)$. Over $C\ell(0,n)$, $T$-regularity comprises a spectrum of function theories, with the two best-known function theories sitting at the edges of the spectrum.

\begin{example}[Paravectors]
Fix a domain $\Omega$ within the paravector subspace $\rr^{n+1}$ of $C\ell(0,N)$ (see Example~\ref{ex:paravectors}). For any function $f:\Omega\to C\ell(0,N)$:
\begin{itemize}
\item $f$ is $(N)$-regular if, and only if, it is in the kernel of the operator $\partial_{x_0}+e_1\partial_{x_1}+\ldots+e_{N}\partial_{x_N}$; this is the definition of \emph{monogenic} function (see, e.g.,~\cite{librosommen,librocnops,librogurlebeck2});
\item $f$ is $(0,N)$-regular if, and only if, for any $J_1\in\s_{1,N}=\s_{C\ell(0,N)}\cap\rr^{N+1}$, the restriction $f_{J_1}$ to the planar domain $\Omega_{J_1}\subseteq\cc_{J_1}$ is a holomorphic map $(\Omega_{J_1},J_1)\to(C\ell(0,N),J_1)$; this is the same as being \emph{slice-monogenic},~\cite{israel} (or \emph{slice-hyperholomorphic},~\cite{librodaniele2}).
\end{itemize}
\end{example}

Distinct choices of $T$ do not necessarily produce distinct function classes. This general fact is made more precise in the following result from~\cite{unifiedtheory}.

\begin{proposition}\label{prop:shortsteps}
Let $T,T'$ be two lists of steps for $V$. The inclusion $\reg_T(\Omega,A)\subseteq\reg_{T'}(\Omega,A)$ is equivalent to the equality $\reg_T(\Omega,A)=\reg_{T'}(\Omega,A)$ and to the following property: one among the lists $T,T'$ comprises the other, possibly preceded by some steps of the form $(m,m+1)$.
\end{proposition}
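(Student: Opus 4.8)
The plan is to close the chain of equivalences by proving $(\ast)\Rightarrow\reg_T(\Omega,A)=\reg_{T'}(\Omega,A)\Rightarrow\reg_T(\Omega,A)\subseteq\reg_{T'}(\Omega,A)\Rightarrow(\ast)$, where $(\ast)$ denotes the stated combinatorial property; the middle implication is trivial. For the first implication, the \emph{easy direction}, it suffices to show that prepending a single ``trivial'' step does not change the class, i.e.\ that $\reg_{(t_0-1,t_0,t_1,\ldots,t_\tau)}(\Omega,A)=\reg_{(t_0,t_1,\ldots,t_\tau)}(\Omega,A)$ whenever $t_0\ge 1$ (together with the analogous statement when $\tau=0$, the mirror then being all of $V$), since iterating this covers an arbitrary prepended run of consecutive integers, which is exactly what $(\ast)$ permits. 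Writing $T^\sharp:=(t_0-1,t_0,t_1,\ldots,t_\tau)$, its first sphere is the $0$-sphere $\s_{t_0,t_0}=\{v_{t_0},-v_{t_0}\}$, so every element of $\torus_{T^\sharp}$ has the form $J^\sharp=(\varepsilon v_{t_0},J_1,\ldots,J_\tau)$ with $\varepsilon\in\{\pm 1\}$ and $(J_1,\ldots,J_\tau)\in\torus_T$, and this map onto $\torus_T$ is surjective. Since $\Span(v_0,\ldots,v_{t_0-1},\varepsilon v_{t_0})=\rr_{0,t_0}$, the slice $\rr^{t_0+\tau+1}_{J^\sharp}$ coincides with the slice $\rr^{t_0+\tau+1}_{(J_1,\ldots,J_\tau)}$ of the list $T$; and, denoting by $\beta^\sharp_1$ the coordinate along $\varepsilon v_{t_0}$, one has $\beta^\sharp_1=\varepsilon x_{t_0}$, so the sign cancels in $(\varepsilon v_{t_0})\partial_{\beta^\sharp_1}=v_{t_0}\partial_{x_{t_0}}$ and therefore $\debar_{J^\sharp}=\debar_{(J_1,\ldots,J_\tau)}$ as differential operators. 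Hence the two lists impose literally the same slice-wise monogenicity conditions, so they define the same class.

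For the \emph{hard direction}, assume $\reg_T(\Omega,A)\subseteq\reg_{T'}(\Omega,A)$. An $\rr$-linear map $\Lambda\colon V\to A$ whose restriction to $\Omega$ is $T$-regular is $T$-regular on all of $V$ (regularity is local and $\Lambda$ is polynomial), so restricting the inclusion to linear maps gives $\reg_T^{(1)}\subseteq\reg_{T'}^{(1)}$, where $\reg_T^{(1)}\subseteq\operatorname{Hom}_\rr(V,A)$ is the subspace of $T$-regular $\rr$-linear maps. I would then describe $\reg_T^{(1)}$ explicitly. For linear $\Lambda$, on each $J$-slice the function $\debar_J\Lambda$ is the constant $\Lambda(v_0)+\sum_{s=1}^{t_0}v_s\Lambda(v_s)+\sum_{h=1}^{\tau}J_h\Lambda(J_h)$; since the spheres $\s_{t_{h-1}+1,t_h}$ can be varied independently, $\Lambda$ is $T$-regular if and only if (i) $v_m\Lambda(v_m)=v_{m+1}\Lambda(v_{m+1})$ for every $m$ with $t_{h-1}<m<t_h$ for some $h$ (this being exactly the condition that the $A$-valued quadratic form $J_h\mapsto J_h\Lambda(J_h)$ be constant on the unit sphere of $\rr_{t_{h-1}+1,t_h}$), and (ii) $\Lambda(v_0)+\sum_{s=1}^{t_0}v_s\Lambda(v_s)+\sum_{h=1}^{\tau}v_{t_{h-1}+1}\Lambda(v_{t_{h-1}+1})=0$. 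In particular $\dim_\rr\reg_T^{(1)}=(t_0+\tau)\dim_\rr A$, and (i) says precisely that the linear form $\Lambda\mapsto v_m\Lambda(v_m)-v_{m+1}\Lambda(v_{m+1})$ vanishes on $\reg_T^{(1)}$ exactly when $m$ is interior to a $T$-block.

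The inclusion $\reg_T^{(1)}\subseteq\reg_{T'}^{(1)}$ now becomes a combinatorial statement. Each equation (i) defining $\reg_{T'}^{(1)}$ must hold identically on $\reg_T^{(1)}$; since $\dim_\rr A\ge 2$, by the last remark this forces every index interior to a $T'$-block to be interior to a $T$-block, i.e.\ $\{1,\ldots,t_0\}\cup\{t_1,\ldots,t_{\tau-1}\}\subseteq\{1,\ldots,t'_0\}\cup\{t'_1,\ldots,t'_{\tau'-1}\}$. Moreover, the linear form defining equation (ii) for $T'$ must vanish on $\reg_T^{(1)}$, i.e.\ be a consequence of equations (i) and (ii) for $T$; substituting the value of $\Lambda(v_0)$ furnished by equation (ii) for $T$ and comparing the coefficients of the (independent, $A$-valued) free parameters of $\reg_T^{(1)}$ yields: when $t_0=t'_0$, each $T$-block contains exactly one $T'$-block, so $T=T'$; when $t_0>t'_0$, the list $T'$ must begin with the consecutive run $t'_0,t'_0+1,\ldots,t_0$, whose removal (via the easy direction) reduces to the case $t_0=t'_0$; the case $t_0<t'_0$ is symmetric. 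In every case, one of $T,T'$ is the other with a prepended run of consecutive integers, so $(\ast)$ holds, and the easy direction upgrades the inclusion to an equality. The step I expect to be the main obstacle is precisely this combinatorial extraction: organizing the bookkeeping around (i) and (ii) so that the purely linear test functions already witness any failure of $(\ast)$ — in particular, so that no input from higher-degree polynomials is needed.
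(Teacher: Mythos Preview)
The present paper does not prove this proposition: it is quoted from \cite{unifiedtheory} without argument, so there is no proof here to compare against. Your strategy is essentially correct and self-contained.

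The easy direction is right: the $0$-sphere $\s_{t_0,t_0}=\{\pm v_{t_0}\}$ makes the extra step redundant, and your sign cancellation $(\varepsilon v_{t_0})\partial_{\beta^\sharp_1}=v_{t_0}\partial_{x_{t_0}}$ is the key observation.

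The hard direction via degree-one test functions also works. Conditions (i) and (ii) do characterize $\reg_T^{(1)}$: the dimension count is correct, and (i) alone already forces $\Lambda(v_s)=v_s\lambda_h$ for every $s$ in block $h$ (with $\lambda_h:=-v_s\Lambda(v_s)$), which by anticommutativity of the $v_s$ automatically gives the off-diagonal vanishing $v_s\Lambda(v_{s'})+v_{s'}\Lambda(v_s)=0$, so nothing is missing there. Two points deserve an explicit sentence each in a full write-up. First, the case $t_0<t'_0$ is not literally symmetric to $t_0>t'_0$, since the hypothesis $\reg_T\subseteq\reg_{T'}$ is one-sided: here the coefficient comparison from (ii) forces every $T$-block contained in $\{t_0+1,\ldots,t'_0\}$ to have size $1$ and forbids any $T$-block from straddling $t'_0$, so it is $T$ (not $T'$) that carries the prepended consecutive run. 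Second, the passage from $\reg_T(\Omega,A)\subseteq\reg_{T'}(\Omega,A)$ to $\reg_T^{(1)}\subseteq\reg_{T'}^{(1)}$ uses that an open $\Omega$ meets an open family of $T'$-slices and that, for linear $\Lambda$, the constant $\debar_{J'}\Lambda$ depends real-analytically on $J'$; vanishing on an open subset of the $T'$-torus therefore propagates to the whole torus.
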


A complete classification of $T$-regularity over the hypercomplex subspace $\hh$ of $\hh$ has been achieved in~\cite{unifiednotion}, in the following terms.

\begin{example}[Quaternions]
Fix a domain $\Omega$ in $\hh$ and a function $f:\Omega\to\hh$. Then:
\begin{itemize}
\item $f$ is $(3)$-regular $\Leftrightarrow f$ belongs to the kernel of the left Cauchy-Riemann-Fueter operator $\partial_{x_0}+i\partial_{x_1}+j\partial_{x_2}+k\partial_{x_3} \Leftrightarrow f$ is a \emph{left Fueter-regular} function (see~\cite{fueter1,fueter2,sudbery});
\item $f$ is $(1,3)$-regular $\Leftrightarrow$
\[\hskip35pt\debar_{J_1}f(x_0+ix_1+\beta_1J_1)=(\partial_{x_0}+i\partial_{x_1}+J_1\partial_{\beta_1})f(x_0+ix_1+\beta_1J_1)\equiv0\]
for all $J_1$ in the $(1,3)$-torus $\s_{2,3}$, which is simply the circle $\s_\hh\cap(j\rr+k\rr)$ (a theory studied in~\cite{unifiednotion});
\item $f$ is $(0,3)$-regular $\Leftrightarrow $ for any $J_1\in\s_{1,3}=\s_\hh$, the restriction $f_{J_1}$ to the planar domain $\Omega_{J_1}\subseteq\cc_{J_1}$ is a holomorphic map $(\Omega_{J_1},J_1)\to(\hh,J_1) \Leftrightarrow f$ is a \emph{slice-regular} function,~\cite{librospringer2} (or \emph{Cullen-regular} in the original articles~\cite{cras,advances}).
\end{itemize}
The classes of $(2,3)$-regular functions, $(1,2,3)$-regular functions, and $(0,1,2,3)$-regular functions all coincide with the class of $(3)$-regular (or left-Fueter regular) functions. The class of $(0,1,3)$-regular functions coincides with the class of $(1,3)$-regular functions. The class of $(0,2,3)$-regular functions (does not coincide with, but) is conjugate to the class of $(0,1,3)$-regular functions via the real vector space isomorphism $\hh\to\hh$ mapping the standard basis $(1,i,j,k)$ into $(1,k,-j,i)$.
\end{example}

Recall that, for $x=x_0+v_1x_1+\ldots+v_nx_n$, we have set
\begin{align*}
x^0&=x_0+v_1x_1+\ldots+v_{t_0}x_{t_0}\,,\\
x^1&=v_{t_0+1}x_{t_0+1}+\ldots+v_{t_1}x_{t_1}\,,\\
\vdots\\
x^\tau&=v_{t_{\tau-1}+1}x_{t_{\tau-1}+1}+\ldots+v_{t_\tau}x_{t_\tau}\,,
\end{align*}
where $t_\tau=N$ by construction. Let us consider the elements $\epsilon_1=(1,0,\ldots,0)$, $\epsilon_2=(0,1,\ldots,0)$, \ldots, $\epsilon_{t_0+\tau}=(0,0,\ldots,1)$ of $\nn^{t_0+\tau}$. The article~\cite{unifiedtheory} constructed the following polynomial $T$-regular functions.

\begin{definition}
We set $\T_\k:\equiv0$ if $\k\in\zz^{t_0+\tau}\setminus\nn^{t_0+\tau}$ and $\T_\k:\equiv1$ if $\k=(0,\ldots,0)$. For $\k=(k_1,\ldots,k_{t_0+\tau})\in\nn^{t_0+\tau}\setminus\{(0,\ldots,0)\}$, we define recursively
\begin{align*}
|\k|\T_\k(x)&:=\sum_{s=1}^{t_0}k_s\T_{\k-\epsilon_s}(x)\left(x_s-(-1)^ax_0v_s\right)+\sum_{s=t_0+1}^{t_0+\tau}(-1)^{b_s}k_s\T_{\k-\epsilon_s}(x)\left(x_0+(-1)^{a_s}x^{s-t_0}\right)
\end{align*}
where $a:=\sum_{u=t_0+1}^{t_0+\tau}k_u, a_s:=a-k_s$ and $b_s:=\sum_{u=s+1}^{t_0+\tau}k_u$. For all $k\in\nn$, we define
\[\F_k:=\{\T_\k\}_{|\k|=k}\,.\]
\end{definition}

We point out that $a_s+b_s=\sum_{u=t_0+1}^{s-1}k_u+2b_s$, whence $(-1)^{a_s+b_s}=(-1)^{c_s},c_s:=\sum_{t_0<u<s}k_u$. While the definition we gave is somewhat technical, it has a strong link to the analogs, in the theory of $J$-monogenic functions, of the classical Fueter polynomials over Clifford algebras. This link is made explicit in~\cite[Lemma 5.8]{unifiedtheory}. For instance, the set $\F_1$ consists of the functions
\begin{align*}
&\T_{\epsilon_s}(x)=x_s-x_0v_s&&1\leq s\leq t_0\,,\\
&\T_{\epsilon_{t_0+u}}(x)=x_0+x^u&&1\leq u\leq\tau\,.
\end{align*}
The functions $\T_{\epsilon_1},\ldots,\T_{\epsilon_{t_0}}$ are called the \emph{$T$-Fueter variables} and the functions $\T_{\epsilon_{t_0+1}},\ldots,\T_{\epsilon_{t_0+\tau}}$ are called the \emph{$T$-Cullen variables}. For every $k\in\nn$,~\cite[Theorem 5.14]{unifiedtheory} proves that the finite sequence $\F_k=\{\T_\k\}_{|\k|=k}$ is an $A$-basis for the right $A$-module of $T$-regular functions $V\to A$ that are homogeneous real polynomial maps of degree $k$.

\begin{example}
When $T=(N)$ (whence $t_0=N,\tau=0$), the set $\F_1$ comprises the $T$-Fueter variables $x_1-x_0v_1,\ldots,x_N-x_0v_N$. In the special case when $A=C\ell(0,N)$ and $V$ is the space of paravectors $\rr^{N+1}$, the polynomials $\{\T_\k\}_{\k\in\zz^N}$ coincide with the Fueter polynomials in the classical theory of monogenic functions $\rr^{N+1}\to C\ell(0,N)$. In the special case when $A=\hh=V$, the classical Fueter-regular polynomial functions $\hh\to\hh$ are recovered.
\end{example}

\begin{example}
When $T=(0,N)$ (whence $\tau=1$), the set $\F_1$ comprises one $T$-Cullen variable, which is the whole variable $x$, and for every $k\in\nn$ the set $\F_k$ consists of the single function $x^k$. In the special case when $A=C\ell(0,N)$ and $V$ is the space of paravectors $\rr^{N+1}$, this fact is consistent with the well-known fact that polynomial slice-monogenic functions are restrictions to $V$ of elements of $A[x]$. In the special case when $A=\hh=V$ (whence $N=3$), it is consistent with fact that the set of polynomial quaternionic slice-regular functions coincides with $\hh[x]$.
\end{example}

\begin{example}
Let $A=\hh=V$ and $T=(1,3)$. The set $\F_1$ comprises one $T$-Fueter variable, $\T_{(1,0)}(x)=x_{1}-ix_{0}$, and one $T$-Cullen variable, $ \T_{(0,1)}(x)=x_0+x^1=x_{0}+jx_{2}+kx_{3}$. Neither is a slice-regular
function $\hh\to \hh$ and the latter is not Fueter-regular.
\end{example}

Let us now compute some higher-degree examples, which we will repeatedly use in the present work.

\begin{example}\label{ex:(0,3,6)}
If $A=C\ell(0,6)$, if $V$ is the space of paravectors $\rr^7$ and if $T=(0,3,6)$, then $\F_1$ comprises two $T$-Cullen variables:
\[\T_{(1,0)}(x)=x_0+x^1=x_0+x_1e_1+x_2e_2+x_3e_3\,,\quad \T_{(0,1)}(x)=x_0+x^2=x_0+x_4e_4+x_5e_5+x_6e_6\,.\]
$\F_{2}=\{\T_{(2,0)},\T_{(1,1)},\T_{(0,2)}\}$, where
\begin{align*}
&\T_{(2,0)}(x)=(x_0+x^1)^{2}=x_0^2+2x_0x^1-\Vert x^1\Vert^2\,,\quad \T_{(0,2)}(x)=(x_0+x^2)^{2}=x_0^2+2x_0x^2-\Vert x^2\Vert^2\,,\\
&2\T_{(1,1)}(x)=-(x_0+x^2)(x_0-x^1)+(x_0+x^1)(x_0-x^2)=2(x_0x^1-x_0x^2-x^1x^2)\,.
\end{align*}
$\F_{3}=\{\T_{(3,0)},\T_{(2,1)},\T_{(1,2)},\T_{(0,3)}\}$, where
\begin{align*}
\T_{(3,0)}(x)&=(x_0+x^1)^{3}=x_0^3+3x_0^2x^1-3x_0\Vert x^1\Vert^2-\Vert x^1\Vert^2x^1\,,\\
\T_{(0,3)}(x)&=(x_0+x^2)^{3}=x_0^3+3x_0^2x^2-3x_0\Vert x^2\Vert^2-\Vert x^2\Vert^2x^2\,,\\
3\T_{(2,1)}(x)&=-2\T_{(1,1)}(x)(x_0-x^1)+\T_{(2,0)}(x)(x_0+x^2)\\
&=x_0^3+3x_0^2x^2-3x_0\Vert x^1\Vert^2+6x_0x^1x^2-3\Vert x^1\Vert^2x^2\,,\\
3\T_{(1,2)}(x)&=\T_{(0,2)}(x)(x_0+x^1)+2\T_{(1,1)}(x)(x_0-x^2)\\
&=x_0^3+3x_0^2x^1-6x_0x^1x^2-3x_0\Vert x^2\Vert^2-3x^1\Vert x^2\Vert^2\,.
\end{align*}
The following example of degree $5$ will be relevant later:
\begin{align*}
10\T_{(3,2)}(x)&=6\T_{(2,2)}(x)(x_0+x^1)+4\T_{(3,1)}(x)(x_0-x^2)\\
&=3\T_{(1,2)}(x)(x_0+x^1)^2+3\T_{(2,1)}(x)(x_0+x^2)(x_0+x^1)\\
&\quad-3\T_{(2,1)}(x)(x_0-x^1)(x_0-x^2)+\T_{(3,0)}(x)(x_0-x^2)^2\\
&=3\T_{(1,2)}(x)(x_0^2+2x_0x^1-\Vert x^1\Vert^2)+6\T_{(2,1)}(x)(x_0x^1+x_0x^2-x^1x^2)\\
&\quad+\T_{(3,0)}(x)(x_0^2-2x_0x^2-\Vert x^2\Vert^2)\,.
\end{align*}
\end{example}

\begin{example}\label{ex:(1,4,7)}
Let $A=C\ell(0,6)$ as in the previous example but let us now choose $V=\Span(e_\emptyset,e_1,e_2,e_3,e_4,e_5,e_6,e_{123456})$. The choice of this eight-dimensional space, which properly includes the seven-dimensional space of paravectors, is a special case of Example~\ref{ex:2mod4}. Let $T=(1,4,7)$. The set $\F_1$ comprises one $T$-Fueter variable,
\[\T_{(1,0,0)}(x)=x_1-x_0e_1\,,\]
and two $T$-Cullen variables,
\begin{align*}
\T_{(0,1,0)}(x)&=x_0+x^1=x_0+x_2e_2+x_3e_3+x_4e_4\,,\\
\T_{(0,0,1)}(x)&=x_0+x^2=x_0+x_5e_5+x_6e_6+x_{123456}e_{123456}\,.
\end{align*}
$\F_{2}$ consist of the functions
\begin{align*}
&\T_{(2,0,0)}(x)=(x_1-x_0e_1)^2\,,\quad\T_{(0,2,0)}(x)=(x_0+x^1)^{2}\,,\quad \T_{(0,0,2)}(x)=(x_0+x^2)^{2}\,,\\
&\T_{(1,1,0)}(x)=\frac12(x_0+x^1)(x_1+x_0e_1)+\frac12(x_1-x_0e_1)(x_0+x^1)=x_0x_1-x_0e_1x^1+x_1x^1\,,\\
&\T_{(1,0,1)}(x)=\frac12(x_0+x^2)(x_1+x_0e_1)+\frac12(x_1-x_0e_1)(x_0+x^2)=x_0x_1-x_0e_1x^2+x_1x^2\,,\\
&\T_{(0,1,1)}(x)=-\frac12(x_0+x^2)(x_0-x^1)+\frac12(x_0+x^1)(x_0-x^2)=x_0x^1-x_0x^2-x^1x^2\,.
\end{align*}
The following example of degree $4$ will be relevant later:
\begin{align*}
6\T_{(1,2,1)}(x)&=\frac32\T_{(0,2,1)}(x)(x_1+x_0e_1)-3\T_{(1,1,1)}(x)(x_0-x^1)+\frac32\T_{(1,2,0)}(x)(x_0+x^2)\\
&=\frac12\left(-2\T_{(0,1,1)}(x)(x_0-x^1)+\T_{(0,2,0)}(x)(x_0+x^2)\right)(x_1+x_0e_1)\\
&\quad-\left(\T_{(0,1,1)}(x)(x_1-x_0e_1)-\T_{(1,0,1)}(x)(x_0-x^1)+\T_{(1,1,0)}(x)(x_0-x^2)\right)(x_0-x^1)\\
&\quad+\frac12\left(\T_{(0,2,0)}(x)(x_1-x_0e_1)+2\T_{(1,1,0)}(x)(x_0+x^1)\right)(x_0+x^2)\\
&=\T_{(0,1,1)}(x)(-2x_0x_1-2x_0e_1x^1+2x_1x^1)+\T_{(0,2,0)}(x)(x_0x_1-x_0e_1x^2+x_1x^2)\\
&\quad+\T_{(1,0,1)}(x)(x_0^2-2x_0x^1-\Vert x^1\Vert^2)+\T_{(1,1,0)}(x)(2x_0x^1+2x_0x^2+2x^1x^2)\,.
\end{align*}
\end{example}

While general $T$-regular functions need not be continuous, a careful choice of the domain $\Omega$ guarantees better-behaved $T$-regular functions $f:\Omega\to A$.

\begin{definition}\label{def:Tslicedomain}
A domain $\Omega\subseteq V$ is called a \emph{$T$-slice domain} if it intersects the mirror $\rr_{0,t_0}$ and if, for any $J\in\torus$, the $J$-slice $\Omega_J$ is connected (whence a domain in $\rr^{t_0+\tau+1}_J$). 
\end{definition}

Over $T$-slice domains, the work~\cite{unifiedtheory} proved the following results.

\begin{theorem}[Identity Principle]\label{thm:identityprinciple}
Let $\Omega\subseteq V$ be a $T$-slice domain and $f,g\in\reg_T(\Omega,A)$. If there exists $J\in\torus$ such that the $J$-slice $\Omega_J$ (whose dimension is $t_0+\tau+1$) contains a set of Hausdorff dimension $s\geq t_0+\tau$ where $f_J$ and $g_J$ coincide, then $f=g$ throughout $\Omega$.
\end{theorem}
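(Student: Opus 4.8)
Since $\reg_T(\Omega,A)$ is a right $A$-module, $h:=f-g$ lies in $\reg_T(\Omega,A)$ and it suffices to show $h\equiv0$. The plan is to isolate the following local statement --- call it the \emph{core claim} --- and then spread it over $\Omega$: \textbf{if $h\in\reg_T(\Omega,A)$, $\Omega$ is a $T$-slice domain, $J\in\torus$ (or $J=\emptyset$ when $\tau=0$), and $h_J$ vanishes on some subset of $\Omega_J$ of Hausdorff dimension $\geq t_0+\tau$, then $h_J\equiv0$ on the whole slice $\Omega_J$.}

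Granting the core claim, the theorem follows. If $\tau=0$ then $\Omega_\emptyset=\Omega$ is (trivially) a $T$-slice domain, and the core claim applied to the given coincidence set gives $h\equiv0$ directly. If $\tau\geq1$, the core claim applied to the given $J$ yields $h_J\equiv0$, and it remains to propagate this to every slice. For an arbitrary $J'=(J'_1,\dots,J'_\tau)\in\torus$ I would form the chain $J^{(0)}:=J$ and $J^{(i)}:=(J'_1,\dots,J'_i,J_{i+1},\dots,J_\tau)\in\torus$, so that $J^{(i-1)}$ and $J^{(i)}$ differ only in the $i$-th entry and $J^{(\tau)}=J'$. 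For each $i$ the subspace $W_i:=\rr^{t_0+\tau+1}_{J^{(i-1)}}\cap\rr^{t_0+\tau+1}_{J^{(i)}}$ contains the mirror $\rr_{0,t_0}$ together with the $\tau-1$ mutually orthogonal lines spanned by the imaginary units shared by $J^{(i-1)}$ and $J^{(i)}$, so $\dim W_i\geq t_0+\tau$; moreover $\Omega_{J^{(i-1)}}=\Omega_{J^{(i)}}$ when $J'_i=\pm J_i$, so we may assume otherwise. Since $\Omega$ is open and meets $\rr_{0,t_0}\subseteq W_i$, the intersection $\Omega\cap W_i$ is a nonempty relatively open subset of $W_i$, hence of Hausdorff dimension $\geq t_0+\tau$. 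Inductively, $h_{J^{(i-1)}}\equiv0$ forces $h_{J^{(i)}}$ to vanish on $\Omega\cap W_i\subseteq\Omega_{J^{(i)}}$, and the core claim gives $h_{J^{(i)}}\equiv0$; after $\tau$ steps $h_{J'}\equiv0$. Since $J'$ was arbitrary and $\Omega=\bigcup_{J'\in\torus}\Omega_{J'}$ by Remark~\ref{rmk:decomposedvariable}, we conclude $h\equiv0$ on $\Omega$.

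To prove the core claim, recall that $h_J$ is $J$-monogenic, hence $J$-harmonic, hence real analytic on the connected open set $\Omega_J$ ($\Omega_J$ is connected because $\Omega$ is a $T$-slice domain), which is an open subset of the $(t_0+\tau+1)$-dimensional space $\rr^{t_0+\tau+1}_J$. Suppose $h_J\not\equiv0$. The set $Z:=\{x\in\Omega_J:h_J(x)=0\}$ is the zero set of the real-analytic function $x\mapsto\Vert h_J(x)\Vert^2$; it contains the given coincidence set, so it has Hausdorff dimension $\geq t_0+\tau$, and it is contained in the zero set of some scalar component of $h_J$ that is not identically $0$, hence it is a proper real-analytic subvariety of $\Omega_J$ of dimension at most $t_0+\tau$. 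Being of Hausdorff dimension $\geq t_0+\tau$, $Z$ has dimension exactly $t_0+\tau$; by the standard stratification and dimension theory of real-analytic sets (the singular locus of a $d$-dimensional real-analytic set has dimension $<d$) it then contains a nonempty real-analytic hypersurface $S\subseteq\Omega_J$, on which $h_J\equiv0$. Fix $p\in S$, a nonzero conormal covector $\nu=(\nu_0,\dots,\nu_{t_0+\tau})$ to $S$ at $p$, and analytic local coordinates near $p$ in which $S=\{t=0\}$. Rewriting $\debar_J h_J\equiv0$ in these coordinates gives $\sigma\,\partial_t h_J+\sum_j B_j\,\partial_{y_j}h_J=0$ with $B_j\in A$, where $\sigma=\sum_{s=0}^{t_0}v_s\nu_s+\sum_{u=1}^\tau J_u\nu_{t_0+u}=L_{\B_J}(\nu)$ is a nonzero element of the hypercomplex subspace $\rr^{t_0+\tau+1}_J\subseteq Q_A$, hence invertible in $A$. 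Since $h_J$ vanishes on $S=\{t=0\}$, its tangential derivatives $\partial_{y_j}h_J$ vanish on $S$, so $\sigma\,\partial_t h_J=0$ on $S$ and therefore $\partial_t h_J\equiv0$ on $S$. Thus each scalar component of $h_J$ solves $\Delta_J(\,\cdot\,)=0$ --- the flat Laplacian in the $\B_J$-coordinates, for which every hypersurface is non-characteristic --- with vanishing Cauchy data on the real-analytic hypersurface $S$, so the uniqueness part of the Cauchy--Kovalevskaya theorem forces $h_J\equiv0$ in a neighbourhood of $S$ in $\Omega_J$; by real analyticity and connectedness, $h_J\equiv0$ on $\Omega_J$, contradicting $h_J\not\equiv0$. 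This establishes the core claim.

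The main obstacle is the core claim, and within it two points: extracting a genuine real-analytic hypersurface inside $\{h_J=0\}$ from a mere Hausdorff-dimension bound, which rests on the stratification and dimension theory of real-analytic sets; and upgrading ``$h_J$ vanishes on $S$'' to ``the full Cauchy data of $h_J$ vanishes on $S$'', which is exactly where the hypercomplex structure enters, through the invertibility in $Q_A$ of the conormal symbol of $\debar_J$. The remaining ingredients --- $J$-monogenic functions are real analytic, $\debar_J\partial_J=\partial_J\debar_J=\Delta_J$, and $\bigcup_{J\in\torus}\rr^{t_0+\tau+1}_J=V$ --- are already recorded in the excerpt, and the chain argument over $\torus$ is routine; in particular this route needs no representation formula for $T$-slice functions.
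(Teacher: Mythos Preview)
Your argument is correct. The paper itself does not prove this theorem: it is quoted from~\cite{unifiedtheory} without reproducing the argument, so there is no in-paper proof to compare against directly.

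That said, your route is sound and worth recording. The two nontrivial ingredients are exactly the ones you flag. First, the extraction of a real-analytic hypersurface $S\subset\{h_J=0\}$ from a Hausdorff-dimension hypothesis: since $h_J$ is $J$-monogenic, hence real analytic, its zero locus is the real-analytic set $\{\Vert h_J\Vert^2=0\}$, and \L ojasiewicz-type stratification gives a top-dimensional smooth stratum once the Hausdorff dimension is $t_0+\tau$. Second, the observation that $\debar_J$ has invertible conormal symbol along any hypersurface, because $L_{\B_J}(\nu)$ is a nonzero element of the hypercomplex subspace $\rr^{t_0+\tau+1}_J\subset Q_A$ and hence not a zero divisor; this upgrades $h_J|_S=0$ to vanishing of the full Cauchy data for $\Delta_J$, after which Cauchy--Kovalevskaya (or Holmgren) and analytic continuation on the connected slice $\Omega_J$ finish the core claim. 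The chain argument over $\torus$, changing one factor at a time and using that consecutive slices intersect in a $(t_0+\tau)$-dimensional subspace meeting the mirror, is the standard propagation step and uses precisely the $T$-slice domain hypotheses (nonempty intersection with $\rr_{0,t_0}$ and connectedness of each $\Omega_{J'}$).

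One cosmetic point: in the coordinate rewrite, $\sigma$ should be read as the function $x\mapsto L_{\B_J}(\nabla t(x))$ along $S$ rather than a single value at $p$; it is invertible at every point of $S$ for the same reason, so the conclusion $\partial_t h_J|_S=0$ holds on all of $S$ near $p$, which is what you need for the Cauchy data.
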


\begin{proposition}[Maximum Modulus Principle]
Let $\Omega$ be a $T$-slice domain in $V$ and $f\in\reg_T(\Omega,A)$. If the function $\Vert f\Vert:\Omega\to\rr$ has a global maximum point in $\Omega$, then $f$ is constant in $\Omega$.
\end{proposition}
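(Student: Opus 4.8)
The plan is to localise the statement to a single slice, invoke classical potential theory there, and then globalise by means of the Identity Principle (Theorem~\ref{thm:identityprinciple}).

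First I would fix a global maximum point $x_0\in\Omega$ of $\Vert f\Vert$, set $M:=\Vert f(x_0)\Vert$, and use Remark~\ref{rmk:decomposedvariable} to choose $J\in\torus$ (or $J=\emptyset$ if $\tau=0$) with $x_0\in\rr^{t_0+\tau+1}_J$, hence $x_0\in\Omega_J$. Since $\Omega$ is open in $V$, the slice $\Omega_J$ is open in $\rr^{t_0+\tau+1}_J$, and since $\Omega$ is a $T$-slice domain, $\Omega_J$ is connected; moreover $\Vert f_J\Vert\leq M$ on $\Omega_J$ with equality at $x_0$. By $T$-regularity, $f_J$ is $J$-monogenic, hence $J$-harmonic and real analytic.

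Next I would pass to Euclidean coordinates on the slice. Completing $\B_J$ to a basis $(\B_J)'$ of $A$ that is orthonormal for $\langle\cdot,\cdot\rangle_{\B'}$ (as recalled before Definition~\ref{def:Jmonogenic}) and writing $f_J=\sum_s g_s\,w_s$ in the real components relative to $(\B_J)'$, the identity $\Delta_J f_J\equiv0$ means exactly that each real component $g_s$, read through $L_{\B_J}$ as a function on the connected open set $\widehat{\Omega_J}=L_{\B_J}^{-1}(\Omega_J)\subseteq\rr^{t_0+\tau+1}$, is harmonic in the classical sense, while $\Vert f_J\Vert^2=\sum_s g_s^2$ since $\Vert\cdot\Vert_{(\B_J)'}=\Vert\cdot\Vert$. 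A one-line computation then gives $\Delta_J\Vert f_J\Vert^2=2\sum_s\Vert\nabla g_s\Vert^2\geq0$, so $\Vert f_J\Vert^2$ is subharmonic on $\widehat{\Omega_J}$ and attains there the interior maximum $M^2$. By the strong maximum principle for subharmonic functions, $\Vert f_J\Vert^2\equiv M^2$ on $\Omega_J$; reinserting this into $\Delta_J\Vert f_J\Vert^2=2\sum_s\Vert\nabla g_s\Vert^2$ forces every $\nabla g_s$ to vanish, so each $g_s$, and therefore $f_J$, is constant on the connected set $\Omega_J$, i.e.\ $f_J\equiv f(x_0)$.

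Finally I would globalise. If $\tau=0$ then $\Omega=\Omega_\emptyset$ and the proof is complete. If $\tau\geq1$, the constant map $g\equiv f(x_0)$ is trivially $T$-regular and agrees with $f$ on the whole of $\Omega_J$, which has Hausdorff dimension $t_0+\tau+1\geq t_0+\tau$; Theorem~\ref{thm:identityprinciple} then yields $f=g$ throughout $\Omega$. The argument is routine once assembled: the only points needing genuine care are the translation of ``$J$-harmonic'' into component-wise Euclidean harmonicity — which is precisely why one works with the orthonormal completion $(\B_J)'$ of $\B_J$ rather than with $\B'$ — and the observation that the single identity $\Delta_J\Vert f_J\Vert^2=2\sum_s\Vert\nabla g_s\Vert^2$ supplies both the input for the maximum principle and the rigidity (constant modulus implies constant) used at the very end. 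I do not expect any serious obstacle beyond this bookkeeping.
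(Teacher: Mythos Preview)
Your argument is correct. The paper does not actually prove this proposition: it merely recalls it from~\cite{unifiedtheory}, so a direct comparison with ``the paper's own proof'' is not possible. That said, the route you take---reduce to a single slice, use that $J$-monogenic implies $J$-harmonic, decompose into real components with respect to an orthonormal completion $(\B_J)'$ so that $\Vert f_J\Vert^2=\sum_s g_s^2$ and $\Delta_J\Vert f_J\Vert^2=2\sum_s\Vert\nabla g_s\Vert^2$, apply the strong maximum principle for subharmonic functions, read off constancy of $f_J$, then globalise via Theorem~\ref{thm:identityprinciple}---is exactly the natural strategy and is the one a reader would expect to find in the cited reference. All the bookkeeping you flag (why $(\B_J)'$ rather than $\B'$; why the single identity does double duty) is handled correctly, and the invocation of the Identity Principle is legitimate since the constant function lies in $\reg_T(\Omega,A)$ and the coincidence set $\Omega_J$ has dimension $t_0+\tau+1$.
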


Symmetry, defined according to the following construction, is another relevant property for the domain $\Omega$ of a $T$-regular function.

\begin{definition}\label{def:Tsymmetric}
For every $h\in\{1,\ldots,\tau\}$, we define the reflection
\[\rr^\tau\to\rr^\tau\,,\quad\beta=(\beta_1,\ldots,\beta_\tau)\mapsto\overline{\beta}^h:=(\beta_1,\ldots,\beta_{h-1},-\beta_h,\beta_{h+1},\ldots,\beta_\tau)\,.\] 
For any $\beta=(\beta_1,\ldots,\beta_\tau)\in\rr^\tau,J=(J_1,\ldots,J_\tau)\in\torus$, we set
\[\beta\,J:=\beta_1J_1+\ldots+\beta_\tau J_\tau\in V\,.\]
If $\tau=0$, for $\beta\in\rr^0=\{0\}$ and $J=\emptyset$ we define $\beta\,J$ to be the zero element of $V$.
For every $D\subseteq\rr_{0,t_0}\times\rr^\tau$, we set
\[\Omega_D:=\{\alpha+\beta\,J: (\alpha,\beta)\in D, J\in\torus\}\subseteq V\]
if $\tau\geq1$ (and $\Omega_D:=\{\alpha\in V: (\alpha,0)\in D\}$ if $\tau=0$). A subset of $V$ is called \emph{$T$-symmetric} if it equals $\Omega_D$ for some $D\subseteq\rr_{0,t_0}\times\rr^\tau$. The \emph{$T$-symmetric completion} of a set $Y\subseteq V$ is the smallest $T$-symmetric subset of $V$ containing $Y$. For any $x\in V$, we denote by $\torus_x$ the $T$-symmetric completion of the singleton $\{x\}$.
\end{definition}

\begin{assumption}\label{ass:domain}
Assume $D$ to be a subset of $\rr_{0,t_0}\times\rr^\tau$, invariant under the reflection $(\alpha,\beta)\mapsto(\alpha,\overline{\beta}^h)$ for every $h\in\{1,\ldots,\tau\}$.
\end{assumption}

The article~\cite{unifiedtheory} proved for every $T$-regular function $f$ on a $T$-symmetric $T$-slice domain a symmetry property, which implies that $f$ is real analytic. This symmetry property is best stated in terms of $T$-functions $\Omega_D\to A$, defined in~\cite{unifiednotion} by means of the concept of $T$-stem function $D\to A\otimes\rr^{2^\tau}$. We point out that $A\otimes\rr^{2^\tau}$ is a bilateral $A$-module with $A$-basis equal to the canonical real vector basis $(E_K)_{K\in\mathscr{P}(\tau)}$ of $\rr^{2^\tau}$. Let us first recall the definition of $T$-stem function, which subsumes the notion of stem function of~\cite[Definition 4]{perotti} and follows the lines of its multivariate generalization~\cite[Definition 2.2]{gpseveral}. 

\begin{definition}
Consider a map $F=\sum_{K\in\mathscr{P}(\tau)}E_KF_K:D\to A\otimes\rr^{2^\tau}$, with components $F_K:D\to A$. The map $F$ is called a \emph{$T$-stem function} if
\[F_K(\alpha,\overline{\beta}^h)=\left\{
\begin{array}{ll}
F_K(\alpha,\beta)&\mathrm{if\ }h\not\in K\\
-F_K(\alpha,\beta)&\mathrm{if\ }h\in K
\end{array}
\right.\]
for all $K\in\mathscr{P}(\tau)$, for all $h\in\{1,\ldots,\tau\}$ and for all $(\alpha,\beta)\in D$. The class of $T$-stem functions $D\to A\otimes\rr^{2^\tau}$ is denoted by $\stem_T=\stem_T(D,A\otimes\rr^{2^\tau})$. If $D$ is an open subset of $\rr_{0,t_0}\times\rr^\tau$ and $p\in\nn\cup\{\infty,\omega\}$, we let $\stem_T^p=\stem_T^p(D,A\otimes\rr^{2^\tau})$ denote the set of $F\in\stem_T(D,A\otimes\rr^{2^\tau})$ such that $F_K\in\mathscr{C}^p(D,A)$ for all $K\in\mathscr{P}(\tau)$.
\end{definition}

We recall that the symbol $\mathscr{C}^\omega(D,A)$ denotes the bilateral $A$-module of real analytic functions $D\to A$. We also point out that $\stem_T(D,A\otimes\rr^{2^\tau})$ is a right $A$-module and that, if $D$ is an open subset of $\rr_{0,t_0}\times\rr^\tau$, then
\[\stem_T^\omega\subset\stem_T^\infty\subset\ldots\subset\stem_T^2\subset\stem_T^1\subset\stem_T^0\subset\stem_T\]
are nested right $A$-submodules. Moreover, for any $F\in\stem_T(D,A\otimes\rr^{2^\tau})$ and any $\rho\in\rr_{0,t_0}$, setting $G(\alpha,\beta):=F(\alpha+\rho,\beta)$ defines a $G\in\stem_T(D-(\rho,0),A\otimes\rr^{2^\tau})$. 

We are now ready to recall from~\cite{unifiednotion} the next definition: that of $T$-function. This notion subsumes the notion of slice function,~\cite[Definition 5]{perotti}, in its associative sub-case. The definition follows the lines of~\cite[Definition 2.5]{gpseveral}, in its associative sub-case.

\begin{definition}\label{def:Tfunction}
For any $J\in\torus$: we set $J_\emptyset:=1$ and, for $K=\{k_1,\ldots,k_p\}$ with $1\leq k_1<\ldots<k_p\leq\tau$, we set $J_K:=J_{k_1}J_{k_2}\cdots J_{k_{p-1}}J_{k_p}$.

For any $T$-stem function $F=\sum_{K\in\mathscr{P}(\tau)}E_KF_K:D\to A\otimes\rr^{2^\tau}$, the \emph{induced} function $f=\I(F):\Omega_D\to A$, is defined at $x=\alpha+\beta\,J\in\Omega_D$ by the formula
\[f(x):=\sum_{K\in\mathscr{P}(\tau)}J_K\,F_K(\alpha,\beta)\,.\]
A function induced by a $T$-stem function is called a \emph{$T$-function}. The class of $T$-functions $\Omega_D\to A$ is denoted by $\slice_T(\Omega_D,A)$. If $D$ is an open subset of $\rr_{0,t_0}\times\rr^\tau$ and  $p\in\nn\cup\{\infty,\omega\}$, then $\slice_T^p(\Omega_D,A)$ denotes the image of $\stem_T^p(D,A\otimes\rr^{2^\tau})$ through $\I$. If $\Omega_D$ is a domain in $V$, a \emph{strongly $T$-regular} function on $\Omega$ is an element of the intersection $\sr_T(\Omega_D,A):=\slice_T(\Omega_D,A)\cap\reg_T(\Omega_D,A)$.
\end{definition}

The article~\cite{unifiedtheory} proved that $\slice_T(\Omega_D,A)$ is a right $A$-module and that
\[\I:\stem_T(D,A\otimes\rr^{2^\tau})\to\slice_T(\Omega_D,A)\]
is a well-defined right $A$-module isomorphism. As a consequence, if $D$ is open, then
\[\slice_T^\omega\subset\slice_T^\infty\subset\ldots\subset\slice_T^2\subset\slice_T^1\subset\slice_T^0\subset\slice_T\]
are nested right $A$-submodules. Moreover,~\cite{unifiedtheory} proved that, if $\Omega_D$ is a domain in $V$, then $\sr_T(\Omega_D,A)$ is a right $A$-submodule of $\slice_T^\omega(\Omega_D,A)$. Additionally, it proved that for any $p\in\rr_{0,t_0}$ and any $f\in\slice_T(\Omega_D,A)$ (or $f\in\sr_T(\Omega_D,A)$), setting $g(x):=f(x+p)$ defines a $g\in\slice_T(\Omega_D-p,A)$ (a $g\in\sr_T(\Omega_D-p,A)$, respectively).  Finally,~\cite{unifiedtheory} proved the following Representation Formula along tori of the form $\torus_{\alpha+\beta I}=\Omega_{\{(\alpha,\beta)\}}=\alpha+\beta\,\torus$ with $\alpha\in\rr_{0,t_0},\beta\in\rr^\tau,I\in\torus$ (see Definition~\ref{def:Tsymmetric}).

\begin{theorem}[Representation Formula, $T$-functions]\label{thm:representationformula}
Fix $I\in\torus$. Every $f\in\slice_T(\Omega_D,A)$ is induced by a unique $T$-stem function $F=\sum_{K\in\mathscr{P}(\tau)}E_KF_K$, whose $K$-component is
\[F_K(\alpha,\beta)=2^{-\tau}I_K^{-1}\,\sum_{H\in\mathscr{P}(\tau)}(-1)^{|K \cap H|}\,f(\alpha+\overline{\beta}^H\,I)\,.\]
In particular, $F$ depends only on the restriction $f_I$. Moreover,
\begin{align}\label{eq:representationformula}
f(\alpha+\beta\,J)&=\sum_{H\in\mathscr{P}(\tau)}\gamma_H\,f(\alpha+\overline{\beta}^H\,I)
\end{align}
for all $(\alpha,\beta)\in D$ and all $J\in\torus$,
where
\[\gamma_H:=2^{-\tau}\sum_{K\in\mathscr{P}(\tau)}(-1)^{|K \cap H|}\,J_K\,I_K^{-1}\,.\]
\end{theorem}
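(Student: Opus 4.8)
\medskip
\noindent The plan is to exploit the fact, recalled above, that $\I\colon\stem_T(D,A\otimes\rr^{2^\tau})\to\slice_T(\Omega_D,A)$ is a right $A$-module isomorphism: this already yields both existence and uniqueness of the inducing $T$-stem function $F=\sum_{K\in\mathscr{P}(\tau)}E_KF_K$, so the only real work is to recover the components $F_K$ explicitly from $f$ and then to substitute them back. (When $\tau=0$ the whole statement is vacuous, with $F=F_\emptyset=f$, so I assume $\tau\geq1$.) The starting point is the very definition of $\I$ together with its well-definedness proved in~\cite{unifiedtheory}: for every $(\alpha,\beta)\in D$ and every $J\in\torus$ one has
\[
f(\alpha+\beta\,J)=\sum_{K\in\mathscr{P}(\tau)}J_K\,F_K(\alpha,\beta),
\]
and this equality holds for \emph{any} representation $\alpha+\beta\,J$ of a point of $\Omega_D$. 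First I would apply it with the fixed $I$ in place of $J$ and with $\overline{\beta}^H$ in place of $\beta$, for each $H\in\mathscr{P}(\tau)$; this is legitimate because, by Assumption~\ref{ass:domain}, the pair $(\alpha,\overline{\beta}^H)$ again lies in $D$, so $\alpha+\overline{\beta}^H\,I$ belongs to $\Omega_D\cap\rr^{t_0+\tau+1}_I$, the domain of the restriction $f_I$.

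Next I would iterate the defining relation of a $T$-stem function over the indices $h\in H$ (the reflections $\beta\mapsto\overline{\beta}^h$ commute pairwise, and the one attached to $h$ multiplies $F_K$ by $-1$ exactly when $h\in K$), obtaining $F_K(\alpha,\overline{\beta}^H)=(-1)^{|K\cap H|}F_K(\alpha,\beta)$ and hence
\[
f(\alpha+\overline{\beta}^H\,I)=\sum_{K\in\mathscr{P}(\tau)}(-1)^{|K\cap H|}\,I_K\,F_K(\alpha,\beta)\qquad(H\in\mathscr{P}(\tau)).
\]
This is a linear system expressing the $2^\tau$ values $f(\alpha+\overline{\beta}^H\,I)$ in terms of the $2^\tau$ quantities $I_K\,F_K(\alpha,\beta)$, and I would invert it by the standard Boolean Fourier trick: fix $L\in\mathscr{P}(\tau)$, multiply the $H$-th equation by $(-1)^{|L\cap H|}$, sum over $H$, interchange the two summations, and use $(-1)^{|L\cap H|+|K\cap H|}=(-1)^{|(K\triangle L)\cap H|}$ together with the elementary identity
\[
\sum_{H\in\mathscr{P}(\tau)}(-1)^{|M\cap H|}=
\begin{cases}
2^\tau & \text{if }M=\emptyset,\\
0 & \text{otherwise}
\end{cases}
\]
(each of the $\tau$ factors in the obvious product expansion equals $2$ if the corresponding index lies outside $M$ and $0$ if it lies inside). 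The right-hand side collapses to $2^\tau\,I_L\,F_L(\alpha,\beta)$; since $I_L=I_{\ell_1}\cdots I_{\ell_p}$ is a product of elements of $\s_A\subset Q_A$, it is invertible in $A$, and dividing gives exactly $F_L(\alpha,\beta)=2^{-\tau}I_L^{-1}\sum_{H\in\mathscr{P}(\tau)}(-1)^{|L\cap H|}f(\alpha+\overline{\beta}^H\,I)$, i.e.\ the claimed formula (after renaming $L$ as $K$). Because every evaluation point $\alpha+\overline{\beta}^H\,I$ lies on the $I$-slice, this in particular shows that $F$ depends only on $f_I$.

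For the Representation Formula~\eqref{eq:representationformula} I would simply insert the expression just found for $F_K$ into $f(\alpha+\beta\,J)=\sum_{K\in\mathscr{P}(\tau)}J_K\,F_K(\alpha,\beta)$, use associativity of $A$ to move the scalar sums through, interchange the summations over $K$ and $H$, and read off the coefficient of $f(\alpha+\overline{\beta}^H\,I)$, which is precisely $2^{-\tau}\sum_{K\in\mathscr{P}(\tau)}(-1)^{|K\cap H|}J_K\,I_K^{-1}=\gamma_H$.

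I do not expect a serious obstacle: the argument is a Fourier inversion over the Boolean cube $\mathscr{P}(\tau)$, decorated with the $A$-valued factors $I_K$. The points that require genuine care are the sign bookkeeping when iterating the $T$-stem relation and in the identity $(-1)^{|L\cap H|+|K\cap H|}=(-1)^{|(K\triangle L)\cap H|}$; the verification that each $I_K$ is invertible and that associativity of $A$ makes the mixed products $J_K\,I_K^{-1}$ unambiguous; and the check that the evaluation points $\alpha+\overline{\beta}^H\,I$ genuinely belong to $\Omega_D$, which is exactly where the symmetry hypothesis of Assumption~\ref{ass:domain} enters.
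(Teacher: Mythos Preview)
Your argument is correct. Note, however, that the paper does not actually supply its own proof of this theorem: it is quoted as a result already established in~\cite{unifiedtheory} (following the lines of~\cite[Theorem~2.13]{gpseveral}), so there is nothing in the present paper to compare against directly. Your approach---iterating the $T$-stem symmetry to obtain $F_K(\alpha,\overline{\beta}^H)=(-1)^{|K\cap H|}F_K(\alpha,\beta)$ and then inverting the resulting system by the character-orthogonality identity $\sum_{H\in\mathscr{P}(\tau)}(-1)^{|M\cap H|}=2^\tau\delta_{M,\emptyset}$---is the natural Boolean--Fourier inversion and is essentially the argument one expects in~\cite{unifiedtheory,gpseveral}. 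The care points you flag (invertibility of $I_K$ in $Q_A$, associativity of $A$ for the products $J_K I_K^{-1}$, and the use of Assumption~\ref{ass:domain} to ensure $(\alpha,\overline{\beta}^H)\in D$) are exactly the right ones.
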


In the previous statement, the symbol $I_K^{-1}$ denotes the multiplicative inverse of $I_K$. Following the lines of~\cite[Corollary 2.16]{gpseveral}, we now prove the converse result. The proof is based on a preliminary technical remark.

\begin{remark}\label{rmk:symmetricdifference}
Fix any $h\in\{1,\ldots,\tau\}$. The map $\mathscr{P}(\tau)\to\mathscr{P}(\tau),\ H\mapsto H\bigtriangleup\{h\}$ is an involutive bijection from $\mathscr{P}(\tau)$ onto itself. Moreover, if $\beta\in\rr^\tau$ and $\gamma=\overline{\beta}^H$, then $\overline{\gamma}^h=\overline{\beta}^{H\bigtriangleup\{h\}}$. Finally, for any $K\in\mathscr{P}(\tau)$, the cardinality $|K \cap(H\bigtriangleup\{h\})|$ equals $|K \cap H|$ if $h\not\in K$; it equals $|K \cap H|\pm1$ if $h\in K$.
\end{remark}

\begin{proposition}\label{prop:representationformula}
A function $f:\Omega_D\to A$ is a $T$-function if there exists $I\in\torus$ such that \eqref{eq:representationformula} holds true for all $(\alpha,\beta)\in D$ and all $J\in\torus$.
\end{proposition}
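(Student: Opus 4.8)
The plan is to define a candidate $T$-stem function $F$ out of $f$ by the formula that the Representation Formula (Theorem~\ref{thm:representationformula}) forces, verify that it is genuinely a $T$-stem function (i.e.\ that its components $F_K$ have the prescribed parity under the reflections $\overline{\beta}^h$), and then check that the function it induces is $f$ itself. Concretely, fix the $I\in\torus$ provided by the hypothesis and set
\[F_K(\alpha,\beta):=2^{-\tau}I_K^{-1}\sum_{H\in\mathscr{P}(\tau)}(-1)^{|K\cap H|}f(\alpha+\overline{\beta}^H\,I)\,,\qquad F:=\sum_{K\in\mathscr{P}(\tau)}E_KF_K\,.\]
This is well-defined as a map $D\to A\otimes\rr^{2^\tau}$ because $D$ satisfies Assumption~\ref{ass:domain} and $\Omega_D$ is $T$-symmetric, so each point $\alpha+\overline{\beta}^H\,I$ lies in $\Omega_D$.

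First I would check that $F\in\stem_T(D,A\otimes\rr^{2^\tau})$, i.e.\ that $F_K(\alpha,\overline{\beta}^h)=F_K(\alpha,\beta)$ if $h\notin K$ and $F_K(\alpha,\overline{\beta}^h)=-F_K(\alpha,\beta)$ if $h\in K$. This is exactly what Remark~\ref{rmk:symmetricdifference} is set up to deliver: replacing $\beta$ by $\overline{\beta}^h$ turns $\overline{\beta}^H$ into $\overline{\beta}^{H\bigtriangleup\{h\}}$, so reindexing the sum over $H$ by the involution $H\mapsto H\bigtriangleup\{h\}$ leaves $\sum_H(-1)^{|K\cap(H\bigtriangleup\{h\})|}f(\alpha+\overline{\beta}^H I)$; and $|K\cap(H\bigtriangleup\{h\})|\equiv|K\cap H|\bmod 2$ when $h\notin K$, while it differs by $1$ when $h\in K$. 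That yields the required sign, so $F$ is a $T$-stem function.

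Next I would show $\I(F)=f$. By definition of $\I$, for $x=\alpha+\beta\,J\in\Omega_D$ we have $\I(F)(x)=\sum_{K}J_K F_K(\alpha,\beta)=\sum_K J_K\,2^{-\tau}I_K^{-1}\sum_H(-1)^{|K\cap H|}f(\alpha+\overline{\beta}^H I)$. Interchanging the order of summation, this equals $\sum_H\gamma_H\,f(\alpha+\overline{\beta}^H I)$ with $\gamma_H=2^{-\tau}\sum_K(-1)^{|K\cap H|}J_KI_K^{-1}$, which is precisely the right-hand side of~\eqref{eq:representationformula}. By hypothesis that right-hand side equals $f(\alpha+\beta\,J)=f(x)$. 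Hence $f=\I(F)$ is a $T$-function. (One should also note that the value $\I(F)(x)$ is well-defined, i.e.\ independent of the representation $x=\alpha+\beta\,J$; but this is part of the established fact that $\I$ is a well-defined map on $\stem_T$, already recalled before Theorem~\ref{thm:representationformula}, applied to the $T$-stem function $F$ just constructed.)

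The only delicate points are bookkeeping rather than conceptual: making sure every $\alpha+\overline{\beta}^H I$ really lies in $\Omega_D$ (which is where Assumption~\ref{ass:domain} and $T$-symmetry of $\Omega_D$ enter), and tracking the mod-$2$ behaviour of $|K\cap(H\bigtriangleup\{h\})|$ correctly — both handled cleanly by Remark~\ref{rmk:symmetricdifference}. I expect the main (very mild) obstacle to be purely notational: keeping the two families of subsets $H$ and $K$, and the inverses $I_K^{-1}$, straight while swapping summation order, exactly as in the cited~\cite[Corollary 2.16]{gpseveral}.
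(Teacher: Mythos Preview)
Your proposal is correct and follows essentially the same approach as the paper: define $F_K$ by the formula forced by Theorem~\ref{thm:representationformula}, use Remark~\ref{rmk:symmetricdifference} to verify the parity conditions making $F$ a $T$-stem function, and then observe that $\I(F)=f$ via the hypothesis~\eqref{eq:representationformula}. The paper is slightly more terse about the last step (it says the equality $f=\I(F)$ ``will follow at once''), but your unpacking of it by swapping the sums over $H$ and $K$ is exactly what is meant.
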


\begin{proof}
Assume there exists $I\in\torus$ such that \eqref{eq:representationformula} holds true for all $(\alpha,\beta)\in D$ and all $J\in\torus$. Let us define $F:=\sum_{K\in\mathscr{P}(\tau)}E_KF_K:D\to A\otimes\rr^{2^\tau}$, where
\[F_K(\alpha,\beta):=2^{-\tau}I_K^{-1}\,\sum_{H\in\mathscr{P}(\tau)}(-1)^{|K \cap H|}\,f(\alpha+\overline{\beta}^H\,I)\]
for all $K\in\mathscr{P}(\tau)$ and all $(\alpha,\beta)\in D$. We are going to prove that $F$ is a $T$-stem function, whence the equality $f=\I(F)$ and the thesis $f\in\slice_T(\Omega_D,A)$ will follow at once. By Remark~\ref{rmk:symmetricdifference}, for any $(\alpha,\beta)\in D$ the expression
\begin{align*}
F_K(\alpha,\overline{\beta}^h)&=2^{-\tau}I_K^{-1}\,\sum_{H\in\mathscr{P}(\tau)}(-1)^{|K \cap H|}\,f(\alpha+\overline{\beta}^{H\bigtriangleup\{h\}}\,I)\\
&=2^{-\tau}I_K^{-1}\,\sum_{H'\in\mathscr{P}(\tau)}(-1)^{|K \cap(H'\bigtriangleup\{h\})|}\,f(\alpha+\overline{\beta}^{H'}\,I)
\end{align*}
equals
\begin{align*}
2^{-\tau}I_K^{-1}\,\sum_{H'\in\mathscr{P}(\tau)}(-1)^{|K \cap H'|}\,f(\alpha+\overline{\beta}^{H'}\,I)=F_K(\alpha,\beta)
\end{align*}
when $h\not\in K$ and equals
\begin{align*}
2^{-\tau}I_K^{-1}\,\sum_{H'\in\mathscr{P}(\tau)}(-1)^{|K \cap H'|\pm1}\,f(\alpha+\overline{\beta}^{H'}\,I)=-F_K(\alpha,\beta)
\end{align*}
when $h\in K$. Our claim that $F$ is a $T$-stem function is therefore proven.
\end{proof}

Throughout the rest of the paper, we make an extra assumption.

\begin{assumption}\label{ass:opendomain}
Assume $\Omega_D$ to be an open subset of $V$, whence $D$ is an open subset of $\rr_{0,t_0}\times\rr^\tau$.
\end{assumption}

The next remark, also from~\cite{unifiedtheory}, will be useful in the paper. For $\beta=(\beta_1,\ldots,\beta_\tau)\in\rr^\tau$, we set the additional notation $\beta^2:=(\beta_1^2,\ldots,\beta_\tau^2)\in\rr^\tau$.

\begin{remark}\label{rmk:whitney}
Let $p\in\{\infty,\omega\}$ and let $F=\sum_{K\in\mathscr{P}(\tau)}E_KF_K\in\stem_T^p(D,A\otimes\rr^{2^\tau)}$. Set $\widehat{D}:=\{(\alpha,\beta^2):(\alpha,\beta)\in D\}\subset\rr_{0,t_0}\times\rr^\tau$. By Whitney's Theorem~\cite[page 160]{whitney}, there exist an open neighborhood $W$ of $\widehat{D}$ in $\rr_{0,t_0}\times\rr^\tau$, with $\widehat{D}=\{(\alpha,\gamma)\in W:\gamma_1,\ldots,\gamma_\tau\geq0\}$, and a finite sequence $\{G_K\}_{K\in\mathscr{P}(\tau)}$ in $\mathscr{C}^\infty(W,A)$ (or in $\mathscr{C}^\omega(W,A)$, respectively) such that, for all $(\alpha,\beta)\in D$, the following equalities hold true: $F_\emptyset(\alpha,\beta)=G_\emptyset(\alpha,\beta^2)$ and
\[F_K(\alpha,\beta)=\beta_{k_1}\cdots\beta_{k_p}\,G_K(\alpha,\beta^2)\]
if $K=\{k_1,\ldots,k_p\}$ with $1\leq k_1<\ldots<k_p\leq\tau$.
\end{remark}

Theorem~\ref{thm:representationformula} and Remark~\ref{rmk:whitney} allowed to prove the next result in~\cite[Proposition 6.11]{unifiedtheory}.

\begin{proposition}\label{prop:analyticstronglyTregular}
Every strongly $T$-regular function is a real analytic function.
\end{proposition}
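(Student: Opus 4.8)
Every strongly $T$-regular function is a real analytic function.

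The plan is to combine the Representation Formula of Theorem~\ref{thm:representationformula} with the Whitney-type decomposition of Remark~\ref{rmk:whitney}. Let $f\in\sr_T(\Omega_D,A)$, so $f=\I(F)$ for a $T$-stem function $F=\sum_{K\in\mathscr{P}(\tau)}E_KF_K$, and $f$ is $T$-regular. First I would reduce the problem to showing that $f$ is real analytic near an arbitrary point $x_0=\alpha_0+\beta_0\,J\in\Omega_D$. Fix any $I\in\torus$. The first key step is to establish that each component $F_K$ is itself real analytic on $D$, i.e.\ that $F\in\stem_T^\omega(D,A\otimes\rr^{2^\tau})$. By the Representation Formula, $F_K(\alpha,\beta)=2^{-\tau}I_K^{-1}\sum_{H\in\mathscr{P}(\tau)}(-1)^{|K\cap H|}f(\alpha+\overline{\beta}^H\,I)$, so it suffices to show that the map $(\alpha,\beta)\mapsto f(\alpha+\beta\,I)=f_I\circ L_{\B_I}(\alpha,\beta_1,\ldots,\beta_\tau)$ is real analytic in a neighborhood of the relevant parameters. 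Now $f_I$ is $I$-monogenic by $T$-regularity, and the excerpt already records that $I$-monogenic functions are $I$-harmonic, whence real analytic (this is stated just after Definition~\ref{def:Jmonogenic}). Since $L_{\B_I}$ is a linear isomorphism, the composition $f\circ L_{\B_I}$ is real analytic on $L_{\B_I}^{-1}(\Omega_I)$; restricting the $(\beta_1,\ldots,\beta_\tau)$-variables to the values $\pm\beta_{0,h}$ and composing with the reflections $\overline{\cdot}^H$ still yields real analytic functions of $(\alpha,\beta)$. Hence each $F_K$ is real analytic near $(\alpha_0,\beta_0)$, and by covering $D$ we get $F\in\stem_T^\omega$.

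The second key step is to push the real analyticity of the $F_K$ through the reconstruction $f(x)=\sum_{K\in\mathscr{P}(\tau)}J_K\,F_K(\alpha,\beta)$, where now $J\in\torus$ and $\beta\in\rr^\tau$ are \emph{not} globally smooth functions of $x\in V$ — this is exactly where the oddness/parity of the $F_K$ in the $\beta$-variables matters, and where Remark~\ref{rmk:whitney} enters. Apply Remark~\ref{rmk:whitney} to $F\in\stem_T^\omega$: there are real analytic functions $G_K$ on an open neighborhood $W\supseteq\widehat{D}$ with $F_\emptyset(\alpha,\beta)=G_\emptyset(\alpha,\beta^2)$ and $F_K(\alpha,\beta)=\beta_{k_1}\cdots\beta_{k_p}\,G_K(\alpha,\beta^2)$ for $K=\{k_1<\ldots<k_p\}$. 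Substituting into the induced-function formula, and recalling from Remark~\ref{rmk:decomposedvariable} that for $x=x^0+\beta_1J_1+\ldots+\beta_\tau J_\tau$ one has $\beta_hJ_h=x^h$ and $\beta_h^2=\Vert x^h\Vert^2$, I would rewrite
\[
f(x)=\sum_{K\in\mathscr{P}(\tau)}\Bigl(\prod_{h\in K}(\beta_hJ_h)\Bigr)\,G_K(\alpha,\beta^2)
=\sum_{K\in\mathscr{P}(\tau)}\Bigl(\prod_{h\in K}x^h\Bigr)\,G_K\bigl(x^0,(\Vert x^1\Vert^2,\ldots,\Vert x^\tau\Vert^2)\bigr),
\]
where in the product the factors $x^h$ are taken in increasing order of $h$ (matching the ordered definition of $J_K$). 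Each factor $x^h$ is a linear, hence real analytic, function of $x$; each $\Vert x^h\Vert^2=n(x^h)$ is a real analytic (indeed polynomial) function of $x$; and $G_K$ is real analytic on $W$, with $(x^0,(\Vert x^1\Vert^2,\ldots))$ landing in $W$ for $x$ near $x_0$. Therefore $f$ is, locally near $x_0$, a finite sum of products of real analytic $A$-valued functions of $x$, hence real analytic; since $x_0\in\Omega_D$ was arbitrary, $f\in\mathscr{C}^\omega(\Omega_D,A)$.

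The main obstacle is the second step rather than the first: the first step is essentially a citation (monogenic $\Rightarrow$ harmonic $\Rightarrow$ analytic, plus the linearity of $L_{\B_I}$), but the second step requires care because the representation $x=\alpha+\beta\,J$ is not unique and $J$ itself degenerates when some $x^h=0$, so one cannot simply declare $\beta$ and $J$ to be analytic functions of $x$. The Whitney trick circumvents this precisely by absorbing the odd part of the $\beta$-dependence into the vector factor $x^h=\beta_hJ_h$ and leaving only an even dependence $\beta_h^2=\Vert x^h\Vert^2$, which \emph{is} a bona fide analytic function of $x$; verifying that the bookkeeping of the ordered product $J_K$ versus $\prod_{h\in K}x^h$ is consistent, and that $G_K$ is evaluated inside its domain of analyticity for $x$ near $x_0$, is the only genuinely technical point. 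One should also note that although $D$ need not be connected in a way that makes $\Omega_D$ a domain, Assumption~\ref{ass:opendomain} guarantees $D$ (and $\Omega_D$) open, which is all that is needed for the local statement; and the argument is manifestly independent of the choice of $I\in\torus$ used in the Representation Formula.
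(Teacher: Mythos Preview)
Your proposal is correct and follows precisely the approach the paper indicates: the paper states that Proposition~\ref{prop:analyticstronglyTregular} is proved in~\cite[Proposition 6.11]{unifiedtheory} using exactly Theorem~\ref{thm:representationformula} and Remark~\ref{rmk:whitney}, and the explicit computation in your second step (rewriting $f(x)$ as $\sum_K x^{k_1}\cdots x^{k_p}\,G_K(x^0,\Vert x^1\Vert^2,\ldots,\Vert x^\tau\Vert^2)$) is carried out verbatim in the proof of Proposition~\ref{prop:analyticTfunction} immediately afterwards.
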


We can now prove a related result.

\begin{proposition}\label{prop:analyticTfunction}
If $p\in\nn\cup\{\infty,\omega\}$, if $f\in\mathscr{C}^p(\Omega_D,A)$ and if there exists an open dense subset $D'$ of $D$ such that $f_{|_{\Omega_{D'}}}$ is a $T$-function, then $f\in\slice_T^p(\Omega_D,A)$. If, moreover, $p\in\{\infty,\omega\}$, then $\slice_T^p(\Omega_D,A)\subset\mathscr{C}^p(\Omega_D,A)$.
\end{proposition}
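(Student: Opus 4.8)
The plan is to reduce the claim to the Representation Formula of Theorem~\ref{thm:representationformula} together with its converse, Proposition~\ref{prop:representationformula}, and then to upgrade the regularity of the reconstructed stem components via Remark~\ref{rmk:whitney}. First I would fix an $I\in\torus$ and consider, for $(\alpha,\beta)\in D$ and $J\in\torus$, the identity
\[f(\alpha+\beta\,J)=\sum_{H\in\mathscr{P}(\tau)}\gamma_H\,f(\alpha+\overline{\beta}^H\,I)\,,\]
with $\gamma_H$ as in Theorem~\ref{thm:representationformula}. By hypothesis this holds for every $(\alpha,\beta)\in D'$; since both sides are $\mathscr{C}^p$ (indeed $\mathscr{C}^0$ suffices) functions of $(\alpha,\beta)$ on the open set $D$ and $D'$ is dense in $D$, the equality extends to all of $D$ by continuity. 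Proposition~\ref{prop:representationformula} then yields $f\in\slice_T(\Omega_D,A)$, so $f=\I(F)$ for the $T$-stem function $F=\sum_K E_KF_K$ whose components are the explicit averages $F_K(\alpha,\beta)=2^{-\tau}I_K^{-1}\sum_{H}(-1)^{|K\cap H|}f(\alpha+\overline{\beta}^H\,I)$.

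It remains to see that each $F_K$ is $\mathscr{C}^p$ on $D$, so that $F\in\stem_T^p$ and hence $f\in\slice_T^p(\Omega_D,A)$. This is where I would be slightly careful: the map $(\alpha,\beta)\mapsto\alpha+\overline{\beta}^H\,I$ is a smooth (indeed affine-linear in $\beta$) map $D\to\Omega_D\subseteq V$, and $f\in\mathscr{C}^p(\Omega_D,A)$ by assumption, so each summand $f(\alpha+\overline{\beta}^H\,I)$ is a $\mathscr{C}^p$ function of $(\alpha,\beta)$ on $D$; the $F_K$ are finite $A$-linear combinations of such functions, hence $\mathscr{C}^p$. Therefore $F\in\stem_T^p(D,A\otimes\rr^{2^\tau})$ and, by Definition~\ref{def:Tfunction}, $f=\I(F)\in\slice_T^p(\Omega_D,A)$, proving the first assertion.

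For the second assertion, suppose $p\in\{\infty,\omega\}$ and take an arbitrary $g\in\slice_T^p(\Omega_D,A)$, say $g=\I(G)$ with $G\in\stem_T^p(D,A\otimes\rr^{2^\tau})$. Applying Remark~\ref{rmk:whitney} to $G$ produces an open neighborhood $W$ of $\widehat{D}$ and a finite family $\{G_K\}_{K}$ in $\mathscr{C}^p(W,A)$ with $G_\emptyset(\alpha,\beta)=G_\emptyset(\alpha,\beta^2)$ (abusing notation) and $G_K(\alpha,\beta)=\beta_{k_1}\cdots\beta_{k_p}G_K(\alpha,\beta^2)$ for $K=\{k_1,\ldots,k_p\}$. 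Consequently, for $x=\alpha+\beta\,J\in\Omega_D$,
\[g(x)=\sum_{K\in\mathscr{P}(\tau)}J_K\,G_K(\alpha,\beta)=\sum_{K=\{k_1,\ldots,k_p\}}(\beta_{k_1}J_{k_1})\cdots(\beta_{k_p}J_{k_p})\,G_K(\alpha,\beta^2)\,,\]
where each factor $\beta_{k}J_{k}$ is the orthogonal component $x^{k}$ of $x$ (cf.\ Remark~\ref{rmk:decomposedvariable}), and $\beta_k^2=\Vert x^k\Vert^2$. Thus $g$ is expressed as a universal polynomial (with $\mathscr{C}^p$, resp.\ real-analytic, coefficients in the variables $\alpha$ and $\Vert x^1\Vert^2,\ldots,\Vert x^\tau\Vert^2$) in the components $x^0=\alpha$ and $x^1,\ldots,x^\tau$ of $x$; since $x\mapsto(x^0,x^1,\ldots,x^\tau,\Vert x^1\Vert^2,\ldots,\Vert x^\tau\Vert^2)$ is a $\mathscr{C}^\omega$ map and its $(\alpha,\beta^2)$-part lands in $W$, the composition $g$ is $\mathscr{C}^p$ on $\Omega_D$. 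Hence $\slice_T^p(\Omega_D,A)\subseteq\mathscr{C}^p(\Omega_D,A)$.

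The main obstacle I anticipate is the second part rather than the first: one must be sure that the Whitney factorization, which a priori only gives smoothness in the \emph{squared} variables $\beta^2$ on the auxiliary set $W\supseteq\widehat{D}$, transports back to genuine smoothness of $g$ as a function on the ambient set $\Omega_D\subseteq V$, i.e.\ that passing through the (non-injective on the zero set $\beta_h=0$) substitution $\beta\mapsto\beta^2$ and then to $x$ does not destroy regularity. The point is exactly that the odd components $F_K$ ($K\neq\emptyset$) carry the compensating product of odd factors $\beta_{k_1}\cdots\beta_{k_p}$, so that after multiplying by $J_K$ everything reorganizes into the $\mathscr{C}^\omega$ quantities $x^k=\beta_kJ_k$ and $\Vert x^k\Vert^2=\beta_k^2$; once this bookkeeping is made precise the $\mathscr{C}^p$ (or real-analytic) conclusion is immediate from the chain rule.
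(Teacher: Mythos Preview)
Your proposal is correct and follows essentially the same route as the paper's own proof: extend the Representation Formula~\eqref{eq:representationformula} from $D'$ to $D$ by continuity, invoke Proposition~\ref{prop:representationformula} and then read off the $\mathscr{C}^p$ regularity of the components $F_K$ from the explicit averaging formula of Theorem~\ref{thm:representationformula}; for the second assertion, both you and the paper apply the Whitney factorization of Remark~\ref{rmk:whitney} and rewrite $g(x)$ in terms of the polynomial quantities $x^{k_1}\cdots x^{k_p}$ and $\Vert x^k\Vert^2$ to conclude $g\in\mathscr{C}^p(\Omega_D,A)$. The only cosmetic issue is the overload of the symbol $p$ for both the regularity class and the cardinality $|K|$, which the paper also commits.
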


\begin{proof}
We first assume that $f\in\mathscr{C}^p(\Omega_D,A)$ and that there exists an open dense subset $D'$ of $D$ such that $f_{|_{\Omega_{D'}}}$ is a $T$-function. Fix $I\in\torus$: then \eqref{eq:representationformula} holds true for all $(\alpha,\beta)\in D'$ and all $J\in\torus$. In other words, for any $J\in\torus$ the map $\Psi^J:D\to A$ defined as
\[\Psi^J(\alpha,\beta):=f(\alpha+\beta\,J)-\sum_{H\in\mathscr{P}(\tau)}\gamma_H\,f(\alpha+\overline{\beta}^H\,I)\]
vanishes identically in $D'$. For any $J\in\torus$, our hypothesis $f\in\mathscr{C}^p(\Omega_D,A)$ guarantees that $\Psi^J\in\mathscr{C}^p(D,A)$, whence $\Psi^J\equiv0$ in $D$. In other words, \eqref{eq:representationformula} holds true for all $(\alpha,\beta)\in D$ and all $J\in\torus$. From Proposition~\ref{prop:representationformula}, we obtain that $f\in\slice_T(\Omega_D,A)$, i.e., that there exists $F\in\stem_T(D,A\otimes\rr^{2^\tau})$ such that $f=\I(F)$. Now, Theorem~\ref{thm:representationformula} yields that $F=\sum_{K\in\mathscr{P}(\tau)}E_KF_K$ with
\[F_K(\alpha,\beta)=2^{-\tau}I_K^{-1}\,\sum_{H\in\mathscr{P}(\tau)}(-1)^{|K \cap H|}\,f(\alpha+\overline{\beta}^H\,I)\,.\]
This formula, along with our hypothesis $f\in\mathscr{C}^p(\Omega_D,A)$, guarantees that $F\in\stem_T^p(D,A\otimes\rr^{2^\tau})$. We conclude that $f\in\slice_T^p(\Omega_D,A)$, as desired.

We now assume $p\in\{\infty,\omega\}$. By definition, $f$ belongs to $\slice_T^p(\Omega_D,A)$ if, and only if, $f=\I(F)$ for some $F\in\stem_T^p(D,A\otimes\rr^{2^\tau})=\stem_T(D,A\otimes\rr^{2^\tau})\cap\mathscr{C}^p(D,A\otimes\rr^{2^\tau})$. If this is the case and if $K=\{k_1,\ldots,k_p\}$, Remark~\ref{rmk:whitney} guarantees that
\[F_K(\alpha,\beta)=\beta_{k_1}\cdots\beta_{k_p}\,G_K(\alpha,\beta^2)\]
for appropriate $\mathscr{C}^p$ functions $\{G_K\}_{K\in\mathscr{P}(\tau)}$. The equality $f=\I(F)$ now implies that
\begin{align*}
f(\alpha+\beta\,J)&=\sum_{K\in\mathscr{P}(\tau)}J_KF_K(\alpha,\beta)\\
&=F_{\emptyset}(\alpha,\beta)+\sum_{1\leq p\leq\tau}\sum_{1\leq k_1<\ldots<k_p\leq\tau} J_{k_1}J_{k_2}\ldots J_{k_p}F_{\{k_1,\ldots,k_p\}}(\alpha,\beta)\\
&=G_{\emptyset}(\alpha,\beta^2)+\sum_{1\leq p\leq\tau}\sum_{1\leq k_1<\ldots<k_p\leq\tau} \beta_{k_1}J_{k_1}\beta_{k_2}J_{k_2}\ldots\beta_{k_p}J_{k_p}G_{\{k_1,\ldots,k_p\}}(\alpha,\beta^2)
\end{align*}
for all $(\alpha,\beta)\in D,J\in\torus$. Referring to the decomposition of the variable $x\in\Omega_D$ performed in Remark~\ref{rmk:decomposedvariable}, we conclude that
\begin{align*}
f(x)&=G_{\emptyset}(x^0,\Vert x^1\Vert^2,\ldots,\Vert x^\tau\Vert^2)\\
&\quad+\sum_{1\leq p\leq\tau}\sum_{1\leq k_1<\ldots<k_p\leq\tau} x^{k_1}x^{k_2}\ldots x^{k_p}G_{\{k_1,\ldots,k_p\}}(x^0,\Vert x^1\Vert^2,\ldots,\Vert x^\tau\Vert^2)\,.
\end{align*}
Since $V\to\rr_{0,t_0}\times\rr^\tau,\ x\mapsto(x^0,\Vert x^1\Vert^2,\ldots,\Vert x^\tau\Vert^2)$ is a real polynomial map, it is clear that each map
\[\Omega_D\to A,\quad x\mapsto G_K(x^0,\Vert x^1\Vert^2,\ldots,\Vert x^\tau\Vert^2)\]
belongs to $\mathscr{C}^p(\Omega_D,A)$. Since $V\to V\subseteq A,\ x=x^0+x^1+\ldots+x^\tau\mapsto x^{k}$ is a real linear map for all $k\in\{1,\ldots,\tau\}$, we conclude that $f\in\mathscr{C}^p(\Omega_D,A)$, as desired.
\end{proof}

We point out that the hypothesis $p\in\{\infty,\omega\}$ is crucial in the second statement of Proposition~\ref{prop:analyticTfunction}. When $p\in\nn$, there is some loss of regularity in the expression of $f=\I(F)$ in terms of $F\in\stem_T^p(D,A\otimes\rr^{2^\tau})$.

In~\cite[Proposition 7.1]{unifiedtheory}, we proved that all polynomials in $\{\T_\k\}_{\k\in\zz^{t_0+\tau}}$ are strongly $T$-regular and used this fact to obtain, for any $f\in\reg_T(\Omega,A)$ and any open ball $B\subseteq\Omega$ centered at a point $p$ in the mirror $\rr_{0,t_0}$, an explicit polynomial series expansion for $f$ converging normally in $B$, see~\cite[Theorem 7.2]{unifiedtheory}. Thanks to this fact,~\cite{unifiedtheory} proved that $T$-regular functions on $T$-symmetric $T$-slice domains are automatically strongly $T$-regular, whence real analytic. This property subsumes a renowned property of quaternionic slice-regular functions, proven in~\cite[Theorem 3.1]{advancesrevised} (see also~\cite{ghilonislicebyslice}), and the analogous property of Clifford slice-monogenic functions (see~\cite[Theorem 2.2.18]{librodaniele2} and references therein). The precise statement follows.

\begin{theorem}[Representation Formula, $T$-regular functions on $T$-symmetric $T$-slice domains]
If the $T$-symmetric set $\Omega_D$ is a $T$-slice domain, then $\reg_T(\Omega_D,A)=\sr_T(\Omega_D,A)$. As a consequence, every $f\in\reg_T(\Omega_D,A)$ is real analytic and fulfills formula~\eqref{eq:representationformula} for all $(\alpha,\beta)\in D$ and all $I,J\in\torus$.
\end{theorem}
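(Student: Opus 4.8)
Since $\sr_T(\Omega_D,A)=\slice_T(\Omega_D,A)\cap\reg_T(\Omega_D,A)$ by definition, the inclusion $\sr_T(\Omega_D,A)\subseteq\reg_T(\Omega_D,A)$ is immediate, and everything comes down to the reverse inclusion, i.e.\ to showing that \emph{every $f\in\reg_T(\Omega_D,A)$ is a $T$-function}. Once this is done, Proposition~\ref{prop:analyticstronglyTregular} supplies real analyticity of $f$ and Theorem~\ref{thm:representationformula} supplies formula~\eqref{eq:representationformula} for all $I,J\in\torus$. The plan is: fix once and for all an $I\in\torus$, prove that $f$ satisfies~\eqref{eq:representationformula} for all $(\alpha,\beta)\in D$ and all $J\in\torus$, and then invoke Proposition~\ref{prop:representationformula}. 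I would establish~\eqref{eq:representationformula} near the mirror by means of the polynomial series expansion of~\cite{unifiedtheory}, and then spread it over all of $D$ by real-analytic continuation.

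\emph{Step 1: $D$ is connected.} Since $\Omega_D$ is a $T$-slice domain it meets the mirror $\rr_{0,t_0}$, so $D$ contains a point $(p,0)$. Each reflection $(\alpha,\beta)\mapsto(\alpha,\overline{\beta}^h)$ of Assumption~\ref{ass:domain} is a self-homeomorphism of $D$ fixing $(p,0)$, hence it fixes the connected component $C$ of $(p,0)$; therefore $C$ and $D\setminus C$ are both open and invariant under all these reflections. From Remark~\ref{rmk:decomposedvariable} one checks that, for reflection-invariant subsets $D',D''$ of $D$, the sets $\Omega_{D'}$ and $\Omega_{D''}$ are disjoint whenever $D'\cap D''=\emptyset$, and that each $\Omega_{D'}$ is open in $V$ because $x\mapsto(x^0,\Vert x^1\Vert,\ldots,\Vert x^\tau\Vert)$ is continuous. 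Hence if $D\setminus C$ were nonempty, $\Omega_C$ and $\Omega_{D\setminus C}$ would disconnect $\Omega_D$, contrary to hypothesis. So $D=C$ is connected.

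\emph{Step 2: validity of~\eqref{eq:representationformula} near the mirror.} Fix $p\in\Omega_D\cap\rr_{0,t_0}$ and an open ball $B=\{x\in V:\Vert x-p\Vert<r\}\subseteq\Omega_D$. Since $\Vert x-p\Vert^2=\Vert x^0-p\Vert^2+\Vert x^1\Vert^2+\ldots+\Vert x^\tau\Vert^2$, the ball $B$ equals $\Omega_{D_B}$ for the connected ball $D_B:=\{(\alpha,\beta)\in\rr_{0,t_0}\times\rr^\tau:\Vert\alpha-p\Vert^2+\beta_1^2+\ldots+\beta_\tau^2<r^2\}$, which is an open subset of $D$. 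By~\cite[Proposition 7.1 and Theorem 7.2]{unifiedtheory}, on $B$ the function $f$ expands as a normally convergent series $f(x)=\sum_{\k}\T_\k(x-p)\,a_\k$ with $a_\k\in A$, whose summands are translates by $p$ of strongly $T$-regular polynomials, hence $T$-functions and as such subject to~\eqref{eq:representationformula}. Since the coefficients $\gamma_H=\gamma_H(I,J)$ appearing in~\eqref{eq:representationformula} do not depend on $(\alpha,\beta)$, and normal convergence permits interchanging $\sum_{\k}$ with the finite sum $\sum_{H\in\mathscr{P}(\tau)}$, it follows that $f$ itself satisfies~\eqref{eq:representationformula} at every $(\alpha,\beta)\in D_B$ and every $J\in\torus$.

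\emph{Step 3: propagation and conclusion.} Fix $J\in\torus$ and define $\Phi^J:D\to A$ by $\Phi^J(\alpha,\beta):=f(\alpha+\beta\,J)-\sum_{H\in\mathscr{P}(\tau)}\gamma_H\,f(\alpha+\overline{\beta}^H\,I)$; this makes sense because Assumption~\ref{ass:domain} keeps $(\alpha,\overline{\beta}^H)$ in $D$, so all the points involved lie in $\Omega_D$. As $f\in\reg_T(\Omega_D,A)$, the restrictions $f_I$ and $f_J$ are $I$- and $J$-monogenic, hence real analytic on the connected slices $\Omega_I$ and $\Omega_J$; since $(\alpha,\beta)\mapsto\alpha+\beta\,J$ and $(\alpha,\beta)\mapsto\alpha+\overline{\beta}^H\,I$ are affine maps into these slices and the $\gamma_H$ are constants, $\Phi^J$ is real analytic on $D$. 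By Step~2 it vanishes on the nonempty open set $D_B$, and by Step~1 the domain $D$ is connected, so $\Phi^J\equiv0$ on $D$. Therefore~\eqref{eq:representationformula} holds for all $(\alpha,\beta)\in D$ and all $J\in\torus$; Proposition~\ref{prop:representationformula} gives $f\in\slice_T(\Omega_D,A)$, whence $f\in\sr_T(\Omega_D,A)$, and Proposition~\ref{prop:analyticstronglyTregular} gives the real analyticity. I expect Step~1 to be the main obstacle: it is the place where the geometric hypotheses on $\Omega_D$ (being $T$-symmetric and a $T$-slice domain) are converted into the connectedness of the parameter set $D$, and that is exactly what legitimizes the analytic continuation in Step~3; by comparison, Steps~2 and~3 are routine once one knows that the $\T_\k$ are $T$-functions and that $T$-regular functions are real analytic along slices.
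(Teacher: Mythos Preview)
The paper does not actually prove this theorem here; it is recalled from~\cite{unifiedtheory}, with only the indication that the argument rests on the polynomial series expansion $f(x)=\sum_{\k}\T_\k(x-p)\,a_\k$ at mirror points together with the strong $T$-regularity of the polynomials $\T_\k$. Your proposal follows precisely this outline and fills in the details the present paper omits: you verify~\eqref{eq:representationformula} locally on a ball about a mirror point via the series, then propagate it over $D$ by real-analytic continuation along slices, using the connectedness of $D$ that you extract from the $T$-slice-domain hypothesis. The argument is correct, and your diagnosis that Step~1 is the crux is accurate: it is the place where the geometric assumptions on $\Omega_D$ are converted into what the analytic continuation in Step~3 needs.
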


%%%%%%%%%%%%%%%%%%%%%%%

\section{Global differential operators on $T$-functions}\label{sec:globaloperators}

Throughout this section, we fix $\tau\in\nn$ and $T=(t_0,\ldots,t_\tau)$ with $0\leq t_0<t_1<\ldots<t_\tau=N$. We are going to define global differential operators on $T$-functions to characterize $T$-regularity and $T$-harmonicity. In order to do so, we will first construct global differential operators on $T$-stem functions $D\to A\otimes\rr^{2^\tau}$.

\subsection{Global differential operators on $T$-stem functions}

Recall that every element of $D$ takes the form $(\alpha,\beta)$ with $\alpha=\sum_{s=0}^{t_0}v_sx_s\in\rr_{0,t_0}$ and $\beta=(\beta_1,\ldots,\beta_\tau)\in\rr^\tau$. In addition to the standard real linear isomorphism $L_{\B'}:\rr^{d+1}\to A$, we will therefore use
\[\Lambda:\rr^{t_0+\tau+1}\to\rr_{0,t_0}\times\rr^\tau\,,\quad \Lambda(x_0,\ldots,x_{t_0},x_{t_0+1},\ldots,x_{t_0+\tau})=\left(\sum_{s=0}^{t_0}x_s\,v_s,x_{t_0+1},\ldots,x_{t_0+\tau}\right)\,.\]
As a first step in our construction for $T$-stem functions, we pose the next definition, where we adopt the notation $\widehat{D}:=\Lambda^{-1}(D)$.

\begin{definition}\label{def:debaralpha}
Let $p\in\nn\cup\{\infty,\omega\}$ and $\Phi\in\mathscr{C}^{p+1}(D,A)$. For all $s\in\{0,\ldots,t_0\},h\in\{1,\ldots,\tau\}$, we define
\begin{align*}
\partial_{\alpha_s}\Phi&:=L_{\B'}\circ\left(\frac{\partial}{\partial x_s}\left(L_{\B'}^{-1}\circ \Phi\circ\Lambda_{|_{\widehat{D}}}\right)\right)\circ (\Lambda^{-1})_{|_D}\in\mathscr{C}^p(D,A)\\
\partial_{\beta_h}\Phi&:=L_{\B'}\circ\left(\frac{\partial}{\partial x_{t_0+h}}\left(L_{\B'}^{-1}\circ \Phi\circ\Lambda_{|_{\widehat{D}}}\right)\right)\circ (\Lambda^{-1})_{|_D}\in\mathscr{C}^p(D,A)\,.
\end{align*}
We also set
\begin{align*}
&\debar_\alpha\Phi:=\sum_{s=0}^{t_0}v_s\,\partial_{\alpha_s}\Phi=\partial_{\alpha_0}\Phi+\sum_{s=1}^{t_0}v_s\,\partial_{\alpha_s}\Phi\in\mathscr{C}^p(D,A)\,,\\
&\partial_\alpha\Phi:=\sum_{s=0}^{t_0}v_s^c\,\partial_{\alpha_s}\Phi=\partial_{\alpha_0}\Phi-\sum_{s=1}^{t_0}v_s\,\partial_{\alpha_s}\Phi\in\mathscr{C}^p(D,A)\,,\\
&\debar_\alpha^u\Phi:=\partial_{\alpha_0}\Phi-(-1)^u\sum_{s=1}^{t_0}v_s\,\partial_{\alpha_s}\Phi\in\mathscr{C}^p(D,A)
\end{align*}
for all $u\in\nn$.
\end{definition}

\begin{remark}\label{rmk:incrementalratio}
Let $\Phi\in\mathscr{C}^1(D,A)$. For all $(\alpha,\beta)=(\alpha,\beta_1,\ldots,\beta_\tau)\in D,s\in\{0,\ldots,t_0\},h\in\{1,\ldots,\tau\}$,
\begin{align*}
&\partial_{\alpha_s}\Phi(\alpha,\beta)=\lim_{\rr\ni \varepsilon\to0}\varepsilon^{-1}\left(\Phi(\alpha+\varepsilon v_s,\beta)-\Phi(\alpha,\beta)\right)\,,\\
&\partial_{\beta_h}\Phi(\alpha,\beta)=\lim_{\rr\ni \varepsilon\to0}\varepsilon^{-1}\left(\Phi(\alpha,\beta_1,\ldots,\beta_{h-1},\beta_{h}+\epsilon,\beta_{h+1},\ldots,\beta_\tau)-\Phi(\alpha,\beta)\right)\,.
\end{align*}
\end{remark}

As a consequence of Remark~\ref{rmk:incrementalratio}, the operators $\partial=\partial_{\alpha_s},\partial=\partial_{\beta_h}$ follow the usual Leibniz rule $\partial(\Phi\Psi)=(\partial\Phi)\,\Psi+\Phi\,(\partial\Psi)$ for all $\Phi,\Psi\in\mathscr{C}^1(D,A)$. Moreover, the usual rule $\partial^2(\Phi\Psi)=(\partial^2\Phi)\,\Psi+\Phi\,(\partial^2\Psi)+2(\partial\Phi)(\partial\Psi)$ applies both for $\partial=\partial_{\alpha_s}$ and for $\partial=\partial_{\beta_h}$ when $\Phi,\Psi\in\mathscr{C}^2(D,A)$.

\begin{remark}\label{rmk:dedebaralpha}
Since $v_s^cv_s=1=v_sv_s^c$ and $v_s^cv_u+v_u^cv_s=0=v_sv_u^c+v_uv_s^c$ for all distinct $s,u\in\{0,\ldots,t_0\}$, we notice that
\[\partial_\alpha\debar_\alpha=\sum_{s=0}^{t_0}\partial_{\alpha_s}^2=\debar_\alpha\partial_\alpha\,.\]
\end{remark}

We now construct the announced global differential operators on $T$-stem functions.

\begin{definition}\label{def:stemoperators}
The function $\sigma:\{1,\ldots,\tau\}\times\mathscr{P}(\tau)\to\{0,1\}$ is defined as follows: $\sigma(h,K)=0$ if there is an even number of elements in $K$ that are less than, or equal to, $h$; $\sigma(h,K)=1$ if there is an odd number of elements in $K$ that are less than, or equal to, $h$.

We define the operators $\debar_T,\partial_T:\stem_T^1(D,A\otimes\rr^{2^\tau})\to\stem_T^0(D,A\otimes\rr^{2^\tau})$ and $\Delta_T:\stem_T^2(D,A\otimes\rr^{2^\tau})\to\stem_T^0(D,A\otimes\rr^{2^\tau})$ as follows. Given $F=\sum_{K\in\mathscr{P}(\tau)}E_KF_K$, we set
\begin{align*}
&(\debar_TF)_K:=\debar_\alpha^{|K|+1}F_K+\sum_{h=1}^\tau(-1)^{\sigma(h,K)+1}\partial_{\beta_h}F_{K\bigtriangleup\{h\}}\,,\\
&(\partial_TF)_K:=\debar_\alpha^{|K|}F_K+\sum_{h=1}^\tau(-1)^{\sigma(h,K)}\partial_{\beta_h}F_{K\bigtriangleup\{h\}}\,,\\
&(\Delta_TF)_K:=\partial_\alpha\debar_\alpha F_K+\sum_{h=1}^{\tau}\partial_{\beta_h}^2F_K
\end{align*}
for all $K\in\mathscr{P}(\tau)$.
\end{definition}

The reason why $\debar_TF,\partial_TF,\Delta_TF$ are still $T$-stem functions is the following. Fix any $h\in\{1,\ldots,\tau\},K\in\mathscr{P}(\tau)$ and $F\in\stem_T^1(D,A\otimes\rr^{2^\tau})$. The functions $\debar_\alpha F_K,\partial_\alpha F_K$ are preserved by composition with the reflection $\rr^\tau\to\rr^\tau\,,\ \beta\mapsto\overline{\beta}^h$ if, and only if, $F_K$ is. Similarly, $F_K$ is preserved by composition with the same reflection if, and only if, $F_{K\bigtriangleup\{h\}}$ changes sign under composition with the same reflection, which is in turn equivalent to having $\partial_{\beta_h}F_{K\bigtriangleup\{h\}}$ preserved by composition with the same reflection.

It is easy to see that $\debar_T,\partial_T,\Delta_T$ all preserve $\stem_T^p(D,A\otimes\rr^{2^\tau})$ for $p\in\{\infty,\omega\}$. Moreover, we prove the next lemma.

\begin{lemma}
$\Delta_T=\partial_T\debar_T=\debar_T\partial_T$ on $\stem_T^2(D,A\otimes\rr^{2^\tau})$.
\end{lemma}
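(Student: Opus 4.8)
The statement asserts the operator identities $\Delta_T=\partial_T\debar_T=\debar_T\partial_T$ on $\stem_T^2(D,A\otimes\rr^{2^\tau})$. The natural strategy is to verify the identities componentwise: fix $F=\sum_{K\in\mathscr{P}(\tau)}E_KF_K\in\stem_T^2$ and a subset $K\in\mathscr{P}(\tau)$, and compute $(\partial_T\debar_T F)_K$ directly from Definition~\ref{def:stemoperators}, showing that it equals $(\Delta_T F)_K=\partial_\alpha\debar_\alpha F_K+\sum_{h=1}^\tau\partial_{\beta_h}^2 F_K$; then repeat (or invoke symmetry) for $\debar_T\partial_T$. To do this, I would first unwind $(\partial_T(\debar_T F))_K$ using the formula for $\partial_T$ applied to the $T$-stem function $G:=\debar_T F$, namely $(\partial_T G)_K=\debar_\alpha^{|K|}G_K+\sum_{h=1}^\tau(-1)^{\sigma(h,K)}\partial_{\beta_h}G_{K\bigtriangleup\{h\}}$, and then substitute $G_K=\debar_\alpha^{|K|+1}F_K+\sum_{g=1}^\tau(-1)^{\sigma(g,K)+1}\partial_{\beta_g}F_{K\bigtriangleup\{g\}}$ and similarly $G_{K\bigtriangleup\{h\}}$.

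\textbf{Key steps.} After the substitution the expression splits into four types of terms, which I would group and handle separately. First, the ``pure $\alpha$'' term $\debar_\alpha^{|K|}\debar_\alpha^{|K|+1}F_K$; here I would use the observation that $\debar_\alpha^u$ depends on $u$ only through the parity of $u$ and that $\debar_\alpha^{u+1}\debar_\alpha^u=\partial_\alpha\debar_\alpha=\sum_{s=0}^{t_0}\partial_{\alpha_s}^2$ by Remark~\ref{rmk:dedebaralpha} (since $|K|$ and $|K|+1$ have opposite parities, one of the two factors is $\debar_\alpha$ and the other is $\partial_\alpha$ up to a harmless relabeling). Second, the ``pure $\beta$'' terms, which upon collecting produce $\sum_{h=1}^\tau\partial_{\beta_h}^2 F_K$ once one checks that the sign $(-1)^{\sigma(h,K)}\cdot(-1)^{\sigma(h,K\bigtriangleup\{h\})+1}$ equals $+1$; this uses the elementary fact that $\sigma(h,K)$ and $\sigma(h,K\bigtriangleup\{h\})$ differ by $1$ modulo $2$, since toggling $h$ in or out of $K$ changes the count of elements of $K$ that are $\le h$ by exactly one. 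Third, the ``mixed'' terms involving $\debar_\alpha^{\bullet}\partial_{\beta_h}F_{K\bigtriangleup\{h\}}$ and $\partial_{\beta_h}\debar_\alpha^{\bullet}F_{K\bigtriangleup\{h\}}$; here I would use that $\partial_{\alpha_s}$ and $\partial_{\beta_h}$ commute (they are ordinary partial derivatives in independent real variables, as made explicit in Remark~\ref{rmk:incrementalratio}), so these terms should cancel in pairs after matching the $\debar_\alpha$-exponent parities; the exponent on the left copy is $|K|$ while on the $G_{K\bigtriangleup\{h\}}$ side it is $|K\bigtriangleup\{h\}|+1$, and these have the same parity, so the two occurrences carry opposite overall signs and cancel. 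Fourth, the ``double-$\beta$'' cross terms $\partial_{\beta_h}\partial_{\beta_g}F_{(K\bigtriangleup\{h\})\bigtriangleup\{g\}}$ with $h\ne g$; since $(K\bigtriangleup\{h\})\bigtriangleup\{g\}=(K\bigtriangleup\{g\})\bigtriangleup\{h\}$ and $\partial_{\beta_h}\partial_{\beta_g}=\partial_{\beta_g}\partial_{\beta_h}$, these come in pairs indexed by the unordered pair $\{h,g\}$, and I would show the signs are opposite so they cancel; this again reduces to a parity computation with $\sigma$.

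\textbf{Main obstacle.} The routine calculus is trivial; the real content is the bookkeeping of the signs coming from $\sigma(h,K)$ and the function $\debar_\alpha^u$ under the various shifts of $K$ by symmetric differences. The crux is the lemma-level fact that $\sigma(h,K\bigtriangleup\{h\})\equiv\sigma(h,K)+1\pmod 2$ and that, for $g\ne h$, the four values $\sigma(h,K),\sigma(g,K),\sigma(h,K\bigtriangleup\{g\}),\sigma(g,K\bigtriangleup\{h\})$ satisfy the relation needed to make the mixed $\beta\beta$-terms cancel; I expect that $\sigma(h,K\bigtriangleup\{g\})\equiv\sigma(h,K)+[g\le h]\pmod 2$, and the pairing of $(h,g)$ with $(g,h)$ then introduces a net factor $(-1)^{[g\le h]+[h\le g]}=-1$, giving cancellation. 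I would isolate these parity facts as a short preliminary computation (or incorporate them into the earlier discussion of why $\debar_TF$ is a $T$-stem function, which already relies on the same $\sigma$-arithmetic), and then the identity $\Delta_T=\partial_T\debar_T$ follows by collecting the four groups. For $\Delta_T=\debar_T\partial_T$ I would either run the symmetric computation — swapping the roles of $\debar_\alpha^{|K|+1}$ and $\debar_\alpha^{|K|}$ and of the two sign exponents $\sigma(h,K)+1$ versus $\sigma(h,K)$ — or observe that the argument is formally identical with the exponents shifted by one, which does not affect any parity (hence any sign) because all the relevant identities depend only on parities that are shifted uniformly.
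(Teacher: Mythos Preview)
Your proposal is correct and follows essentially the same approach as the paper: a direct componentwise computation of $(\partial_T\debar_T F)_K$ by substituting $G=\debar_T F$ into the formula for $\partial_T$, splitting into the pure-$\alpha$ term, the mixed $\alpha\beta$ terms (which cancel since $|K|$ and $|K\bigtriangleup\{h\}|+1$ have the same parity), and the double $\beta$-sum whose diagonal gives $\sum_h\partial_{\beta_h}^2F_K$ and whose off-diagonal terms cancel via the parity identity $\sigma(h,K\bigtriangleup\{g\})\equiv\sigma(h,K)+[g\le h]\pmod 2$. The paper's proof is organized in the same way, with the same sign bookkeeping (phrased as $\sigma(h,K)=\sigma(h,K\bigtriangleup\{\ell\})$ and $\sigma(\ell,K\bigtriangleup\{h\})=\sigma(\ell,K)\pm1$ for $h<\ell$), and likewise handles $\debar_T\partial_T$ by the symmetric computation.
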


\begin{proof}
Fix $F\in\stem_T^2(D,A\otimes\rr^{2^\tau})$ and let $G:=\debar_TF$, so that
\[G_K=\debar_\alpha^{|K|+1}F_K+\sum_{h=1}^\tau(-1)^{\sigma(h,K)+1}\partial_{\beta_h}F_{K\bigtriangleup\{h\}}\]
for all $K\in\mathscr{P}(\tau)$. Now,
\begin{align*}
(\partial_T\debar_TF)_K&=(\partial_TG)_K=\debar_\alpha^{|K|}G_K+\sum_{h=1}^\tau(-1)^{\sigma(h,K)}\partial_{\beta_h}G_{K\bigtriangleup\{h\}}\\
&=\partial_\alpha\debar_\alpha F_K+\sum_{h=1}^\tau(-1)^{\sigma(h,K)+1}\,\debar_\alpha^{|K|}\partial_{\beta_h}F_{K\bigtriangleup\{h\}}\\
&\quad+\sum_{h=1}^\tau(-1)^{\sigma(h,K)}\partial_{\beta_h}\debar_\alpha^{|K\bigtriangleup\{h\}|+1}F_{K\bigtriangleup\{h\}}\\
&\quad+\sum_{h,\ell=1}^\tau(-1)^{\sigma(h,K)+\sigma(\ell,K\bigtriangleup\{h\})+1}\partial_{\beta_h}\partial_{\beta_\ell}F_{(K\bigtriangleup\{\ell\})\bigtriangleup\{h\}}\\
&=\partial_\alpha\debar_\alpha F_K+\sum_{h=1}^\tau\partial_{\beta_h}^2F_K\\
&\quad-\sum_{h<\ell}^\tau\left((-1)^{\sigma(h,K)+\sigma(\ell,K\bigtriangleup\{h\})}+(-1)^{\sigma(\ell,K)+\sigma(h,K\bigtriangleup\{\ell\})}\right)\partial_{\beta_h}\partial_{\beta_\ell}F_{(K\bigtriangleup\{\ell\})\bigtriangleup\{h\}}\\
&=\partial_\alpha\debar_\alpha F_K+\sum_{h=1}^\tau\partial_{\beta_h}^2F_K=(\Delta_TF)_K\,.
\end{align*}
For the third equality, we used the fact that $|K|$ and $|K|+1$ have opposite parities along with Remark~\ref{rmk:dedebaralpha}. In the fourth equality: the first two series canceled out because $|K\bigtriangleup\{h\}|+1$ has the same parity as $|K|$; and $\sigma(h,K)$ has the same parity as $\sigma(\ell,K\bigtriangleup\{h\})+1$ when $\ell=h$. For the fifth equality, we used the fact that $\sigma(h,K)=\sigma(h,K\bigtriangleup\{\ell\})$ and $\sigma(\ell,K\bigtriangleup\{h\})=\sigma(\ell,K)\pm1$ under the hypothesis $h<\ell$. Analogous computations prove that $\debar_T\partial_TF=\Delta_T$. 
\end{proof}

We conclude this subsection with remark about the kernel of $\Delta_T$.

\begin{remark}\label{rmk:Tstemharmonicareanalytic}
If $F$ belongs to the kernel of $\Delta_T:\stem_T^2(D,A\otimes\rr^{2^\tau})\to\stem_T^0(D,A\otimes\rr^{2^\tau})$, then $F\in\stem_T^\omega(D,A\otimes\rr^{2^\tau})$. This is because $\Delta_TF\equiv0$ implies that, for every $K\in\mathscr{P}(\tau)$, the component $F_K$ belongs to the kernel of the hypoelliptic operator $\sum_{s=0}^{t_0}\partial_{\alpha_s}^2+\sum_{h=1}^{\tau}\partial_{\beta_h}^2$, whence $F_K\in\mathscr{C}^\omega(D,A)$.
\end{remark}

\subsection{Global differential operators on $T$-functions}

We are now ready to define useful operators on $T$-functions, using the operators we just defined on $T$-stem functions. We will soon see that these operators are naturally connected to $T$-regularity and $T$-harmonicity. Additionally, the forthcoming Proposition~\ref{prop:globaloperators} will provide explicit expressions for these operators.

\begin{definition}
The operator $\debar_T:\slice_T^1(\Omega_D,A)\to\slice_T^0(\Omega_D,A)$ is defined to make the following diagram commutative:
\begin{center}
    \begin{tikzcd}
	{\stem_T^1(D,A\otimes\rr^{2^\tau})} && {\stem_T^0(D,A\otimes\rr^{2^\tau})} \\
	& \circlearrowleft \\
	{\slice_T^1(\Omega_D,A)} && {\slice_T^0(\Omega_D,A)\,.}
	\arrow["\debar_T", from=1-1, to=1-3]
	\arrow["{\I}"', from=1-1, to=3-1]
	\arrow["\debar_T", from=3-1, to=3-3]
	\arrow["{\I}", from=1-3, to=3-3]
\end{tikzcd}
\end{center}
The operators $\partial_T:\slice_T^1(\Omega_D,A)\to\slice_T^0(\Omega_D,A)$ and $\Delta_T:\slice_T^2(\Omega_D,A)\to\slice_T^0(\Omega_D,A)$ are similarly defined to lift to $\partial_T:\stem_T^1(D,A\otimes\rr^{2^\tau})\to\stem_T^0(D,A\otimes\rr^{2^\tau})$ and to $\Delta_T:\stem_T^2(D,A\otimes\rr^{2^\tau})\to\stem_T^0(D,A\otimes\rr^{2^\tau})$, respectively.
\end{definition}

It is easy to see that $\debar_T,\partial_T,\Delta_T$ all preserve $\slice_T^p(\Omega_D,A)$ for $p\in\{\infty,\omega\}$.

We are almost ready to characterize $T$-regularity and $T$-harmonicity in terms of $\debar_T$ and $\Delta_T$. The next remark, which uses Definition~\ref{def:stemoperators}, provides a useful tool for this characterization.

\begin{remark}\label{rmk:units}
Fix $h\in\{1,\ldots,\tau\}$ and $K\in\mathscr{P}(\tau)$. For any $J=(J_1,\ldots,J_\tau)\in\torus$,
\[J_hJ_K=(-1)^{\sigma(h,K)}J_{K\bigtriangleup\{h\}}\,.\]
\end{remark}

\begin{theorem}\label{thm:debarT}
If $f\in\slice_T^1(\Omega_D,A)$, then $(\debar_Tf)_J=\debar_Jf_J,(\partial_Tf)_J=\partial_Jf_J$ for every $J\in\torus$. If, moreover, $f\in\slice_T^2(\Omega_D,A)$, then $(\Delta_Tf)_J=\Delta_Jf_J$ for every $J\in\torus$.
\end{theorem}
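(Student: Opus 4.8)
The plan is to reduce the claimed identities to the definitions via the commuting diagram that defines $\debar_T, \partial_T, \Delta_T$ on $T$-functions. Since $f \in \slice_T^1(\Omega_D, A)$, we may write $f = \I(F)$ for a unique $F = \sum_{K \in \mathscr{P}(\tau)} E_K F_K \in \stem_T^1(D, A \otimes \rr^{2^\tau})$, and by definition $\debar_T f = \I(\debar_T F)$, so that $(\debar_T f)_J$ is the restriction to $\Omega_D \cap \rr^{t_0+\tau+1}_J$ of the function induced by $\debar_T F$. Unwinding Definition~\ref{def:Tfunction}, this restriction is
\[
(\debar_T f)_J(\alpha + \beta J) = \sum_{K \in \mathscr{P}(\tau)} J_K\,(\debar_T F)_K(\alpha,\beta)\,.
\]
On the other hand, $f_J(\alpha+\beta J) = \sum_{K} J_K F_K(\alpha,\beta)$, and I would compute $\debar_J f_J$ directly from Definition~\ref{def:Jmonogenic}: applying $\partial_{\alpha_s}$ (for $s \le t_0$) and $\partial_{\beta_h}$ (for $h \le \tau$) to this sum, using that the $J_K$ are constant and the Leibniz rule from Remark~\ref{rmk:incrementalratio}, one gets $\partial_s f_J = \sum_K J_K \partial_{\alpha_s} F_K$ and $\partial_{t_0+h} f_J = \sum_K J_K \partial_{\beta_h} F_K$ (using the identification of $\partial_{\alpha_s}$ with the slice-wise partial via $\Lambda$ versus $L_{\B_J}$).

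The core of the argument is then a bookkeeping comparison. Assembling $\debar_J f_J = \sum_{s=0}^{t_0} v_s \partial_s f_J + \sum_{h=1}^\tau J_h \partial_{t_0+h} f_J$ and substituting, one obtains
\[
\debar_J f_J = \sum_{K}\left(\sum_{s=0}^{t_0} v_s J_K \partial_{\alpha_s} F_K\right) + \sum_{h=1}^\tau \sum_{K} J_h J_K \partial_{\beta_h} F_K\,.
\]
The crucial step is to rewrite $v_s J_K$ and $J_h J_K$ so that the coefficient of each basis unit $J_{K'}$ can be collected. For the first sum, I would reindex using the fact that $v_s$ anticommutes with each $J_u$ (since $J_u \in \Span(v_{t_{u-1}+1}, \ldots, v_{t_u})$ and these are orthogonal to $v_s$, so $v_s v_u = -v_u v_s$, hence $v_s J_K = (-1)^{|K|} J_K v_s$); this produces exactly the operator $\debar_\alpha^{|K|+1}$ acting on $F_K$, matching the first term of $(\debar_T F)_K$. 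For the second sum, Remark~\ref{rmk:units} gives $J_h J_K = (-1)^{\sigma(h,K)} J_{K \bigtriangleup \{h\}}$, so reindexing $K' = K \bigtriangleup \{h\}$ (equivalently $K = K' \bigtriangleup \{h\}$) turns $\sum_h \sum_K J_h J_K \partial_{\beta_h} F_K$ into $\sum_{K'} J_{K'} \sum_h (-1)^{\sigma(h, K' \bigtriangleup \{h\})} \partial_{\beta_h} F_{K' \bigtriangleup \{h\}}$, and one checks $\sigma(h, K' \bigtriangleup \{h\}) = \sigma(h, K') + 1 \bmod 2$, recovering the $(-1)^{\sigma(h,K)+1}$ factor in Definition~\ref{def:stemoperators}. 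Collecting coefficients of $J_{K'}$ then yields precisely $\sum_{K'} J_{K'} (\debar_T F)_{K'}(\alpha,\beta) = (\debar_T f)_J$, as desired.

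The identity $(\partial_T f)_J = \partial_J f_J$ is entirely analogous, with the sign on the $v_s$ and $J_h$ terms flipped and $\debar_\alpha^{|K|+1}$ replaced by $\debar_\alpha^{|K|}$; I would just indicate the parallel computation. For $\Delta_T$, assuming now $f \in \slice_T^2(\Omega_D, A)$, the cleanest route is to invoke the already-established identities together with the factorizations $\Delta_T = \partial_T \debar_T$ (the lemma just proved on $T$-stem functions, which transports to $T$-functions through $\I$) and $\Delta_J = \partial_J \debar_J$ (stated in the preliminaries): then $(\Delta_T f)_J = (\partial_T \debar_T f)_J = \partial_J (\debar_T f)_J = \partial_J \debar_J f_J = \Delta_J f_J$. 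Alternatively, one computes $\Delta_J f_J = \sum_{s=0}^{t_0+\tau} \partial_s^2 f_J$ directly: the $J_K$ are constant, so $\partial_s^2 f_J = \sum_K J_K \partial_{\alpha_s}^2 F_K$ for $s \le t_0$ and $\partial_{t_0+h}^2 f_J = \sum_K J_K \partial_{\beta_h}^2 F_K$, and summing gives $\sum_K J_K (\partial_\alpha \debar_\alpha F_K + \sum_h \partial_{\beta_h}^2 F_K) = \sum_K J_K (\Delta_T F)_K$ by Remark~\ref{rmk:dedebaralpha}.

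I expect the main obstacle to be the sign and parity bookkeeping in the $v_s J_K$ step and in verifying $\sigma(h, K \bigtriangleup \{h\}) = \sigma(h,K)+1$: one must be careful that $J_K$ is a product of $|K|$ imaginary units each anticommuting with $v_s$, so conjugating $v_s$ past $J_K$ introduces the factor $(-1)^{|K|}$ exactly, and that the definition of $\debar_\alpha^u$ depends only on the parity of $u$. A minor subtlety is the identification of the slice-wise operators $\partial_s$ from Definition~\ref{def:Jmonogenic} (defined via $L_{\B_J}$) with the operators $\partial_{\alpha_s}, \partial_{\beta_h}$ from Definition~\ref{def:debaralpha} (defined via $\Lambda$) after composing with $x = \alpha + \beta J$; this follows from the incremental-ratio formulas in Remark~\ref{rmk:incrementalratio} and the analogous formulas recalled after Definition~\ref{def:Jmonogenic}, since both compute the same directional derivatives along $v_s$ and along $J_h$. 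None of this is deep, but it is the part that requires genuine care.
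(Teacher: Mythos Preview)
Your proposal is correct and follows essentially the same approach as the paper's proof: write $f=\I(F)$, compute $\debar_J f_J$ directly using the anticommutation $v_s J_K = (-1)^{|K|} J_K v_s$ to produce the $\debar_\alpha^{|K|+1}$ term, then apply Remark~\ref{rmk:units} and the reindexing $K' = K \bigtriangleup \{h\}$ together with the parity flip $\sigma(h,K'\bigtriangleup\{h\}) \equiv \sigma(h,K')+1$ to match $(\debar_T F)_K$. The paper leaves the $\Delta_T$ case implicit, and either of your two routes (factorization via $\Delta_T = \partial_T\debar_T$ and $\Delta_J = \partial_J\debar_J$, or the direct computation using that $J_K$ is constant) works.
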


\begin{proof}
Assume $f=\I(F)$, whence $f(\alpha+J\beta)=\sum_{K\in\mathscr{P}(\tau)}J_KF_K(\alpha,\beta)$ for all $(\alpha,\beta)\in D$ and all $J\in\torus$. Thus,
\begin{align*}
\debar_Jf_J(\alpha+J\beta)&=\sum_{K\in\mathscr{P}(\tau)}\left(\debar_\alpha+\sum_{h=1}^\tau J_h\partial_{\beta_h}\right)\left(J_KF_K(\alpha,\beta)\right)\\
&=\sum_{K\in\mathscr{P}(\tau)}J_K\debar_\alpha^{|K|+1} F_K(\alpha,\beta)+\sum_{K\in\mathscr{P}(\tau)}\sum_{h=1}^\tau J_hJ_K\partial_{\beta_h}F_K(\alpha,\beta)\,,\\
\partial_Jf_J(\alpha+J\beta)&=\sum_{K\in\mathscr{P}(\tau)}\left(\partial_\alpha-\sum_{h=1}^\tau J_h\partial_{\beta_h}\right)\left(J_KF_K(\alpha,\beta)\right)\\
&=\sum_{K\in\mathscr{P}(\tau)}J_K\debar_\alpha^{|K|} F_K(\alpha,\beta)-\sum_{K\in\mathscr{P}(\tau)}\sum_{h=1}^\tau J_hJ_K\partial_{\beta_h}F_K(\alpha,\beta)
\end{align*}
Now, Remarks~\ref{rmk:symmetricdifference} and~\ref{rmk:units} yield
\begin{align*}
\sum_{K\in\mathscr{P}(\tau)}\sum_{h=1}^\tau J_hJ_K\partial_{\beta_h}F_K(\alpha,\beta)&=\sum_{K\in\mathscr{P}(\tau)}\sum_{h=1}^\tau (-1)^{\sigma(h,K)}J_{K\bigtriangleup\{h\}}\partial_{\beta_h}F_K(\alpha,\beta)\\
&=\sum_{H\in\mathscr{P}(\tau)}J_{H}\sum_{h=1}^\tau (-1)^{\sigma(h,H\bigtriangleup\{h\})}\partial_{\beta_h}F_{H\bigtriangleup\{h\}}(\alpha,\beta)\\
&=\sum_{H\in\mathscr{P}(\tau)}J_{H}\sum_{h=1}^\tau (-1)^{\sigma(h,H)+1}\partial_{\beta_h}F_{H\bigtriangleup\{h\}}(\alpha,\beta)
\end{align*}
It follows that
\begin{align*}
\debar_Jf_J(\alpha+J\beta)&=\sum_{K\in\mathscr{P}(\tau)}J_K\left(\debar_\alpha^{|K|+1} F_K(\alpha,\beta)+\sum_{h=1}^\tau (-1)^{\sigma(h,K)+1}\partial_{\beta_h}F_{K\bigtriangleup\{h\}}(\alpha,\beta)\right)\\
&=\debar_Tf(\alpha+J\beta)\,,\\
\partial_Jf_J(\alpha+J\beta)&=\sum_{K\in\mathscr{P}(\tau)}J_K\left(\debar_\alpha^{|K|} F_K(\alpha,\beta)+\sum_{h=1}^\tau (-1)^{\sigma(h,K)}\partial_{\beta_h}F_{K\bigtriangleup\{h\}}(\alpha,\beta)\right)\\
&=\partial_Tf(\alpha+J\beta)\,.
\end{align*}
\end{proof}

\begin{corollary}
If $f\in\slice_T^1(\Omega_D,A)$, then $f$ is $T$-regular if, and only if, it belongs to the kernel of $\debar_T:\slice_T^1(\Omega_D,A)\to\slice_T^0(\Omega_D,A)$. Moreover, the right $A$-submodule $\sr_T(\Omega_D,A)$ of strongly $T$-regular functions is the image through the isomorphism $\I:\stem_T^1(D,A\otimes\rr^{2^\tau})\to\slice_T^1(\Omega_D,A)$ of the kernel of the operator $\debar_T:\stem_T^1(D,A\otimes\rr^{2^\tau})\to\stem_T^0(D,A\otimes\rr^{2^\tau})$.

If $f\in\slice_T^2(\Omega_D,A)$, then $f$ is $T$-harmonic if, and only if, it belongs to the kernel of $\Delta_T:\slice_T^2(\Omega_D,A)\to\slice_T^0(\Omega_D,A)$. Moreover, the right $A$-submodule of $\slice_T^2(\Omega_D,A)$ consisting of $T$-harmonic functions is the image through the isomorphism $\I:\stem_T^2(D,A\otimes\rr^{2^\tau})\to\slice_T^2(\Omega_D,A)$ of the kernel of the operator $\Delta_T:\stem_T^2(D,A\otimes\rr^{2^\tau})\to\stem_T^0(D,A\otimes\rr^{2^\tau})$.
\end{corollary}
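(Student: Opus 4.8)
The plan is to reduce everything to Theorem~\ref{thm:debarT} together with the elementary observation that a $T$-function $g\in\slice_T^0(\Omega_D,A)$ vanishes identically if, and only if, its restriction $g_J$ vanishes for every $J\in\torus$. This holds because every point of $\Omega_D$ has the form $\alpha+\beta\,J$ with $(\alpha,\beta)\in D$ and $J\in\torus$, so that $\Omega_D=\bigcup_{J\in\torus}\Omega_J$ and $g\equiv0$ exactly when each $g_J=g_{|_{\Omega_J}}$ vanishes.

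First I would handle the $\debar_T$ part. Let $f\in\slice_T^1(\Omega_D,A)$. By definition, $f$ is $T$-regular precisely when $f_J$ is $J$-monogenic, i.e.\ $\debar_Jf_J\equiv0$, for every $J\in\torus$. By Theorem~\ref{thm:debarT} one has $\debar_Jf_J=(\debar_Tf)_J$; hence $f$ is $T$-regular if, and only if, $(\debar_Tf)_J\equiv0$ for all $J\in\torus$, which by the observation above is equivalent to $\debar_Tf\equiv0$ in $\slice_T^0(\Omega_D,A)$. For the second assertion I would recall from the Preliminaries that $\sr_T(\Omega_D,A)=\slice_T(\Omega_D,A)\cap\reg_T(\Omega_D,A)\subseteq\slice_T^\omega(\Omega_D,A)\subseteq\slice_T^1(\Omega_D,A)$, so that $f$ is strongly $T$-regular if, and only if, $f\in\slice_T^1(\Omega_D,A)$ and $\debar_Tf\equiv0$, i.e.\ $f$ lies in the kernel of $\debar_T\colon\slice_T^1(\Omega_D,A)\to\slice_T^0(\Omega_D,A)$. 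Since by construction this operator is the conjugate under the $A$-module isomorphism $\I$ of $\debar_T\colon\stem_T^1(D,A\otimes\rr^{2^\tau})\to\stem_T^0(D,A\otimes\rr^{2^\tau})$, the kernel of the former is exactly the image through $\I$ of the kernel of the latter, which is the claimed description of $\sr_T(\Omega_D,A)$.

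The statements about $T$-harmonicity would be proved in the same way, \emph{mutatis mutandis}: replacing $\debar_T$ by $\Delta_T$, $\debar_J$ by $\Delta_J$, the property of being $J$-monogenic by that of being $J$-harmonic, and $\slice_T^1,\stem_T^1$ by $\slice_T^2,\stem_T^2$, one uses the identity $(\Delta_Tf)_J=\Delta_Jf_J$ from Theorem~\ref{thm:debarT} and the commutative diagram that defines $\Delta_T$ on $\slice_T^2(\Omega_D,A)$ from $\Delta_T$ on $\stem_T^2(D,A\otimes\rr^{2^\tau})$.

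There is no real obstacle here: the argument is essentially formal once Theorem~\ref{thm:debarT} is in hand. The only two points deserving a word of care — and both already settled in the Preliminaries — are that the slice-wise vanishing of the $T$-function $\debar_Tf$ (resp.\ $\Delta_Tf$) forces its global vanishing on $\Omega_D$, and that strongly $T$-regular functions are automatically real analytic, hence in $\slice_T^1(\Omega_D,A)$, so that $\debar_T$ may legitimately be applied to them.
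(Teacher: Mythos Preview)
Your proposal is correct and follows precisely the approach the paper intends: the corollary is stated without proof immediately after Theorem~\ref{thm:debarT}, and your argument---combining the slice-wise identities $(\debar_Tf)_J=\debar_Jf_J$ and $(\Delta_Tf)_J=\Delta_Jf_J$ with the covering $\Omega_D=\bigcup_{J\in\torus}\Omega_J$, the inclusion $\sr_T(\Omega_D,A)\subset\slice_T^\omega(\Omega_D,A)$, and the commutative diagram defining $\debar_T,\Delta_T$ on $T$-functions---is exactly the formal deduction the paper leaves to the reader.
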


The next remark extends Proposition~\ref{prop:analyticstronglyTregular} to all $T$-harmonic $T$-functions.

\begin{remark}\label{rmk:Tharmonicareanalytic}
As a consequence of Remark~\ref{rmk:Tstemharmonicareanalytic}, if $f$ belongs to the kernel of $\Delta_T:\slice_T^2(\Omega_D,A)\to\slice_T^0(\Omega_D,A)$, then $f\in\slice_T^\omega(\Omega_D,A)$. In this situation, Proposition~\ref{prop:analyticTfunction} implies that $f\in\mathscr{C}^\omega(\Omega_D,A)$.
\end{remark}

We conclude this section with the announced explicit expressions of $\debar_T,\partial_T,\Delta_T$. Some preparation is needed. In addition to the previously defined $L_{\B'}:\rr^{d+1}\to A$, we will use
\[L_{\B}:\rr^{N+1}\to V\,,\quad L_{\B}(x_0,\ldots,x_N)=\sum_{s=0}^{N}x_s\,v_s.\]
For every domain $\Omega$ in $V$ and every $p\in\nn$, the symbol $\mathscr{C}^p(\Omega,A)$ denotes the bilateral $A$-module of $\mathscr{C}^p$ functions $\Omega\to A$. In accordance with~\cite[Definition 2.36]{unifiedtheory}, we pose the next definition.

\begin{definition}\label{def:partialderivatives}
Fix a domain $\Omega$ in $V$, set $\Omega':=L_{\B}^{-1}(\Omega)\subseteq\rr^{N+1}$ and let $f\in\mathscr{C}^1(\Omega,A)$. For $s\in\{0,\ldots,N\}$, we define
\[\partial_{x_s}f:=L_{\B'}\circ\left(\frac{\partial}{\partial x_s}\left(L_{\B'}^{-1}\circ f\circ (L_{\B})_{|_{\Omega'}}\right)\right)\circ (L_{\B}^{-1})_{|_\Omega}\in\mathscr{C}^0(\Omega,A)\,.\]
\end{definition}

According to~\cite{unifiedtheory}, for any $x\in \Omega,s\in\{0,\ldots,N\}$,
\[\partial_{x_s}f(x)=\lim_{\rr\ni \varepsilon\to0}\varepsilon^{-1}\left(f(x+\varepsilon v_s)-f(x)\right)\,.\]
As a consequence, the operator $\partial_{x_s}$ does not depend on the whole basis $\B'$ of $A$ chosen, but only on the choice of $v_s$. Moreover, the usual Leibniz rule $\partial_{x_s}(fg)=(\partial_{x_s} f)\,g+f\,(\partial_{x_s}g)$ holds true for all $f,g\in\mathscr{C}^1(\Omega,A)$ and $\partial_{x_s}^2(fg)=(\partial_{x_s}^2f)\,g+f\,(\partial_{x_s}^2g)+2(\partial_{x_s}f)(\partial_{x_s}g)$ for all $f,g\in\mathscr{C}^2(\Omega,A)$.

We are now ready to explicitly express $\debar_T,\partial_T,\Delta_T$ and to provide examples.

\begin{proposition}\label{prop:globaloperators}
Let $D':=\{(\alpha,\beta_1,\ldots,\beta_\tau)\in D:\beta_1\cdots\beta_\tau\neq0\}$, so that $\Omega_{D'}$ equals $\Omega_D$ minus the union for $u\in\{1,\ldots,\tau\}$ of the real vector subspaces with Cartesian equations $x^u=0$. If $f\in\slice_T^1(\Omega_D,A)$, then
\begin{align*}
&\debar_Tf=\sum_{s=0}^{t_0}v_s\,\partial_{x_s}f+\sum_{u=1}^\tau \frac{x^u}{\Vert x^u\Vert^2}\sum_{s=t_{u-1}+1}^{t_u}x_s\,\partial_{x_s}f\,,\\
&\partial_Tf=\sum_{s=0}^{t_0}v_s^c\,\partial_{x_s}f+\sum_{u=1}^\tau\left(\frac{x^u}{\Vert x^u\Vert^2}\right)^c\sum_{s=t_{u-1}+1}^{t_u}x_s\,\partial_{x_s}f
\end{align*}
in $\Omega_{D'}$. If $f\in\slice_T^2(\Omega_D,A)$, then
\begin{align*}
&\Delta_Tf=\sum_{s=0}^{t_0}\partial_{x_s}^2f+\sum_{u=1}^\tau\sum_{s,s'=t_{u-1}+1}^{t_u}\frac{x_sx_{s'}}{\Vert x^u\Vert^2}\,\partial_{x_s}\partial_{x_{s'}}f
\end{align*}
in $\Omega_{D'}$.
\end{proposition}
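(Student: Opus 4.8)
The plan is to reduce everything to the corresponding statement for $T$-stem functions, where the operators $\debar_T,\partial_T,\Delta_T$ were defined explicitly in Definition~\ref{def:stemoperators}, and then to transport the identities through the isomorphism $\I$ using Theorem~\ref{thm:debarT}. Concretely, given $f\in\slice_T^1(\Omega_D,A)$, write $f=\I(F)$ with $F=\sum_{K\in\mathscr{P}(\tau)}E_KF_K\in\stem_T^1(D,A\otimes\rr^{2^\tau})$. By Theorem~\ref{thm:debarT}, for every $J\in\torus$ we have $(\debar_Tf)_J=\debar_Jf_J$, where $\debar_J$ is the $J$-Cauchy-Riemann operator of Definition~\ref{def:Jmonogenic}, namely $\debar_J=\sum_{s=0}^{t_0}v_s\,\partial_s+\sum_{u=1}^\tau J_u\,\partial_{t_0+u}$ in the variables adapted to the hypercomplex basis $\B_J=(v_0,\ldots,v_{t_0},J_1,\ldots,J_\tau)$. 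So the whole point is to re-express this slice-wise operator in the \emph{global} coordinates $(x_0,\ldots,x_N)$ of $V$ determined by $\B$, valid on the open dense set $\Omega_{D'}$ where every $x^u\neq0$.

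First I would set up the change of coordinates on a fixed slice. On $\rr^{t_0+\tau+1}_J$ a point is $x=x^0+\beta_1J_1+\cdots+\beta_\tau J_\tau$ with $x^0=\sum_{s=0}^{t_0}v_sx_s$; here the first $t_0+1$ coordinates of $L_{\B_J}$ and of $L_{\B}$ agree, so $\partial_s=\partial_{x_s}$ for $s\in\{0,\ldots,t_0\}$, and the remaining $\partial_{t_0+u}=\partial_{\beta_u}$ is a directional derivative in the direction $J_u$. The key computation is the chain rule linking $\partial_{\beta_u}$ to the global partials $\partial_{x_s}$ for $t_{u-1}+1\le s\le t_u$: on the slice through $J$ we have $x^u=\beta_uJ_u$, so if $J_u=\sum_{s=t_{u-1}+1}^{t_u}v_s\lambda_s$ (a fixed unit vector in $\rr_{t_{u-1}+1,t_u}$ with $\sum\lambda_s^2=1$), then $x_s=\beta_u\lambda_s$, hence $\partial_{\beta_u}=\sum_{s=t_{u-1}+1}^{t_u}\lambda_s\,\partial_{x_s}$. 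Now $\lambda_s=x_s/\beta_u=x_s/\Vert x^u\Vert$ (choosing the sign of $\beta_u=\pm\Vert x^u\Vert$ consistently, which is harmless since $J_u$ changes sign accordingly and the product $J_u\partial_{\beta_u}$ is unchanged), and $J_u=x^u/\Vert x^u\Vert$. Substituting into $\debar_J$ gives
\[
\debar_Jf_J=\sum_{s=0}^{t_0}v_s\,\partial_{x_s}f+\sum_{u=1}^\tau \frac{x^u}{\Vert x^u\Vert}\cdot\frac{1}{\Vert x^u\Vert}\sum_{s=t_{u-1}+1}^{t_u}x_s\,\partial_{x_s}f
\]
which is exactly the claimed formula; since $(\debar_Tf)_J$ equals this for every $J\in\torus$, and every point of $\Omega_{D'}$ lies on some slice $\rr^{t_0+\tau+1}_J$, the identity for $\debar_Tf$ holds throughout $\Omega_{D'}$. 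The formula for $\partial_Tf$ follows identically from $\partial_J=\sum_{s=0}^{t_0}v_s^c\,\partial_s-\sum_{u=1}^\tau J_u\,\partial_{t_0+u}$, noting $(x^u/\Vert x^u\Vert^2)^c=-x^u/\Vert x^u\Vert^2$ since $x^u\in\rr_{t_{u-1}+1,t_u}$ is a (purely imaginary) vector, so the minus sign is absorbed into the conjugate.

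For $\Delta_T$ one proceeds the same way from $\Delta_J=\sum_{s=0}^{t_0}\partial_s^2+\sum_{u=1}^\tau\partial_{\beta_u}^2=\sum_{s=0}^{t_0}\partial_{x_s}^2+\sum_{u=1}^\tau\partial_{\beta_u}^2$: expanding $\partial_{\beta_u}^2=\bigl(\sum_s\lambda_s\partial_{x_s}\bigr)^2=\sum_{s,s'}\lambda_s\lambda_{s'}\partial_{x_s}\partial_{x_{s'}}$ and substituting $\lambda_s=x_s/\Vert x^u\Vert$ yields $\sum_{s,s'=t_{u-1}+1}^{t_u}\frac{x_sx_{s'}}{\Vert x^u\Vert^2}\partial_{x_s}\partial_{x_{s'}}f$, as stated. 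Here one should be slightly careful that the $\lambda_s$ are constant \emph{on the fixed slice} (they are the components of the fixed unit vector $J_u$), so no extra first-order terms arise from differentiating them; the $x$-dependence of the coefficients $x_sx_{s'}/\Vert x^u\Vert^2$ in the global formula is an artifact of letting $J$ vary from slice to slice, and does not affect the computation on any single slice. The one point requiring care — and the place I would be most careful in writing the details — is the sign/branch issue for $\beta_u=\pm\Vert x^u\Vert$ and the identification $J_u=x^u/\Vert x^u\Vert$: one must check that the expressions $\frac{x^u}{\Vert x^u\Vert^2}\sum_s x_s\partial_{x_s}$ and $\frac{x_sx_{s'}}{\Vert x^u\Vert^2}$ are invariant under the simultaneous flip $J_u\mapsto-J_u$, $\beta_u\mapsto-\beta_u$ (which they manifestly are, being even in that pair), so that the resulting global operator is well defined independently of the choice of $J\in\torus$ covering a given point — consistent with the fact, already noted after Definition~\ref{def:Jmonogenic}, that $\debar_J,\partial_J,\Delta_J$ depend only on the hypercomplex subspace $\rr^{t_0+\tau+1}_J$, not on the sign of its basis vectors.
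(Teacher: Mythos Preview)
Your proposal is correct and follows essentially the same approach as the paper's proof: both invoke Theorem~\ref{thm:debarT} to reduce $(\debar_Tf)_J,(\partial_Tf)_J,(\Delta_Tf)_J$ to the slice-wise operators $\debar_Jf_J,\partial_Jf_J,\Delta_Jf_J$, and then express the slice derivative $\partial_{t_0+u}$ as the directional derivative along $J_u=\sum_s(J_u)_sv_s$, which in global coordinates becomes $\sum_s(J_u)_s\partial_{x_s}$ with $(J_u)_s=x_s/\Vert x^u\Vert$. The paper carries this out via the incremental-ratio formulas for $\partial_sf_J$ and $\partial_{t_0+u}f_J$ rather than your chain-rule phrasing with $\lambda_s$, but the computation is the same; your remark on the sign invariance under $(J_u,\beta_u)\mapsto(-J_u,-\beta_u)$ is a welcome extra check that the paper leaves implicit.
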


\begin{proof}
Set $\Omega:=\Omega_{D'}$. We are going to prove that the desired equalities hold true in $\Omega_J$ for all $J\in\torus$. By Theorem~\ref{thm:debarT}, this is the same as proving that
\begin{align*}
&\debar_Jf_J=\sum_{s=0}^{t_0}v_s\,(\partial_{x_s}f)_J+\sum_{u=1}^\tau J_u\sum_{s=t_{u-1}+1}^{t_u}(J_u)_s\,(\partial_{x_s}f)_J\,,\\
&\partial_Jf_J=\sum_{s=0}^{t_0}v_s^c\,(\partial_{x_s}f)_J+\sum_{u=1}^\tau\left(J_u\right)^c\sum_{s=t_{u-1}+1}^{t_u}(J_u)_s\,(\partial_{x_s}f)_J\,,\\
&\Delta_Jf_J=\sum_{s=0}^{t_0}(\partial_{x_s}^2f)_J+\sum_{u=1}^\tau\sum_{s,s'=t_{u-1}+1}^{t_u}(J_u)_s(J_u)_{s'}\,(\partial_{x_s}\partial_{x_{s'}}f)_J
\end{align*}
in $\Omega_J$, for all $J\in\torus$. Here, we used the temporary notation $(J_u)_s$ for the $s$-th component of $J_u$. By Definition~\ref{def:Jmonogenic}, it suffices fix $J\in\torus,s\in\{0,\ldots,t_0\},u\in\{1,\ldots,\tau\}$ and to prove the following equalities: $\partial_sf_J=(\partial_{x_s}f)_J,\partial_s^2f_J=(\partial_{x_s}^2f)_J,\partial_{t_0+u}f_J=\sum_{s=t_{u-1}+1}^{t_u}(J_u)_s\,(\partial_{x_s}f)_J$ and $\partial_{t_0+u}^2f_J=\sum_{s,s'=t_{u-1}+1}^{t_u}(J_u)_s(J_u)_{s'}\,(\partial_{x_s}\partial_{x_{s'}}f)_J$. This is readily done, as follows:
\begin{align*}
(\partial_sf_J)(x)&=\lim_{\rr\ni \varepsilon\to0}\varepsilon^{-1}\left(f_J(x+\varepsilon v_s)-f_J(x)\right)=\lim_{\rr\ni \varepsilon\to0}\varepsilon^{-1}\left(f(x+\varepsilon v_s)-f(x)\right)=(\partial_{x_s}f)(x)\,,\\
(\partial_s^2f_J)(x)&=\lim_{\rr\ni \varepsilon\to0}\varepsilon^{-2}\left(f(x+\varepsilon v_s)-2f(x)+f(x-\varepsilon v_s)\right)=(\partial_{x_s}^2f)(x)\,,\\
(\partial_{t_0+u}f_J)(x)&=\lim_{\rr\ni \varepsilon\to0}\varepsilon^{-1}\left(f(x+\varepsilon J_u)-f(x)\right)=\sum_{s=t_{u-1}+1}^{t_u}(J_u)_s\,(\partial_{x_s}f)(x)\,,\\
(\partial_{t_0+u}^2f_J)(x)&=\lim_{\rr\ni \varepsilon\to0}\varepsilon^{-2}\left(f(x+\varepsilon J_u)-2f(x)+f(x-\varepsilon J_u)\right)\\
&=\sum_{s,s'=t_{u-1}+1}^{t_u}(J_u)_s(J_u)_{s'}\,(\partial_{x_s}\partial_{x_{s'}}f)(x)
\end{align*}
for all $x\in\Omega_J$.
\end{proof}

When $T=(0,N)$, the operators $\debar_T,\partial_T$ coincide with the operators $2\overline{\vartheta},2\vartheta$ introduced in~\cite{global} to characterize slice regular functions. When $\tau=1$, $A=C\ell(0,N)$ and $V=\rr^{N+1}$, the operator $\debar_T$ coincides with the operator $\overline{\vartheta}$ used in~\cite{xsgeneralizedpartialslice} to characterize generalized partial-slice monogenic functions. We now use Proposition~\ref{prop:globaloperators} to double-check $T$-regularity and $T$-harmonicity of the polynomial example of degree $3$ constructed in Example~\ref{ex:(0,3,6)}. We will use it again, throughout the paper, for computations concerning the polynomial examples constructed in Examples~\ref{ex:(0,3,6)} and~\ref{ex:(1,4,7)}.

\begin{example}\label{ex:(0,3,6)bis}
Let $A=C\ell(0,6),V=\rr^7$ and $T=(0,3,6)$. Let us consider the strongly $T$-regular polynomial
\[3\T_{(2,1)}(x)=x_0^3+3x_0^2x^2-3x_0\Vert x^1\Vert^2+6x_0x^1x^2-3\Vert x^1\Vert^2x^2\]
computed in Example~\ref{ex:(0,3,6)}, where $x^1=x_1e_1+x_2e_2+x_3e_3,x^2=x_4e_4+x_5e_5+x_6e_6$. If $x\in\rr^7$ has $x^1\neq0\neq x^2$, then
\begin{align*}
3\,\debar_T\T_{(2,1)}(x)&=3\,\partial_{x_0}\T_{(2,1)}(x)+\sum_{u=1}^2\frac{x^u}{\Vert x^u\Vert^2}\sum_{s=t_{u-1}+1}^{t_u}x_s\,3\,\partial_{x_s}\T_{(2,1)}(x)\\
&=3x_0^2+6x_0x^2-3\Vert x^1\Vert^2+6x^1x^2+\frac{x^1}{\Vert x^1\Vert^2}\sum_{s=1}^3x_s(-6x_0x_s+6x_0e_sx^2-6x_sx^2)\\
&\quad+\frac{x^2}{\Vert x^2\Vert^2}\sum_{s=4}^6x_s(3x_0^2e_s+6x_0x^1e_s-3\Vert x^1\Vert^2e_s)\\
&=3x_0^2+6x_0x^2-3\Vert x^1\Vert^2+6x^1x^2-6x_0x^1-6x_0x^2-6x^1x^2-3x_0^2+6x_0x^1+3\Vert x^1\Vert^2\\
&\equiv0
\end{align*}
and
\begin{align*}
3\,\Delta_T\T_{(2,1)}(x)&=3\,\partial_{x_0}^2\T_{(2,1)}(x)+\sum_{u=1}^\tau\sum_{s,s'=t_{u-1}+1}^{t_u}\frac{x_sx_{s'}}{\Vert x^u\Vert^2}\,3\,\partial_{x_s}\partial_{x_{s'}}\T_{(2,1)}(x)\\
&=6x_0+6x^2+\sum_{s=1}^3\frac{x_s^2}{\Vert x^1\Vert^2}\,(-6x_0-6x^2)+0=6x_0+6x^2-6x_0-6x^2\\
&\equiv0\,,
\end{align*}
as expected.
\end{example}

%%%%%%%%%%%%%%%%%%%%%%%

\section{Natural inclusion of $T$-functions among $\widetilde{T}$-functions}\label{sec:naturalinclusion}

For $\tau\in\nn$ and $T=(t_0,t_1,\ldots,t_\tau)$ with $0\leq t_0<t_1<\ldots<t_\tau=N$, we set the notation
\[\widetilde{T}:=(t_1,\ldots,t_\tau)\]
and notice that $\widetilde{T}$ is a list of $\tau-1$ steps. We recall Assumptions~\ref{ass:domain} and~\ref{ass:opendomain} and we set the notations
\begin{align}\label{eq:tildeD}
\widetilde{D}&:=\left\{(x^0+x^1,\beta_2,\ldots,\beta_\tau):x^1\in\rr_{t_0+1,t_1},\,(x^0,\Vert x^1\Vert,\beta_2,\ldots,\beta_\tau)\in D\right\}\subseteq\rr_{0,t_1}\times\rr^{\tau-1}\,,\\
\widetilde{D}_*&:=\left\{(x^0+x^1,\beta_2,\ldots,\beta_\tau):x^1\in\rr_{t_0+1,t_1}\setminus\{0\},\,(x^0,\Vert x^1\Vert,\beta_2,\ldots,\beta_\tau)\in D\right\}\subseteq\rr_{0,t_1}\times\rr^{\tau-1}\,.\notag
\end{align}
The domain of $\widetilde{\Omega}_{\widetilde{D}}\subseteq V$ of $\widetilde{T}$-functions induced by the elements of $\stem_{\widetilde{T}}(\widetilde{D},A\otimes\rr^{2^{\tau-1}})$ is the same as the domain $\Omega_D\subseteq V$ of $T$-functions induced by the elements of $\stem_T(D,A\otimes\rr^{2^\tau})$ because
\begin{align*}
\Omega_D&=\{\alpha+\beta\,J: (\alpha,\beta)\in D, J\in\torus=\s_{t_0+1,t_1}\times\widetilde{\torus}\}\\
&=\{\alpha+\beta_1J_1+\widetilde{\beta}\,\widetilde{J}: (\alpha,\beta_1,\widetilde{\beta})\in D, J_1\in\s_{t_0+1,t_1},\widetilde{J}\in\widetilde{\torus}\}\\
&=\left\{\alpha+x^1+\widetilde{\beta}\,\widetilde{J}:x^1\in\rr_{t_0+1,t_1},(\alpha,\Vert x^1\Vert,\widetilde{\beta})\in D, \widetilde{J}\in\widetilde{\torus}\right\}\\
&=\left\{\widetilde{\alpha}+\widetilde{\beta}\,\widetilde{J}: (\widetilde{\alpha},\widetilde{\beta})\in\widetilde{D}, \widetilde{J}\in\widetilde{\torus}\right\}=\widetilde{\Omega}_{\widetilde{D}}\,.
\end{align*}
This section focuses on the effect on $\slice_T^\infty(\Omega_D,A)$ (or $\slice_T^\omega(\Omega_D,A)$) of changing $T$ into $\widetilde{T}$.

We begin with a remark and a definition.

\begin{remark}\label{rmk:tilde}
$\mathscr{P}(\tau)$ is the disjoint union between the image of the injective map $\mathscr{P}(\tau-1)\to\mathscr{P}(\tau)\,,\ H\mapsto H+1$ and the image of the injective map $\mathscr{P}(\tau-1)\to\mathscr{P}(\tau)\,,\ H\mapsto\{1\}\cup(H+1)$.
\end{remark}

\begin{definition}\label{def:tilde}
For $p\in\{\infty,\omega\}$, the right $A$-module morphism
\[\widetilde{\ }:\stem_T^p(D,A\otimes\rr^{2^\tau})\to\mathscr{C}^p(\widetilde{D},A\otimes\rr^{2^{\tau-1}})\]
is defined to associate to each $F=\sum_{K\in\mathscr{P}(\tau)}E_KF_K$ the unique element $\widetilde{F}=\sum_{H\in\mathscr{P}(\tau-1)}\widetilde{E}_H\widetilde{F}_H$ of $\mathscr{C}^p(\widetilde{D},A\otimes\rr^{2^{\tau-1}})$ such that
\begin{align}\label{eq:tilde}
\widetilde{F}_H(\alpha+x^1,\widetilde{\beta})=F_{H+1}(\alpha,\Vert x^1\Vert,\widetilde{\beta})+(-1)^{|H|}\frac{x^1}{\Vert x^1\Vert}F_{\{1\}\cup(H+1)}(\alpha,\Vert x^1\Vert,\widetilde{\beta})
\end{align}
for all $(\alpha+x^1,\widetilde{\beta})\in\widetilde{D}_*$.
\end{definition}

We point out that, while formula~\eqref{eq:tilde} only defines an element $\widetilde{F}$ of $\mathscr{C}^p(\widetilde{D}_*,A\otimes\rr^{2^{\tau-1}})$, Remark~\ref{rmk:whitney} guarantees that $\widetilde{F}$ extends to a unique element of $\mathscr{C}^p(\widetilde{D},A\otimes\rr^{2^{\tau-1}})$.

We are now ready to prove that every element $f=\I(F)$ of $\slice_T^p(\Omega_D,A)$ is automatically a $\widetilde{T}$-function, induced by $\widetilde{F}$. In the process, we will also prove that $\widetilde{F}$ is a $\widetilde{T}$-stem function.

\begin{proposition}\label{TfunctionsareTtildefucntions}
Fix $p\in\{\infty,\omega\}$. The image of $\stem_T^p(D,A\otimes\rr^{2^\tau})$ through $\widetilde{\ }$ is contained in $\stem_{\widetilde{T}}^p(\widetilde{D},A\otimes\rr^{2^{\tau-1}})$. Moreover, $\slice_T^p(\Omega_D,A)$ is contained in $\slice_{\widetilde{T}}^p(\Omega_D,A)$ and the following diagram commutes:
\begin{center}
    \begin{tikzcd}
	{\stem_T^p(D,A\otimes\rr^{2^\tau})} && {\stem_{\widetilde{T}}^p(\widetilde{D},A\otimes\rr^{2^{\tau-1}})} \\
	& \circlearrowleft \\
	{\slice_T^p(\Omega_D,A)} && {\slice_{\widetilde{T}}^p(\Omega_D,A)\,.}
	\arrow["\widetilde{\ }", from=1-1, to=1-3]
	\arrow["{\I}"', from=1-1, to=3-1]
	\arrow[hookrightarrow, from=3-1, to=3-3]
	\arrow["{\I}", from=1-3, to=3-3]
\end{tikzcd}
\end{center}
\end{proposition}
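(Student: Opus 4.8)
The plan is to verify the two claims — that $\widetilde F$ is a $\widetilde T$-stem function, and that $\I(\widetilde F) = \I(F)$ as functions on $\Omega_D = \widetilde\Omega_{\widetilde D}$ — by working on the dense open subset $\widetilde D_*$ where $x^1 \neq 0$, and then invoking continuity (using $p \in \{\infty,\omega\}$ and Remark~\ref{rmk:whitney}, exactly as the paragraph before Definition~\ref{def:tilde} already sets up). First I would unwind the notation: an element $(\widetilde\alpha,\widetilde\beta) \in \widetilde D_*$ is written as $(\alpha + x^1, \widetilde\beta)$ with $\alpha \in \rr_{0,t_0}$, $0 \neq x^1 \in \rr_{t_0+1,t_1}$, $\widetilde\beta = (\beta_2,\ldots,\beta_\tau)$, and the corresponding point of $D$ is $(\alpha, \|x^1\|, \widetilde\beta)$; write $J_1 := x^1/\|x^1\| \in \s_{t_0+1,t_1}$, so that $x^1 = \beta_1 J_1$ with $\beta_1 := \|x^1\| > 0$. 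The single input datum is the defining formula~\eqref{eq:tilde}, which reads $\widetilde F_H(\alpha + x^1,\widetilde\beta) = F_{H+1}(\alpha,\beta_1,\widetilde\beta) + (-1)^{|H|} J_1 F_{\{1\}\cup(H+1)}(\alpha,\beta_1,\widetilde\beta)$.

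For the \emph{$\widetilde T$-stem condition}, fix $h \in \{1,\ldots,\tau-1\}$ and ask how $\widetilde F_H$ transforms under $\widetilde\beta \mapsto \overline{\widetilde\beta}^{\,h}$, i.e.\ under negating $\beta_{h+1}$ in the original $\tau$-tuple. Under the identification $\mathscr{P}(\tau-1) \to \mathscr{P}(\tau)$, $K \mapsto K+1$ of Remark~\ref{rmk:tilde}, the index $h$ corresponds to $h+1 \in \{1,\ldots,\tau\}$, and negating $\beta_{h+1}$ is the reflection $\overline{\phantom{\beta}}^{\,h+1}$ on $\rr^\tau$. Since $F$ is a $T$-stem function, $F_{H+1}$ picks up the sign $+1$ if $h+1 \notin H+1$ (i.e.\ $h \notin H$) and $-1$ if $h+1 \in H+1$ (i.e.\ $h \in H$); likewise $F_{\{1\}\cup(H+1)}$ picks up $+1$ if $h+1 \notin \{1\}\cup(H+1)$, which again is $h \notin H$ since $h+1 \geq 2$, and $-1$ if $h \in H$. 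Crucially $J_1$ and $\beta_1 = \|x^1\|$ are unaffected by this reflection, so both terms on the right of~\eqref{eq:tilde} transform by the same sign, giving $\widetilde F_H(\alpha+x^1,\overline{\widetilde\beta}^{\,h}) = (-1)^{[h\in H]}\,\widetilde F_H(\alpha+x^1,\widetilde\beta)$ on $\widetilde D_*$, hence on $\widetilde D$ by continuity; that is precisely the $\widetilde T$-stem condition, and it also shows $\widetilde F \in \stem_{\widetilde T}^p$.

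For the \emph{commuting square}, I would compute $\I(\widetilde F)$ at a point $x = \alpha + x^1 + \widetilde\beta\,\widetilde J \in \Omega_D$ with $x^1 \neq 0$, using $\widetilde J = (J_2,\ldots,J_\tau) \in \widetilde\torus$ and $J := (J_1,J_2,\ldots,J_\tau) \in \torus$ with $J_1 = x^1/\|x^1\|$. By definition of $\I$ for $\widetilde T$,
\[
\I(\widetilde F)(x) = \sum_{H \in \mathscr{P}(\tau-1)} \widetilde J_H\,\widetilde F_H(\alpha+x^1,\widetilde\beta)
= \sum_{H} \widetilde J_H\Bigl(F_{H+1}(\alpha,\beta_1,\widetilde\beta) + (-1)^{|H|} J_1\,F_{\{1\}\cup(H+1)}(\alpha,\beta_1,\widetilde\beta)\Bigr).
\]
The two sums, indexed by $H \mapsto H+1$ and $H \mapsto \{1\}\cup(H+1)$, run over the two halves of $\mathscr{P}(\tau)$ from Remark~\ref{rmk:tilde}, so this should reassemble into $\sum_{K \in \mathscr{P}(\tau)} J_K F_K(\alpha,\beta_1,\widetilde\beta) = \I(F)(x)$, provided the two elementary bookkeeping facts hold: $\widetilde J_H = J_{H+1}$ (immediate, since $\widetilde J_H$ is a product of the $J_{h}$'s for $h \in H+1$, $h \geq 2$), and $(-1)^{|H|} J_1 \cdot (\text{the } \widetilde J\text{-product for } H) = J_{\{1\}\cup(H+1)}$, i.e.\ $J_1$ commuted past $|H|$ imaginary units $J_2,\ldots$ that anticommute with it produces the sign $(-1)^{|H|}$ — this is exactly Remark~\ref{rmk:units} with $h=1$ and $K = H+1$, since $\sigma(1, H+1) = 0$ as $1 \notin H+1$. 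Thus $\I(\widetilde F) = \I(F)$ on $\widetilde D_*$, and both sides being $\mathscr{C}^p$ (Proposition~\ref{prop:analyticTfunction}) and $\widetilde D_*$ dense, the equality extends to all of $\Omega_D$; this also gives the inclusion $\slice_T^p(\Omega_D,A) \subseteq \slice_{\widetilde T}^p(\Omega_D,A)$. I do not anticipate a genuine obstacle here — the content is entirely in matching the two index decompositions of $\mathscr{P}(\tau)$ with the sign arithmetic of Remark~\ref{rmk:units}; the one point requiring a little care is the passage from $\widetilde D_*$ to $\widetilde D$, which is legitimate only because $p \in \{\infty,\omega\}$ and Remark~\ref{rmk:whitney} guarantees the relevant extensions are unique and $\mathscr{C}^p$.
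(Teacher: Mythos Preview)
Your proposal is correct and follows essentially the same route as the paper's proof: verify the $\widetilde T$-stem symmetry of $\widetilde F$ on $\widetilde D_*$ via the reflections (using that $h+1\geq 2$ so $h+1\in H+1\Leftrightarrow h+1\in\{1\}\cup(H+1)$), then match $\I(\widetilde F)=\I(F)$ on $\widetilde\Omega_{\widetilde D_*}$ by splitting $\mathscr P(\tau)$ via Remark~\ref{rmk:tilde} and the sign identity $\widetilde J_H\,(-1)^{|H|}J_1=J_{\{1\}\cup(H+1)}$, and finally extend to all of $\Omega_D$ by Proposition~\ref{prop:analyticTfunction}. The only cosmetic difference is that the paper computes $\I(F)$ and regroups toward $\I(\widetilde F)$, whereas you start from $\I(\widetilde F)$ and regroup toward $\I(F)$; the bookkeeping is identical.
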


\begin{proof}
Fix $f=\I(F)\in\slice_T^p(\Omega_D,A)$, whence $f\in\mathscr{C}^p(\Omega_D,A)$ by Proposition~\ref{prop:analyticTfunction}. For our computations, let us also fix $\alpha\in\rr_{0,t_0},x^1\in\rr_{t_0+1,t_1},\widetilde{\beta}=(\beta_2,\ldots,\beta_\tau)\in\rr^{\tau-1}$ such that $(\alpha+x^1,\widetilde{\beta})\in \widetilde{D}_*$ and let us set $\widetilde{\alpha}:=\alpha+x^1$. Fix $\widetilde{J}\in\widetilde{\torus}$ and remark that $\widetilde{\alpha}+\widetilde{J}\,\widetilde{\beta}=\alpha+x^1+\widetilde{J}\,\widetilde{\beta}=\alpha+J\beta$ with $J:=\left(\frac{x^1}{\Vert x^1\Vert},\widetilde{J}\right)\in\s_{t_0+1,t_1}\times\widetilde{\torus}=\torus$ and $\beta:=(\Vert x^1\Vert,\widetilde{\beta})\in\rr^\tau$.

Our first task is proving that the restriction of $\widetilde{F}$ to $\widetilde{D}_*$, defined by formula~\eqref{eq:tilde}, is an element of $\stem_{\widetilde{T}}^p(\widetilde{D}_*,A\otimes\rr^{2^{\tau-1}})$. This can be done as follows. If $h\in H\in\mathscr{P}(\tau-1)$, then $h+1\in H+1\subset\{1\}\cup(H+1)$, whence
\begin{align*}
&\widetilde{F}_H(\alpha+x^1,\beta_2,\ldots,-\beta_{h+1},\ldots,\beta_\tau)=F_{H+1}(\alpha,\Vert x^1\Vert,\beta_2,\ldots,-\beta_{h+1},\ldots,\beta_\tau)\\
&\quad+(-1)^{|H|}\frac{x^1}{\Vert x^1\Vert}F_{\{1\}\cup(H+1)}(\alpha,\Vert x^1\Vert,\beta_2,\ldots,-\beta_{h+1},\ldots,\beta_\tau)\\
&=-F_{H+1}(\alpha,\Vert x^1\Vert,\beta_2,\ldots,\beta_{h+1},\ldots,\beta_\tau)\\
&\quad-(-1)^{|H|}\frac{x^1}{\Vert x^1\Vert}F_{\{1\}\cup(H+1)}(\alpha,\Vert x^1\Vert,\beta_2,\ldots,\beta_{h+1},\ldots,\beta_\tau)\\
&=-\widetilde{F}_H(\alpha+x^1,\beta_2,\ldots,\beta_{h+1},\ldots,\beta_\tau)\,.
\end{align*}
If, instead, $h\not\in H\in\mathscr{P}(\tau-1)$, then $h+1\not\in\{1\}\cup(H+1)\supset H+1$, whence
\begin{align*}
&\widetilde{F}_H(\alpha+x^1,\beta_2,\ldots,-\beta_{h+1},\ldots,\beta_\tau)=F_{H+1}(\alpha,\Vert x^1\Vert,\beta_2,\ldots,-\beta_{h+1},\ldots,\beta_\tau)\\
&\quad+(-1)^{|H|}\frac{x^1}{\Vert x^1\Vert}F_{\{1\}\cup(H+1)}(\alpha,\Vert x^1\Vert,\beta_2,\ldots,-\beta_{h+1},\ldots,\beta_\tau)\\
&=F_{H+1}(\alpha,\Vert x^1\Vert,\beta_2,\ldots,\beta_{h+1},\ldots,\beta_\tau)\\
&\quad+(-1)^{|H|}\frac{x^1}{\Vert x^1\Vert}F_{\{1\}\cup(H+1)}(\alpha,\Vert x^1\Vert,\beta_2,\ldots,\beta_{h+1},\ldots,\beta_\tau)\\
&=\widetilde{F}_H(\alpha+x^1,\beta_2,\ldots,\beta_{h+1},\ldots,\beta_\tau)\,.
\end{align*}

Our second task is proving that $f$ coincides in $\widetilde{\Omega}_{\widetilde{D}_*}$ with the $\widetilde{T}$-function induced by $\widetilde{F}_{|_{\widetilde{D}_*}}$. Since $f=\I(F)$, we have
\begin{align*}
&f(\widetilde{\alpha}+\widetilde{J}\,\widetilde{\beta})=f(\alpha+J\beta)=\sum_{K\in\mathscr{P}(\tau)}J_KF_K(\alpha,\beta)=\sum_{K\in\mathscr{P}(\tau)}J_KF_K(\alpha,\Vert x^1\Vert,\widetilde{\beta})\\
&=\sum_{H\in\mathscr{P}(\tau-1)}J_{H+1}F_{H+1}(\alpha,\Vert x^1\Vert,\widetilde{\beta})+\sum_{H\in\mathscr{P}(\tau-1)}J_{\{1\}\cup(H+1)}F_{\{1\}\cup(H+1)}(\alpha,\Vert x^1\Vert,\widetilde{\beta})\\
&=\sum_{H\in\mathscr{P}(\tau-1)}\widetilde{J}_H\left(F_{H+1}(\alpha,\Vert x^1\Vert,\widetilde{\beta})+(-1)^{|H|}\frac{x^1}{\Vert x^1\Vert}F_{\{1\}\cup(H+1)}(\alpha,\Vert x^1\Vert,\widetilde{\beta})\right)\\
&=\sum_{H\in\mathscr{P}(\tau-1)}\widetilde{J}_H\widetilde{F}_H(\alpha+x^1,\widetilde{\beta})=\sum_{H\in\mathscr{P}(\tau-1)}\widetilde{J}_H\widetilde{F}_H(\widetilde{\alpha},\widetilde{\beta})\,.
\end{align*}
For the third equality, we used the equality $\beta=(\Vert x^1\Vert,\widetilde{\beta})$. The fourth equality follows from Remark~\ref{rmk:tilde}. For the fifth equality, we used the indentities $J_{H+1}=\widetilde{J}_H$ and $J_{\{1\}\cup(H+1)}=\frac{x^1}{\Vert x^1\Vert}J_{H+1}=(-1)^{|H|}J_{H+1}\frac{x^1}{\Vert x^1\Vert}=\widetilde{J}_H(-1)^{|H|}\frac{x^1}{\Vert x^1\Vert}$, which are consequences of the equality $J=\left(\frac{x^1}{\Vert x^1\Vert},\widetilde{J}\right)$. The sixth equality follows from formula~\eqref{eq:tilde}. The seventh equality is based on the definition of $\widetilde{\alpha}$ as $\alpha+x^1$. Since $\widetilde{J}$ is an arbitrary element of $\widetilde{\torus}$ and $(\widetilde{\alpha},\widetilde{\beta})$ is an arbitrary element of $\widetilde{D}_*$, we have indeed proven that $f$ coincides in $\widetilde{\Omega}_{\widetilde{D}_*}$ with the $\widetilde{T}$-function induced by $\widetilde{F}_{|_{\widetilde{D}_*}}$.

We now conclude the proof with the following argument. We have $\widetilde{\Omega}_{\widetilde{D}_*}=\Omega_{D'}$, where $D':=\{(\alpha,\beta_1,\ldots,\beta_\tau)\in D:\beta_1\neq0\}$ is an open dense subset of $D$. Since $f\in\mathscr{C}^p(\Omega_D,A)$ is a $\widetilde{T}$-function when restricted to $\widetilde{\Omega}_{\widetilde{D}_*}=\Omega_{D'}$, Proposition~\ref{prop:analyticTfunction} guarantees that $f\in\slice_{\widetilde{T}}^p(\Omega_D,A)$. The element of $\stem_{\widetilde{T}}^p(\widetilde{D},A\otimes\rr^{2^{\tau-1}})$ inducing $f$ coincides with $\widetilde{F}$ in $\widetilde{D}_*$, whence in $\widetilde{D}$ by continuity.
\end{proof}

We provide an explicit example.

\begin{example}\label{ex:(0,3,6)ter}
Let $A=C\ell(0,6),V=\rr^7$ and $T=(0,3,6)$, whence $\widetilde{T}=(3,6)$. We already computed in Example~\ref{ex:(0,3,6)} the strongly $T$-regular polynomial $3\T_{(2,1)}(x)=x_0^3+3x_0^2x^2-3x_0\Vert x^1\Vert^2+6x_0x^1x^2-3\Vert x^1\Vert^2x^2$. The computations
\begin{align*}
3\T_{(2,1)}(\alpha+J_1\beta_1+J_2\beta_2)&=\alpha^3-3\alpha\beta_1^2+J_2(3\alpha^2-3\beta_1^2)\beta_2+J_1J_2(6\alpha\beta_1\beta_2)\,,\\
3\T_{(2,1)}(\alpha+x^1+J_2\beta_2)&=\alpha^3-3\alpha\Vert x^1\Vert^2+J_2(3\alpha^2-6\alpha x^1-3\Vert x^1\Vert^2)\beta_2\,.
\end{align*}
show that the $T$-stem function inducing $3\T_{(2,1)}$ is $F(\alpha,\beta_1,\beta_2)=E_\emptyset(\alpha^3-3\alpha\beta_1^2)+E_{\{2\}}(3\alpha^2-3\beta_1^2)\beta_2+E_{\{1,2\}}(6\alpha\beta_1\beta_2)$ and that $3\T_{(2,1)}$ is also a $\widetilde{T}$-function, induced by the $\widetilde{T}$-stem function $\widetilde{F}$ with $\widetilde{F}(\alpha+x^1,\beta_2)=\widetilde{E}_\emptyset(\alpha^3-3\alpha\Vert x^1\Vert^2)+\widetilde{E}_{\{1\}}(3\alpha^2-6\alpha x^1-3\Vert x^1\Vert^2)\beta_2$.

We point out that, while $3\T_{(2,1)}$ is a $\widetilde{T}$-function, it is neither $\widetilde{T}$-regular nor $\widetilde{T}$-harmonic. Indeed, taking into account that $\widetilde{x}^0=x_0+x^1=\sum_{s=0}^3x_sv_s,\widetilde{x}^1=x^2=\sum_{s=4}^6x_sv_s$, we get
\begin{align*}
3\,\Delta_{\widetilde{T}}\T_{(2,1)}(x)&=\sum_{s=0}^33\,\partial_{x_s}^2\T_{(2,1)}(x)+\sum_{s,s'=4}^{6}\frac{x_sx_{s'}}{x_4^2+x_5^2+x_6^2}\,3\,\partial_{x_s}\partial_{x_{s'}}\T_{(2,1)}(x)\\
&=6x_0+6x^2+\sum_{s=1}^3(-6x_0-6x^2)+0=-12x_0-12x^2
\end{align*}
for all $x\in\rr^7$ with $x^2\neq0$, whence $3\,\Delta_{\widetilde{T}}\T_{(2,1)}(x)=-12x_0-12x^2$ throughout $\rr^7$.
\end{example}

For future use, we now wish to compute the iterates of $\ \widetilde{\ }$. To this end, we set the following notations for our fixed $\tau\in\nn$, $T=(t_0,t_1,\ldots,t_\tau)$ with $0\leq t_0<t_1<\ldots<t_\tau=N$, and $D\subseteq\rr_{0,t_0}\times\rr^\tau$.

\begin{definition}\label{def:multitilde}
Fix $p\in\{\infty,\omega\}$. We define
\[T_1:=\widetilde{T}=(t_1,\ldots,t_\tau),\ T_2:=\widetilde{T}_1=(t_2,\ldots,t_\tau),\ \ldots,\ T_\tau:=\widetilde{T}_{\tau-1}=(t_\tau)=(N)\,,\]
as well as
\[D^{1}:=\widetilde{D},\ D^{1}_*:=\widetilde{D}_*,\ D^{2}:=\widetilde{D^{1}},\ D^{2}_*:=\widetilde{(D^{1}_*)}_*,\ \ldots,\ D^{\tau}:=\widetilde{D^{\tau-1}},\ D^{\tau}_*:=\widetilde{(D^{\tau-1}_*)}_*\,.\]
For every $F\in\stem_T^p(D,A\otimes\rr^{2^\tau})$, we set
\[F^{1}:=\widetilde{F},\ F^{2}:=\widetilde{F^{1}},\ \ldots,\ F^{\tau}:=\widetilde{F^{\tau-1}}\,.\]
\end{definition}

Let us now compute $F^\sigma$ in terms of $F$, thus expressing $f$ as a $T_\sigma$-function.

\begin{proposition}\label{prop:multitilde}
Fix $p\in\{\infty,\omega\}$, let $f=\I(F)\in\slice_T^p(\Omega_D,A)$ and $\sigma\in\{1,\ldots,\tau\}$. Then $f$ is the element of $\slice_{T_\sigma}^p(\Omega_D,A)$, induced by $F^\sigma$. Moreover, for $H\in\mathscr{P}(\tau-\sigma)$ fixed,
\begin{align*}
&F^\sigma_H(\alpha+x^1+\ldots+x^\sigma,\beta_{\sigma+1},\ldots,\beta_{\tau})=F_{H+\sigma}(\alpha,\Vert x^1\Vert,\ldots,\Vert x^\sigma\Vert,\beta_{\sigma+1},\ldots,\beta_{\tau})\\
&\quad+\sum_{p=1}^\sigma\sum_{1\leq k_1<\ldots<k_p\leq \sigma}(-1)^{p|H|}\frac{x^{k_1}}{\Vert x^{k_1}\Vert}\ldots\frac{x^{k_p}}{\Vert x^{k_p}\Vert}F_{\{k_1,\ldots,k_p\}\cup(H+\sigma)}(\alpha,\Vert x^1\Vert,\ldots,\Vert x^\sigma\Vert,\beta_{\sigma+1},\ldots,\beta_{\tau})\\
&=\sum_{p=0}^\sigma\sum_{1\leq k_1<\ldots<k_p\leq \sigma}(-1)^{p|H|}\frac{x^{k_1}}{\Vert x^{k_1}\Vert}\ldots\frac{x^{k_p}}{\Vert x^{k_p}\Vert}F_{\{k_1,\ldots,k_p\}\cup(H+\sigma)}(\alpha,\Vert x^1\Vert,\ldots,\Vert x^\sigma\Vert,\beta_{\sigma+1},\ldots,\beta_{\tau})
\end{align*}
for all $\alpha\in\rr_{0,t_0},\,x^1\in\rr_{t_0+1,t_1},\,\ldots,\,x^\sigma\in\rr_{t_{\sigma-1}+1,t_\sigma},\,(\beta_{\sigma+1},\ldots,\beta_{\tau})\in\rr^{\tau-\sigma}$ chosen so that $(\alpha,\Vert x^1\Vert,\ldots,\Vert x^\sigma\Vert,\beta_{\sigma+1},\ldots,\beta_{\tau})\in D^\sigma_*$.
\end{proposition}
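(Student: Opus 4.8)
The plan is to induct on $\sigma$. The case $\sigma=1$ is exactly Definition~\ref{def:tilde} together with Proposition~\ref{TfunctionsareTtildefucntions}: formula~\eqref{eq:tilde} is the claimed expression for $F^1_H=\widetilde F_H$ (it is the sum over $p\in\{0,1\}$ in the statement, read with $\sigma=1$), and Proposition~\ref{TfunctionsareTtildefucntions} already gives $f=\I(\widetilde F)\in\slice_{T_1}^p(\Omega_D,A)$. For the inductive step I would re-apply Proposition~\ref{TfunctionsareTtildefucntions}, but now with the list of steps $T_{\sigma-1}=(t_{\sigma-1},\ldots,t_\tau)$, the domain $D^{\sigma-1}$ and the $T_{\sigma-1}$-stem function $F^{\sigma-1}$ in place of $T$, $D$ and $F$. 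Before doing so I would check that the running hypotheses (Assumptions~\ref{ass:domain} and~\ref{ass:opendomain}) persist for $T_{\sigma-1}$ and $D^{\sigma-1}$; this is immediate from $\Omega_{D^{\sigma-1}}=\Omega_D$ (the computation opening Section~\ref{sec:naturalinclusion}, iterated) and from the reflection structure of $\widetilde D$. The first fan step of $T_{\sigma-1}$ is $\rr_{t_{\sigma-1}+1,t_\sigma}$, so the variable that gets ``absorbed'' when passing from $F^{\sigma-1}$ to $F^\sigma=\widetilde{F^{\sigma-1}}$ is $x^\sigma$, and Proposition~\ref{TfunctionsareTtildefucntions} yields simultaneously that $f=\I(F^\sigma)$ as a $T_\sigma$-function and the one-step recursion
\[F^\sigma_H(\widetilde\alpha+x^\sigma,\widetilde\beta)=F^{\sigma-1}_{H+1}(\widetilde\alpha,\Vert x^\sigma\Vert,\widetilde\beta)+(-1)^{|H|}\frac{x^\sigma}{\Vert x^\sigma\Vert}F^{\sigma-1}_{\{1\}\cup(H+1)}(\widetilde\alpha,\Vert x^\sigma\Vert,\widetilde\beta)\]
for every $H\in\mathscr{P}(\tau-\sigma)$, where $\widetilde\alpha:=\alpha+x^1+\ldots+x^{\sigma-1}$ and $\widetilde\beta:=(\beta_{\sigma+1},\ldots,\beta_\tau)$.

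Next I would substitute the inductive hypothesis for $F^{\sigma-1}_{H+1}$ and $F^{\sigma-1}_{\{1\}\cup(H+1)}$, both of which are legitimate indices in $\mathscr{P}(\tau-\sigma+1)$ by Remark~\ref{rmk:tilde}. Using the identities $(H+1)+(\sigma-1)=H+\sigma$ and $|H+1|=|H|$, the first summand of the recursion reproduces exactly those terms of the target formula whose index set $\{k_1<\ldots<k_p\}\subseteq\{1,\ldots,\sigma\}$ does not contain $\sigma$. Using $(\{1\}\cup(H+1))+(\sigma-1)=\{\sigma\}\cup(H+\sigma)$ and $|\{1\}\cup(H+1)|=|H|+1$, the second summand, after renaming $p\mapsto p+1$ with $k_{p+1}:=\sigma$, produces the terms whose index set does contain $\sigma$. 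The one step that needs care here is that the recursion creates the factor $\frac{x^\sigma}{\Vert x^\sigma\Vert}$ on the \emph{left} of the product $\frac{x^{k_1}}{\Vert x^{k_1}\Vert}\cdots\frac{x^{k_p}}{\Vert x^{k_p}\Vert}$, whereas the target formula wants it on the right; moving it across the $p$ factors contributes a sign $(-1)^p$, because each $\frac{x^{k_j}}{\Vert x^{k_j}\Vert}$ (with $k_j\le\sigma-1$) lies in a subspace $\rr_{t_{k_j-1}+1,t_{k_j}}$ disjoint from $\rr_{t_{\sigma-1}+1,t_\sigma}$ and so anticommutes with $\frac{x^\sigma}{\Vert x^\sigma\Vert}$ by the defining relations of the hypercomplex basis (Theorem~\ref{thm:hypercomplexbasis}). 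Collecting this $(-1)^p$ together with the factor $(-1)^{p(|H|+1)}=(-1)^{p|H|}(-1)^p$ from the inductive hypothesis and the global $(-1)^{|H|}$ from the recursion, the overall sign becomes $(-1)^{p|H|}(-1)^{|H|}=(-1)^{(p+1)|H|}$, which is precisely $(-1)^{p'|H|}$ for $p'=p+1$, as required. Finally, the second displayed equality in the statement is merely the first one with the $p=0$ term (namely $F_{H+\sigma}$, with empty product and sign $+1$) folded back into the sum, so no extra argument is needed.

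The only genuinely delicate part of this proof is the sign and ordering bookkeeping in the second paragraph: one has to keep track of the order of the non-commuting unit factors $\frac{x^{k_j}}{\Vert x^{k_j}\Vert}$ in $A$, and of the parity exponents $(-1)^{p|H|}$ under the two ways an index of $F^{\sigma-1}$ can sit inside $\mathscr{P}(\tau-\sigma+1)$, namely as $H+1$ or as $\{1\}\cup(H+1)$. Everything else is a formal consequence of Proposition~\ref{TfunctionsareTtildefucntions} applied iteratively.
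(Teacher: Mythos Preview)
Your proof is correct and follows essentially the same approach as the paper's: induction on $\sigma$, with the base case given directly by Definition~\ref{def:tilde} and Proposition~\ref{TfunctionsareTtildefucntions}, and the inductive step obtained by applying the one-step recursion $F^{\sigma}=\widetilde{F^{\sigma-1}}$ and tracking the sign $(-1)^{(p+1)|H|}$ via the anticommutation of the factors $\frac{x^{k_j}}{\Vert x^{k_j}\Vert}$ with $\frac{x^\sigma}{\Vert x^\sigma\Vert}$. The paper indexes the induction from $\sigma$ to $\sigma+1$ rather than from $\sigma-1$ to $\sigma$, but the computation and the sign bookkeeping are identical.
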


\begin{proof}
The first statement follows immediately by repeated applications of Proposition~\ref{TfunctionsareTtildefucntions}. To prove the second statement, we proceed by induction on $\sigma\in\{1,\ldots,\tau\}$. For $\sigma=1$, it suffices to recall that $F^1=\widetilde{F}$ and to apply Definition~\ref{def:tilde}. Let us now assume the second statement true for $\sigma$ and prove it for $\sigma+1$. Taking into account that $F^{\sigma+1}=\widetilde{F^\sigma}$ and applying Definition~\ref{def:tilde}, we get
\begin{align*}
&F^{\sigma+1}_H(\alpha+x^1+\ldots+x^{\sigma+1},\beta_{\sigma+2},\ldots,\beta_{\tau})=
F^{\sigma}_{H+1}(\alpha+x^1+\ldots+x^\sigma,\Vert x^{\sigma+1}\Vert,\beta_{\sigma+2},\ldots,\beta_{\tau})\\
&\quad+(-1)^{|H|}\frac{x^{\sigma+1}}{\Vert x^{\sigma+1}\Vert}F^\sigma_{\{1\}\cup(H+1)}(\alpha+x^1+\ldots+x^\sigma,\Vert x^{\sigma+1}\Vert,\beta_{\sigma+2},\ldots,\beta_{\tau})\\
&=\sum_{p=0}^\sigma\sum_{1\leq k_1<\ldots<k_p\leq \sigma}(-1)^{p|H|}\frac{x^{k_1}}{\Vert x^{k_1}\Vert}\ldots\frac{x^{k_p}}{\Vert x^{k_p}\Vert}\cdot\\
&\quad\cdot F_{\{k_1,\ldots,k_p\}\cup(H+\sigma+1)}(\alpha,\Vert x^1\Vert,\ldots,\Vert x^\sigma\Vert,\Vert x^{\sigma+1}\Vert,\beta_{\sigma+2},\ldots,\beta_{\tau})\\
&\quad+(-1)^{|H|}\frac{x^{\sigma+1}}{\Vert x^{\sigma+1}\Vert}\sum_{p=0}^\sigma\sum_{1\leq k_1<\ldots<k_p\leq \sigma}(-1)^{p|H|+p}\frac{x^{k_1}}{\Vert x^{k_1}\Vert}\ldots\frac{x^{k_p}}{\Vert x^{k_p}\Vert}\cdot\\
&\quad\cdot F_{\{k_1,\ldots,k_p,\sigma+1\}\cup(H+\sigma+1)}(\alpha,\Vert x^1\Vert,\ldots,\Vert x^\sigma\Vert,\Vert x^{\sigma+1}\Vert,\beta_{\sigma+2},\ldots,\beta_{\tau})\\
&=\sum_{p=0}^\sigma\sum_{1\leq k_1<\ldots<k_p\leq \sigma}(-1)^{p|H|}\frac{x^{k_1}}{\Vert x^{k_1}\Vert}\ldots\frac{x^{k_p}}{\Vert x^{k_p}\Vert}\cdot\\
&\quad\cdot F_{\{k_1,\ldots,k_p\}\cup(H+\sigma+1)}(\alpha,\Vert x^1\Vert,\ldots,\Vert x^\sigma\Vert,\Vert x^{\sigma+1}\Vert,\beta_{\sigma+2},\ldots,\beta_{\tau})\\
&\quad+\sum_{p=0}^\sigma\sum_{1\leq k_1<\ldots<k_p\leq \sigma}(-1)^{(p+1)|H|}\frac{x^{k_1}}{\Vert x^{k_1}\Vert}\ldots\frac{x^{k_p}}{\Vert x^{k_p}\Vert}\frac{x^{\sigma+1}}{\Vert x^{\sigma+1}\Vert}\cdot\\
&\quad\cdot F_{\{k_1,\ldots,k_p,\sigma+1\}\cup(H+\sigma+1)}(\alpha,\Vert x^1\Vert,\ldots,\Vert x^\sigma\Vert,\Vert x^{\sigma+1}\Vert,\beta_{\sigma+2},\ldots,\beta_{\tau})\\
&=\sum_{p=0}^{\sigma+1}\sum_{1\leq k_1<\ldots<k_p\leq \sigma+1}(-1)^{p|H|}\frac{x^{k_1}}{\Vert x^{k_1}\Vert}\ldots\frac{x^{k_p}}{\Vert x^{k_p}\Vert}\cdot\\
&\quad\cdot F_{\{k_1,\ldots,k_p\}\cup(H+\sigma+1)}(\alpha,\Vert x^1\Vert,\ldots,\Vert x^{\sigma+1}\Vert,\beta_{\sigma+2},\ldots,\beta_{\tau})\,,
\end{align*}
as desired. For the second equality, we took into account that $|H+1|=|H|$, that $|\{1\}\cup(H+1)|=|H|+1$ and that $(\{1\}\cup(H+1))+\sigma=\{\sigma+1\}\cup(H+\sigma+1)$. For the third equality, we took into account that $x^{\sigma+1}$ anticommutes with each of $x^{k_1},\ldots,x^{k_p}$. The inductive step and the proof are now concluded.
\end{proof}

\begin{example}\label{ex:(0,3,6)ter2}
Let $A=C\ell(0,6),V=\rr^7$ and $T=(0,3,6)$, whence $T_1=\widetilde{T}=(3,6),T_2=\widetilde{T_1}=(6)$. The strongly $T$-regular polynomial $3\T_{(2,1)}(x)=x_0^3+3x_0^2x^2-3x_0\Vert x^1\Vert^2+6x_0x^1x^2-3\Vert x^1\Vert^2x^2$ of Example~\ref{ex:(0,3,6)} is a $T_1$-function and, trivially, a $T_2$-function. We point out that $3\T_{(2,1)}$ is neither $T_2$-regular nor $T_2$-harmonic. Indeed,
\begin{align*}
3\,\Delta_{T_2}\T_{(2,1)}(x)&=\sum_{s=0}^6\,\partial_{x_s}^2\T_{(2,1)}(x)=6x_0+6x^2+\sum_{s=1}^3(-6x_0-6x^2)+0=-12x_0-12x^2
\end{align*}
for all $x\in\rr^7$. In other words, $3\T_{(2,1)}$ is neither a monogenic nor a harmonic function $\rr^7\to C\ell(0,6)$.
\end{example}

%%%%%%%%%%%%%%%%%%%%%%%

\section{A variety of Laplacians of $T$-functions}\label{sec:tildeTlaplacians}

Let us fix $p\in\{\infty,\omega\},\sigma\in\{1,\ldots,\tau\}$ and $f\in\slice_T^p(\Omega_D,A)$. If we refer to the notations set in Definition~\ref{def:multitilde}, we know from Propositions~\ref{TfunctionsareTtildefucntions} and~\ref{prop:multitilde} that $f$ also belongs to $\slice_{\widetilde{T}}^p(\Omega_D,A)=\slice_{T_1}^p(\Omega_D,A)$ and to $\slice_{T_\sigma}^p(\Omega_D,A)$. The present section computes the effect on $f$ of the operators $\Delta_{T_\sigma}:\slice_{T_\sigma}^2(\Omega_D,A)\to\slice_{T_\sigma}^0(\Omega_D,A)$ and $\debar_{T_1}=\debar_{\widetilde{T}}:\slice_{\widetilde{T}}^1(\Omega_D,A)\to\slice_{\widetilde{T}}^0(\Omega_D,A)$. In both cases, we will prove that the result is still an element of $\slice_T^p(\Omega_D,A)$. While these intermediate results are quite technical, they provide fundamental tools to later prove our main theorems.

Studying the operator $\Delta_{T_\sigma}:\slice_{T_\sigma}^p(\Omega_D,A)\to\slice_{T_\sigma}^p(\Omega_D,A)$ is equivalent to studying the operator $\Delta_{T_\sigma}:\stem_{T_\sigma}^p(D^\sigma,A\otimes\rr^{2^{\tau-\sigma}})\to\stem_{T_\sigma}^p(D^\sigma,A\otimes\rr^{2^{\tau-\sigma}})$ precomposed with the $\sigma$-th iterate of $\ \widetilde{\ }$. In other words: for $f=\I(F)\in\slice_T^p(\Omega_D,A)$, the $T_\sigma$-stem function inducing $\Delta_{T_\sigma}f$ is $\Delta_{T_\sigma}F^\sigma$, where $F^\sigma$ is explicitly computed in Proposition~\ref{prop:multitilde}. To compute $\Delta_{T_\sigma}F^\sigma$, the next result will be handy.

\begin{proposition}\label{prop:laplacianpower}
Fix $\sigma\in\{1,\ldots,\tau\}$ and $p\in\nn^*\cup\{\infty,\omega\}$. If $\Phi\in\mathscr{C}^p(D,A)$, then
\[\Psi:D^\sigma\to A,\quad (\alpha+x^1+\ldots+x^\sigma,\beta_{\sigma+1},\ldots,\beta_{\tau})\mapsto\Phi(\alpha,\Vert x^1\Vert,\ldots,\Vert x^\sigma\Vert,\beta_{\sigma+1},\ldots,\beta_{\tau})\]
is a $\mathscr{C}^{p-1}$ map that is $\mathscr{C}^p$ in $D^\sigma_*$. Recall that operators $\partial_{\alpha_0},\ldots,\partial_{\alpha_{t_0}},\partial_{\beta_1},\ldots,\partial_{\beta_\tau}$ on $\mathscr{C}^p(D,A)$ have been set up in Definition~\ref{def:debaralpha} and let us use for the analogous operators on $\mathscr{C}^p(D^\sigma,A)$ (or on $\mathscr{C}^p(D^\sigma_*,A)$) the temporary notation $\partial_{\widetilde{\alpha}_0},\ldots,\partial_{\widetilde{\alpha}_{t_\sigma}},\partial_{\widetilde{\beta}_1},\ldots,\partial_{\widetilde{\beta}_{\tau-\sigma}}$ . Now let $u\in\{0,\ldots,t_0\},v\in\{1,\ldots,\sigma\},s\in\{t_{v-1}+1,\ldots,t_v\},h\in\{1,\ldots,\tau-\sigma\}$. Then
\begin{align*}
&(\partial_{\widetilde{\alpha}_u}\Psi)(\alpha+x^1+\ldots+x^\sigma,\beta_{\sigma+1},\ldots,\beta_{\tau})=(\partial_{\alpha_u}\Phi)(\alpha,\Vert x^1\Vert,\ldots,\Vert x^\sigma\Vert,\beta_{\sigma+1},\ldots,\beta_{\tau})\,,\\
&(\partial_{\widetilde{\alpha}_s}\Psi)(\alpha+x^1+\ldots+x^\sigma,\beta_{\sigma+1},\ldots,\beta_{\tau})=\frac{x_s}{\Vert x^v\Vert}\,(\partial_{\beta_v}\Phi)(\alpha,\Vert x^1\Vert,\ldots,\Vert x^\sigma\Vert,\beta_{\sigma+1},\ldots,\beta_{\tau})\,,\\
&(\partial_{\widetilde{\beta}_h}\Psi)(\alpha+x^1+\ldots+x^\sigma,\beta_{\sigma+1},\ldots,\beta_{\tau})=(\partial_{\beta_{h+\sigma}}\Phi)(\alpha,\Vert x^1\Vert,\ldots,\Vert x^\sigma\Vert,\beta_{\sigma+1},\ldots,\beta_{\tau})
\end{align*}
for all $(\alpha+x^1+\ldots+x^\sigma,\beta_{\sigma+1},\ldots,\beta_{\tau})\in D^\sigma_*$. If, moreover, $p\geq2$ and if we adopt the temporary notation $\Box:=\sum_{s=t_0+1}^{t_\sigma}\partial_{\widetilde{\alpha}_s}^2$, then
\begin{align*}
&(\partial_{\widetilde{\alpha}_u}^2\Psi)(\alpha+x^1+\ldots+x^\sigma,\beta_{\sigma+1},\ldots,\beta_{\tau})=(\partial_{\alpha_u}^2\Phi)(\alpha,\Vert x^1\Vert,\ldots,\Vert x^\sigma\Vert,\beta_{\sigma+1},\ldots,\beta_{\tau})\,,\\
&(\Box\Psi)(\alpha+x^1+\ldots+x^\sigma,\beta_{\sigma+1},\ldots,\beta_{\tau})=\sum_{v=1}^\sigma(\partial_{\beta_v}^2\Phi)(\alpha,\Vert x^1\Vert,\ldots,\Vert x^\sigma\Vert,\beta_{\sigma+1},\ldots,\beta_{\tau})\\
&\quad+\sum_{v=1}^\sigma\frac{t_v-t_{v-1}-1}{\Vert x^v\Vert}(\partial_{\beta_v}\Phi)(\alpha,\Vert x^1\Vert,\ldots,\Vert x^\sigma\Vert,\beta_{\sigma+1},\ldots,\beta_{\tau})\,,\\
&(\partial_{\widetilde{\beta}_h}^2\Psi)(\alpha+x^1+\ldots+x^\sigma,\beta_{\sigma+1},\ldots,\beta_{\tau})=(\partial_{\beta_{h+\sigma}}^2\Phi)(\alpha,\Vert x^1\Vert,\ldots,\Vert x^\sigma\Vert,\beta_{\sigma+1},\ldots,\beta_{\tau})
\end{align*}
for all $(\alpha+x^1+\ldots+x^\sigma,\beta_{\sigma+1},\ldots,\beta_{\tau})\in D^\sigma_*$. Finally, for all $m\in\zz$,
\begin{align*}
&\partial_{\widetilde{\alpha}_s}\Vert x^v\Vert^m=m x_s \Vert x^v\Vert^{m-2}\,,\\
&\Box\Vert x^v\Vert^m=m(t_v-t_{v-1}+m-2)\Vert x^v\Vert^{m-2}\,,\\
&\partial_{\widetilde{\alpha}_s}\left(\frac{x^v}{\Vert x^v\Vert^m}\right)=v_s\,\Vert x^v\Vert^{-m}-mx^v\,x_s \Vert x^v\Vert^{-m-2}\,,\\
&\Box\left(\frac{x^v}{\Vert x^v\Vert^m}\right)=m\frac{m+t_{v-1}-t_v}{\Vert x^v\Vert^{m+2}}x^v\,.
\end{align*}
\end{proposition}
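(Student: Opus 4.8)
The statement is, at its core, a chain-rule and Leibniz-rule computation for the map that replaces each scalar variable $\beta_v$ ($1\le v\le\sigma$) by the norm $\Vert x^v\Vert$ of the vector variable $x^v\in\rr_{t_{v-1}+1,t_v}$. I would first isolate two elementary ingredients and then feed them into the chain rule. The first ingredient is that, by the hypercomplex-basis property~\eqref{eq:cliffordnorm}, $\Vert x^v\Vert^2=\sum_{s=t_{v-1}+1}^{t_v}x_s^2$, so that on $D^\sigma_*$ (where every $\Vert x^v\Vert$ is nonzero) the real-valued function $\Vert x^v\Vert$ is real-analytic with $\partial_{\widetilde{\alpha}_s}\Vert x^v\Vert^m=m\,x_s\Vert x^v\Vert^{m-2}$ for $t_{v-1}<s\le t_v$, $m\in\zz$, and $\partial_{\widetilde{\alpha}_s}\Vert x^v\Vert^m=0$ for $s$ outside the $v$-th block. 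The second ingredient is that $x^v=\sum_{s=t_{v-1}+1}^{t_v}v_sx_s$ is linear in the relevant coordinates, so $\partial_{\widetilde{\alpha}_s}x^v=v_s$ for $t_{v-1}<s\le t_v$, $\partial_{\widetilde{\alpha}_s}x^v=0$ otherwise, and all second $\widetilde{\alpha}$-derivatives of $x^v$ vanish. As $\Psi$ is the composition of $\Phi$ with $x\mapsto(\alpha,\Vert x^1\Vert,\ldots,\Vert x^\sigma\Vert,\beta_{\sigma+1},\ldots,\beta_\tau)$, a map that is real-analytic off the subspaces $\{x^v=0\}$ and continuous everywhere, $\Psi$ is $\mathscr{C}^p$ on $D^\sigma_*$; the remaining (global) regularity statement follows from the same description, while the differential identities below are asserted, and I would establish them, pointwise on $D^\sigma_*$.

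For the first-order identities I would apply the chain rule via the incremental-ratio description of $\partial_{\widetilde{\alpha}_u}$ and $\partial_{\widetilde{\beta}_h}$ (the analogue of Remark~\ref{rmk:incrementalratio} for the domain $D^\sigma$), writing $(\ldots)$ throughout as shorthand for evaluation at $(\alpha,\Vert x^1\Vert,\ldots,\Vert x^\sigma\Vert,\beta_{\sigma+1},\ldots,\beta_\tau)$. If $u\le t_0$, the coordinate $x_u$ enters $\Psi$ only through the first slot of $\Phi$, giving $\partial_{\widetilde{\alpha}_u}\Psi=(\partial_{\alpha_u}\Phi)(\ldots)$; similarly $\partial_{\widetilde{\beta}_h}\Psi=(\partial_{\beta_{h+\sigma}}\Phi)(\ldots)$. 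If $t_{v-1}<s\le t_v$, the coordinate $x_s$ enters only through the slot $\beta_v$, via $\Vert x^v\Vert$, so the chain rule and the first ingredient yield $\partial_{\widetilde{\alpha}_s}\Psi=(\partial_{\widetilde{\alpha}_s}\Vert x^v\Vert)\,(\partial_{\beta_v}\Phi)(\ldots)=\tfrac{x_s}{\Vert x^v\Vert}(\partial_{\beta_v}\Phi)(\ldots)$.

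The second-order identities for $\partial_{\widetilde{\alpha}_u}^2$ and $\partial_{\widetilde{\beta}_h}^2$ come by iterating the previous step. The heart of the argument is $\Box\Psi=\sum_{v=1}^\sigma\sum_{s=t_{v-1}+1}^{t_v}\partial_{\widetilde{\alpha}_s}^2\Psi$: differentiating $\tfrac{x_s}{\Vert x^v\Vert}(\partial_{\beta_v}\Phi)(\ldots)$ once more by the Leibniz rule, with $\partial_{\widetilde{\alpha}_s}\Vert x^v\Vert^{-1}=-x_s\Vert x^v\Vert^{-3}$ and $\partial_{\widetilde{\alpha}_s}\big((\partial_{\beta_v}\Phi)(\ldots)\big)=\tfrac{x_s}{\Vert x^v\Vert}(\partial_{\beta_v}^2\Phi)(\ldots)$ (the first-order identity applied to $\partial_{\beta_v}\Phi$), one gets
\[\partial_{\widetilde{\alpha}_s}^2\Psi=\left(\frac{1}{\Vert x^v\Vert}-\frac{x_s^2}{\Vert x^v\Vert^3}\right)(\partial_{\beta_v}\Phi)(\ldots)+\frac{x_s^2}{\Vert x^v\Vert^2}(\partial_{\beta_v}^2\Phi)(\ldots).\]
Summing over the $t_v-t_{v-1}$ indices $s$ in the $v$-th block and using $\sum_{s=t_{v-1}+1}^{t_v}x_s^2=\Vert x^v\Vert^2$ collapses this to $\tfrac{t_v-t_{v-1}-1}{\Vert x^v\Vert}(\partial_{\beta_v}\Phi)(\ldots)+(\partial_{\beta_v}^2\Phi)(\ldots)$ — exactly the radial part of the Euclidean Laplacian in dimension $t_v-t_{v-1}$ — and summing over $v$ yields the asserted formula for $\Box\Psi$. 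The four explicit formulas are then special cases: with $\Phi(\alpha,\beta_1,\ldots,\beta_\tau)=\beta_v^m$ the first-order and $\Box$ identities give $\partial_{\widetilde{\alpha}_s}\Vert x^v\Vert^m=m\,x_s\Vert x^v\Vert^{m-2}$ and $\Box\Vert x^v\Vert^m=m(t_v-t_{v-1}+m-2)\Vert x^v\Vert^{m-2}$; and the formulas for $\partial_{\widetilde{\alpha}_s}\big(x^v\Vert x^v\Vert^{-m}\big)$ and $\Box\big(x^v\Vert x^v\Vert^{-m}\big)$ follow by the Leibniz rule applied to the product $x^v\cdot\Vert x^v\Vert^{-m}$, using $\partial_{\widetilde{\alpha}_s}x^v=v_s$, $\partial_{\widetilde{\alpha}_s}^2x^v=0$ and the two scalar formulas just obtained, then summing over $s$ in the $v$-th block with $\sum v_sx_s=x^v$ and $\sum x_s^2=\Vert x^v\Vert^2$.

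I expect the only genuine subtlety to be the bookkeeping in the $\Box$-computation: it is precisely the cancellation $\sum_s\big(\tfrac{1}{\Vert x^v\Vert}-\tfrac{x_s^2}{\Vert x^v\Vert^3}\big)=\tfrac{t_v-t_{v-1}-1}{\Vert x^v\Vert}$ that produces the ``$-1$'' in the coefficient, and one must keep the three index ranges ($u\le t_0$; $t_{v-1}<s\le t_v$; $1\le h\le\tau-\sigma$) carefully separated. A minor point worth recording is that the $A$-valued Leibniz rule used throughout is legitimate because every $\partial_{\widetilde{\alpha}_s}$ has real (scalar) coefficients, hence commutes past the basis elements $v_s$ and past left and right multiplication by elements of $A$; this is why the vector factor $x^v$ in $x^v\Vert x^v\Vert^{-m}$ causes no ordering issues.
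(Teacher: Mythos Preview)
Your proposal is correct and follows essentially the same approach as the paper: both arguments use the incremental-ratio/chain-rule description for the first-order identities, differentiate once more via the Leibniz rule to obtain $\partial_{\widetilde{\alpha}_s}^2\Psi$ and sum over the $v$-th block using $\sum_s x_s^2=\Vert x^v\Vert^2$ to get the radial-Laplacian formula for $\Box\Psi$, and derive the explicit formulas by specializing to $\Phi(\alpha,\beta)=\beta_v^m$ and by applying the Leibniz rule to the product $x^v\cdot\Vert x^v\Vert^{-m}$. The only cosmetic difference is that the paper computes $\partial_{\widetilde{\alpha}_s}^2\big(x^v\Vert x^v\Vert^{-m}\big)$ term by term before summing, whereas you invoke the $\Box$-Leibniz rule with $\partial_{\widetilde{\alpha}_s}^2x^v=0$ directly; both lead to the same one-line collapse.
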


Here and throughout the paper, we abuse notation by denoting the map $(\alpha+x^1+\ldots+x^\sigma,\beta_{\sigma+1},\ldots,\beta_{\tau})\mapsto\Vert x^v\Vert^m$ simply by $\Vert x^v\Vert^m$ and the map $(\alpha+x^1+\ldots+x^\sigma,\beta_{\sigma+1},\ldots,\beta_{\tau})\mapsto\frac{x^v}{\Vert x^v\Vert^m}$ simply by $\frac{x^v}{\Vert x^v\Vert^m}$.

\begin{proof}
The first, third, fourth and sixth formulas are proven by applying Remark~\ref{rmk:incrementalratio} first to the operators $\partial_{\widetilde{\alpha}_u}$ and $\partial_{\widetilde{\beta}_h}$ on $\Psi$, then to the operators $\partial_{\alpha_u}$ and $\partial_{\beta_{h+\sigma}}$ on $\Phi$. The second formula follows from the computation
\begin{align*}
(\partial_{\widetilde{\alpha}_s}\Psi)(\alpha+x^1+\ldots+x^\sigma,\beta_{\sigma+1},\ldots,\beta_{\tau})&=(\partial_{\beta_v}\Phi)(\alpha,\Vert x^1\Vert,\ldots,\Vert x^\sigma\Vert,\beta_{\sigma+1},\ldots,\beta_{\tau})\;\partial_{\widetilde{\alpha}_s}\Vert x^v\Vert\\
&=\frac{x_s}{\Vert x^v\Vert}\,(\partial_{\beta_v}\Phi)(\alpha,\Vert x^1\Vert,\ldots,\Vert x^\sigma\Vert,\beta_{\sigma+1},\ldots,\beta_{\tau})\,.
\end{align*}
By applying it to the map $\Phi(\alpha,\beta)=\beta_v^m$, which yields $\Psi(\alpha+x^1+\ldots+x^\sigma,\beta_{\sigma+1},\ldots,\beta_{\tau})=\Vert x^v\Vert^m$, we obtain the seventh formula. We now compute
\begin{align*}
&(\partial_{\widetilde{\alpha}_s}^2\Psi)(\alpha+x^1+\ldots+x^\sigma,\beta_{\sigma+1},\ldots,\beta_{\tau})=\Vert x^v\Vert^{-1}(\partial_{\beta_v}\Phi)(\alpha,\Vert x^1\Vert,\ldots,\Vert x^\sigma\Vert,\beta_{\sigma+1},\ldots,\beta_{\tau})\\
&\quad+x_s\,\partial_{\widetilde{\alpha}_s}\left(\Vert x^v\Vert^{-1}\right)(\partial_{\beta_v}\Phi)(\alpha,\Vert x^1\Vert,\ldots,\Vert x^\sigma\Vert,\beta_{\sigma+1},\ldots,\beta_{\tau})\\
&\quad+x_s\Vert x^v\Vert^{-1}(\partial_{\beta_v}^2\Phi)(\alpha,\Vert x^1\Vert,\ldots,\Vert x^\sigma\Vert,\beta_{\sigma+1},\ldots,\beta_{\tau})\,\partial_{\widetilde{\alpha}_s}\Vert x^v\Vert\\
&=x_s^2\Vert x^v\Vert^{-2}(\partial_{\beta_v}^2\Phi)(\alpha,\Vert x^1\Vert,\ldots,\Vert x^\sigma\Vert,\beta_{\sigma+1},\ldots,\beta_{\tau})\\
&\quad+\left(\Vert x^v\Vert^{-1}-x_s^2\Vert x^v\Vert^{-3}\right)(\partial_{\beta_v}\Phi)(\alpha,\Vert x^1\Vert,\ldots,\Vert x^\sigma\Vert,\beta_{\sigma+1},\ldots,\beta_{\tau})\,,\\
&(\Box\Psi)(\alpha+x^1+\ldots+x^\sigma,\beta_{\sigma+1},\ldots,\beta_{\tau})=\sum_{v=1}^\sigma(\partial_{\beta_v}^2\Phi)(\alpha,\Vert x^1\Vert,\ldots,\Vert x^\sigma\Vert,\beta_{\sigma+1},\ldots,\beta_{\tau})\\
&\quad+\sum_{v=1}^\sigma(t_v-t_{v-1}-1)\Vert x^v\Vert^{-1}(\partial_{\beta_v}\Phi)(\alpha,\Vert x^1\Vert,\ldots,\Vert x^\sigma\Vert,\beta_{\sigma+1},\ldots,\beta_{\tau})\,,
\end{align*}
and the fifth formula is proven. By applying it to the map $\Phi(\alpha,\beta)=\beta_v^m$, we obtain the eighth formula. Finally, the computations
\begin{align*}
\partial_{\widetilde{\alpha}_s}\left(\frac{x^v}{\Vert x^v\Vert^m}\right)&=v_s\,\Vert x^v\Vert^{-m}+x^v\,\partial_{\widetilde{\alpha}_s}\Vert x^v\Vert^{-m}=v_s\,\Vert x^v\Vert^{-m}-mx^v\,x_s \Vert x^v\Vert^{-m-2}\,,\\
\partial_{\widetilde{\alpha}_s}^2\left(\frac{x^v}{\Vert x^v\Vert^m}\right)&=v_s\,\partial_{\widetilde{\alpha}_s}\Vert x^v\Vert^{-m}-mv_s\,x_s \Vert x^v\Vert^{-m-2}-mx^v\Vert x^v\Vert^{-m-2}-mx^v\,x_s\partial_{\widetilde{\alpha}_s}\Vert x^v\Vert^{-m-2}\\
&=-2mv_sx_s\Vert x^v\Vert^{-m-2}-mx^v\Vert x^v\Vert^{-m-2}+m(m+2)x^vx_s^2\Vert x^v\Vert^{-m-4}\\
&=\frac{m}{\Vert x^v\Vert^{m+2}}\left(x^v\left((m+2)\frac{x_s^2}{\Vert x^v\Vert^2}-1\right)-2v_sx_s\right)\,,\\
\Box\left(\frac{x^v}{\Vert x^v\Vert^m}\right)&=m\frac{x^v}{\Vert x^v\Vert^{m+2}}(m+2+t_{v-1}-t_v-2)=m(m+t_{v-1}-t_v)\frac{x^v}{\Vert x^v\Vert^{m+2}}
\end{align*}
prove the ninth and tenth formulas.
\end{proof}

We are now ready for the announced computation of $\Delta_{T_\sigma}f$ for $f\in\slice_T^p(\Omega_D,A)$ with $p\in\{\infty,\omega\}$. To fully justify the statement, we recall that $\Delta_T$ preserves $\slice_T^p(\Omega_D,A)$.

\begin{theorem}\label{thm:varietyoflaplacians}
Fix $p\in\{\infty,\omega\}$ and $\sigma\in\{1,\ldots,\tau\}$. If $f=\I(F)\in\slice_T^p(\Omega_D,A)$, then there exists $g=\I(G)\in\slice_T^p(\Omega_D,A)$ such that
\[\Delta_{T_\sigma}f=\Delta_Tf+g\in\slice_T^p(\Omega_D,A)\,.\]
For every $K\in\mathscr{P}(\tau)$, the $K$-component of $G\in\stem_{T}^p(D,A\otimes\rr^{2^\tau})$ fulfills the equality
\begin{align}\label{eq:laplacians}
&G_K(\alpha,\beta_1,\ldots,\beta_{\tau})\\
&=\sum_{v\in\{1,\ldots,\sigma\}}\frac{t_v-t_{v-1}-1}{\beta_v}\,(\partial_{\beta_v}F_K)(\alpha,\beta_1,\ldots,\beta_{\tau})+\sum_{v\in\{1,\ldots,\sigma\}\cap K}\frac{1+t_{v-1}-t_v}{\beta_v^2}\,F_K(\alpha,\beta_1,\ldots,\beta_{\tau})\,,\notag
\end{align}
in $D':=\{(\alpha,\beta_1,\ldots,\beta_{\tau})\in D:\beta_1\cdots\beta_\sigma\neq0\}$.
\end{theorem}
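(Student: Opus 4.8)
The plan is to compute both $\Delta_Tf$ and $\Delta_{T_\sigma}f$ by means of Proposition~\ref{prop:globaloperators}, subtract them, and recognise the difference as a $T$-function with the announced stem components. Since $f$ is a $T_\sigma$-function by Proposition~\ref{prop:multitilde}, and $\Delta_{T_\sigma}$ preserves $\slice_{T_\sigma}^p$, applying Proposition~\ref{prop:globaloperators} once to $T$ and once to $T_\sigma$ (whose mirror is $\rr_{0,t_\sigma}$ and whose blocks are $\{t_\sigma+1,\ldots,t_{\sigma+1}\},\ldots,\{t_{\tau-1}+1,\ldots,t_\tau\}$) and subtracting yields, on the open dense subset $\Omega''$ of $\Omega_D$ where $\beta_1\cdots\beta_\tau\neq0$,
\[\Delta_{T_\sigma}f-\Delta_Tf=\sum_{v=1}^\sigma\bigl(D_v-R_v\bigr)f\,,\qquad D_v:=\sum_{s\in B_v}\partial_{x_s}^2\,,\quad R_v:=\sum_{s,s'\in B_v}\frac{x_sx_{s'}}{\Vert x^v\Vert^2}\,\partial_{x_s}\partial_{x_{s'}}\,,\]
where $B_v:=\{t_{v-1}+1,\ldots,t_v\}$. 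This is the classical splitting $\Delta_{\rr^{n_v}}=\partial_{r_v}^2+\tfrac{n_v-1}{r_v}\partial_{r_v}+\tfrac{1}{r_v^2}\Delta_{S^{n_v-1}}$ with $n_v:=t_v-t_{v-1}$ and $r_v:=\Vert x^v\Vert$, once one notices that $R_v=\tfrac{1}{r_v^2}E(E-1)=\partial_{r_v}^2$ for the Euler operator $E:=\sum_{s\in B_v}x_s\partial_{x_s}$.

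The core step is evaluating $(D_v-R_v)f$ on $\Omega''$ in terms of the stem data. Fixing $I\in\torus$, write $f(x)=\sum_{K\in\mathscr{P}(\tau)}J_K\,F_K(x^0,\Vert x^1\Vert,\ldots,\Vert x^\tau\Vert)$ with $J:=(x^1/\Vert x^1\Vert,\ldots,x^\tau/\Vert x^\tau\Vert)$. For a fixed $v\le\sigma$, the only factors of the $K$-summand depending on the coordinates $x_s$ with $s\in B_v$ are the factor $x^v/\Vert x^v\Vert$ (present exactly when $v\in K$) and the $v$-th argument $\Vert x^v\Vert$ of $F_K$. Expanding $(D_v-R_v)(J_KF_K)$ by the Leibniz rule and inserting the formulas of Proposition~\ref{prop:laplacianpower} for $\partial_{x_s}\Vert x^v\Vert^m$, for $\partial_{x_s}\bigl(x^v/\Vert x^v\Vert\bigr)$, and for $\partial_{x_s}\bigl(F_K(\ldots,\Vert x^v\Vert,\ldots)\bigr)=\tfrac{x_s}{\Vert x^v\Vert}(\partial_{\beta_v}F_K)(\ldots)$, together with $\sum_{s\in B_v}x_sv_s=x^v$, $\sum_{s\in B_v}x_s^2=\Vert x^v\Vert^2$ and $\sum_{s,s'\in B_v}x_sx_{s'}\partial_{x_s}\partial_{x_{s'}}=E(E-1)$, one finds that every cross term cancels — essentially because $x^v/\Vert x^v\Vert$ is homogeneous of degree $0$ in $x^v$, hence annihilated by $E$ — and that
\[(D_v-R_v)(J_KF_K)=J_K\Bigl(\tfrac{t_v-t_{v-1}-1}{\beta_v}\,\partial_{\beta_v}F_K+[v\in K]\,\tfrac{1+t_{v-1}-t_v}{\beta_v^2}\,F_K\Bigr)\]
on $\Omega''$, with $\beta_v=\Vert x^v\Vert$. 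Summing over $K$ and then over $v\in\{1,\ldots,\sigma\}$ gives $\Delta_{T_\sigma}f-\Delta_Tf=\sum_{K}J_KG_K$ with $G_K$ equal to the right-hand side of~\eqref{eq:laplacians}.

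It remains to verify that $G:=\sum_{K\in\mathscr{P}(\tau)}E_KG_K$, with $G_K$ given on $D':=\{\beta_1\cdots\beta_\sigma\neq0\}\cap D$ by~\eqref{eq:laplacians}, is the restriction of a genuine element of $\stem_T^p(D,A\otimes\rr^{2^\tau})$. The $T$-stem parity of $G_K$ follows from that of $F_K$. For regularity across $\{\beta_v=0\}$ with $v\le\sigma$: when $v\in K$, Remark~\ref{rmk:whitney} gives $F_K=\beta_v P$ with $P$ depending on $\beta_v$ only through $\beta_v^2$, and the two summands of $G_K$ indexed by that $v$ combine into $2(t_v-t_{v-1}-1)\,\beta_v\,\partial_{(\beta_v^2)}P$, so the apparent $1/\beta_v^2$ singularity disappears; when $v\notin K$ the single term $\tfrac1{\beta_v}\partial_{\beta_v}F_K=2\,\partial_{(\beta_v^2)}F_K$ is already smooth. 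Hence $G$ extends to $\stem_T^p(D,A\otimes\rr^{2^\tau})$ and $g:=\I(G)\in\slice_T^p(\Omega_D,A)$. Finally, $\Delta_Tf$, $g$ and $\Delta_{T_\sigma}f$ all lie in $\mathscr{C}^p(\Omega_D,A)$ by Proposition~\ref{prop:analyticTfunction} applied to $T$ and to $T_\sigma$; since they satisfy $\Delta_{T_\sigma}f=\Delta_Tf+g$ on the dense open $\Omega''$, the equality holds throughout $\Omega_D$, and as $\slice_T^p(\Omega_D,A)$ is a right $A$-module, $\Delta_{T_\sigma}f=\Delta_Tf+g$ belongs to it; formula~\eqref{eq:laplacians} then holds on all of $D'$ by continuity. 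The main obstacle is the Leibniz bookkeeping in the case $v\in K$, where the noncommutative product $J_K$ must be split around the factor $x^v/\Vert x^v\Vert$ and the vanishing of every cross term checked; a close second is the cancellation of the $1/\beta_v^2$ singularity needed to land back inside $\stem_T^p$.
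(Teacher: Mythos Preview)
Your proof is correct and takes a genuinely different route from the paper's. The paper works entirely at the stem-function level: it uses Proposition~\ref{prop:multitilde}'s explicit formula for the $T_\sigma$-stem function $F^\sigma$, applies $\Delta_{T_\sigma}$ to $F^\sigma$ component by component via Definition~\ref{def:stemoperators}, and splits the result into three pieces $I,II,III$ (with $II\equiv0$) to recognise $(\Delta_TF+G)^\sigma$. You instead work directly on $\Omega_D$: you subtract the two explicit formulas of Proposition~\ref{prop:globaloperators} and reduce everything to the classical polar decomposition $\Delta_{\rr^{n_v}}=\partial_{r_v}^2+\tfrac{n_v-1}{r_v}\partial_{r_v}+\tfrac{1}{r_v^2}\Delta_{S^{n_v-1}}$ in each block $v\le\sigma$, reading off the spherical part via the Euler operator. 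Your argument is more geometric and bypasses the intermediate object $F^\sigma$ entirely; the paper's is more algebraic and stays inside the stem formalism, which dovetails directly with the later proofs of Theorems~\ref{thm:debartilde} and~\ref{thm:powersoflaplacian}. One small remark: your sentence ``every cross term cancels because $x^v/\Vert x^v\Vert$ is homogeneous of degree~$0$'' literally justifies only $R_v(J_KF_K)=J_KR_vF_K$; the vanishing of the $D_v$-cross term needs the separate identity $2\sum_{s\in B_v}\partial_{x_s}(x^v/\Vert x^v\Vert)\cdot\tfrac{x_s}{\Vert x^v\Vert}=0$, which is precisely the paper's piece $II$ --- but you evidently intend this when you flag the ``Leibniz bookkeeping in the case $v\in K$'' as the main obstacle. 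Your Whitney-based cancellation of the $1/\beta_v^2$ singularity is a nice addition not spelled out in the paper, which instead defines $g$ as $\Delta_{T_\sigma}f-\Delta_Tf$ and appeals to Proposition~\ref{prop:analyticTfunction}.
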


\begin{proof}
As a first step, we define $G_K\in\mathscr{C}^p(D',A)$ by formula~\eqref{eq:laplacians} for every $K\in\mathscr{P}(\tau)$,
we let $G:=\sum_{K\in\mathscr{P}(\tau)}E_KG_K$, and we establish that $G\in\stem_{T}^p(D',A\otimes\rr^{2^\tau})$ through the following reasoning. Fix $(\alpha,\beta)\in D'$. First assume $h\in K$, whence $F_K(\alpha,\overline{\beta}^h)=-F_K(\alpha,\beta)$, $(\partial_{\beta_h}F_K)(\alpha,\overline{\beta}^h)=(\partial_{\beta_h}F_K)(\alpha,\beta)$ and $(\partial_{\beta_v}F_K)(\alpha,\overline{\beta}^h)=-(\partial_{\beta_v}F_K)(\alpha,\beta)$ for all $v\in\{1,\ldots,\tau\}\setminus\{h\}$. If $h\in\{1,\ldots,\sigma\}$, then
\begin{align*}
G_K(\alpha,\overline{\beta}^h)&=\frac{t_h-t_{h-1}-1}{-\beta_h}\,(\partial_{\beta_h}F_K)(\alpha,\beta)+\sum_{v\in\{1,\ldots,\sigma\}\setminus\{h\}}\frac{t_v-t_{v-1}-1}{\beta_v}\,(-\partial_{\beta_v}F_K)(\alpha,\beta)\\
&\quad+\sum_{v\in\{1,\ldots,\sigma\}\cap K}\frac{1+t_{v-1}-t_v}{\beta_v^2}\,(-F_K(\alpha,\beta))=-G_K(\alpha,\beta)\,,
\end{align*}
while if $h\not\in\{1,\ldots,\sigma\}$, then
\begin{align*}
G_K(\alpha,\overline{\beta}^h)&=\sum_{v\in\{1,\ldots,\sigma\}}\frac{t_v-t_{v-1}-1}{\beta_v}\,(-\partial_{\beta_v}F_K)(\alpha,\beta)+\sum_{v\in\{1,\ldots,\sigma\}\cap K}\frac{1+t_{v-1}-t_v}{\beta_v^2}\,(-F_K(\alpha,\beta))\\
&=-G_K(\alpha,\beta)\,.
\end{align*}
Now assume $h\not\in K$, whence $F_K(\alpha,\overline{\beta}^h)=F_K(\alpha,\beta)$, $(\partial_{\beta_h}F_K)(\alpha,\overline{\beta}^h)=(-\partial_{\beta_h}F_K)(\alpha,\beta)$ and $(\partial_{\beta_v}F_K)(\alpha,\overline{\beta}^h)=(\partial_{\beta_v}F_K)(\alpha,\beta)$ for all $v\in\{1,\ldots,\tau\}\setminus\{h\}$. If $h\in\{1,\ldots,\sigma\}$, then
\begin{align*}
G_K(\alpha,\overline{\beta}^h)&=\frac{t_h-t_{h-1}-1}{-\beta_h}\,(-\partial_{\beta_h}F_K)(\alpha,\beta)+\sum_{v\in\{1,\ldots,\sigma\}\setminus\{h\}}\frac{t_v-t_{v-1}-1}{\beta_v}\,(\partial_{\beta_v}F_K)(\alpha,\beta)\\
&\quad+\sum_{v\in\{1,\ldots,\sigma\}\cap K}\frac{1+t_{v-1}-t_v}{\beta_v^2}\,F_K(\alpha,\beta)=G_K(\alpha,\beta)\,,
\end{align*}
while if $h\not\in\{1,\ldots,\sigma\}$, then
\begin{align*}
G_K(\alpha,\overline{\beta}^h)&=\sum_{v\in\{1,\ldots,\sigma\}}\frac{t_v-t_{v-1}-1}{\beta_v}\,(\partial_{\beta_v}F_K)(\alpha,\beta)+\sum_{v\in\{1,\ldots,\sigma\}\cap K}\frac{1+t_{v-1}-t_v}{\beta_v^2}\,F_K(\alpha,\beta)\\
&=G_K(\alpha,\beta)\,.
\end{align*}

We now take a second step. By Proposition~\ref{prop:multitilde}, $f\in\slice_{T_\sigma}^p(\Omega_D,A)$. Thus, $\Delta_{T_\sigma}f\in\slice_{T_\sigma}^p(\Omega_D,A)\subset\mathscr{C}^p(\Omega_D,A)$, where the last inclusion follows from Proposition~\ref{prop:analyticTfunction}. As we already mentioned, the $T_\sigma$-stem function inducing $\Delta_{T_\sigma}f$ is $\Delta_{T_\sigma}F^\sigma\in\stem_{T_\sigma}^p(D^\sigma,A\otimes\rr^{2^{\tau-\sigma}})$. For every $H\in\mathscr{P}(\tau-\sigma)$, we are now in a position to compute the $H$-component of $\Delta_{T_\sigma}F^\sigma$ as
\[(\Delta_{T_\sigma}F^\sigma)_H=\left(\sum_{u=0}^{t_0}\partial_{\widetilde{\alpha}_u}^2+\Box+\sum_{h=1}^{\tau-\sigma}\partial_{\widetilde{\beta}_h}^2\right)F^\sigma_H\,.\]
Here, we have adopted the notations of Proposition~\ref{prop:laplacianpower}. In Proposition~\ref{prop:multitilde}, we have computed the $H$-component of $F^\sigma$ as
\begin{align*}
&F^\sigma_H(\alpha+x^1+\ldots+x^\sigma,\beta_{\sigma+1},\ldots,\beta_{\tau})\\
&=\sum_{p=0}^\sigma\sum_{1\leq k_1<\ldots<k_p\leq \sigma}(-1)^{p|H|}\frac{x^{k_1}}{\Vert x^{k_1}\Vert}\ldots\frac{x^{k_p}}{\Vert x^{k_p}\Vert}F_{\{k_1,\ldots,k_p\}\cup(H+\sigma)}(\alpha,\Vert x^1\Vert,\ldots,\Vert x^\sigma\Vert,\beta_{\sigma+1},\ldots,\beta_{\tau})
\end{align*}
in $D^\sigma_*$. Thus, in the same set $D^\sigma_*$,
\[(\Delta_{T_\sigma}F^\sigma)_H=\sum_{p=0}^\sigma\sum_{1\leq k_1<\ldots<k_p\leq \sigma}(-1)^{p|H|}(I+II+III)\,,\]
where $I,II,III$ are specified as follows. Using for $\{k_1,\ldots,k_p\}\cup(H+\sigma)$ the temporary notation $K$ and setting $\Psi(\alpha+x^1+\ldots+x^\sigma,\beta_{\sigma+1},\ldots,\beta_{\tau}):=F_K(\alpha,\Vert x^1\Vert,\ldots,\Vert x^\sigma\Vert,\beta_{\sigma+1},\ldots,\beta_{\tau})$,
\begin{align*}
I&:=\Box\left(\frac{x^{k_1}}{\Vert x^{k_1}\Vert}\ldots\frac{x^{k_p}}{\Vert x^{k_p}\Vert}\right)\Psi=\left(\frac{x^{k_1}}{\Vert x^{k_1}\Vert}\ldots\frac{x^{k_p}}{\Vert x^{k_p}\Vert}\right)\sum_{s=1}^p\frac{1+t_{k_s-1}-t_{k_s}}{\Vert x^{k_s}\Vert^2}\Psi\,.
\end{align*}
For the second equality, we applied Proposition~\ref{prop:laplacianpower}. Moreover,
\begin{align*}
&II(\alpha+x^1+\ldots+x^\sigma,\beta_{\sigma+1},\ldots,\beta_{\tau}):=\\
&=2\sum_{s=t_0+1}^{t_\sigma}\partial_{\widetilde{\alpha}_s}\left(\frac{x^{k_1}}{\Vert x^{k_1}\Vert}\ldots\frac{x^{k_p}}{\Vert x^{k_p}\Vert}\right)\,(\partial_{\widetilde{\alpha}_s}\Psi)(\alpha+x^1+\ldots+x^\sigma,\beta_{\sigma+1},\ldots,\beta_{\tau})\\
&=2\sum_{v=1}^p\sum_{s=t_{k_v-1}+1}^{t_{k_v}}\frac{x^{k_1}}{\Vert x^{k_1}\Vert}\ldots\left(\frac{v_s}{\Vert x^{k_v}\Vert}-\frac{x^{k_v}\,x_s}{\Vert x^{k_v}\Vert^3}\right)\ldots\frac{x^{k_p}}{\Vert x^{k_p}\Vert}\frac{x_s}{\Vert x^{k_v}\Vert}\,(\partial_{\beta_v}F_K)(\alpha,\Vert x^1\Vert,\ldots)\\
&=2\sum_{v=1}^p\frac{x^{k_1}}{\Vert x^{k_1}\Vert}\ldots\left(\frac{x^{k_v}}{\Vert x^{k_v}\Vert^2}-\frac{x^{k_v}\,\Vert x^{k_v}\Vert^2}{\Vert x^{k_v}\Vert^4}\right)\ldots\frac{x^{k_p}}{\Vert x^{k_p}\Vert}(\partial_{\beta_v}F_K)(\alpha,\Vert x^1\Vert,\ldots,\Vert x^\sigma\Vert,\beta_{\sigma+1},\ldots,\beta_{\tau})\\
&\equiv0\,,
\end{align*}
where the second equality follows by applying Proposition~\ref{prop:laplacianpower} twice. Finally,
\begin{align*}
&III(\alpha+x^1+\ldots+x^\sigma,\beta_{\sigma+1},\ldots,\beta_{\tau}):=\\
&=\left(\frac{x^{k_1}}{\Vert x^{k_1}\Vert}\ldots\frac{x^{k_p}}{\Vert x^{k_p}\Vert}\right)\left(\sum_{u=0}^{t_0}\partial_{\widetilde{\alpha}_u}^2\Psi+\Box\Psi+\sum_{h=1}^{\tau-\sigma}\partial_{\widetilde{\beta}_h}^2\Psi\right)(\alpha+x^1+\ldots+x^\sigma,\beta_{\sigma+1},\ldots,\beta_{\tau})\\
&=\left(\frac{x^{k_1}}{\Vert x^{k_1}\Vert}\ldots\frac{x^{k_p}}{\Vert x^{k_p}\Vert}\right)\left(\sum_{u=0}^{t_0}\partial_{\alpha_u}^2F_K+\sum_{v=1}^{\tau}\partial_{\beta_v}^2F_K+\sum_{v=1}^\sigma\frac{t_v-t_{v-1}-1}{\Vert x^v\Vert}\,\partial_{\beta_v}F_K\right)(\alpha,\Vert x^1\Vert,\ldots)\\
&=\left(\frac{x^{k_1}}{\Vert x^{k_1}\Vert}\ldots\frac{x^{k_p}}{\Vert x^{k_p}\Vert}\right)\left((\Delta_TF)_K+\sum_{v=1}^\sigma\frac{t_v-t_{v-1}-1}{\Vert x^v\Vert}\,\partial_{\beta_v}F_K\right)(\alpha,\Vert x^1\Vert,\ldots)\,.
\end{align*}
For the second equality, we applied again Proposition~\ref{prop:laplacianpower}. We conclude that
\begin{align*}
&(\Delta_{T_\sigma}F^\sigma)_H(\alpha+x^1+\ldots+x^\sigma,\beta_{\sigma+1},\ldots,\beta_{\tau})\\
&=\sum_{p=0}^\sigma\sum_{1\leq k_1<\ldots<k_p\leq \sigma}(-1)^{p|H|}\frac{x^{k_1}}{\Vert x^{k_1}\Vert}\ldots\frac{x^{k_p}}{\Vert x^{k_p}\Vert}(\Delta_TF)_{\{k_1,\ldots,k_p\}\cup(H+\sigma)}(\alpha,\Vert x^1\Vert,\ldots,\Vert x^\sigma\Vert,\beta_{\sigma+1},\ldots)\\
&\quad+\sum_{p=0}^\sigma\sum_{1\leq k_1<\ldots<k_p\leq \sigma}(-1)^{p|H|}\frac{x^{k_1}}{\Vert x^{k_1}\Vert}\ldots\frac{x^{k_p}}{\Vert x^{k_p}\Vert}G_{\{k_1,\ldots,k_p\}\cup(H+\sigma)}(\alpha,\Vert x^1\Vert,\ldots,\Vert x^\sigma\Vert,\beta_{\sigma+1},\ldots)\\
&=(\Delta_TF)^\sigma_H(\alpha+x^1+\ldots+x^\sigma,\beta_{\sigma+1},\ldots,\beta_{\tau})+G^\sigma_H(\alpha+x^1+\ldots+x^\sigma,\beta_{\sigma+1},\ldots,\beta_{\tau})
\end{align*}
in $D^\sigma_*$, whence $\Delta_{T_\sigma}F^\sigma$ equals $(\Delta_TF)^\sigma+G^\sigma$ in $D^\sigma_*$. Equivalently, $\Delta_{T_\sigma}f$ equals in $\Omega_{D'}$ the $T_\sigma$-function induced by $(\Delta_TF+G)^\sigma$, whence the $T$-function induced by $\Delta_TF+G$. Taking into account that $\Delta_{T_\sigma}f\in\mathscr{C}^p(\Omega_D,A)$, Proposition~\ref{prop:analyticTfunction} guarantees that $\Delta_{T_\sigma}f\in\slice_T^p(\Omega_D,A)$. Setting $g:=\Delta_{T_\sigma}f-\Delta_Tf\in\slice_T^p(\Omega_D,A)$ yields the desired conclusion.
\end{proof}

The following result concerning the effect of $\debar_{\widetilde{T}}=\debar_{T_1}$ on a $T$-function $f$ will be useful to achieve our main theorem in the forthcoming Section~\ref{sec:FueterSceTfunctions}.

\begin{theorem}\label{thm:debartilde}
Let $p\in\{\infty,\omega\}$. If $f=\I(F)\in\slice_T^p(\Omega_D,A)$, then there exists $g\in\slice_T^p(\Omega_D,A)$ such that $\debar_{\widetilde{T}}f=\debar_T f+(1+t_0-t_1)g$ and $\partial_{\widetilde{T}}f=\partial_T f+(t_1-t_0-1)g$ (whence $\debar_{\widetilde{T}}f,\partial_{\widetilde{T}}f\in\slice_T^p(\Omega_D,A)$). For every $K\in\mathscr{P}(\tau)$, the $K$-component of the function $G\in\stem_{T}^p(D,A\otimes\rr^{2^\tau})$ inducing $g$ fulfills the equality
\begin{align}\label{eq:G}
&G_K(\alpha,\beta_1,\ldots,\beta_\tau)=\left\{
\begin{array}{ll}
\beta_1^{-1}F_{\{1\}\cup K}(\alpha,\beta_1,\ldots,\beta_\tau)&\mathrm{if\ }1\not\in K\\
0&\mathrm{if\ }1\in K
\end{array}
\right.\,,
\end{align}
in $D':=\{(\alpha,\beta_1,\ldots,\beta_\tau)\in D:\beta_1\neq0\}$.
\end{theorem}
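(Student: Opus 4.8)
The plan is to mimic the architecture of the proof of Theorem~\ref{thm:varietyoflaplacians}, but for the first‑order operators, exploiting the fact that—thanks to Proposition~\ref{prop:globaloperators}—the \emph{difference} between $\debar_{\widetilde{T}}$ and $\debar_T$ (and between $\partial_{\widetilde{T}}$ and $\partial_T$) has a very simple closed form. First I would define $G=\sum_{K\in\mathscr{P}(\tau)}E_KG_K$ on $D':=\{(\alpha,\beta_1,\ldots,\beta_\tau)\in D:\beta_1\neq0\}$ by formula~\eqref{eq:G}, i.e.\ $G_K:=\beta_1^{-1}F_{\{1\}\cup K}$ if $1\not\in K$ and $G_K:\equiv0$ if $1\in K$. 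That $G$ is a $T$-stem function on $D'$ is a short case analysis on $h\in\{1,\ldots,\tau\}$: when $h=1$ the sign flip of $\beta_1^{-1}$ under $\beta\mapsto\overline{\beta}^h$ matches exactly the oddness of $F_{\{1\}\cup K}$ in $\beta_1$, and when $h\neq1$ one reads off the claimed parity directly from that of $F_{\{1\}\cup K}$. To see that $G$ extends across $\{\beta_1=0\}$ to an element of $\stem_T^p(D,A\otimes\rr^{2^\tau})$, I would invoke Remark~\ref{rmk:whitney}: for $1\not\in K=\{k_1<\ldots<k_q\}$ one has $F_{\{1\}\cup K}(\alpha,\beta)=\beta_1\beta_{k_1}\cdots\beta_{k_q}\,\widehat{G}_{\{1\}\cup K}(\alpha,\beta^2)$ with $\widehat{G}_{\{1\}\cup K}\in\mathscr{C}^p$, so $\beta_1^{-1}F_{\{1\}\cup K}=\beta_{k_1}\cdots\beta_{k_q}\,\widehat{G}_{\{1\}\cup K}(\alpha,\beta^2)\in\mathscr{C}^p(D,A)$. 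I then set $g:=\I(G)\in\slice_T^p(\Omega_D,A)$, and record for later that, by Definition~\ref{def:tilde} applied to $G$ (where $G_{H+1}=\beta_1^{-1}F_{\{1\}\cup(H+1)}$ and $G_{\{1\}\cup(H+1)}\equiv0$), one has $\widetilde{G}_H(\alpha+x^1,\widetilde{\beta})=\Vert x^1\Vert^{-1}F_{\{1\}\cup(H+1)}(\alpha,\Vert x^1\Vert,\widetilde{\beta})$ on $\widetilde{D}_*$.

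Next I would reduce the $\partial_{\widetilde{T}}$ identity to the $\debar_{\widetilde{T}}$ one. Put $D'':=\{(\alpha,\beta_1,\ldots,\beta_\tau)\in D:\beta_1\cdots\beta_\tau\neq0\}$, whose $\Omega_{D''}$ is open dense in $\Omega_D$. Comparing the explicit formulas of Proposition~\ref{prop:globaloperators} for $T$ and for $\widetilde{T}$ (whose mirror is $\rr_{0,t_1}$ and whose Cullen blocks are $x^2,\ldots,x^\tau$), the $u=2,\ldots,\tau$ summands cancel and, in $\Omega_{D''}$,
\[
\debar_{\widetilde{T}}f-\debar_Tf=\sum_{s=t_0+1}^{t_1}v_s\,\partial_{x_s}f-\frac{x^1}{\Vert x^1\Vert^2}\sum_{s=t_0+1}^{t_1}x_s\,\partial_{x_s}f\,;
\]
since $v_s^c=-v_s$ for $s\geq1$ and $(x^1)^c=-x^1$ (whence $(x^1/\Vert x^1\Vert^2)^c=-x^1/\Vert x^1\Vert^2$), the analogous computation gives $\partial_{\widetilde{T}}f-\partial_Tf=-(\debar_{\widetilde{T}}f-\debar_Tf)$. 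As $\debar_{\widetilde{T}}f,\debar_Tf,g$ are all $\mathscr{C}^p$ on $\Omega_D$ (Proposition~\ref{prop:analyticTfunction} and the subsequent remarks), it suffices to prove $\debar_{\widetilde{T}}f-\debar_Tf=(1+t_0-t_1)g$ on $\Omega_{D''}$.

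For that core step, I would use Proposition~\ref{TfunctionsareTtildefucntions} to write $f=\I(\widetilde{F})$ with $\widetilde{F}_H=\Psi_{H+1}+(-1)^{|H|}\frac{x^1}{\Vert x^1\Vert}\Psi_{\{1\}\cup(H+1)}$, where $\Psi_L$ denotes the lift $(\alpha+x^1,\widetilde{\beta})\mapsto F_L(\alpha,\Vert x^1\Vert,\widetilde{\beta})$. Because for $s\in\{t_0+1,\ldots,t_1\}$ the coordinate $x_s$ lies in the $\widetilde{T}$-mirror, each $\widetilde{J}_h$ is independent of $x_s$, so $\partial_{x_s}f=\sum_H\widetilde{J}_H\,(\partial_{x_s}\widetilde{F}_H)$. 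I would then compute $\partial_{x_s}\widetilde{F}_H$ using the $\sigma=1$ formulas of Proposition~\ref{prop:laplacianpower} ($\partial_{x_s}\Psi_L=\frac{x_s}{\Vert x^1\Vert}\widehat{\partial_{\beta_1}F_L}$ and $\partial_{x_s}(x^1\Vert x^1\Vert^{-1})=v_s\Vert x^1\Vert^{-1}-x^1x_s\Vert x^1\Vert^{-3}$) together with the elementary identities $\sum_s v_s^2=-(t_1-t_0)$, $\sum_s x_sv_s=x^1$, $(x^1)^2=-\Vert x^1\Vert^2$, $\sum_s x_s^2=\Vert x^1\Vert^2$ (all sums over $s\in\{t_0+1,\ldots,t_1\}$). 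Substituting these, the bracket $\sum_s v_s(\partial_{x_s}\widetilde{F}_H)-\frac{x^1}{\Vert x^1\Vert^2}\sum_s x_s(\partial_{x_s}\widetilde{F}_H)$ collapses: the $\widehat{\partial_{\beta_1}F_{H+1}}$- and $\widehat{\partial_{\beta_1}F_{\{1\}\cup(H+1)}}$-contributions cancel in pairs and one is left with $(1+t_0-t_1)(-1)^{|H|}\Vert x^1\Vert^{-1}F_{\{1\}\cup(H+1)}(\alpha,\Vert x^1\Vert,\widetilde{\beta})$. Finally, commuting $v_s$ and $x^1$ past the $|H|$ imaginary units constituting $\widetilde{J}_H$ produces a second $(-1)^{|H|}$ which cancels the first, and by the formula for $\widetilde{G}_H$ recorded above this yields $\debar_{\widetilde{T}}f-\debar_Tf=(1+t_0-t_1)\sum_H\widetilde{J}_H\widetilde{G}_H=(1+t_0-t_1)g$ in $\Omega_{D''}$, hence everywhere. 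Since $\slice_T^p(\Omega_D,A)$ is a right $A$-module containing $\debar_Tf$ and $g$, it follows that $\debar_{\widetilde{T}}f=\debar_Tf+(1+t_0-t_1)g$ and $\partial_{\widetilde{T}}f=\partial_Tf+(t_1-t_0-1)g$ both lie in $\slice_T^p(\Omega_D,A)$.

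The main obstacle I anticipate is the sign bookkeeping in the last paragraph: one must make sure the $(-1)^{|H|}$ of formula~\eqref{eq:tilde}, the $(-1)^{|H|}$ from the anticommutation of $v_s$ and $x^1$ with $\widetilde{J}_H$, and the signs hidden inside $\partial_{x_s}(x^1\Vert x^1\Vert^{-1})$ all conspire so that the leftover term is a genuine $T$-function with the prescribed stem components (and not merely a $\widetilde{T}$-function). A secondary but genuine point is the $\mathscr{C}^p$-extension of $G$ across $\{\beta_1=0\}$, which is precisely what the Whitney factorisation of Remark~\ref{rmk:whitney} supplies.
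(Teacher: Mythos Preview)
Your argument is correct and takes a genuinely different route from the paper's. The paper works entirely at the stem level: it expands $(\debar_{\widetilde T}\widetilde F)_H$ from Definition~\ref{def:stemoperators}, splits it into the four blocks $\partial_{\widetilde\alpha_0}$, $\sum_{s=1}^{t_0}v_s\partial_{\widetilde\alpha_s}$, $\sum_{s=t_0+1}^{t_1}v_s\partial_{\widetilde\alpha_s}$, $\sum_h(-1)^{\sigma(h,H)+1}\partial_{\widetilde\beta_h}$, and reassembles the first, second and fourth into $(\widetilde{\debar_TF})_H$, so only the third block survives as the correction term; the $\partial_{\widetilde T}$ statement is then obtained by ``running through the proof with appropriate sign changes''. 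You instead invoke Proposition~\ref{prop:globaloperators} for both $T$ and $\widetilde T$, observe that all summands indexed by $s\le t_0$ or $u\ge2$ cancel on $\Omega_{D''}$, and are left with only the $s\in\{t_0+1,\ldots,t_1\}$ contribution to evaluate. This bypasses the $\sigma(h,K)$ bookkeeping entirely. Your reduction of the $\partial_{\widetilde T}$ identity to the $\debar_{\widetilde T}$ one via $v_s^c=-v_s$ and $(x^1/\|x^1\|^2)^c=-x^1/\|x^1\|^2$ is also cleaner than a parallel rederivation. Finally, your explicit Whitney argument gives $G\in\stem_T^p(D,A\otimes\rr^{2^\tau})$ and $g=\I(G)$ directly, whereas the paper defines $g$ a posteriori as $(1+t_0-t_1)^{-1}(\debar_{\widetilde T}f-\debar_Tf)$, which strictly speaking requires $t_1\neq t_0+1$; your construction is uniform in this respect. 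Both approaches are of comparable length; yours is arguably more transparent, while the paper's stays methodologically aligned with the purely stem-level computation in Theorem~\ref{thm:varietyoflaplacians}.
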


\begin{proof}
Since $f=\I(F)$ for some $T$-stem function on $D$, we know that $f$ is also a $\widetilde{T}$-function, induced by the $\widetilde{T}$-stem function $\widetilde{F}=\sum_{H\in\mathscr{P}(\tau-1)}\widetilde{E}_H\widetilde{F}_H$ such that
\[\widetilde{F}_H(\alpha+x^1,\widetilde{\beta})=F_{H+1}(\alpha,\Vert x^1\Vert,\widetilde{\beta})+(-1)^{|H|}\frac{x^1}{\Vert x^1\Vert}F_{\{1\}\cup(H+1)}(\alpha,\Vert x^1\Vert,\widetilde{\beta})\]
for all $\alpha\in\rr_{0,t_0},x^1\in\rr_{t_0+1,t_1},\widetilde{\beta}\in\rr^{\tau-1}$ such that $(\alpha,\Vert x^1\Vert,\widetilde{\beta})\in D$. In particular, $\debar_{\widetilde{T}}f\in\slice_{\widetilde{T}}^p(\Omega_D,A)\subset\mathscr{C}^p(\Omega_D,A)$. Computing $\debar_{\widetilde{T}}f$ is the same as computing $\debar_{\widetilde{T}}\widetilde{F}$, where
\begin{align*}
(\debar_{\widetilde{T}}\widetilde{F})_H&=\debar_{\widetilde{\alpha}}^{|H|+1}\widetilde{F}_H+\sum_{h=1}^{\tau-1}(-1)^{\sigma(h,H)+1}\partial_{\widetilde{\beta}_h}\widetilde{F}_{H\bigtriangleup\{h\}}\\
&=\partial_{\widetilde{\alpha}_0}\widetilde{F}_H+(-1)^{|H|}\sum_{s=0}^{t_1}v_s\partial_{\widetilde{\alpha}_s}\widetilde{F}_H+\sum_{h=1}^{\tau-1}(-1)^{\sigma(h,H)+1}\partial_{\widetilde{\beta}_h}\widetilde{F}_{H\bigtriangleup\{h\}}
\end{align*}
for all $H\in\mathscr{P}(\tau-1)$ by Definitions~\ref{def:debaralpha} and~\ref{def:stemoperators}.

Fix $H\in\mathscr{P}(\tau-1)$. Using Proposition~\ref{prop:laplacianpower} in its special case $\sigma=1$, we get
\[(\partial_{\widetilde{\alpha}_0}\widetilde{F}_H)(\alpha+x^1,\widetilde{\beta})=(\partial_{\alpha_0}F_{H+1})(\alpha,\Vert x^1\Vert,\widetilde{\beta})+(-1)^{|H|}\frac{x^1}{\Vert x^1\Vert}(\partial_{\alpha_0}F_{\{1\}\cup(H+1)})(\alpha,\Vert x^1\Vert,\widetilde{\beta})\,,\]
as well as
\begin{align*}
&(-1)^{|H|}\sum_{s=1}^{t_0}v_s(\partial_{\widetilde{\alpha}_s}\widetilde{F}_H)(\alpha+x^1,\widetilde{\beta})\\
&=(-1)^{|H|}\sum_{s=1}^{t_0}v_s(\partial_{\alpha_s}F_{H+1})(\alpha,\Vert x^1\Vert,\widetilde{\beta})-\frac{x^1}{\Vert x^1\Vert}\sum_{s=1}^{t_0}v_s(\partial_{\alpha_s}F_{\{1\}\cup(H+1)})(\alpha,\Vert x^1\Vert,\widetilde{\beta})\\
&=(-1)^{|H+1|}\sum_{s=1}^{t_0}v_s(\partial_{\alpha_s}F_{H+1})(\alpha,\Vert x^1\Vert,\widetilde{\beta})\\
&\quad+(-1)^{|H|}\frac{x^1}{\Vert x^1\Vert}(-1)^{|\{1\}\cup(H+1)|}\sum_{s=1}^{t_0}v_s(\partial_{\alpha_s}F_{\{1\}\cup(H+1)})(\alpha,\Vert x^1\Vert,\widetilde{\beta})\,,
\end{align*}
where we took into account the equalities $v_s\frac{x^1}{\Vert x^1\Vert}=-\frac{x^1}{\Vert x^1\Vert}v_s$ (valid for all $s\in\{1,\ldots,t_0\}$), $|H+1|=|H|$ and $|\{1\}\cup(H+1)|=|H|+1$.

Another application of Proposition~\ref{prop:laplacianpower} (in its special case $\sigma=1$) gives
\begin{align*}
&\sum_{h=1}^{\tau-1}(-1)^{\sigma(h,H)+1}(\partial_{\widetilde{\beta}_h}\widetilde{F}_{H\bigtriangleup\{h\}})(\alpha+x^1,\widetilde{\beta})=\sum_{v=2}^{\tau}(-1)^{\sigma(v-1,H)+1}(\partial_{\beta_v}F_{(H\bigtriangleup\{v-1\})+1})(\alpha,\Vert x^1\Vert,\widetilde{\beta})\\
&\quad+\frac{x^1}{\Vert x^1\Vert}\sum_{v=2}^{\tau}(-1)^{\sigma(v-1,H)+1+|H\bigtriangleup\{v-1\}|}(\partial_{\beta_v}F_{\{1\}\cup((H\bigtriangleup\{v-1\})+1)})(\alpha,\Vert x^1\Vert,\widetilde{\beta})\\
&=\sum_{v=2}^{\tau}(-1)^{\sigma(v,H+1)+1}(\partial_{\beta_v}F_{(H+1)\bigtriangleup\{v\}})(\alpha,\Vert x^1\Vert,\widetilde{\beta})\\
&\quad+(-1)^{|H|}\frac{x^1}{\Vert x^1\Vert}\sum_{v=2}^{\tau}(-1)^{\sigma(v,\{1\}\cup(H+1))+1}(\partial_{\beta_v}F_{(\{1\}\cup(H+1))\bigtriangleup\{v\}})(\alpha,\Vert x^1\Vert,\widetilde{\beta})
\end{align*}
for all $(\alpha+x^1,\widetilde{\beta})\in\widetilde{D}_*$. Here, we used the equalities $\sigma(v-1,H)=\sigma(v,H+1)=\sigma(v,\{1\}\cup(H+1))\pm1$, $|H\bigtriangleup\{v-1\}|=|H|\pm1$, $(H\bigtriangleup\{v-1\})+1=(H+1)\bigtriangleup\{v\}$ and
\[\{1\}\cup(({H\bigtriangleup\{v-1\}})+1)=\{1\}\cup((H+1)\bigtriangleup\{v\})=(\{1\}\cup(H+1))\bigtriangleup\{v\}\,,\]
which are valid for all $v\in\{2,\ldots,\tau\}$.

We now compute, using again Proposition~\ref{prop:laplacianpower} in its special case $\sigma=1$,
\begin{align*}
&(-1)^{|H|}\sum_{s=t_0+1}^{t_1}v_s(\partial_{\widetilde{\alpha}_s}\widetilde{F}_H)(\alpha+x^1,\widetilde{\beta})=(-1)^{|H|}\sum_{s=t_0+1}^{t_1}v_s\frac{x_s}{\Vert x^1\Vert}\,(\partial_{\beta_1}F_{H+1})(\alpha,\Vert x^1\Vert,\widetilde{\beta})\\
&\quad+\sum_{s=t_0+1}^{t_1}v_s\left(\frac{v_s}{\Vert x^1\Vert}-\frac{x_s\,x^1}{\Vert x^1\Vert^3}\right)F_{\{1\}\cup(H+1)}(\alpha,\Vert x^1\Vert,\widetilde{\beta})\\
&\quad+\sum_{s=t_0+1}^{t_1}v_s\frac{x_sx^1}{\Vert x^1\Vert^2}\,(\partial_{\beta_1}F_{\{1\}\cup(H+1)})(\alpha,\Vert x^1\Vert,\widetilde{\beta})\\
&=(-1)^{|H|}\frac{x^1}{\Vert x^1\Vert}\,(\partial_{\beta_1}F_{(\{1\}\cup(H+1))\bigtriangleup\{1\}})(\alpha,\Vert x^1\Vert,\widetilde{\beta})+\frac{1+t_0-t_1}{\Vert x^1\Vert}F_{\{1\}\cup(H+1)}(\alpha,\Vert x^1\Vert,\widetilde{\beta})\\
&\quad-(\partial_{\beta_1}F_{(H+1)\bigtriangleup\{1\}})(\alpha,\Vert x^1\Vert,\widetilde{\beta})\\
&=(-1)^{\sigma(1,H+1)+1}(\partial_{\beta_1}F_{(H+1)\bigtriangleup\{1\}})(\alpha,\Vert x^1\Vert,\widetilde{\beta})\\
&\quad+(-1)^{|H|}\frac{x^1}{\Vert x^1\Vert}(-1)^{\sigma(1,\{1\}\cup(H+1))+1}\,(\partial_{\beta_1}F_{(\{1\}\cup(H+1))\bigtriangleup\{1\}})(\alpha,\Vert x^1\Vert,\widetilde{\beta})\\
&\quad+\frac{1+t_0-t_1}{\Vert x^1\Vert}F_{\{1\}\cup(H+1)}(\alpha,\Vert x^1\Vert,\widetilde{\beta})\,.
\end{align*}
For the second equality, we took into account that $\sum_{s=t_0+1}^{t_1}v_s^2=(t_1-t_0)(-1)=t_0-t_1$ and that $\sum_{s=t_0+1}^{t_1}\frac{v_sx_sx^1}{\Vert x^1\Vert^2}=\frac{x^1}{\Vert x^1\Vert}\frac{x^1}{\Vert x^1\Vert}=-1$. For the third equality, we used the identities $-1=(-1)^{\sigma(1,H+1)+1},1=(-1)^{\sigma(1,\{1\}\cup(H+1))+1}$.

Now let us sum up the four parts we computed: taking into account (for both $K=H+1$ and $K=\{1\}\cup(H+1)$) the formula
\[(\debar_TF)_K=\partial_{\alpha_0}F_K+(-1)^{|K|}\sum_{s=1}^{t_0}v_s\,\partial_{\alpha_s}F_K+\sum_{h=1}^\tau(-1)^{\sigma(h,K)+1}\partial_{\beta_h}F_{K\bigtriangleup\{h\}}\,,\]
we get
\begin{align*}
&(\debar_{\widetilde{T}}\widetilde{F})_H(\alpha+x^1,\widetilde{\beta})=(\debar_T F)_{H+1}(\alpha,\Vert x^1\Vert,\widetilde{\beta})+(-1)^{|H|}\frac{x^1}{\Vert x^1\Vert}(\debar_T F)_{\{1\}\cup(H+1)}(\alpha,\Vert x^1\Vert,\widetilde{\beta})\\
&\quad+\frac{1+t_0-t_1}{\Vert x^1\Vert}F_{\{1\}\cup(H+1)}(\alpha,\Vert x^1\Vert,\widetilde{\beta})\\
&=\widetilde{(\debar_T F)}_H(\alpha+x^1,\widetilde{\beta})+\frac{1+t_0-t_1}{\Vert x^1\Vert}F_{\{1\}\cup(H+1)}(\alpha,\Vert x^1\Vert,\widetilde{\beta})\,.
\end{align*}
If we define $G:=\sum_{K\in\mathscr{P}(\tau)}E_KG_K$ in $D'$ according to~\eqref{eq:G}, then $G\in\stem_T^p(D',A\otimes\rr^{2^\tau})$ thanks to the following remarks, valid for $K\in\mathscr{P}(\tau),h\in\{2,\ldots,\tau\},(\alpha,\beta)\in D'$: if $1\in K$, then $G_K(\alpha,\overline{\beta}^1),G_K(\alpha,\overline{\beta}^h)$ and $G_K(\alpha,\beta)$ all vanish; if $1\not\in K$, then
\begin{align*}
G_K(\alpha,\overline{\beta}^1)&=(-\beta_1)^{-1}F_{\{1\}\cup K}(\alpha,\overline{\beta}^1)=\beta_1^{-1}F_{\{1\}\cup K}(\alpha,\beta)=G_K(\alpha,\beta)\,,&\\
G_K(\alpha,\overline{\beta}^h)&=\beta_1^{-1}F_{\{1\}\cup K}(\alpha,\overline{\beta}^h)=\beta_1^{-1}F_{\{1\}\cup K}(\alpha,\beta)=G_K(\alpha,\beta)&\text{if }h\not\in K\,,\\
G_K(\alpha,\overline{\beta}^h)&=\beta_1^{-1}F_{\{1\}\cup K}(\alpha,\overline{\beta}^h)=-\beta_1^{-1}F_{\{1\}\cup K}(\alpha,\beta)=-G_K(\alpha,\beta)&\text{if }h\in K\,.
\end{align*}
Using~\eqref{eq:G}, we now compute
\begin{align*}
\widetilde{G}_H(\alpha+x^1,\widetilde{\beta})&=G_{H+1}(\alpha,\Vert x^1\Vert,\widetilde{\beta})+(-1)^{|H|}\frac{x^1}{\Vert x^1\Vert}G_{\{1\}\cup(H+1)}(\alpha,\Vert x^1\Vert,\widetilde{\beta})\\
&=\Vert x^1\Vert^{-1}F_{\{1\}\cup(H+1)}(\alpha,\Vert x^1\Vert,\widetilde{\beta})+0
\end{align*}
in $D'$ and conclude that $\debar_{\widetilde{T}}\widetilde{F}=\widetilde{(\debar_T F)}+(1+t_0-t_1)\widetilde{G}$ in $D'$. Since $\debar_{\widetilde{T}}f$ is the $\widetilde{T}$-function induced by $\debar_{\widetilde{T}}\widetilde{F}$, we conclude that $\debar_{\widetilde{T}}f$ coincides in $\Omega_{D'}$ with the $T$-function induced by $\debar_T F+(1+t_0-t_1)G$. Taking into account that $\debar_{\widetilde{T}}f\in\mathscr{C}^p(\Omega_D,A)$, Proposition~\ref{prop:analyticTfunction} yields that $\debar_{\widetilde{T}}f\in\slice_T^p(\Omega_D,A)$. Choosing $g\in\slice_T^p(\Omega_D,A)$ so that $(1+t_0-t_1)g=\debar_{\widetilde{T}}f-\debar_Tf$ yields the desired conclusion.

Running through the proof with appropriate sign changes proves that $\partial_{\widetilde{T}}f=\partial_T f+(t_1-t_0-1)g$, as desired.
\end{proof}

In case $t_1=t_0+1$, Theorem~\ref{thm:debartilde} yields that $\debar_{\widetilde{T}}=\debar_T$ and $\partial_{\widetilde{T}}=\partial_T$, consistently with Proposition~\ref{prop:shortsteps}. Let us now compare $\debar_{\widetilde{T}}$ to $\debar_T$ in an explicit example.

\begin{example}
Let $A=C\ell(0,6),V=\rr^7$ and $T=(0,3,6)$. We already computed in Example~\ref{ex:(0,3,6)} the strongly $T$-regular polynomial $3\T_{(2,1)}(x)=x_0^3+3x_0^2x^2-3x_0\Vert x^1\Vert^2+6x_0x^1x^2-3\Vert x^1\Vert^2x^2$. We also saw in Example~\ref{ex:(0,3,6)ter} that the $T$-stem function inducing $3\T_{(2,1)}$ is $F(\alpha,\beta_1,\beta_2)=E_\emptyset(\alpha^3-3\alpha\beta_1^2)+E_{\{2\}}(3\alpha^2-3\beta_1^2)\beta_2+E_{\{1,2\}}(6\alpha\beta_1\beta_2)$. We know by construction that $3\debar_T\T_{(2,1)}\equiv0$. Using Proposition~\ref{prop:globaloperators} and taking into account that $\widetilde{x}^0=x_0+x^1$ and $\widetilde{x}^1=x_0+x^2$, we compute
\begin{align*}
3\debar_{\widetilde{T}}\T_{(2,1)}&=3\partial_{x_0}\T_{(2,1)}(x)+3\sum_{s=1}^3e_s\partial_{x_s}\T_{(2,1)}(x)+3\frac{x^2}{\Vert x^2\Vert^2}\sum_{s=4}^{6}\,x_s\,\partial_{x_s}\T_{(2,1)}(x)\\
&=3x_0^2+6x_0x^2-3\Vert x^1\Vert^2+6x^1x^2+\sum_{s=1}^3e_s\left(-6x_0x_s+6x_0e_sx^2-6x_sx^2\right)\\
&\quad+\frac{x^2}{\Vert x^2\Vert^2}\sum_{s=4}^{6}\,x_s\,\left(3x_0^2e_s+6x_0x^1e_s-3\Vert x^1\Vert^2e_s\right)\\
&=3x_0^2+6x_0x^2-3\Vert x^1\Vert^2+6x^1x^2-6x_0x^1+6x_0(-3)x^2-6x^1x^2\\
&\quad+\frac{x^2}{\Vert x^2\Vert^2}\left(3x_0^2x^2+6x_0x^1x^2-3\Vert x^1\Vert^2x^2\right)\\
&=3x_0^2-12x_0x^2-3\Vert x^1\Vert^2-6x_0x^1-3x_0^2+6x_0x^1+3\Vert x^1\Vert^2=-12x_0x^2
\end{align*}
Now, taking into account that $F_1\equiv0$ and $F_{\{1,2\}}(\alpha,\beta_1,\beta_2)=6\alpha\beta_1\beta_2$, we have
\[G(\alpha,\beta_1,\beta_2)=E_{\{1,2\}}\beta_1^{-1}6\alpha\beta_1\beta_2=E_{\{1,2\}}6\alpha\beta_2\,,\]
whence $g(x)=6x_0x^2$. The function $3\debar_T\T_{(2,1)}(x)+(1+3-6)g(x)=-2g(x)=-12x_0x^2$ coincides with $3\debar_{\widetilde{T}}\T_{(2,1)}$ by Theorem~\ref{thm:debartilde} or by direct inspection.
\end{example}

%%%%%%%%%%%%%%%%%%%%%%%

\section{Iterates of the $\widetilde{T}$-Laplacian on $T$-harmonic $T$-functions}\label{sec:iteratesharmonic}

We now wish to study the effect on a $T$-harmonic $T$-function $f$ of the iterates of $\Delta_{\widetilde{T}}:\slice_{\widetilde{T}}^2(\Omega_D,A)\to\slice_{\widetilde{T}}^0(\Omega_D,A)$. This is equivalent to studying the effect of the iterates of $\Delta_{\widetilde{T}}:\stem_{\widetilde{T}}^2(\widetilde{D},A\otimes\rr^{2^\tau})\to\stem_{\widetilde{T}}^0(\widetilde{D},A\otimes\rr^{2^\tau})$ on $\widetilde{F}$ for any $T$-stem function $F$ belonging to the kernel of $\Delta_T:\stem_T^2(D,A\otimes\rr^{2^\tau})\to\stem_T^0(D,A\otimes\rr^{2^\tau})$.

We begin with a useful definition and some related properties.

\begin{definition}
We define $b_{-1,-1}:=1$. For every $n\in\nn$, we define $b_{n,-1}:=0$ and, for every $\ell\in\{0,\ldots,n\}$
\[b_{n,\ell}:=\frac{(-1)^{n+\ell}}{2^{n-\ell}}\,\frac{(2n-\ell)!}{\ell!(n-\ell)!}\,.\]
\end{definition}

We point out that $(-1)^{n+\ell}b_{n,\ell}$ are the coefficients of the reverse Bessel polynomial, see~\cite[page 6]{librogrosswald}. The next properties will be useful.

\begin{lemma}\label{lem:besselcoefficients}
If $n\in\nn$, then the following properties hold true:
\begin{align*}
&b_{n,n}=1\,,&\\
&b_{n+1,\ell}=(\ell-2n-1)b_{n,\ell}+b_{n,\ell-1}&\text{if }0\leq\ell\leq n\,,\\
&b_{n-1,\ell-1}=b_{n,\ell}+(\ell+1)b_{n,\ell+1}&\text{if }0\leq\ell\leq n-1\,,\\
&\ell(\ell-2n-1)b_{n,\ell}+2(\ell-n-1)b_{n,\ell-1}=0&\text{if }0\leq\ell\leq n\,.
\end{align*}
\end{lemma}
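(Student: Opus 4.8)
The plan is to verify all four identities by reducing them, via the closed form of $b_{n,\ell}$, to elementary polynomial identities in $\ell$ and $n$, after disposing of the boundary cases with the conventions $b_{-1,-1}=1$ and $b_{n,-1}=0$. The only preliminary one really needs is a short list of ratio formulas between neighbouring coefficients; everything else is cancellation of factorials. Throughout, the stated index ranges guarantee that every factorial written is that of a nonnegative integer.

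First I would dispatch $b_{n,n}=1$: directly from the definition, $b_{n,n}=(-1)^{2n}2^{0}\,\frac{n!}{n!\,0!}=1$. Next I would record the basic ratio: for $1\le\ell\le n$, dividing the two closed-form expressions and cancelling factorials gives
\[
b_{n,\ell-1}=-\frac{\ell\,(2n-\ell+1)}{2\,(n-\ell+1)}\,b_{n,\ell}\,,
\]
and similarly, for the relevant ranges,
\[
\frac{b_{n+1,\ell}}{b_{n,\ell}}=-\frac{(2n+1-\ell)(2n+2-\ell)}{2\,(n+1-\ell)}\qquad(0\le\ell\le n),\qquad
\frac{b_{n-1,\ell-1}}{b_{n,\ell}}=\frac{\ell}{2n-\ell}\qquad(1\le\ell\le n-1).
\]
For $\ell=0$ the terms $b_{n,\ell-1}=b_{n,-1}$ and $b_{n-1,\ell-1}=b_{n-1,-1}$ vanish by convention, so in that case identities (2)--(4) reduce to the single relation $b_{n+1,0}=-(2n+1)b_{n,0}$ (equivalently $0=0$), which is checked directly.

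With these ratios in hand, each of identities (2), (3), (4) becomes, after dividing by $b_{n,\ell}$ and clearing the common denominator $2(n-\ell+1)$ (note $n+1-\ell=n-\ell+1$), a polynomial identity. For (2) one must check
\[
-(2n+1-\ell)(2n+2-\ell)=2(\ell-2n-1)(n-\ell+1)-\ell(2n-\ell+1)\,,
\]
which I would verify by the substitution $a:=2n+1-\ell$, turning both sides into $-a(a+1)$. For (3), using the basic ratio with $\ell$ replaced by $\ell+1$ gives $(\ell+1)b_{n,\ell+1}=-\tfrac{2(n-\ell)}{2n-\ell}\,b_{n,\ell}$, so the right-hand side equals $\bigl(1-\tfrac{2(n-\ell)}{2n-\ell}\bigr)b_{n,\ell}=\tfrac{\ell}{2n-\ell}\,b_{n,\ell}=b_{n-1,\ell-1}$. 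For (4), substituting the basic ratio and using $\ell-n-1=-(n-\ell+1)$ collapses the expression to $\ell\bigl[(\ell-2n-1)+(2n-\ell+1)\bigr]\,b_{n,\ell}=0$.

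I do not expect a genuine obstacle: the content is entirely routine factorial algebra. The only points demanding care are the bookkeeping of index ranges, so that no factorial of a negative integer is ever written, and the uniform handling of the boundary values $b_{n,-1}=0$ and $b_{-1,-1}=1$, which are exactly what make the extreme cases $\ell=0$ (and, for (2) and (4), $\ell=n$) of the three recurrences hold.
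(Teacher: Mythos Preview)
Your proof is correct and takes the same direct-computation approach as the paper, verifying each identity from the closed form of $b_{n,\ell}$; your ratio-based organization is a mild streamlining of the paper's brute-force factorial expansions but substantively identical. One minor wording slip: at $\ell=0$, identity (3) reads $0=b_{n,0}+b_{n,1}$, which is not the relation $b_{n+1,0}=-(2n+1)b_{n,0}$ you quote, but your general argument for (3) already covers $\ell=0$ (the factor $\tfrac{\ell}{2n-\ell}$ vanishes there), so no real gap arises.
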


\begin{proof}
By direct computation,
\[b_{n,n}=\frac{(-1)^{2n}}{2^{0}}\frac{(n)!}{n!\,0!}=1\,.\]
Moreover,
\begin{align*}
&(\ell-2n-1)b_{n,\ell}+b_{n,\ell-1}\\
&=(\ell-2n-1)\frac{(-1)^{n+\ell}}{2^{n-\ell}}\frac{(2n-\ell)!}{\ell!(n-\ell)!}+\frac{(-1)^{n+\ell-1}}{2^{n-\ell+1}}\frac{(2n-\ell+1)!}{(\ell-1)!(n-\ell+1)!}\\
&=\frac{(-1)^{n+1+\ell}}{2^{n+1-\ell}}\frac{(2n+1-\ell)!}{\ell!(n+1-\ell)!}(2(n+1-\ell)+\ell)\\
&=\frac{(-1)^{n+1+\ell}}{2^{n+1-\ell}}\frac{(2n+2-\ell)!}{\ell!(n+1-\ell)!}=b_{n+1,\ell}
\end{align*}
for $1\leq\ell\leq n$ and
\[(-2n-1)b_{n,0}+b_{n,-1}=(-2n-1)\frac{(-1)^n}{2^n}\frac{(2n)!}{n!}=\frac{(-1)^{n+1}}{2^n}\frac{(2n+1)!}{n!}=\frac{(-1)^{n+1}}{2^{n+1}}\frac{(2n+2)!}{(n+1)!}=b_{n+1,0}\]
for the case $\ell=0$. Additionally,
\begin{align*}
&b_{n,\ell}+(\ell+1)b_{n,\ell+1}\\
&\frac{(-1)^{n+\ell}}{2^{n-\ell}}\frac{(2n-\ell)!}{\ell!(n-\ell)!}+(\ell+1)\frac{(-1)^{n+\ell+1}}{2^{n-\ell-1}}\frac{(2n-\ell-1)!}{(\ell+1)!(n-\ell-1)!}\\
&=\frac{(-1)^{n+\ell}}{2^{n-\ell}}\frac{(2n-\ell-1)!}{\ell!(n-\ell)!}(2n-\ell-2(n-\ell))\\
&=\frac{(-1)^{n+\ell}}{2^{n-\ell}}\frac{(2n-\ell-1)!}{(\ell-1)!(n-\ell)!}\\
&=\frac{(-1)^{n-1+\ell-1}}{2^{n-1-(\ell-1)}}\frac{(2(n-1)-(\ell-1))!}{(\ell-1)!(n-1-(\ell-1))!}=b_{n-1,\ell-1}\,,
\end{align*}
provided $n\geq1$. Finally,
\begin{align*}
&\ell(\ell-2n-1)b_{n,\ell}+2(\ell-n-1)b_{n,\ell-1}\\
&=\ell(\ell-2n-1)\frac{(-1)^{n+\ell}}{2^{n-\ell}}\frac{(2n-\ell)!}{\ell!(n-\ell)!}+2(\ell-n-1)\frac{(-1)^{n+\ell-1}}{2^{n-\ell+1}}\frac{(2n-\ell+1)!}{(\ell-1)!(n-\ell+1)!}\\
&=-\frac{(-1)^{n+\ell}}{2^{n-\ell}}\frac{(2n+1-\ell)!}{(\ell-1)!(n-\ell)!}+\frac{(-1)^{n+\ell}}{2^{n-\ell}}\frac{(2n-\ell+1)!}{(\ell-1)!(n-\ell)!}=0\,.\qedhere
\end{align*}
\end{proof}

Let $f\in\slice_T^2(\Omega_D,A)$ and assume $\Delta_Tf\equiv0$. We are now ready for the announced study of the effect of the iterates of $\Delta_{\widetilde{T}}$ on $f$. We recall that, by Remark~\ref{rmk:Tharmonicareanalytic}, $f$ automatically belongs to $\slice_T^\omega(\Omega_D,A)\subset \mathscr{C}^\omega(\Omega_D,A)$.

\begin{theorem}\label{thm:powersoflaplacian}
Let $f$ belong to the kernel of $\Delta_T:\slice_T^2(\Omega_D,A)\to\slice_T^0(\Omega_D,A)$.
\begin{enumerate}
\item Assume $t_1-t_0$ to be an even natural number. For any $n\in\nn$ there exists a unique element $f^{[n]}\in\slice_T^\omega(\Omega_D,A)$ such that 
\[\Delta_{\widetilde{T}}^nf=f^{[n]}\,\prod_{\ell=1}^n(t_1-t_0-2\ell+1)\,.\]
\item Assume $t_1-t_0$ to be an odd natural number $2n_1+1$. Then $\Delta_{\widetilde{T}}^nf\equiv0$ in $\Omega_D$ for all $n>n_1$. For any $n\leq n_1$, there exists a unique element $f^{[n]}\in\slice_T^\omega(\Omega_D,A)$ such that 
\[\Delta_{\widetilde{T}}^nf=f^{[n]}\,\prod_{\ell=1}^n(t_1-t_0-2\ell+1)\,.\]
\end{enumerate}
The $T$-stem function $F^{[n]}\in\stem_T^\omega(D,A\otimes\rr^{2^\tau})$ inducing $f^{[n]}$ (whenever the latter is defined) has the following property: given $K\in\mathscr{P}(\tau)$,
\begin{align}\label{eq:F[n]}
&F^{[n]}_K(\alpha,\beta)=\left\{
\begin{array}{ll}
\sum_{\ell=0}^n\frac{b_{n-1,\ell-1}}{\beta_1^{2n-\ell}}(\partial_{\beta_1}^\ell F_K)(\alpha,\beta)&\text{if }1\not\in K\\
\sum_{\ell=0}^n\frac{b_{n,\ell}}{\beta_1^{2n-\ell}}(\partial_{\beta_1}^\ell F_K)(\alpha,\beta)&\text{if }1\in K
\end{array}
\right.
\end{align}
in $D':=\{(\alpha,\beta)\in D:\beta_1\neq0\}$.
\end{theorem}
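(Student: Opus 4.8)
The plan is to induct on $n$ at the level of $T$-stem functions, reducing the whole matter to a one-variable identity in $\beta_1$ governed by the recursions of Lemma~\ref{lem:besselcoefficients}. By Remark~\ref{rmk:Tharmonicareanalytic} we may write $f=\I(F)$ with $F\in\stem_T^\omega(D,A\otimes\rr^{2^\tau})$ and $\Delta_TF\equiv0$; and iterating Theorem~\ref{thm:varietyoflaplacians} with $\sigma=1$ (so that $T_1=\widetilde T$) shows that each $\Delta_{\widetilde T}^nf$ again lies in $\slice_T^\omega(\Omega_D,A)$, hence is induced by a genuine $\Phi^{(n)}\in\stem_T^\omega(D,A\otimes\rr^{2^\tau})$. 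Writing $P_n:=\prod_{\ell=1}^n(t_1-t_0-2\ell+1)$, the inductive claim is that on the open dense set $D':=\{(\alpha,\beta)\in D:\beta_1\neq0\}$ one has $\Phi^{(n)}_K=P_n\,F^{[n]}_K$ for every $K\in\mathscr{P}(\tau)$, with $F^{[n]}_K$ the function of formula~\eqref{eq:F[n]}; the case $n=0$ is immediate from $b_{-1,-1}=b_{0,0}=1$.

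For the inductive step one applies Theorem~\ref{thm:varietyoflaplacians} ($\sigma=1$) to $\Delta_{\widetilde T}^nf=\I(\Phi^{(n)})$, getting $\Phi^{(n+1)}=\Delta_T\Phi^{(n)}+G$, where~\eqref{eq:laplacians} reads $G_K=\tfrac{t_1-t_0-1}{\beta_1}\partial_{\beta_1}\Phi^{(n)}_K+[1\in K]\tfrac{1+t_0-t_1}{\beta_1^2}\Phi^{(n)}_K$ on $D'$. Substituting $\Phi^{(n)}_K=P_nF^{[n]}_K$ and cancelling $P_n$, it remains to verify, on $D'$,
\[
\Delta_TF^{[n]}_K+\frac{t_1-t_0-1}{\beta_1}\partial_{\beta_1}F^{[n]}_K+[1\in K]\frac{1+t_0-t_1}{\beta_1^2}F^{[n]}_K=(t_1-t_0-2n-1)\,F^{[n+1]}_K.
\]
Expanding $F^{[n]}_K=\sum_\ell c^{[n]}_\ell(K)\,\beta_1^{-(2n-\ell)}\partial_{\beta_1}^\ell F_K$ (with $c^{[n]}_\ell(K)=b_{n-1,\ell-1}$ if $1\notin K$, $c^{[n]}_\ell(K)=b_{n,\ell}$ if $1\in K$), using that $\Delta_T$ acts componentwise as $\sum_s\partial_{\alpha_s}^2+\sum_h\partial_{\beta_h}^2$ (Remark~\ref{rmk:dedebaralpha}), that $\partial_{\alpha_s}$ and $\partial_{\beta_h}$ ($h\geq2$) commute with $\partial_{\beta_1}$ and with powers of $\beta_1$, and that $\Delta_TF\equiv0$ forces $\bigl(\sum_s\partial_{\alpha_s}^2+\sum_{h\geq2}\partial_{\beta_h}^2\bigr)F_K=-\partial_{\beta_1}^2F_K$, the Leibniz rule makes the $\partial_{\beta_1}^{\ell+2}F_K$ terms cancel, and the coefficient of $\beta_1^{-(2(n+1)-j)}\partial_{\beta_1}^jF_K$ collapses to
\[
c^{[n]}_j(K)\bigl[(2n-j)(2n-j+1)-(t_1-t_0-1)(2n-j)+[1\in K](1+t_0-t_1)\bigr]+c^{[n]}_{j-1}(K)\bigl[t_1-t_0-4n+2j-3\bigr].
\]

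It then has to be checked that this equals $(t_1-t_0-2n-1)$ times $b_{n,j-1}$ (if $1\notin K$) or $b_{n+1,j}$ (if $1\in K$). In the case $1\in K$ one expands $b_{n+1,j}$ by the second recursion of Lemma~\ref{lem:besselcoefficients} and finds the discrepancy to be exactly $j(j-2n-1)b_{n,j}+2(j-n-1)b_{n,j-1}$, which vanishes by the fourth recursion; in the case $1\notin K$ one expands $b_{n,j-1}$ by the second recursion at level $n-1$ and finds the discrepancy $(j-1)(j-2n)b_{n-1,j-1}+2(j-n-1)b_{n-1,j-2}$, which vanishes by the fourth recursion at level $n-1$. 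The boundary indices $j=n+1$ (using $b_{n,n}=b_{n+1,n+1}=1$) and $j=0$ (using $b_{n,-1}=0$ for $n\geq1$ and $b_{-1,-1}=1$ for $n=0$) are handled directly. This closes the induction, giving $\Phi^{(n+1)}_K=P_n(t_1-t_0-2n-1)F^{[n+1]}_K=P_{n+1}F^{[n+1]}_K$ on $D'$. Finally: if $t_1-t_0$ is even, or if $t_1-t_0=2n_1+1$ and $n\leq n_1$, then $P_n\neq0$ (indeed $P_n=2^n\,n_1!/(n_1-n)!$ in the odd case), so $F^{[n]}:=P_n^{-1}\Phi^{(n)}\in\stem_T^\omega(D,A\otimes\rr^{2^\tau})$ agrees on $D'$ with~\eqref{eq:F[n]}, and $f^{[n]}:=\I(F^{[n]})=P_n^{-1}\Delta_{\widetilde T}^nf$ satisfies $\Delta_{\widetilde T}^nf=f^{[n]}P_n$, with uniqueness since $\I$ is an isomorphism; if $t_1-t_0=2n_1+1$ and $n>n_1$, then the factor $\ell=n_1+1$ makes $P_{n_1+1}=0$, hence $P_n=0$, so $\Phi^{(n)}\equiv0$ on the dense set $D'$ and thus on $D$ by continuity, i.e. $\Delta_{\widetilde T}^nf\equiv0$ in $\Omega_D$. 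The main obstacle is precisely the bookkeeping in the displayed one-variable identity: keeping the two parity cases $1\in K$ and $1\notin K$ separate, confirming the cancellation of the top-order $\beta_1$-derivative, and matching the residual constants against exactly the recursions of Lemma~\ref{lem:besselcoefficients}, boundary indices included.
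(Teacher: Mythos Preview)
Your proof is correct and takes a genuinely different route from the paper's. The paper descends to the $\widetilde T$-stem level: it writes out $\widetilde F^{[n]}_H$ explicitly via formula~\eqref{eq:Gn} and computes $\Delta_{\widetilde T}\widetilde F^{[n]}$ directly, using Proposition~\ref{prop:laplacianpower} (with $\sigma=1$) to evaluate $\Box$ on products of the form $\Vert x^1\Vert^{\ell-2n}\Psi_\ell$ and $\frac{x^1}{\Vert x^1\Vert^{2n+1-\ell}}\Xi_\ell$, splitting the result into four blocks $I,II,III,IV$ before invoking Lemma~\ref{lem:besselcoefficients}. You instead stay entirely at the $T$-stem level, applying Theorem~\ref{thm:varietyoflaplacians} with $\sigma=1$ to express $\Delta_{\widetilde T}$ as $\Delta_T$ plus the explicit correction~\eqref{eq:laplacians}; the harmonicity $\Delta_TF\equiv0$ then collapses everything to a pure one-variable identity in $\beta_1$. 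Your route is shorter and cleaner precisely because Theorem~\ref{thm:varietyoflaplacians} has already absorbed the geometric passage from $T$-stem to $\widetilde T$-stem; the paper's route is more self-contained in that it does not rely on Theorem~\ref{thm:varietyoflaplacians}, redoing the analogous computation by hand. Both approaches land on the same Bessel recursions (second and fourth in Lemma~\ref{lem:besselcoefficients}) with the same boundary checks.

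One small imprecision: when you write ``cancelling $P_n$'' in the inductive step, this presupposes $P_n\neq0$. In the odd case with $n\geq n_1+1$ you have $P_n=0$, so the cancellation is illegitimate; but then the inductive hypothesis $\Phi^{(n)}=0$ on $D'$ forces $\Delta_T\Phi^{(n)}=0$ and $G=0$, whence $\Phi^{(n+1)}=0=P_{n+1}F^{[n+1]}$ trivially. The paper treats this case with the single sentence ``This step is trivial if $t_1-t_0$ is an odd natural number $2n_1+1$ and $n>n_1$''; you should insert the analogous remark before cancelling.
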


\begin{proof}
Our proof is by induction on $n$.

For $n=0$, the thesis follows by setting $f^{[0]}:=f$ and $F^{[0]}:=F$, if we take into account that $b_{-1,-1}=1=b_{0,0}$.

Now let us deal with the inductive step from $n$ to $n+1$. This step is trivial if $t_1-t_0$ is an odd natural number $2n_1+1$ and $n>n_1$. In all other cases, the inductive hypothesis is that there exists $f^{[n]}=\I(F^{[n]})\in\slice_T^\omega(\Omega_D,A)$ with $F^{[n]}$ fulfilling condition~\eqref{eq:F[n]} in $D'$ and that $\Delta_{\widetilde{T}}^nf=f^{[n]}\,\prod_{\ell=1}^n(t_1-t_0-2\ell+1)$. Let $F^{[n+1]}$ denote the element of $\stem_T^\omega(D',A\otimes\rr^{2^\tau})$ defined by (the $n+1$ instance of) formula~\eqref{eq:F[n]}: we claim that $\Delta_{\widetilde{T}}\widetilde{F}^{[n]}=(t_1-t_0-2n-1)\,\widetilde{F}^{[n+1]}$ in $\widetilde{D}_*$. Our claim implies that $\Delta_{\widetilde{T}}f^{[n]}=(t_1-t_0-2n-1)\,f^{[n+1]}$ in $\Omega_{D'}$, if $f^{[n+1]}$ denotes the element of $\slice_T^\omega(\Omega_{D'},A)$ induced by $F^{[n+1]}$. Thus, $\Delta_{\widetilde{T}}^{n+1}f=\Delta_{\widetilde{T}}f^{[n]}\,\prod_{\ell=1}^n(t_1-t_0-2\ell+1)$ is an element of $\mathscr{C}^\omega(\Omega_D,A)$ whose restriction to $\Omega_{D'}$ is the $T$-function $(t_1-t_0-2n-1)\,f^{[n+1]}$. Proposition~\ref{prop:analyticTfunction} yields that $\Delta_{\widetilde{T}}^{n+1}f\in\slice_T^\omega(\Omega_D,A)$. In case $t_1-t_0=2n+1$, it follows that $\Delta_{\widetilde{T}}^{n+1}f\equiv0$ in $\Omega_D$, as desired. If, instead, $t_1-t_0\neq2n+1$, then $T$-stem function inducing $\frac{\Delta_{\widetilde{T}}^{n+1}f}{t_1-t_0-2n-1}$ coincides with $F^{[n+1]}$ in $D'$; we still denote it by $F^{[n+1]}$. Since $\I(F^{[n+1]})$ coincides with $f^{[n+1]}$ in $\Omega_{D'}$, we still denote it by $f^{[n+1]}$. It follows that 
\[\Delta_{\widetilde{T}}^{n+1}f=\Delta_{\widetilde{T}}f^{[n]}\prod_{\ell=1}^{n}(t_1-t_0-2\ell+1)=f^{[n+1]}\prod_{\ell=1}^{n+1}(t_1-t_0-2\ell+1)\,,\]
as desired.

We are left with proving our claim that $\Delta_{\widetilde{T}}\widetilde{F}^{[n]}=(t_1-t_0-2n-1)\,\widetilde{F}^{[n+1]}$ in $\widetilde{D}_*$. From formula~\eqref{eq:F[n]} and Definition~\ref{def:tilde}, we get $\widetilde{F}^{[n]}:=\sum_{H\in\mathscr{P}(\tau-1)}\widetilde{F}^{[n]}_HE_H$ with
\begin{align}\label{eq:Gn}
&\widetilde{F}^{[n]}_H(\alpha+x^1,\widetilde{\beta}):=\\
&\sum_{\ell=0}^n\left(\frac{b_{n-1,\ell-1}}{\Vert x^1\Vert^{2n-\ell}}(\partial_{\beta_1}^\ell F_{H+1})(\alpha,\Vert x^1\Vert,\widetilde{\beta})+(-1)^{|H|}\frac{b_{n,\ell}\,x^1}{\Vert x^1\Vert^{2n+1-\ell}}(\partial_{\beta_1}^\ell F_{\{1\}\cup(H+1)})(\alpha,\Vert x^1\Vert,\widetilde{\beta})\right)\notag
\end{align}
for all $H\in\mathscr{P}(\tau-1),\alpha\in\rr_{0,t_0},x^1\in\rr_{t_0+1,t_1},\widetilde{\beta}\in\rr^{\tau-1}$ such that $(\alpha+x^1,\widetilde{\beta})\in\widetilde{D}_*$. By Definition~\ref{def:stemoperators} and Remark~\ref{rmk:dedebaralpha}, for all $H\in\mathscr{P}(\tau-1)$ we have
\[(\Delta_{\widetilde{T}}\widetilde{F}^{[n]})_H=\sum_{s=0}^{t_1}\partial_{\widetilde{\alpha}_s}^2\widetilde{F}^{[n]}_H+\sum_{h=1}^{\tau-1}\partial_{\widetilde{\beta}_h}^2\widetilde{F}^{[n]}_H\,.\]
Using Proposition~\ref{prop:laplacianpower} in its special case $\sigma=1$, we get
\begin{align*}
&\sum_{s=0}^{t_0}\left(\partial_{\widetilde{\alpha}_s}^2\widetilde{F}^{[n]}_H\right)(\alpha+x^1,\widetilde{\beta})=\sum_{\ell=0}^n\frac{b_{n-1,\ell-1}}{\Vert x^1\Vert^{2n-\ell}}\left(\partial_{\beta_1}^\ell \sum_{s=0}^{t_0}\partial_{\alpha_s}^2F_{H+1}\right)(\alpha,\Vert x^1\Vert,\widetilde{\beta})\\
&\quad+(-1)^{|H|}\sum_{\ell=0}^n\frac{b_{n,\ell}\,x^1}{\Vert x^1\Vert^{2n+1-\ell}}\left(\partial_{\beta_1}^\ell \sum_{s=0}^{t_0}\partial_{\alpha_s}^2F_{\{1\}\cup(H+1)}\right)(\alpha,\Vert x^1\Vert,\widetilde{\beta})
\end{align*}
and
\begin{align*}
&\sum_{h=1}^{\tau-1}(\partial_{\widetilde{\beta}_h}^2\widetilde{F}^{[n]}_H)(\alpha+x^1,\widetilde{\beta})=\sum_{\ell=0}^n\frac{b_{n-1,\ell-1}}{\Vert x^1\Vert^{2n-\ell}}\left(\partial_{\beta_1}^\ell\sum_{u=2}^{\tau}\partial_{\beta_u}^2F_{H+1}\right)(\alpha,\Vert x^1\Vert,\widetilde{\beta})\\
&+(-1)^{|H|}\sum_{\ell=0}^n\frac{b_{n,\ell}\,x^1}{\Vert x^1\Vert^{2n+1-\ell}}\left(\partial_{\beta_1}^\ell \sum_{u=2}^{\tau}\partial_{\beta_u}^2F_{\{1\}\cup(H+1)}\right)(\alpha,\Vert x^1\Vert,\widetilde{\beta})
\end{align*}
in $\widetilde{D}_*$. Now, $0\equiv(\Delta_TF)_K=\sum_{s=0}^{t_0}\partial_{\alpha_s}^2F_K+\sum_{u=1}^{\tau}\partial_{\beta_u}^2F_K$ implies $\sum_{s=0}^{t_0}\partial_{\alpha_s}^2F_K+\sum_{u=2}^{\tau}\partial_{\beta_u}^2F_K=-\partial_{\beta_1}^2F_K$ for all $K\in\mathscr{P}(\tau)$. We conclude that
\begin{align}\label{eq:easypartoflaplacian}
&\sum_{s=0}^{t_0}(\partial_{\widetilde{\alpha}_s}^2\widetilde{F}^{[n]}_H)(\alpha+x^1,\widetilde{\beta})+\sum_{h=1}^{\tau-1}(\partial_{\widetilde{\beta}_h}^2\widetilde{F}^{[n]}_H)(\alpha+x^1,\widetilde{\beta})=I+(-1)^{|H|}II\,,\\
&I:=-\sum_{\ell=0}^n\frac{b_{n-1,\ell-1}}{\Vert x^1\Vert^{2n-\ell}}(\partial_{\beta_1}^{\ell+2} F_{H+1})(\alpha,\Vert x^1\Vert,\widetilde{\beta})\,,\notag\\
&II:=-\sum_{\ell=0}^n\frac{b_{n,\ell}\,x^1}{\Vert x^1\Vert^{2n+1-\ell}}(\partial_{\beta_1}^{\ell+2} F_{\{1\}\cup(H+1)})(\alpha,\Vert x^1\Vert,\widetilde{\beta})\notag
\end{align}
in $\widetilde{D}_*$. We must now compute $\sum_{s=t_0+1}^{t_1}\partial_{\widetilde{\alpha}_s}^2\widetilde{F}^{[n]}_H$, which is the same as $\Box\widetilde{F}^{[n]}_H$ in the notation of Proposition~\ref{prop:laplacianpower} (case $\sigma=1$). Thus,
\begin{align*}
&\sum_{s=t_0+1}^{t_1}\partial_{\widetilde{\alpha}_s}^2\widetilde{F}^{[n]}_H=\Box\widetilde{F}^{[n]}_H=\sum_{\ell=0}^nb_{n-1,\ell-1}\;\Box(\Vert x^1\Vert^{\ell-2n}\,\Psi_\ell)+(-1)^{|H|}\sum_{\ell=0}^nb_{n,\ell}\;\Box\left(\frac{x^1}{\Vert x^1\Vert^{2n+1-\ell}}\,\Xi_\ell\right)
\end{align*}
where
\begin{align*}
\Psi_\ell(\alpha+x^1,\widetilde{\beta})&:=(\partial_{\beta_1}^\ell F_{H+1})(\alpha,\Vert x^1\Vert,\widetilde{\beta})\,,\\
\Xi_\ell(\alpha+x^1,\widetilde{\beta})&:=(\partial_{\beta_1}^\ell F_{\{1\}\cup(H+1)})(\alpha,\Vert x^1\Vert,\widetilde{\beta})\,.
\end{align*}
For pur computation, let us start with single ingredients. Using Proposition~\ref{prop:laplacianpower} with $\sigma=1=m$, we compute
\begin{align*}
&\Box\left(\Vert x^1\Vert^{\ell-2n}\,\Psi_\ell(\alpha+x^1,\widetilde{\beta})\right)=\left(\Box\Vert x^1\Vert^{\ell-2n}\right)\Psi_\ell(\alpha+x^1,\widetilde{\beta})\\
&\quad+\Vert x^1\Vert^{\ell-2n}\,(\Box\Psi_\ell)(\alpha+x^1,\widetilde{\beta})+2\sum_{s=t_0+1}^{t_1}\left(\partial_{\widetilde{\alpha}_s}\Vert x^1\Vert^{\ell-2n}\right)\,(\partial_{\widetilde{\alpha}_s}\Psi_\ell)(\alpha+x^1,\widetilde{\beta})\\
&=(\ell-2n)(t_1-t_0+\ell-2n-2)\,\Vert x^1\Vert^{\ell-2n-2}\,(\partial_{\beta_1}^\ell F_{H+1})(\alpha,\Vert x^1\Vert,\widetilde{\beta})\\
&\quad+\Vert x^1\Vert^{\ell-2n}\,\left(\frac{t_1-t_0-1}{\Vert x^1\Vert}(\partial_{\beta_1}^{\ell+1}F_{H+1})(\alpha,\Vert x^1\Vert,\widetilde{\beta})+(\partial_{\beta_1}^{\ell+2}F_{H+1})(\alpha,\Vert x^1\Vert,\widetilde{\beta})\right)\\
&\quad+2\sum_{s=t_0+1}^{t_1}(\ell-2n)\,x_s\,\Vert x^1\Vert^{\ell-2n-2}\,\frac{x_s}{\Vert x^1\Vert}(\partial_{\beta_1}^{\ell+1}F_{H+1})(\alpha,\Vert x^1\Vert,\widetilde{\beta})\\
&=(\ell-2n)(t_1-t_0+\ell-2n-2)\,\Vert x^1\Vert^{\ell-2n-2}\,(\partial_{\beta_1}^\ell F_{H+1})(\alpha,\Vert x^1\Vert,\widetilde{\beta})\\
&\quad+(t_1-t_0-1+2\ell-4n)\,\Vert x^1\Vert^{\ell-2n-1}\,(\partial_{\beta_1}^{\ell+1}F_{H+1})(\alpha,\Vert x^1\Vert,\widetilde{\beta})\\
&\quad+\Vert x^1\Vert^{\ell-2n}\,(\partial_{\beta_1}^{\ell+2}F_{H+1})(\alpha,\Vert x^1\Vert,\widetilde{\beta})
\end{align*}
Using the temporary notation $K$ for $\{1\}\cup(H+1)$, Proposition~\ref{prop:laplacianpower} (with $\sigma=1=m$) also allows the following computation:
\begin{align*}
&\Box\left(\frac{x^1}{\Vert x^1\Vert^{2n+1-\ell}}\,\Xi_\ell(\alpha+x^1,\widetilde{\beta})\right)=\Box\left(\frac{x^1}{\Vert x^1\Vert^{2n+1-\ell}}\right)\,\Xi_\ell(\alpha+x^1,\widetilde{\beta})\\
&\quad+\frac{x^1}{\Vert x^1\Vert^{2n+1-\ell}}\,(\Box\Xi_\ell)(\alpha+x^1,\widetilde{\beta})+2\sum_{s=t_0+1}^{t_1}\left(\partial_{\widetilde{\alpha}_s}\frac{x^1}{\Vert x^1\Vert^{2n+1-\ell}}\right)\,(\partial_{\widetilde{\alpha}_s}\Xi_\ell)(\alpha+x^1,\widetilde{\beta})\\
&=(2n+1-\ell)(2n+1-\ell+t_0-t_1)\frac{x^1}{\Vert x^1\Vert^{2n+3-\ell}}\,(\partial_{\beta_1}^\ell F_K)(\alpha,\Vert x^1\Vert,\widetilde{\beta})\\
&\quad+\frac{x^1}{\Vert x^1\Vert^{2n+1-\ell}}\,\left(\frac{t_1-t_0-1}{\Vert x^1\Vert}(\partial_{\beta_1}^{\ell+1} F_K)(\alpha,\Vert x^1\Vert,\widetilde{\beta})+(\partial_{\beta_1}^{\ell+2} F_K)(\alpha,\Vert x^1\Vert,\widetilde{\beta})\right)\\
&\quad+2\sum_{s=t_0+1}^{t_1}\frac{v_s+(\ell-2n-1)x^1\,x_s \Vert x^1\Vert^{-2}}{\Vert x^1\Vert^{2n+1-\ell}}\frac{x_s}{\Vert x^1\Vert}(\partial_{\beta_1}^{\ell+1} F_K)(\alpha,\Vert x^1\Vert,\widetilde{\beta})\\
&=(2n+1-\ell)(2n+1-\ell+t_0-t_1)\frac{x^1}{\Vert x^1\Vert^{2n+3-\ell}}\,(\partial_{\beta_1}^\ell F_K)(\alpha,\Vert x^1\Vert,\widetilde{\beta})\\
&\quad+(t_1-t_0-1+2\ell-4n)\frac{x^1}{\Vert x^1\Vert^{2n+2-\ell}}\,(\partial_{\beta_1}^{\ell+1} F_K)(\alpha,\Vert x^1\Vert,\widetilde{\beta})\\
&\quad+\frac{x^1}{\Vert x^1\Vert^{2n+1-\ell}}\,(\partial_{\beta_1}^{\ell+2} F_K)(\alpha,\Vert x^1\Vert,\widetilde{\beta})\,.
\end{align*}
It follows that
\begin{align}\label{eq:hardpartoflaplacian}
&\sum_{s=t_0+1}^{t_1}(\partial_{\widetilde{\alpha}_s}^2\widetilde{F}^{[n]}_H)(\alpha+x^1,\widetilde{\beta})=\Box\widetilde{F}^{[n]}_H(\alpha+x^1,\widetilde{\beta})=III+(-1)^{|H|}IV\,,\\
&III:=\sum_{\ell=0}^nb_{n-1,\ell-1}\Big((\ell-2n)(t_1-t_0+\ell-2n-2)\,\Vert x^1\Vert^{\ell-2n-2}\,(\partial_{\beta_1}^\ell F_{H+1})(\alpha,\Vert x^1\Vert,\widetilde{\beta})\notag\\
&\quad+(t_1-t_0-1+2\ell-4n)\,\Vert x^1\Vert^{\ell-2n-1}\,(\partial_{\beta_1}^{\ell+1}F_{H+1})(\alpha,\Vert x^1\Vert,\widetilde{\beta})\notag\\
&\quad+\Vert x^1\Vert^{\ell-2n}\,(\partial_{\beta_1}^{\ell+2}F_{H+1})(\alpha,\Vert x^1\Vert,\widetilde{\beta})\Big)\,,\notag\\
&IV:=\sum_{\ell=0}^nb_{n,\ell}\Big(
(2n+1-\ell)(2n+1-\ell+t_0-t_1)\frac{x^1}{\Vert x^1\Vert^{2n+3-\ell}}\,(\partial_{\beta_1}^\ell F_{\{1\}\cup(H+1)})(\alpha,\Vert x^1\Vert,\widetilde{\beta})\notag\\
&\quad+(t_1-t_0-1+2\ell-4n)\frac{x^1}{\Vert x^1\Vert^{2n+2-\ell}}\,(\partial_{\beta_1}^{\ell+1} F_{\{1\}\cup(H+1)})(\alpha,\Vert x^1\Vert,\widetilde{\beta})\notag\\
&\quad+\frac{x^1}{\Vert x^1\Vert^{2n+1-\ell}}\,(\partial_{\beta_1}^{\ell+2} F_{\{1\}\cup(H+1)})(\alpha,\Vert x^1\Vert,\widetilde{\beta})\Big)\notag
\end{align}
in $\widetilde{D}_*$. Summing~\eqref{eq:easypartoflaplacian} and~\eqref{eq:hardpartoflaplacian}, we conclude that
\[(\Delta_{\widetilde{T}}\widetilde{F}^{[n]})_H(\alpha+x^1,\widetilde{\beta})=I+III+(-1)^{|H|}(II+IV)\,.\]
Now,
\begin{align*}
I+III&=\sum_{\ell=0}^nb_{n-1,\ell-1}(\ell-2n)(t_1-t_0+\ell-2n-2)\,\Vert x^1\Vert^{\ell-2n-2}\,(\partial_{\beta_1}^\ell F_{H+1})(\alpha,\Vert x^1\Vert,\widetilde{\beta})\\
&\quad+\sum_{\ell=0}^nb_{n-1,\ell-1}(t_1-t_0-1+2\ell-4n)\,\Vert x^1\Vert^{\ell-2n-1}\,(\partial_{\beta_1}^{\ell+1}F_{H+1})(\alpha,\Vert x^1\Vert,\widetilde{\beta})\\
&=\sum_{\ell=0}^nb_{n-1,\ell-1}(\ell-2n)(t_1-t_0+\ell-2n-2)\,\Vert x^1\Vert^{\ell-2n-2}\,(\partial_{\beta_1}^\ell F_{H+1})(\alpha,\Vert x^1\Vert,\widetilde{\beta})\\
&\quad+\sum_{m=1}^{n+1}b_{n-1,m-2}(t_1-t_0+2m-4n-3)\,\Vert x^1\Vert^{m-2n-2}\,(\partial_{\beta_1}^mF_{H+1})(\alpha,\Vert x^1\Vert,\widetilde{\beta})\,,
\end{align*}
where we set $m:=\ell+1$. Using the identities $b_{n-1,-1}(-2n)=0,b_{n-1,n-1}=1$, we find that
\begin{align*}
I+III&=\sum_{\ell=1}^n\Big((t_1-t_0-2n-1)\left((\ell-2n)b_{n-1,\ell-1}+b_{n-1,\ell-2}\right)+(\ell-1)(\ell-2n)b_{n-1,\ell-1}\\
&\quad+2(\ell-n-1)b_{n-1,\ell-2}\Big)\,\Vert x^1\Vert^{\ell-2n-2}\,(\partial_{\beta_1}^\ell F_{H+1})(\alpha,\Vert x^1\Vert,\widetilde{\beta})\\
&\quad+(t_1-t_0-2n-1)\,\Vert x^1\Vert^{-n-1}\,(\partial_{\beta_1}^{n+1}F_{H+1})(\alpha,\Vert x^1\Vert,\widetilde{\beta})\\
&=(t_1-t_0-2n-1)\sum_{\ell=1}^nb_{n,\ell-1}\Vert x^1\Vert^{\ell-2n-2}\,(\partial_{\beta_1}^\ell F_{H+1})(\alpha,\Vert x^1\Vert,\widetilde{\beta})\\
&\quad+(t_1-t_0-2n-1)\,b_{n,n}\,\Vert x^1\Vert^{-n-1}\,(\partial_{\beta_1}^{n+1}F_{H+1})(\alpha,\Vert x^1\Vert,\widetilde{\beta})\\
&=(t_1-t_0-2n-1)\sum_{\ell=0}^{n+1}b_{n,\ell-1}\Vert x^1\Vert^{\ell-2n-2}\,(\partial_{\beta_1}^\ell F_{H+1})(\alpha,\Vert x^1\Vert,\widetilde{\beta})\,.
\end{align*}
For the second equality, we applied Lemma~\ref{lem:besselcoefficients} and the identity $1=b_{n,n}$. For the third equality, we used the identity $b_{n,-1}=0$. Similarly,
\begin{align*}
II+IV&=\sum_{\ell=0}^nb_{n,\ell}(2n+1-\ell)(2n+1-\ell+t_0-t_1)\frac{x^1}{\Vert x^1\Vert^{2n+3-\ell}}\,(\partial_{\beta_1}^\ell F_{\{1\}\cup(H+1)})(\alpha,\Vert x^1\Vert,\widetilde{\beta})\notag\\
&\quad+\sum_{\ell=0}^nb_{n,\ell}(t_1-t_0-1+2\ell-4n)\frac{x^1}{\Vert x^1\Vert^{2n+2-\ell}}\,(\partial_{\beta_1}^{\ell+1} F_{\{1\}\cup(H+1)})(\alpha,\Vert x^1\Vert,\widetilde{\beta})\\
&=\sum_{\ell=0}^nb_{n,\ell}(2n+1-\ell)(2n+1-\ell+t_0-t_1)\frac{x^1}{\Vert x^1\Vert^{2n+3-\ell}}\,(\partial_{\beta_1}^\ell F_{\{1\}\cup(H+1)})(\alpha,\Vert x^1\Vert,\widetilde{\beta})\notag\\
&\quad+\sum_{m=1}^{n+1}b_{n,m-1}(t_1-t_0+2m-4n-3)\frac{x^1}{\Vert x^1\Vert^{2n+3-m}}\,(\partial_{\beta_1}^m F_{\{1\}\cup(H+1)})(\alpha,\Vert x^1\Vert,\widetilde{\beta})\,,
\end{align*}
where we set $m:=\ell+1$. Using the identities $b_{n,-1}=0,b_{n,n}=1$,
\begin{align*}
II+IV&=\sum_{\ell=0}^n\Big((t_1-t_0-2n-1)\left((\ell-2n-1)b_{n,\ell}+b_{n,\ell-1}\right)\\
&\quad+\ell(\ell-2n-1)b_{n,\ell}+2(\ell-n-1)b_{n,\ell-1}\Big)\frac{x^1}{\Vert x^1\Vert^{2n+3-\ell}}\,(\partial_{\beta_1}^\ell F_{\{1\}\cup(H+1)})(\alpha,\Vert x^1\Vert,\widetilde{\beta})\\
&\quad+(t_1-t_0-2n-1)\,\frac{x^1}{\Vert x^1\Vert^{n+2}}\,(\partial_{\beta_1}^{n+1} F_{\{1\}\cup(H+1)})(\alpha,\Vert x^1\Vert,\widetilde{\beta})\\
&=(t_1-t_0-2n-1)\sum_{\ell=0}^nb_{n+1,\ell}\frac{x^1}{\Vert x^1\Vert^{2n+3-\ell}}\,(\partial_{\beta_1}^\ell F_{\{1\}\cup(H+1)})(\alpha,\Vert x^1\Vert,\widetilde{\beta})\\
&\quad+(t_1-t_0-2n-1)\,b_{n+1,n+1}\,\frac{x^1}{\Vert x^1\Vert^{n+2}}\,(\partial_{\beta_1}^{n+1} F_{\{1\}\cup(H+1)})(\alpha,\Vert x^1\Vert,\widetilde{\beta})\\
&=(t_1-t_0-2n-1)\sum_{\ell=0}^{n+1}b_{n+1,\ell}\frac{x^1}{\Vert x^1\Vert^{2n+3-\ell}}\,(\partial_{\beta_1}^\ell F_{\{1\}\cup(H+1)})(\alpha,\Vert x^1\Vert,\widetilde{\beta})\,.
\end{align*}
For the second equality, we applied Lemma~\ref{lem:besselcoefficients} and the identity $1=b_{n+1,n+1}$. We conclude that
\begin{align*}
&(\Delta_{\widetilde{T}}\widetilde{F}^{[n]})_H(\alpha+x^1,\widetilde{\beta})=(t_1-t_0-2n-1)\sum_{\ell=0}^{n+1}b_{n,\ell-1}\Vert x^1\Vert^{\ell-2n-2}\,(\partial_{\beta_1}^\ell F_{H+1})(\alpha,\Vert x^1\Vert,\widetilde{\beta})\\
&\quad+(-1)^{|H|}(t_1-t_0-2n-1)\sum_{\ell=0}^{n+1}b_{n+1,\ell}\frac{x^1}{\Vert x^1\Vert^{2n+3-\ell}}\,(\partial_{\beta_1}^\ell F_{\{1\}\cup(H+1)})(\alpha,\Vert x^1\Vert,\widetilde{\beta})\\
&=(t_1-t_0-2n-1)\,\widetilde{F}^{[n+1]}(\alpha+x^1,\widetilde{\beta})
\end{align*}
in $\widetilde{D}_*$. This establishes our claim and completes the proof.
\end{proof}

Let us provide an explicit example.

\begin{example}\label{ex:(0,3,6)quater}
Let $A=C\ell(0,6),V=\rr^7$ and $T=(0,3,6)$, whence $\widetilde{T}=(3,6)$. For the strongly $T$-regular polynomial $\T_{(3,2)}(x)$, we computed in Example~\ref{ex:(0,3,6)} the expression
\begin{align*}
10\T_{(3,2)}(x)&=3\T_{(1,2)}(x)(x_0^2+2x_0x^1-\Vert x^1\Vert^2)+3\T_{(2,1)}(x)(2x_0x^1+2x_0x^2-2x^1x^2)\\
&\quad+\T_{(3,0)}(x)(x_0^2-2x_0x^2-\Vert x^2\Vert^2)\,.
\end{align*}
The same example obtained the expressions
\begin{align*}
3\T_{(1,2)}(x)&=x_0^3+3x_0^2x^1-6x_0x^1x^2-3x_0\Vert x^2\Vert^2-3x^1\Vert x^2\Vert^2\,,\\
3\T_{(2,1)}(x)&=x_0^3+3x_0^2x^2-3x_0\Vert x^1\Vert^2+6x_0x^1x^2-3\Vert x^1\Vert^2x^2\,,\\
\T_{(3,0)}(x)&=x_0^3+3x_0^2x^1-3x_0\Vert x^1\Vert^2-\Vert x^1\Vert^2x^1\,.
\end{align*}
Taking into account that $\widetilde{x}^0=x_0+x^1=\sum_{s=0}^3x_se_s$ and $\widetilde{x}^1=x^2=\sum_{s=4}^6x_se_s$, we now wish to compute
\begin{align*}
\Delta_{\widetilde{T}}\T_{(3,2)}(x)&=\sum_{s=0}^3\partial_{x_s}^2\T_{(3,2)}(x)+\sum_{s,s'=4}^{6}\frac{x_sx_{s'}}{x_4^2+x_5^2+x_6^2}\,\partial_{x_s}\partial_{x_{s'}}\T_{(3,2)}(x)
\end{align*}
for all $x\in\rr^7$ with $x^2\neq0$. We begin with
\begin{align*}
&10\,\partial_{x_0}^2\T_{(3,2)}(x)=(6x_0+6x^1)(x_0^2+2x_0x^1-\Vert x^1\Vert^2)\\
&\quad+(x_0^3+3x_0^2x^1-6x_0x^1x^2-3x_0\Vert x^2\Vert^2-3x^1\Vert x^2\Vert^2)2\\
&\quad+2(3x_0^2+6x_0x^1-6x^1x^2-3\Vert x^2\Vert^2)(2x_0+2x^1)\\
&\quad+(6x_0+6x^2)(2x_0x^1+2x_0x^2-2x^1x^2)+0\\
&\quad+2(3x_0^2+6x_0x^2-3\Vert x^1\Vert^2+6x^1x^2)(2x^1+2x^2)\\
&\quad+(6x_0+6x^1)(x_0^2-2x_0x^2-\Vert x^2\Vert^2)\\
&\quad+(x_0^3+3x_0^2x^1-3x_0\Vert x^1\Vert^2-\Vert x^1\Vert^2x^1)2\\
&\quad+2(3x_0^2+6x_0x^1-3\Vert x^1\Vert^2)(2x_0-2x^2)\\
&=6x_0^3+18x_0^2x^1-18x_0\Vert x^1\Vert^2-6\Vert x^1\Vert^2x^1\\
&\quad+2x_0^3+6x_0^2x^1-12x_0x^1x^2-6x_0\Vert x^2\Vert^2-6x^1\Vert x^2\Vert^2\\
&\quad+12x_0^3+36x_0^2x^1-24x_0\Vert x^1\Vert^2-24x_0x^1x^2-12x_0\Vert x^2\Vert^2-24\Vert x^1\Vert^2x^2-12x^1\Vert x^2\Vert^2\\
&\quad+12x_0^2x^1+12x_0^2x^2-24x_0x^1x^2-12x_0\Vert x^2\Vert^2-12x^1\Vert x^2\Vert^2\\
&\quad+12x_0^2x^1+12x_0^2x^2-24x_0x^1x^2-24x_0\Vert x^2\Vert^2-12\Vert x^1\Vert^2x^1+12\Vert x^1\Vert^2x^2-24x^1\Vert x^2\Vert^2\\
&\quad+6x_0^3+6x_0^2x^1-12x_0^2x^2-12x_0x^1x^2-6x_0\Vert x^2\Vert^2-6x^1\Vert x^2\Vert^2\\
&\quad+2x_0^3+6x_0^2x^1-6x_0\Vert x^1\Vert^2-2\Vert x^1\Vert^2x^1\\
&\quad+12x_0^3+24x_0^2x^1-12x_0^2x^2-12x_0\Vert x^1\Vert^2-24x_0x^1x^2+12\Vert x^1\Vert^2x^2\\
&=40x_0^3+120x_0^2x^1-60x_0\Vert x^1\Vert^2-120x_0x^1x^2-60x_0\Vert x^2\Vert^2-20\Vert x^1\Vert^2x^1-60x^1\Vert x^2\Vert^2\,,
\end{align*}
whence
\[\partial_{x_0}^2\T_{(3,2)}(x)=4x_0^3+12x_0^2x^1-6x_0\Vert x^1\Vert^2-12x_0x^1x^2-6x_0\Vert x^2\Vert^2-2\Vert x^1\Vert^2x^1-6x^1\Vert x^2\Vert^2\,.\]
For $s\in\{1,2,3\}$, we compute
\begin{align*}
&10\,\partial_{x_s}^2\T_{(3,2)}(x)=0+(x_0^3+3x_0^2x^1-6x_0x^1x^2-3x_0\Vert x^2\Vert^2-3x^1\Vert x^2\Vert^2)(-2)\\
&\quad+2(3x_0^2e_s-6x_0e_sx^2-3e_s\Vert x^2\Vert^2)(2x_0e_s-2x_s)\\
&\quad+(-6x_0-6x^2)(2x_0x^1+2x_0x^2-2x^1x^2)+0\\
&\quad+2(-6x_0x_s+6x_0e_sx^2-6x_sx^2)(2x_0e_s-2e_sx^2)\\
&\quad+(-6x_0-2x^1-4x_se_s)(x_0^2-2x_0x^2-\Vert x^2\Vert^2)+0+0\\
&=-2x_0^3-6x_0^2x^1+12x_0x^1x^2+6x_0\Vert x^2\Vert^2+6x^1\Vert x^2\Vert^2\\
&\quad-12x_0^3-24x_0^2x^2+12x_0\Vert x^2\Vert^2-12x_0^2x_se_s+24x_0x_se_sx^2+12x_se_s\Vert x^2\Vert^2\\
&\quad-12x_0^2x^1-12x_0^2x^2+24x_0x^1x^2+12x_0\Vert x^2\Vert^2+12x^1\Vert x^2\Vert^2\\
&\quad+24x_0^2x^2+24x_0\Vert x^2\Vert^2-24x_0^2x_se_s+48x_0x_se_sx^2+24x_se_s\Vert x^2\Vert^2\\
&\quad-6x_0^3-2x_0^2x^1+12x_0^2x^2+4x_0x^1x^2+6x_0\Vert x^2\Vert^2+2x^1\Vert x^2\Vert^2\\
&\quad-4x_0^2x_se_s+8x_0x_se_sx^2+4x_se_s\Vert x^2\Vert^2\\
&=-20x_0^3-20x_0^2x^1+40x_0x^1x^2+60x_0\Vert x^2\Vert^2+20x^1\Vert x^2\Vert^2\\
&\quad-40x_0^2x_se_s+80x_0x_se_sx^2+40x_se_s\Vert x^2\Vert^2\,,
\end{align*}
whence
\begin{align*}
\sum_{s=1}^3\partial_{x_s}^2\T_{(3,2)}(x)
&=3(-2x_0^3-2x_0^2x^1+4x_0x^1x^2+6x_0\Vert x^2\Vert^2+2x^1\Vert x^2\Vert^2)\\
&\quad-4x_0^2x^1+8x_0x^1x^2+4x^1\Vert x^2\Vert^2\\
&=-6x_0^3-10x_0^2x^1+20x_0x^1x^2+18x_0\Vert x^2\Vert^2+10x^1\Vert x^2\Vert^2\,.
\end{align*}
For distinct $s,s'\in\{4,5,6\}$, we see that
\begin{align*}
&10\,\partial_{x_s}\partial_{x_{s'}}\T_{(3,2)}(x)\equiv0+0+0=0\,,\\
&10\,\partial_{x_s}^2\T_{(3,2)}(x)=(-6x_0-6x^1)(x_0^2+2x_0x^1-\Vert x^1\Vert^2)+0+0\\\
&\quad+0+0+2(3x_0^2e_s+6x_0x^1e_s-3\Vert x^1\Vert^2e_s)(2x_0e_s-2x^1e_s)\\
&\quad+0+(x_0^3+3x_0^2x^1-3x_0\Vert x^1\Vert^2-\Vert x^1\Vert^2x^1)(-2)+0\\
&=-6x_0^3-18x_0^2x^1+18x_0\Vert x^1\Vert^2+6\Vert x^1\Vert^2x^1\\
&\quad-12x_0^3-36x_0^2x^1+36x_0\Vert x^1\Vert^2+12\Vert x^1\Vert^2x^1\\
&\quad-2x_0^3-6x_0^2x^1+6x_0\Vert x^1\Vert^2+2\Vert x^1\Vert^2x^1\\
&=-20x_0^3-60x_0^2x^1+60x_0\Vert x^1\Vert^2+20\Vert x^1\Vert^2x^1\,,
\end{align*}
whence
\begin{align*}
\sum_{s,s'=4}^{6}\frac{x_sx_{s'}}{x_4^2+x_5^2+x_6^2}\,\partial_{x_s}\partial_{x_{s'}}\T_{(3,2)}(x)&=\sum_{s=4}^{6}\frac{x_s^2}{x_4^2+x_5^2+x_6^2}\,\partial_{x_s}^2\T_{(3,2)}(x)\\
&=-2x_0^3-6x_0^2x^1+6x_0\Vert x^1\Vert^2+2\Vert x^1\Vert^2x^1\,.
\end{align*}
Overall, we get that
\begin{align*}
\Delta_{\widetilde{T}}\T_{(3,2)}(x)&=4x_0^3+12x_0^2x^1-6x_0\Vert x^1\Vert^2-12x_0x^1x^2-6x_0\Vert x^2\Vert^2-2\Vert x^1\Vert^2x^1-6x^1\Vert x^2\Vert^2\\
&\quad-6x_0^3-10x_0^2x^1+20x_0x^1x^2+18x_0\Vert x^2\Vert^2+10x^1\Vert x^2\Vert^2\\
&\quad-2x_0^3-6x_0^2x^1+6x_0\Vert x^1\Vert^2+2\Vert x^1\Vert^2x^1\\
&=-4x_0^3-4x_0^2x^1+8x_0x^1x^2+12x_0\Vert x^2\Vert^2+4x^1\Vert x^2\Vert^2\,.
\end{align*}
for all $x\in\rr^7$. We have $\Delta_{\widetilde{T}}^2\T_{(3,2)}\equiv0$ in $\rr^7$ by Theorem~\ref{thm:powersoflaplacian} or by the direct computation
\begin{align*}
\Delta_{\widetilde{T}}^2\T_{(3,2)}(x)&=\sum_{s=0}^3\partial_{x_s}^2\Delta_{\widetilde{T}}\T_{(3,2)}(x)+\sum_{s,s'=4}^{6}\frac{x_sx_{s'}}{x_4^2+x_5^2+x_6^2}\,\partial_{x_s}\partial_{x_{s'}}\Delta_{\widetilde{T}}\T_{(3,2)}(x)\\
&=\partial_{x_0}^2\Delta_{\widetilde{T}}\T_{(3,2)}(x)+\sum_{s=4}^{6}\frac{x_s^2}{x_4^2+x_5^2+x_6^2}\,\partial_{x_s}^2\Delta_{\widetilde{T}}\T_{(3,2)}(x)\\
&=-24x_0-8x^1+\sum_{s=4}^{6}\frac{x_s^2}{x_4^2+x_5^2+x_6^2}(24x_0+8x^1)=-24x_0-8x^1+24x_0+8x^1\\
&\equiv0\,,
\end{align*}
valid for all $x\in\rr^7$ with $x_4e_4+x_5e_5+x_6e_6\neq0$.
\end{example}

%%%%%%%%%%%%%%%%%%%%%%%

\section{The Fueter-Sce phenomenon for $T$-functions}\label{sec:FueterSceTfunctions}

This section is devoted to the Fueter-Sce phenomenon for $T$-functions. Theorem~\ref{thm:powersoflaplacian} suggests the next definition.

\begin{definition}
Let $\tau\in\nn,T=(t_0,t_1,\ldots,t_\tau)\in\nn^{\tau+1}$ with $0\leq t_0<t_1<\ldots<t_\tau=N$ and set $\widetilde{T}:=(t_1,\ldots,t_\tau)$. Assume $t_1-t_0$ to be an odd natural number $2n_1+1$. For any strongly $T$-regular function $f=\I(F)\in\sr_T(\Omega_D,A)\subset\slice_T^\omega(\Omega_D,A)$, the function $\Delta_{\widetilde{T}}^{n_1}f$ is called the \emph{first Fueter transform} of $f$.
\end{definition}

The first Fueter transform of a strongly $T$-regular function, whenever defined, is a $T$-function by Theorem~\ref{thm:varietyoflaplacians} and is $\widetilde{T}$-harmonic by Theorem~\ref{thm:powersoflaplacian}. In the next theorem, we prove that it is actually $\widetilde{T}$-regular.

\begin{theorem}\label{thm:fuetersce}
Let $\tau\in\nn,T=(t_0,t_1,\ldots,t_\tau)\in\nn^{\tau+1}$ with $0\leq t_0<t_1<\ldots<t_\tau=N$ and set $\widetilde{T}:=(t_1,\ldots,t_\tau)$. Assume $t_1-t_0$ to be an odd natural number $2n_1+1$ and consider any strongly $T$-regular function $f\in\sr_T(\Omega_D,A)$. Then the first Fueter transform of $f$ is strongly $\widetilde{T}$-regular and still is a $T$-function. In symbols: $\Delta_{\widetilde{T}}^{n_1}f\in\sr_{\widetilde{T}}(\Omega_D,A)\cap\slice_T^\omega(\Omega_D,A)$.
\end{theorem}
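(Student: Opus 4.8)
The plan is to isolate three claims about $h:=\Delta_{\widetilde T}^{n_1}f$ — that it is a $T$-function, that it is $\widetilde T$-harmonic, and that it is $\widetilde T$-regular — and to dispose of the first two quickly, reducing the third (the only substantial point) to a single Laplacian identity. First I would record that strong $T$-regularity gives $\debar_Tf\equiv0$, hence $\Delta_Tf=\partial_T\debar_Tf\equiv0$, so $f$ lies in the kernel of $\Delta_T$ and Theorem~\ref{thm:powersoflaplacian}(2) is available: $\Delta_{\widetilde T}^nf$ is defined for all $n$, $\Delta_{\widetilde T}^{n_1+1}f\equiv0$, and, since $\prod_{\ell=1}^{n_1}(t_1-t_0-2\ell+1)=\prod_{\ell=1}^{n_1}2(n_1+1-\ell)=2^{n_1}n_1!\neq0$, one has $h=2^{n_1}n_1!\,f^{[n_1]}$. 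That $h$ is still a $T$-function is immediate by iterating Theorem~\ref{thm:varietyoflaplacians} with $\sigma=1$ (note $T_1=\widetilde T$), which says $\Delta_{\widetilde T}$ maps $\slice_T^\omega(\Omega_D,A)$ into itself; and $h$ is $\widetilde T$-harmonic because $\Delta_{\widetilde T}h=\Delta_{\widetilde T}^{n_1+1}f\equiv0$. So everything comes down to showing $\debar_{\widetilde T}h\equiv0$.

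For that, the plan is to commute $\debar_{\widetilde T}$ past the Laplacians. On $\widetilde T$-functions $\Delta_{\widetilde T}=\debar_{\widetilde T}\partial_{\widetilde T}=\partial_{\widetilde T}\debar_{\widetilde T}$, whence $\debar_{\widetilde T}\Delta_{\widetilde T}=\debar_{\widetilde T}\partial_{\widetilde T}\debar_{\widetilde T}=\Delta_{\widetilde T}\debar_{\widetilde T}$ and therefore $\debar_{\widetilde T}h=\Delta_{\widetilde T}^{n_1}(\debar_{\widetilde T}f)$. Theorem~\ref{thm:debartilde}, together with $\debar_Tf\equiv0$, identifies $\debar_{\widetilde T}f$ with $(1+t_0-t_1)g=-2n_1\,g$, where $g=\I(G)\in\slice_T^\omega(\Omega_D,A)$ has $T$-stem function $G_K\equiv0$ for $1\in K$ and $G_K=\beta_1^{-1}F_{\{1\}\cup K}$ for $1\notin K$ on $D'=\{\beta_1\neq0\}$ (formula~\eqref{eq:G}). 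Hence the theorem will follow once we establish the identity
\[\Delta_{\widetilde T}^{n_1}g\equiv0\,.\]

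This identity is where I expect the real work to be. By Theorem~\ref{thm:varietyoflaplacians} ($\sigma=1$) the $T$-stem function of $\Delta_{\widetilde T}\I(\Psi)$ is $\Delta_T\Psi+\Psi'$ with $\Psi'$ the correction of formula~\eqref{eq:laplacians}; using that $F$ is $T$-harmonic, so $\big(\sum_{s=0}^{t_0}\partial_{\alpha_s}^2+\sum_{u=2}^\tau\partial_{\beta_u}^2\big)F_L=-\partial_{\beta_1}^2F_L$ for every $L\in\mathscr{P}(\tau)$, together with the Leibniz rule, one checks that the space of $T$-stem functions $\Psi$ with $\Psi_K\equiv0$ for $1\in K$ and $\Psi_K$ a finite combination of ``monomials'' $\beta_1^{-a}\partial_{\beta_1}^bF_{\{1\}\cup K}$ ($a\geq1$) for $1\notin K$ is stable under $\Psi\mapsto\Delta_T\Psi+\Psi'$, each monomial transforming by
\[\beta_1^{-a}\partial_{\beta_1}^bF_{\{1\}\cup K}\ \longmapsto\ 2(n_1-a)\,\beta_1^{-a-1}\partial_{\beta_1}^{b+1}F_{\{1\}\cup K}+a(a+1-2n_1)\,\beta_1^{-a-2}\partial_{\beta_1}^bF_{\{1\}\cup K}\,.\]
Starting from $G$ (the single monomial $a=1$, $b=0$), an induction on the number $m$ of applications of $\Delta_{\widetilde T}$ shows $\Delta_{\widetilde T}^mg$ is a combination of $\beta_1^{-(2m-\ell+1)}\partial_{\beta_1}^\ell F_{\{1\}\cup K}$ ($0\leq\ell\leq m$) whose coefficients satisfy a recursion of exactly the reverse-Bessel type encoded in Lemma~\ref{lem:besselcoefficients}; one then verifies that these coefficients all vanish at $m=n_1$ (the leading one picks up the factor $\prod_{j=1}^{n_1}(n_1-j)=0$, and the Bessel identities propagate the vanishing to the rest). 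This gives $\Delta_{\widetilde T}^{n_1}g\equiv0$ on $\Omega_{D'}$, and then on all of $\Omega_D$ by density, using $\Delta_{\widetilde T}^{n_1}g\in\mathscr{C}^\omega(\Omega_D,A)$ and Proposition~\ref{prop:analyticTfunction}. A variant of this last step avoids $g$ entirely: insert the explicit formula~\eqref{eq:F[n]} for $F^{[n_1]}$ into the expression for $\debar_{\widetilde T}$ in Definition~\ref{def:stemoperators} and compute $\debar_{\widetilde T}\widetilde F^{[n_1]}$ by the same bookkeeping as in the proof of Theorem~\ref{thm:powersoflaplacian}; the result carries the overall factor $t_1-t_0-2n_1-1=0$.

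Assembling the pieces, $\debar_{\widetilde T}h=-2n_1\,\Delta_{\widetilde T}^{n_1}g\equiv0$ (trivially so when $n_1=0$, where $\widetilde T$ and $T$ give the same class by Proposition~\ref{prop:shortsteps}), so $h$ is $\widetilde T$-regular; combined with $h\in\slice_T^\omega(\Omega_D,A)\subseteq\slice_{\widetilde T}^\omega(\Omega_D,A)$ this yields $h\in\sr_{\widetilde T}(\Omega_D,A)\cap\slice_T^\omega(\Omega_D,A)$. The only delicate point is the Laplacian identity $\Delta_{\widetilde T}^{n_1}g\equiv0$; everything else is bookkeeping on top of Theorems~\ref{thm:powersoflaplacian},~\ref{thm:varietyoflaplacians} and~\ref{thm:debartilde}.
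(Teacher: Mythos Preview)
Your overall strategy is sound and valid: the commutation $\debar_{\widetilde T}\Delta_{\widetilde T}=\Delta_{\widetilde T}\debar_{\widetilde T}$ holds on $\widetilde T$-functions, and applying Theorem~\ref{thm:debartilde} to $f$ (rather than to $f^{[n_1]}$) correctly reduces the problem to $\Delta_{\widetilde T}^{n_1}g\equiv0$. Your monomial transformation rule is also correct. However, this reorganisation does not save work: since $h=2^{n_1}n_1!\,f^{[n_1]}$, your target identity $\Delta_{\widetilde T}^{n_1}g\equiv0$ is literally equivalent (up to the nonzero constant $-2n_1$) to $\debar_{\widetilde T}f^{[n_1]}\equiv0$, which is exactly what the paper proves. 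The paper attacks it by applying Theorem~\ref{thm:debartilde} to $f^{[n_1]}$ instead of to $f$, inserting the closed form~\eqref{eq:F[n]} for $F^{[n_1]}$ already supplied by Theorem~\ref{thm:powersoflaplacian}, and then cancelling the resulting coefficients using the second and third identities of Lemma~\ref{lem:besselcoefficients} (separately for $1\in K$ and $1\notin K$). Your route instead builds up the coefficients by iterating the Laplacian recursion from scratch; that recursion does not visibly match the Bessel recursion of Lemma~\ref{lem:besselcoefficients} without a further change of variables, and ``the Bessel identities propagate the vanishing to the rest'' is asserted, not shown.

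One concrete correction: your ``variant'' claims that computing $\debar_{\widetilde T}\widetilde F^{[n_1]}$ directly produces an overall factor $t_1-t_0-2n_1-1=0$. That is how the proof of Theorem~\ref{thm:powersoflaplacian} works for $\Delta_{\widetilde T}$, but it is \emph{not} what happens for $\debar_{\widetilde T}$: in the paper's computation no common factor appears, and the vanishing comes from the two distinct Bessel identities $b_{n,\ell}=(\ell-2n+1)b_{n-1,\ell}+b_{n-1,\ell-1}$ and $b_{n-1,\ell-1}=b_{n,\ell}+(\ell+1)b_{n,\ell+1}$ applied termwise. So either carry out your recursion explicitly and exhibit the substitution that turns it into Lemma~\ref{lem:besselcoefficients}, or switch to the paper's ordering and use~\eqref{eq:F[n]} directly.
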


\begin{proof}
For the sake of simplicity, let us write $n$ instead of $n_1$. We have $f\in\sr_T(\Omega_D,A)\subset\slice_T^\omega(\Omega_D,A)$ by Proposition~\ref{prop:analyticstronglyTregular}. If we apply $n$ times Theorem~\ref{thm:varietyoflaplacians} (in its special case $\sigma=1$), we conclude that $\Delta_{\widetilde{T}}^nf\in\slice_T^\omega(\Omega_D,A)$. Proving the leftover inclusion $\Delta_{\widetilde{T}}^nf\in\sr_{\widetilde{T}}(\Omega_D,A)$ is equivalent, by Theorem~\ref{thm:powersoflaplacian}, to proving the inclusion $f^{[n]}\in\sr_{\widetilde{T}}(\Omega_D,A)$; this is, in turn, equivalent to proving that $\debar_{\widetilde{T}}f^{[n]}\equiv0$ in $\Omega_D$.

We apply Theorem~\ref{thm:debartilde} to get $\debar_{\widetilde{T}}f^{[n]}=\debar_T f^{[n]}+(1+t_0-t_1)g=\debar_T f^{[n]}-2ng$, where $g=\I(G)$ with $G$ defined by formula~\eqref{eq:G} (with $F^{[n]}$ in lieu of $F$). Thus, $\debar_{\widetilde{T}}f^{[n]}$ is the $T$-function induced by the $T$-stem function $L\in\stem_T^\omega(D,A\otimes\rr^{2^\tau})$ whose $K$-component (for $K\in\mathscr{P}(\tau)$) can be expressed as
\begin{align*}
&L_K(\alpha,\beta)=\left\{
\begin{array}{ll}
(\debar_T F^{[n]})_K(\alpha,\beta)-2n\beta_1^{-1}F^{[n]}_{\{1\}\cup K}(\alpha,\beta)&\mathrm{if\ }1\not\in K\\
(\debar_T F^{[n]})_K(\alpha,\beta)&\mathrm{if\ }1\in K
\end{array}
\right.
\end{align*}
in $D':=\{(\alpha,\beta)\in D:\beta_1\neq0\}$. We have therefore translated our goal to prove that $\debar_{\widetilde{T}}f^{[n]}\equiv0$ in $\Omega_D$ into an equivalent goal: proving that $L_K\equiv0$ in $D'$ (whence in $D$) for all $K\in\mathscr{P}(\tau)$. By Definitions~\ref{def:debaralpha} and~\ref{def:stemoperators},
\[(\debar_TF^{[n]})_K=\partial_{\alpha_0}F^{[n]}_K+(-1)^{|K|}\sum_{s=1}^{t_0}v_s\,\partial_{\alpha_s}F^{[n]}_K+\sum_{h=1}^\tau(-1)^{\sigma(h,K)+1}\partial_{\beta_h}F^{[n]}_{K\bigtriangleup\{h\}}\,.\]
Analogous considerations about $(\debar_TF)_K$, which vanishes identically in $D$ by our hypothesis $f\in\sr_T(\Omega_D,A)$, yield the equality
\begin{align}\label{eq:technical}
\partial_{\alpha_0}F_K+(-1)^{|K|}\sum_{s=1}^{t_0}v_s\,\partial_{\alpha_s}F_K+\sum_{h=2}^\tau(-1)^{\sigma(h,K)+1}\partial_{\beta_h}F_{K\bigtriangleup\{h\}}=(-1)^{\sigma(1,K)}\partial_{\beta_1}F_{K\bigtriangleup\{1\}}\,,
\end{align}
which will soon be useful.

For $K\ni 1$, using formula~\eqref{eq:F[n]} and omitting the variable $(\alpha,\beta)$ for readability, we compute
\begin{align*}
L_K&=(\debar_T F^{[n]})_K=\sum_{\ell=0}^n\frac{b_{n,\ell}}{\beta_1^{2n-\ell}}\partial_{\beta_1}^\ell\left(\left(\partial_{\alpha_0}+(-1)^{|K|}\sum_{s=1}^{t_0}v_s\,\partial_{\alpha_s}\right)F_K\right)\\
&\quad+(-1)^{\sigma(1,K)+1}\partial_{\beta_1}\left(\sum_{\ell=0}^n\frac{b_{n-1,\ell-1}}{\beta_1^{2n-\ell}}(\partial_{\beta_1}^\ell F_{K\bigtriangleup\{1\}})\right)\\
&\quad+\sum_{\ell=0}^n\frac{b_{n,\ell}}{\beta_1^{2n-\ell}}\partial_{\beta_1}^\ell\left(\sum_{h=2}^\tau(-1)^{\sigma(h,K)+1}\partial_{\beta_h}F_{K\bigtriangleup\{h\}}\right)\\
&=\sum_{\ell=0}^nb_{n,\ell}\beta_1^{\ell-2n}\partial_{\beta_1}^\ell\left((-1)^{\sigma(1,K)}\partial_{\beta_1}F_{K\bigtriangleup\{1\}}\right)+\sum_{\ell=0}^nb_{n-1,\ell-1}\beta_1^{\ell-2n}(\partial_{\beta_1}^{\ell+1}F_{K\bigtriangleup\{1\}})\\
&\quad+\sum_{\ell=0}^nb_{n-1,\ell-1}(\ell-2n)\beta_1^{\ell-2n-1}(\partial_{\beta_1}^\ell F_{K\bigtriangleup\{1\}})\\
&=\sum_{\ell=0}^n(-b_{n,\ell}+b_{n-1,\ell-1})\beta_1^{\ell-2n}(\partial_{\beta_1}^{\ell+1}F_{K\bigtriangleup\{1\}})+\sum_{m=-1}^{n-1}b_{n-1,m}(m-2n+1)\beta_1^{m-2n}(\partial_{\beta_1}^{m+1} F_{K\bigtriangleup\{1\}})\\
&=\sum_{\ell=0}^{n-1}\left((\ell-2n+1)b_{n-1,\ell}+b_{n-1,\ell-1}-b_{n,\ell}\right)\beta_1^{\ell-2n}(\partial_{\beta_1}^{\ell+1}F_{K\bigtriangleup\{1\}})\equiv0
\end{align*}
in $D'$. For the third equality, we used~\eqref{eq:technical} and $\sigma(1,K)=1$. The fourth equality uses $\sigma(1,K)=1$ and $m:=\ell-1$. The fifth equality follows from $b_{n-1,n-1}-b_{n,n}=1-1=0$ and $b_{n-1,-1}(-1-2n+1)=-2nb_{n-1,-1}=0$. The sixth equality follows from the second property listed in Lemma~\ref{lem:besselcoefficients}.

For $K\not\ni 1$, using formula~\eqref{eq:F[n]} and omitting the variable $(\alpha,\beta)$ for readability, we compute
\begin{align*}
L_K&=(\debar_T F^{[n]})_K-2n\beta_1^{-1}F^{[n]}_{\{1\}\cup K}=\sum_{\ell=0}^n\frac{b_{n-1,\ell-1}}{\beta_1^{2n-\ell}}\partial_{\beta_1}^\ell\left(\left(\partial_{\alpha_0}+(-1)^{|K|}\sum_{s=1}^{t_0}v_s\,\partial_{\alpha_s}\right)F_K\right)\\
&\quad+(-1)^{\sigma(1,K)+1}\partial_{\beta_1}\left(\sum_{\ell=0}^n\frac{b_{n,\ell}}{\beta_1^{2n-\ell}}(\partial_{\beta_1}^\ell F_{K\bigtriangleup\{1\}})\right)\\
&\quad+\sum_{\ell=0}^n\frac{b_{n-1,\ell-1}}{\beta_1^{2n-\ell}}\partial_{\beta_1}^\ell\left(\sum_{h=2}^\tau(-1)^{\sigma(h,K)+1}\partial_{\beta_h}F_{K\bigtriangleup\{h\}}\right)-2n\beta_1^{-1}\sum_{\ell=0}^n\frac{b_{n,\ell}}{\beta_1^{2n-\ell}}(\partial_{\beta_1}^\ell F_{\{1\}\cup K})\\
&=\sum_{\ell=0}^nb_{n-1,\ell-1}\beta_1^{\ell-2n}\partial_{\beta_1}^\ell\left((-1)^{\sigma(1,K)}\partial_{\beta_1}F_{K\bigtriangleup\{1\}}\right)-\sum_{\ell=0}^nb_{n,\ell}\beta_1^{\ell-2n}(\partial_{\beta_1}^{\ell+1}F_{K\bigtriangleup\{1\}})\\
&\quad+\sum_{\ell=0}^nb_{n,\ell}(2n-\ell)\beta_1^{\ell-2n-1}(\partial_{\beta_1}^\ell F_{K\bigtriangleup\{1\}})-2n\sum_{\ell=0}^nb_{n,\ell}{\beta_1^{\ell-2n-1}}(\partial_{\beta_1}^\ell F_{K\bigtriangleup\{1\}})\\
&=\sum_{\ell=0}^n(b_{n-1,\ell-1}-b_{n,\ell})\beta_1^{\ell-2n}(\partial_{\beta_1}^{\ell+1}F_{K\bigtriangleup\{1\}})-\sum_{\ell=1}^n\ell b_{n,\ell}\beta_1^{\ell-2n-1}(\partial_{\beta_1}^\ell F_{K\bigtriangleup\{1\}})\\
&=\sum_{\ell=0}^{n-1}(b_{n-1,\ell-1}-b_{n,\ell})\beta_1^{\ell-2n}(\partial_{\beta_1}^{\ell+1}F_{K\bigtriangleup\{1\}})-\sum_{m=0}^{n-1}(m+1)b_{n,m+1}\beta_1^{m-2n}(\partial_{\beta_1}^{m+1} F_{K\bigtriangleup\{1\}})\\
&=\sum_{\ell=0}^{n-1}(b_{n-1,\ell-1}-b_{n,\ell}-(\ell+1)b_{n,\ell+1})\beta_1^{\ell-2n}(\partial_{\beta_1}^{\ell+1}F_{K\bigtriangleup\{1\}})\equiv0
\end{align*}
in $D'$. For the third equality, we used~\eqref{eq:technical} and $\sigma(1,K)=0$. For the fourth equality, we used again $\sigma(1,K)=0$ and the equality $2n-\ell-2n=-\ell$ (which vanishes for $\ell=0$). For the fifth equality, we used $b_{n-1,n-1}-b_{n,n}=1-1=0$ and we set $m:=\ell-1$. The seventh equality follows from the third property listed in Lemma~\ref{lem:besselcoefficients}.

Since we proved that $L_K\equiv0$ in $D'$ for all $K\in\mathscr{P}(\tau)$, the proof is complete.
\end{proof}

Theorem~\ref{thm:fuetersce} recovers Fueter's theorem of~\cite{fueter1} as its special case with $A=\hh=V,T=(0,3)$. It recovers Sce's theorem of~\cite{sce} as its special case with $A=C\ell(0,N),V=\rr^{N+1},T=(0,N)$ for some odd $N$. It recovers Xu and Sabadini's result of~\cite{xsfuetersce} as its special case with $A=C\ell(0,N),V=\rr^{N+1},T=(t_0,N)$ with odd $N-t_0$.

For two of our polynomial examples, let us compute the first Fueter transform of each polynomial and explicitly check its $\widetilde{T}$-regularity. The first example is not covered by the results known in literature because the list of steps $T$ has length $2$.

\begin{example}\label{ex:(0,3,6)quinquies}
Let $A=C\ell(0,6),V=\rr^7$ and $T=(0,3,6)$, whence $\widetilde{T}=(3,6)$. Consider the strongly $T$-regular polynomial $3\T_{(2,1)}(x)=x_0^3+3x_0^2x^2-3x_0\Vert x^1\Vert^2+6x_0x^1x^2-3\Vert x^1\Vert^2x^2$, computed in Example~\ref{ex:(0,3,6)}. Its first Fueter transform $3\,\Delta_{\widetilde{T}}\T_{(2,1)}(x)=-12x_0-12x^2$, computed in Example~\ref{ex:(0,3,6)ter}, is strongly $\widetilde{T}$-regular by Theorem~\ref{thm:fuetersce} or by the direct computation
\[3\,\debar_{\widetilde{T}}\Delta_{\widetilde{T}}\T_{(2,1)}(x)=-12+\frac{x_4e_4+x_5e_5+x_6e_6}{x_4^2+x_5^2+x_6^2}\sum_{s=4}^6x_s(-12e_s)\equiv-12+12=0\,,\]
performed according to Proposition~\ref{prop:globaloperators} in the open dense subset of $\rr^7$ where $x_4e_4+x_5e_5+x_6e_6\neq0$.

In Example~\ref{ex:(0,3,6)ter2}, we computed $3\,\Delta_{(6)}\T_{(2,1)}(x)=-12x_0-12x^2$: it is not $(6)$-regular because $3\,\debar_{(6)}\Delta_{(6)}\T_{(2,1)}\equiv-12+\sum_{s=4}^6e_s(-12e_s)=-12+36=24$ for all $x\in\rr^7$. In other words, the full Laplacian of $3\T_{(2,1)}$ is not a monogenic function $\rr^7\to C\ell(0,6)$.
\end{example}

Our second example is also not covered by previously known results, both because the list of steps $T$ has length $2$ and because the domain properly includes the paravector space. 

\begin{example}\label{ex:(1,4,7)bis}
Let $A=C\ell(0,6), V=\Span(e_\emptyset,e_1,e_2,e_3,e_4,e_5,e_6,e_{123456})$ and $T=(1,4,7)$, whence $\widetilde{T}=(4,7)$. In Example~\ref{ex:(1,4,7)}, we obtained the expression
\begin{align*}
6\T_{(1,2,1)}(x)&=\T_{(0,1,1)}(x)(-2x_0x_1-2x_0e_1x^1+2x_1x^1)+\T_{(0,2,0)}(x)(x_0x_1-x_0e_1x^2+x_1x^2)\\
&\quad+\T_{(1,0,1)}(x)(x_0^2-2x_0x^1-\Vert x^1\Vert^2)+\T_{(1,1,0)}(x)(2x_0x^1+2x_0x^2+2x^1x^2)\,,
\end{align*}
where
\begin{align*}
&\T_{(0,1,1)}(x)=x_0x^1-x_0x^2-x^1x^2\,,\qquad\T_{(0,2,0)}(x)=x_0^2+2x_0x^1-\Vert x^1\Vert^2\\
&\T_{(1,0,1)}(x)=x_0x_1-x_0e_1x^2+x_1x^2\,,\qquad\T_{(1,1,0)}(x)=x_0x_1-x_0e_1x^1+x_1x^1\,.
\end{align*}
Let us compute the first Fueter transform $\Delta_{\widetilde{T}}\T_{(1,2,1)}$ of $\T_{(1,2,1)}$. We start with
\begin{align*}
6\,\partial_{x_0}^2\T_{(1,2,1)}(x)&=0+0+2(x^1-x^2)(-2x_1-2e_1x^1)\\
&\quad+2(x_0x_1-x_0e_1x^2+x_1x^2)+0+2(2x_0+2x^1)(x_1-e_1x^2)\\
&\quad+0+(x_0x_1-x_0e_1x^2+x_1x^2)2+2(x_1-e_1x^2)(2x_0-2x^1)\\
&\quad+0+0+2(x_1-e_1x^1)(2x^1+2x^2)\\
&=4(-x_1x^1+x_1x^2-e_1\Vert x^1\Vert^2+e_1x^1x^2)+4(x_0x_1-x_0e_1x^2+x_1x^2)\\
&\quad+4(x_0x_1-x_0e_1x^2+x_1x^1+e_1x^1x^2)+4(x_0x_1-x_0e_1x^2-x_1x^1-e_1x^1x^2)\\
&\quad+4(x_1x^1+x_1x^2+e_1\Vert x^1\Vert^2-e_1x^1x^2)\\
&=12(x_0x_1-x_0e_1x^2+x_1x^2)\,,\\
6\,\partial_{x_1}^2\T_{(1,2,1)}(x)&=0+0+0+0+0+0+0+0+0+0+0+0=0
\end{align*}
and, for $s=2,3,4$,
\begin{align*}
6\,\partial_{x_s}^2\T_{(1,2,1)}(x)&=0+0+2(x_0e_s-e_sx^2)(-2x_0e_1e_s+2x_1e_s)\\
&\quad-2(x_0x_1-x_0e_1x^2+x_1x^2)+0+0\\
&\quad+0+(x_0x_1-x_0e_1x^2+x_1x^2)(-2)+0\\
&\quad+0+0+2(-x_0e_1e_s+x_1e_s)(2x_0e_s+2e_sx^2)\\
&=4(-x_0^2e_1-x_0x_1+x_0e_1x^2-x_1x^2)+4(-x_0x_1+x_0e_1x^2-x_1x^2)\\
&\quad+4(x_0^2e_1-x_0x_1+x_0e_1x^2-x_1x^2)\\
&=12(-x_0x_1+x_0e_1x^2-x_1x^2)\,.
\end{align*}
We also remark that $6\T_{(1,2,1)}(x)$ has degree $1$ in $x_5,x_6,x_{123456}$, whence $6\,\partial_{x_s}\partial_{x_{s'}}\T_{(1,2,1)}(x)\equiv0$ for $s,s'\in\{5,6,123456\}$. Thus,
\begin{align*}
\Delta_{\widetilde{T}}\T_{(1,2,1)}(x)&=\sum_{s=0}^4\,\partial_{x_s}^2\T_{(1,2,1)}(x)+0\\
&=2(x_0x_1-x_0e_1x^2+x_1x^2)+0+6(-x_0x_1+x_0e_1x^2-x_1x^2)\\
&=-4x_0x_1+4x_0e_1x^2-4x_1x^2
\end{align*}
for all $x\in V$ with $x_5e_5+x_6e_6+x_{123456}e_{123456}\neq0$. It follows at once that
\[\Delta_{\widetilde{T}}\T_{(1,2,1)}(x)=-4x_0x_1+4x_0e_1x^2-4x_1x^2\]
throughout $V$. Analogous computations prove that $\Delta_{(7)}\T_{(2,1)}(x)=-4x_0x_1+4x_0e_1x^2-4x_1x^2$ for all $x\in V$.

The first Fueter transform $\Delta_{\widetilde{T}}\T_{(1,2,1)}$ of $T_{(1,2,1)}$ is strongly $\widetilde{T}$-regular by Theorem~\ref{thm:fuetersce} or by the direct computation
\begin{align*}
\debar_{\widetilde{T}}\Delta_{\widetilde{T}}\T_{(1,2,1)}(x)&=-4x_1+4e_1x^2+e_1(-4x_0-4x^2)+0\\
&\quad+\frac{x_5e_5+x_6e_6+x_{123456}e_{123456}}{x_5^2+x_6^2+x_{123456}^2}\sum_{s\in\{5,6,123456\}}x_s(4x_0e_1e_s-4x_1e_s)\\
&=-4x_1-4x_0e_1+\frac{(x_5e_5+x_6e_6+x_{123456}e_{123456})^2}{x_5^2+x_6^2+x_{123456}^2}(-4x_0e_1-4x_1)\\
&=-4x_1-4x_0e_1+4x_0e_1+4x_1\equiv0\,,
\end{align*}
valid in the open dense subset of $V$ where $x_5e_5+x_6e_6+x_{123456}e_{123456}\neq0$. On the other hand, the polynomial function $\Delta_{(7)}\T_{(1,2,1)}(x)=-4x_0x_1+4x_0e_1x^2-4x_1x^2$ is not $(7)$-regular because
\begin{align*}
\debar_{(7)}\Delta_{(7)}\T_{(1,2,1)}&\equiv-4x_1+4e_1x^2+e_1(-4x_0-4x^2)+0+\sum_{s\in\{5,6,123456\}}e_s(4x_0e_1e_s-4x_1e_s)\\
&=-4x_1-4x_0e_1+3(4x_0e_1+4x_1)=8x_0e_1+8x_1
\end{align*}
for all $x\in V$.
\end{example}

Now assume $t_h-t_{h-1}$ to be an odd natural number not only for $h=1$, but for $\sigma$ distinct choices of $h$ in $\{1,\ldots,\tau\}$. There is no loss of generality in assuming $t_h-t_{h-1}$ to be odd exactly for the first $\sigma$ choices of $h\in\{1,\ldots,\tau\}$ because the elements $v_1,\ldots,v_N$ of the basis $\B$ in Assumption~\ref{ass:associative} can be reordered. In this situation, we articulate the Fueter-Sce phenomenon into $\sigma$ steps. As a preparation, we pose the next definition.

\begin{definition}
Let $\sigma,\tau\in\nn$ with $\sigma\leq\tau$, let $T_0=(t_0,t_1,\ldots,t_\tau)\in\nn^{\tau+1}$ and assume $t_h-t_{h-1}$ to be an odd natural number $2n_h+1$ for every $h\in\{1,\ldots,\sigma\}$. Set $T_1:=(t_1,\ldots,t_\tau),\ldots,T_\sigma:=(t_\sigma,\ldots,t_\tau)$. For every strongly $T_0$-regular function $f\in\sr_{T_0}(\Omega_D,A)$, the function
\[\Delta_{T_\sigma}^{n_\sigma}\ldots\Delta_{T_2}^{n_2}\Delta_{T_1}^{n_1}f\in\slice_{T_\sigma}^\omega(\Omega_D,A)\]
is called the \emph{$\sigma$-th Fueter transform} of $f$.
 \end{definition}

We are now in a position to state the next theorem, which is the main result in the present work.
 
\begin{theorem}\label{cor:fuetersce}
Let $\sigma,\tau\in\nn$ with $\sigma\leq\tau$ and let $T_0=(t_0,t_1,\ldots,t_\tau)\in\nn^{\tau+1}$. Assume $t_h-t_{h-1}$ to be an odd natural number for every $h\in\{1,\ldots,\sigma\}$ and set $T_1:=(t_1,\ldots,t_\tau),\ldots,T_\sigma:=(t_\sigma,\ldots,t_\tau)$. For every strongly $T_0$-regular function $f$, the $\sigma$-th Fueter transform of $f$ is a strongly $T_\sigma$-regular function and still is a $T_0$-function. In symbols, $\Delta_{T_\sigma}^{n_\sigma}\ldots\Delta_{T_2}^{n_2}\Delta_{T_1}^{n_1}f\in\sr_{T_\sigma}(\Omega_D,A)\cap\slice_{T_0}^\omega(\Omega_D,A)$.
\end{theorem}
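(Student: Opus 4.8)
The plan is to argue by a single forward induction that builds the iterated transform one Laplacian at a time, while carrying along \emph{two} invariants simultaneously. Concretely, set $g_0:=f$ and, for $k=1,\dots,\sigma$, set $g_k:=\Delta_{T_k}^{n_k}g_{k-1}$, so that $g_\sigma$ is exactly the $\sigma$-th Fueter transform of $f$. I claim that for every $k\in\{0,1,\dots,\sigma\}$ one has
\[g_k\in\sr_{T_k}(\Omega_D,A)\cap\slice_{T_0}^\omega(\Omega_D,A),\]
i.e.\ $g_k$ is strongly $T_k$-regular \emph{and} still a $T_0$-function. The case $k=0$ is the hypothesis itself: $f\in\sr_{T_0}(\Omega_D,A)\subset\slice_{T_0}^\omega(\Omega_D,A)$ by Proposition~\ref{prop:analyticstronglyTregular}. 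Note also that, by the identification $\Omega_D=\widetilde\Omega_{\widetilde D}=\cdots$ carried out in Section~\ref{sec:naturalinclusion}, the underlying open subset of $V$ does not change when passing from $T_0$ to any $T_k$, so all the modules $\slice_{T_k}^\omega(\Omega_D,A)$, $\sr_{T_k}(\Omega_D,A)$ live over the same $\Omega_D$.

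For the inductive step, assume $g_{k-1}\in\sr_{T_{k-1}}(\Omega_D,A)\cap\slice_{T_0}^\omega(\Omega_D,A)$ with $1\le k\le\sigma$. The list $T_{k-1}=(t_{k-1},t_k,\dots,t_\tau)$ has first-step difference $t_k-t_{k-1}=2n_k+1$ odd, and its "$\widetilde{\ }$" is precisely $T_k$; hence Theorem~\ref{thm:fuetersce}, applied to $g_{k-1}$ relative to the list $T_{k-1}$, yields $g_k=\Delta_{T_k}^{n_k}g_{k-1}\in\sr_{T_k}(\Omega_D,A)\cap\slice_{T_{k-1}}^\omega(\Omega_D,A)$. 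This already settles strong $T_k$-regularity. It only records, however, that $g_k$ is a $T_{k-1}$-function, which is weaker than what we want; to upgrade it to a $T_0$-function I would instead use Theorem~\ref{thm:varietyoflaplacians} with base list $T_0$ and index $k$ (the list denoted "$T_\sigma$" there, namely $(t_k,\dots,t_\tau)$, is exactly our $T_k$, and $k\le\sigma\le\tau$). That theorem gives $\Delta_{T_k}\phi=\Delta_{T_0}\phi+($something in $\slice_{T_0}^\omega)\in\slice_{T_0}^\omega(\Omega_D,A)$ for every $\phi\in\slice_{T_0}^\omega(\Omega_D,A)$, since $\Delta_{T_0}$ itself preserves $\slice_{T_0}^\omega(\Omega_D,A)$; thus $\Delta_{T_k}$ maps $\slice_{T_0}^\omega(\Omega_D,A)$ into itself, and iterating $n_k$ times gives $g_k=\Delta_{T_k}^{n_k}g_{k-1}\in\slice_{T_0}^\omega(\Omega_D,A)$. (The operator $\Delta_{T_k}$ is defined on $g_{k-1}$ because $\slice_{T_0}^\omega\subseteq\slice_{T_1}^\omega\subseteq\cdots\subseteq\slice_{T_k}^\omega$ by iterating Proposition~\ref{TfunctionsareTtildefucntions}.) This closes the induction, and $k=\sigma$ gives $\Delta_{T_\sigma}^{n_\sigma}\cdots\Delta_{T_1}^{n_1}f\in\sr_{T_\sigma}(\Omega_D,A)\cap\slice_{T_0}^\omega(\Omega_D,A)$, as claimed.

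All the genuinely hard analysis — the passage from $\widetilde T$-harmonicity (Theorem~\ref{thm:powersoflaplacian}) to $\widetilde T$-regularity of one Fueter transform, and the stability of the property "being a $T$-function" under $\Delta_{\widetilde T}$ and $\Delta_{T_\sigma}$ — has already been packaged into Theorems~\ref{thm:fuetersce} and~\ref{thm:varietyoflaplacians} and into Proposition~\ref{TfunctionsareTtildefucntions}, so at this point the only delicate issue is the bookkeeping of the double invariant: I expect the main obstacle to be resisting the temptation to prove the theorem by iterating Theorem~\ref{thm:fuetersce} alone, which would only certify that $g_\sigma$ is a $T_{\sigma-1}$-function; recovering the sharper "$T_0$-function" conclusion genuinely requires feeding each intermediate $g_k$ back through Theorem~\ref{thm:varietyoflaplacians} as well. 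I would then test the statement on the two running polynomial examples of the paper (Examples~\ref{ex:(0,3,6)} and~\ref{ex:(1,4,7)}), where $\tau=2$, $\sigma=\tau$, $n_1=n_2=1$, and the $\sigma$-th Fueter transform is the composition $\Delta_{T_2}\Delta_{T_1}$, whose strong $T_2$-regularity (monogenicity, over the paravector codomain) and persistent $T_0$-function character can be verified by direct inspection via Proposition~\ref{prop:globaloperators}.
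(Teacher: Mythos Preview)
Your proof is correct and follows essentially the same approach as the paper: induction on the number of Fueter steps, invoking Theorem~\ref{thm:fuetersce} at each step for strong $T_k$-regularity and iterating Theorem~\ref{thm:varietyoflaplacians} (with base list $T_0$) to preserve membership in $\slice_{T_0}^\omega(\Omega_D,A)$. The only cosmetic difference is that the paper takes $\sigma=1$ as its base case (already absorbing one application of Theorem~\ref{thm:fuetersce}) while you start at $k=0$; your explicit remark that Theorem~\ref{thm:fuetersce} alone would only yield a $T_{k-1}$-function, and that Theorem~\ref{thm:varietyoflaplacians} is genuinely needed to retain the $T_0$-function property, is exactly the point the paper's proof is making.
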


\begin{proof}
We proceed by induction on $\sigma$. The basic case $\sigma=1$ has been established in Theorem~\ref{thm:fuetersce}. Now assume the thesis true for $\sigma=h$ and let us prove it for $\sigma=h+1$. Our inductive hypothesis is that the $h$-th Fueter transform $g:=\Delta_{T_h}^{n_h}\ldots\Delta_{T_2}^{n_2}\Delta_{T_1}^{n_1}f$ of $f$ belongs to both $\sr_{T_h}(\Omega_D,A)$ and $\slice_T^\omega(\Omega_D,A)$. Now, the $(h+1)$-th Fueter transform of $f$ is the first Fueter transform $\Delta_{T_{h+1}}^{n_{h+1}}g$ of $g$. Another application of Theorem~\ref{thm:fuetersce} guarantees that $\Delta_{T_{h+1}}^{n_{h+1}}g\in\sr_{T_{h+1}}(\Omega_D,A)$, as desired. Furthermore, by applying Theorem~\ref{thm:varietyoflaplacians} $n_{h+1}$ times, we obtain that $\Delta_{T_{h+1}}^{n_{h+1}}g$ still belongs to $\slice_T^\omega(\Omega_D,A)$. The proof is now complete.
\end{proof}

Since the $\sigma$-th Fueter transform is still a $T_0$-function, where $T_0$ is a list of $\tau$ steps, we can describe it as a $\tau$-axial function in the sense of Eelbode's work~\cite{eelbode2014}. In the very special case when $t_h-t_{h-1}=1$ for every $h\in\{1,\ldots,\sigma\}$, then the $\sigma$-th Fueter transform of $f$ coincides with $f$ and Theorem~\ref{cor:fuetersce} recovers Proposition~\ref{prop:shortsteps}. Furthermore, Theorem~\ref{cor:fuetersce} has the following immediate consequence.

\begin{corollary}\label{cor:monogenic}
Let $\tau\in\nn$ and let $T=(t_0,t_1,\ldots,t_\tau)\in\nn^{\tau+1}$ with $0\leq t_0<t_1<\ldots<t_\tau=N$. If $t_h-t_{h-1}$ is an odd natural number for every $h\in\{1,\ldots,\tau\}$, then the $\tau$-th Fueter transform of $f$ is a strongly $(N)$-regular function that is also a $T$-function. In particular: for every strongly $T$-regular function $\rr^{N+1}\to C\ell(0,N)$, the $\tau$-th Fueter transform of $f$ (if defined) is a monogenic function $\rr^{N+1}\to C\ell(0,N)$ that is also a $T$-function.
\end{corollary}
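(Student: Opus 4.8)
The plan is to obtain this corollary as the special case $\sigma=\tau$ of Theorem~\ref{cor:fuetersce}. First I would observe that, under the standing hypothesis that $t_h-t_{h-1}$ is an odd natural number for every $h\in\{1,\ldots,\tau\}$, the $\tau$-th Fueter transform of $f$ is defined (the oddness hypothesis is exactly what the definition of $\sigma$-th Fueter transform requires for $\sigma=\tau$), so the parenthetical ``if defined'' in the statement is automatic. Then I would apply Theorem~\ref{cor:fuetersce} with its starting list $T_0$ taken to be our $T$ and with $\sigma=\tau$; note that in that case the list $T_\tau=(t_\tau)=(N)$ reduces to a single step. The theorem yields
\[\Delta_{T_\tau}^{n_\tau}\ldots\Delta_{T_2}^{n_2}\Delta_{T_1}^{n_1}f\in\sr_{(N)}(\Omega_D,A)\cap\slice_{T}^\omega(\Omega_D,A)\,,\]
which is precisely the assertion that the $\tau$-th Fueter transform is a strongly $(N)$-regular function that is still a $T$-function. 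This settles the first part of the statement.

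For the ``in particular'' clause I would specialize to $A=C\ell(0,N)$ with $V$ the paravector space $\rr^{N+1}$. In this setting, as recalled in the paravector example of Section~\ref{sec:preliminaries}, a function is $(N)$-regular exactly when it lies in the kernel of $\partial_{x_0}+e_1\partial_{x_1}+\ldots+e_N\partial_{x_N}$, i.e.\ exactly when it is monogenic. Hence strong $(N)$-regularity of the $\tau$-th Fueter transform implies in particular that it is a monogenic function $\rr^{N+1}\to C\ell(0,N)$, while the fact that it remains a $T$-function transfers verbatim from the first part. Optionally, following Eelbode~\cite{eelbode2014}, one may add the remark that being a $T$-function with $\tau$ steps amounts to possessing $\tau$-axial symmetry, so that the resulting monogenic function is $\tau$-axially symmetric.

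I do not expect any real obstacle here: the corollary is a direct specialization, and the only points requiring care are bookkeeping — verifying that the list $T_\tau$ produced by Theorem~\ref{cor:fuetersce} is indeed the one-step list $(N)$, and unpacking the definition of strong $(N)$-regularity into ``monogenic'' in the Clifford paravector case. No new estimate, induction, or computation beyond what is already established is needed.
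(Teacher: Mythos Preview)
Your proposal is correct and matches the paper's approach: the paper presents Corollary~\ref{cor:monogenic} as an immediate consequence of Theorem~\ref{cor:fuetersce} (without writing out a separate proof), which is exactly what you do by taking $\sigma=\tau$ so that $T_\tau=(N)$ and then specializing to the Clifford paravector setting where $(N)$-regularity coincides with monogenicity.
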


Corollary~\ref{cor:monogenic} recovers Fueter's theorem of~\cite{fueter1} as its special case with $A=\hh=V,\tau=1,t_0=0$. It recovers Sce's theorem of~\cite{sce} as its special case with $A=C\ell(0,N),V=\rr^{N+1},\tau=1,t_0=0$. It recovers Xu and Sabadini's result of~\cite{xsfuetersce} as its special case with $A=C\ell(0,N),V=\rr^{N+1},\tau=1$. For $A=C\ell(0,N),V=\rr^{N+1},\tau\geq2$, Corollary~\ref{cor:monogenic} produces $\tau$-axial monogenic functions.

We now provide an explicit example with $\tau=2$, by computing the second Fueter transform of our polynomial example of degree $5$.

\begin{example}\label{ex:(0,3,6)sexties}
Let $A=C\ell(0,6),V=\rr^7$ and $T=(0,3,6)$, whence $T_1=\widetilde{T}=(3,6),T_2=\widetilde{T_1}=(6)$. In Example~\ref{ex:(0,3,6)quater}, we computed the first Fueter transform of $\T_{(3,2)}$: namely,
\[\Delta_{T_1}\T_{(3,2)}(x)=-4x_0^3-4x_0^2x^1+8x_0x^1x^2+12x_0\Vert x^2\Vert^2+4x^1\Vert x^2\Vert^2\,.\]
We can easily compute the second Fueter transform of $\T_{(3,2)}$, as follows:
\begin{align*}
\Delta_{T_2}\Delta_{T_1}\T_{(3,2)}(x)&=\sum_{s=0}^6\partial_{x_s}^2\Delta_{T_1}\T_{(3,2)}(x)=-24x_0-8x^1+\sum_{s=1}^30+\sum_{s=4}^6(24x_0+8x^1)\\
&=48x_0+16x^1\,.
\end{align*}
The second Fueter transform of $\T_{(3,2)}$ is $T_2$-regular, i.e., monogenic, by Corollary~\ref{cor:monogenic} or by the direct computation
\[\debar_{T_2}\Delta_{T_2}\Delta_{T_1}\T_{(3,2)}\equiv48+\sum_{s=1}^3e_s16e_s+\sum_{s=4}^6e_s0=48-48+0=0\,.\]
\end{example}

\section{Concluding remarks}\label{sec:conclusions}

This work unifies into a single statement, Theorem~\ref{thm:fuetersce}, several results known in literature: Fueter's theorem of~\cite{fueter1}, Sce's theorem of~\cite{sce}, as well as the more recent results of Xu and Sabadini~\cite{xsfuetersce}. Theorem~\ref{thm:fuetersce} applies not only to Clifford algebras, but to general associative $*$-algebras. Even over Clifford algebras, it applies not only to the paravector subspace but to general hypercomplex subspaces. Moreover, Theorem~\ref{thm:fuetersce} is multi-axial in the sense of~\cite{eelbode2014}.

More importantly, this work uncovers a new phenomenon: our main Theorem~\ref{cor:fuetersce} and its Corollary~\ref{cor:monogenic} show that the Fueter-Sce theorem is just the last step in a longer process involving several steps.

Beyond our main results, we believe this work opens the path for mixed phenomena of Sce~\cite{sce} and Qian~\cite{qian1997} type. Indeed, we conjecture that the oddness hypotheses in Theorem~\ref{cor:fuetersce} and in Corollary~\ref{cor:monogenic} might be removed. In other words, we conjecture that for arbitrary $T$, $\sigma\in\{1,\ldots,\tau\}$ and $f\in\sr_T(\Omega_D,A)$, an appropriately defined $\sigma$-th Fueter transform of $f$ is strongly $T_\sigma$-regular and still is a $T$-function. This means monogenic and $\tau$-axial (in the sense of~\cite{eelbode2014}) when $A=C\ell(0,N),V=\rr^{N+1},\sigma=\tau$.

%%%%%%%%%%%%%%%%%%%%%%%

%\clearpage

\section*{Acknowledgements}

Both authors are partly supported by: GNSAGA INdAM; Progetto “Teoria delle funzioni ipercomplesse e applicazioni” Università di Firenze. The second author was also partly supported by: PRIN 2022 “Real and complex manifolds: geometry and holomorphic dynamics” (2022AP8HZ9) MIUR; Finanziamento Premiale “Splines for accUrate NumeRics: adaptIve models for Simulation Environments” INdAM.

The authors warmly thank the anonymous referee, whose careful reading helped them polishing this work.

%\vfill

%%%%%%%%%%%%%%%%%%%%%%%

%%%%%%%%%%%%%%%%%%%%%%%%%%%%%
%% BIBLIOGRAPHY

%\vfill

%%%%%%%%%%%%%%%%%%%%%%%%%%%%%

\end{document}